\newcommand{\Be}{\begin{equation}}
\newcommand{\Ee}{\end{equation}}
\newcommand{\Bea}{\begin{eqnarray}}
\newcommand{\Eea}{\end{eqnarray}}
\newcommand{\Bln}{\begin{align}}
\newcommand{\Eln}{\end{align}}
\newcommand{\Beas}{\begin{eqnarray*}}
	\newcommand{\Eeas}{\end{eqnarray*}}
\newcommand{\Benu}{\begin{enumerate}}
	\newcommand{\Eenu}{\end{enumerate}}
\newcommand{\Bi}{\begin{itemize}}
	\newcommand{\Ei}{\end{itemize}}
\newcommand{\Rang}{\big\rangle}
\newcommand{\Lang}{\big\langle}
\def\avint{\mathop{\,\rlap{$-$}\!\!\int}\nolimits}
\def\wt#1{\widetilde{#1}}
\def\wh#1{\widehat{#1}}
\newcommand{\B}{\Big}
\numberwithin{equation}{section}
\newcommand{\supp} {{\rm supp\, }}
\newcommand{\dist} {{\rm dist}}
\newcommand{\C}{\mathbb C}
\newcommand\divergence{\operatorname{div}}
\newcommand{\La}{\Lambda}
\newcommand{\la}{\lambda}
\newcommand{\ga}{\gamma}
\newcommand{\Om}{\Omega}
\newcommand{\R}{\mathbb R}
\newcommand{\F}{\mathcal F}
\newcommand{\Z}{\mathbb Z}
\newcommand{\ta}{\tau}
\newcommand{\Si}{\Sigma}
\newcommand{\de}{\delta}
\newcommand{\diam}{\text{diam}}
\theoremstyle{plain}
\newtheorem{thm}{Theorem}[section]
\newtheorem{cor}[thm]{Corollary}
\newtheorem{lem}[thm]{Lemma}
\newtheorem{prop}[thm]{Proposition}
\newtheorem{conj}{Conjecture}
\theoremstyle{remark}
\newtheorem{rmk}{Remark}
\theoremstyle{definition}
\newtheorem{defn}{Definition}[section]
\newcommand{\Rangle}{\big\rangle}
\newcommand{\Langle}{\big\langle}
\newcommand{\inp}[1]{\langle #1\rangle}
\newcommand*{\bdot}[1]{%
   \accentset{\mbox{\scriptsize\bfseries .}}{#1}} 
\DeclareRobustCommand{\rchi}{{\mathpalette\irchi\relax}}
\newcommand{\irchi}[2]{\raisebox{\depth}{$#1\chi$}} 
\newcommand{\avm}{\avint_{\!\!\!\! M} }
\newcommand{\cF}{\mathcal F}
\newcommand{\jc}{{j_\circ}}
\newcommand{\ja}{{j_\ast}}
\newcommand{\ec}{\epsilon_\circ}
\newcommand{\I}{\mathrm I}
\newcommand{\fu}{\mathfrak u}
\newcommand{\bs}{\mathbb S}
\newcommand{\epc}{\epsilon_\circ}
\newcommand\bu{\mathbf {u}}
\newcommand\bv{\mathbf {v}}
\newcommand{\bA}{\mathbf A}
\newcommand{\Rd}{\mathbb R^d}
\newcommand{\dep}{\delta_2^{-\epsilon}}
\begin{document}

\title[]
{Uniqueness in the Calder\'on  problem \\ and bilinear restriction estimates }

\author{Seheon Ham}
\author{Yehyun Kwon}
\author{Sanghyuk Lee}

\subjclass[2010]{{35R30}, {42B15}}
\keywords{Calder\'on problem, Bourgain's space, bilinear restriction estimate, inverse problem}

\address{Department of Mathematical Sciences and RIM, Seoul National University, Seoul 08826, Republic of Korea}
\email{seheonham@snu.ac.kr}
\email{shklee@snu.ac.kr}

\address{School of Mathematics, Korea Institute for Advanced Study,  Seoul 02455, Republic of Korea}
\email{yhkwon@kias.re.kr}

%\thanks{}
%\subjclass[2010]{ }
%\date{\today}
%\keywords{}
% ----------------------------------------------------------------

\begin{abstract} 
Uniqueness in the Calder\'on problem in dimension bigger than two was  usually studied under the assumption that  conductivity has bounded gradient.  For conductivities with unbounded gradients  uniqueness results  have not been known until recent years.  The latest result due to  Haberman basically relies on  the optimal $L^2$ restriction estimate for hypersurface which is known as the Tomas-Stein restriction theorem.  In the course  of developments of the Fourier restriction problem bilinear and multilinear generalizations of the (adjoint) restriction estimates under suitable transversality condition between surfaces have played  important roles. Since such advanced machineries usually provide strengthened estimates,  it seems natural to attempt to utilize these estimates to improve the known results.   In this paper, we make use of  the sharp  bilinear restriction estimates,  which is due to Tao,  and  relax the  regularity assumption on conductivity.  We also consider the inverse problem for the Schr\"odinger operator with potentials contained in the Sobolev spaces of negative orders. 
\end{abstract}

\maketitle

\section{introduction}\label{intro}
For $d\ge3$, let $\Omega\subset \mathbb R^{d}$ be a bounded domain with Lipschitz boundary, and let  $A(\Omega)$ denote the set of all functions $\gamma \in L^\infty(\Omega)$ satisfying $ \gamma\ge c$ in  $ \Omega$  for some  $ c>0$.  Throughout the paper, we assume $\gamma\in A(\Omega)$.  For $f\in H^{1/2}(\partial\Omega)$ and $\gamma\in A(\Omega)$,  we consider the Dirichlet problem:
\begin{equation}\label{eq}
\left\{ 
\begin{aligned}
\, \divergence (\gamma \nabla u)  &= 0   &\text{in}& \quad \Omega, \\[3pt] 
\,u  &= f &\text{on}& \quad  \partial\Omega.
\end{aligned}
\right. 
\end{equation}
Let $\partial/\partial\nu$ denote the outward normal derivative on the boundary $\partial \Omega$.  The Dirichlet-to-Neumann map  $\Lambda_\gamma$ is formally defined by 
$\Lambda_\gamma(f) = \gamma \frac{\partial u_f}{\partial \nu} \big |_{\partial \Omega}$.  Since the boundary value problem \eqref{eq} has a unique solution $u_f\in H^1(\Omega)$ (for example,  see  \cite[Theorem 2.52]{FSU}),  by the trace theorem and Green's formula, the operator can be formulated in the weak sense.   Precisely,  for $ f\in H^{1/2}(\partial\Omega)$ and $g\in   H^{1/2}(\partial\Omega)$, 
\[ (\Lambda_\gamma(f), g) =\int_{\Omega}\gamma \nabla u_f \cdot {\nabla v} dx  \]
where $v\in H^1(\Omega)$ and $v|_{\partial \Omega}=g$.  It is well known that $\Lambda _\gamma$ is well defined and $\Lambda _\gamma$ is continuous from $ H^{1/2}(\partial\Omega)$ to $H^{-1/2}(\partial\Omega)$.

\subsubsection*{Calder\'on's problem}  Calder\'on's inverse conductivity problem concerns whether $\gamma$ can be uniquely determined  from $\Lambda_\gamma$, that is to say,  whether the map  $\gamma\mapsto \Lambda_\gamma$ is injective.  The problem was introduced by Calder\'on \cite{Calderon} who showed uniqueness for the linearized problem.  Afterwards, numerous works have been devoted to  extending the function class $X (\Omega)\subset A(\Omega)$ for which  the map $X(\Omega) \ni \gamma \mapsto \Lambda_\gamma$  is injective (\cite{Uhlmann}).  Kohn and Vogelius \cite{KV} showed that if $\partial \Omega$ is smooth and $\La_{\ga}=0$ then $\ga$ vanishes to infinite order at $\partial \Omega$ provided that  $\ga\in C^\infty(\overline{\Omega})$ (also, see \cite{SU88}).  Consequently, the mapping $\gamma \mapsto \Lambda_\gamma$ is injective if we choose $X(\Omega)$ to be the space of analytic functions on $\overline{\Omega}$. Sylvester and Uhlmann in their influential work \cite{SU} proved that $\ga$ is completely determined by $\La_\ga$ if  $\gamma\in C^2 (\overline{\Omega})$ for $d\ge 2$.  They made use of the complex geometrical optics solutions which become most predominant tool not only in the Calder\'on problem but also in  various related problems.  Afterward, it has been shown that  regularity on conductivity can be lowered further.  The $C^2$ regularity  assumption  was relaxed to $C^{3/2 +\epsilon}$ by Brown \cite{Brown96}.  P\"aiv\"arinta,  Panchenko, and   Uhlmann \cite{PPU}  showed global uniqueness of conductivities in $W^{3/2,\infty}$, and  results with conductivities in $W^{3/2,p}$, $p>2d$ were obtained by Brown and Torres \cite{BT}. Nguyen and Spirn \cite{NS} obtained a result with conductivities in $W^{s, 3/s}$ for $3/2<s<2$ when $d=3$.  In two dimensions, the problem has different nature and uniqueness of $L^\infty$ conductivity was established by Astala and P\"aiv\"arinta \cite{AP}. Their  result is an extension of the previous ones in \cite{Nachman, BU}.  Recently, C\^arstea and Wang \cite{CW} obtained uniqueness of unbounded conductivities.  (See \cite{ALP} and references therein for related results.)  For $d\ge 3$, the regularity condition was remarkably improved by Haberman and Tataru  \cite{HT}. By making use of  Bourgain's $X^{s,b}$ type spaces, they proved uniqueness when  $\gamma\in C^1(\overline\Omega)$, or  $\gamma \in W^{1,\infty}(\Omega)$ with the assumption that $\|\nabla \log\gamma\|_{L^\infty(\overline \Omega)}$ is small.   This smallness assumption was later removed by Caro and Rogers \cite{CR}.

As already mentioned  before,  for $d\ge 3$,  most of the previous results were obtained under the assumption that  $\gamma$ has  bounded gradient.  Since the equation $\divergence(\gamma\nabla u)=0$ can  be rewritten as $\Delta u+ W \cdot \nabla u = 0$ with $W=\nabla \log \gamma$, it naturally relates to the unique continuation problem for $u$ satisfying  $|\Delta u|\le W|\nabla u|$.  Meanwhile,  it is known that the unique continuation property holds  with $W\in L_{loc}^{d}$ \cite{Wolff} and generally fails  if $W\in L_{loc}^{p}$ for $p<d$ \cite{KT}. In this regards Brown \cite{BT}  proposed a conjecture  that uniqueness should be valid for $\gamma \in W^{1,d}(\Omega)$, but no counterexample which shows the optimality of this conjecture has been known yet.  Recently, Brown's conjecture  was verified  by  Haberman \cite{Haberman15} for $d=3,4$,  and he also showed that  uniqueness remains valid even if $\nabla\gamma$ is unbounded when $d=5,6$. More precisely,  he showed that $\gamma \mapsto \Lambda_\gamma$ is injective  if $\gamma$ belongs to $W^{s,p}(\Omega)$ with $d \le p \le \infty$ and $s = 1$ for $d=3,4$,  and $d \le p < \infty$ and $s = 1 + \frac{d-4}{2p}$ for $d=5,6$.

For a given function $q$, let $\mathcal M_q$ be the multiplication operator $f\mapsto qf$ and let $O_d$ be the orthonormal  group in $\mathbb R^d$. Most important part of the argument  in Haberman  \cite{Haberman15}(\cite{HT})  is to show that there are sequences $\{U_j\}$ in $O_d$ and $\{\tau_j\}$ in $(0,\infty)$  such that 
\Be \label{asymptotic}
\lim_{j\to \infty} \| \mathcal M_{(\nabla f)\circ U_j}\|_{X_{\zeta(\tau_j)}^{1/2}\to X_{\zeta(\tau_j)}^{-1/2} }=0
\Ee
and  $\tau_j\to \infty$ as $j\to \infty$.  We refer  the reader forward to Section \ref{sec_pre} for the definition of  the spaces $X_{\zeta(\tau)}^{1/2}$ and $X_{\zeta(\tau)}^{-1/2}$.  If $f\in L^\frac d2$ and has compact support, it is not difficult to show $\lim_{\tau\to \infty}   \| \mathcal M_{f}\|_{X_{\zeta(\tau)}^{1/2}\to X_{\zeta(\tau)}^{-1/2} }=0$, see Remark \ref{nachman}. However, $\| \mathcal M_{\nabla\! f}\|_{X_{\zeta(\tau)}^{1/2}\to X_{\zeta(\tau)}^{-1/2} }$ does not behave as nicely  as $\| \mathcal M_{f}\|_{X_{\zeta(\tau)}^{1/2}\to X_{\zeta(\tau)}^{-1/2} }$. This is also related to the failure of the Carleman estimate of the form $ \|e^{v\cdot x} \nabla u\|_{L^q}\le C\| e^{v\cdot x} \Delta u\|_{L^p}$ when $d\ge 3$.  See \cite{KRS, BKRS, Wolff, JKL}.  To get around the difficulty  averaged estimates  over $O_d$ and $\tau$ were considered (\cite{HT, NS, Haberman15}).  In view of Wolff's work \cite{Wolff}  it still seems plausible to expect \eqref{asymptotic} or its variant  holds with $f\in L^d_{loc}$.

\subsubsection*{Restriction estimate} Let $S\subset \mathbb R^{d-1}$\,\footnote{We use $d-1$ instead of $d$ to avoid confusion in the subsequent discussion. } be a smooth compact  hypersurface with nonvanishing Gaussian curvature and let $d\mu$ be the surface measure on $S$.  The  estimate $\| \widehat f \vert_{S} \|_{ L^{2}(d\mu)} \lesssim \|f\|_{L^r(\mathbb R^{d-1})}$, $r \le 2d/(d+2)$ is known as the Stein-Tomas theorem.  The range of $r$ is optimal since the estimate fails if  $r> 2d/(d+2)$.  The restriction  estimate can be rewritten in its adjoint form:
\begin{equation}\label{adj-lin}
\| \widehat{f d\mu}  \|_{L^q(\mathbb R^{d-1})} \le C(d, p,q, S) \| f\|_{L^p(S , d\mu )}. 
\end{equation}
The restriction conjecture is to determine $(p,q)$ for which \eqref{adj-lin}  holds. Even  for most typical  surfaces such as the sphere and the paraboloid, the conjecture is left open when $d\ge 4$. We refer the reader to \cite{Guth1, Guth2} for the most recent progress.  There have been bilinear and multilinear generalizations of  the linear estimate \eqref{adj-lin} under additional transversality conditions between surfaces (\cite{TVV, Tao1, BCT}), and these estimates  played important roles in development of the restriction problem.  To be precise, let $S_1, S_2 \subset S$ be hypersurfaces in $\mathbb R^{d-1}$ and  let $ d\mu_1$,  $ d\mu_2$ be the surface measures on $S_1,$ $S_2$, respectively.  The following form of estimate is called \emph{bilinear (adjoint) restriction estimate}:
\begin{equation}\label{adj-bil}
\| \widehat{f d\mu_1} \, \widehat{ g d\mu_2} \|_{L^{q/2}(\mathbb R^{d-1})} \le C\| f\|_{L^2(S_1, d\mu_1)} \| g \|_{L^2(S_2, d\mu_2)}.
\end{equation}
Under certain condition between $S_1$ and $S_2$  the estimate \eqref{adj-bil}  remains valid for some $q<\frac{2d}{d-2}$ with which \eqref{adj-lin} fails if $p=2$. (See  Theorem \ref{bilinearthm} and  \cite{Tao2, Lee1} for detailed discussion.)

By duality,  in order to get estimate for  $\| \mathcal M_{\nabla\! f}\|_{X_{\zeta(\tau)}^{1/2}\to X_{\zeta(\tau)}^{-1/2} }$, we  consider the bilinear operator  $\mathcal B_{\nabla\! f}$ which is given by 
\[  X_{\zeta(\tau)}^{1/2} \times X_{\zeta(\tau)}^{1/2}  \ni  (u,v)   \mapsto \mathcal B_{\nabla\! f}(u,v)=\inp{ \nabla\! f\, u, v}.  \]
Compared with the previous results  the main new input in  \cite{Haberman15}  was the $L^2$-Fourier restriction theorem for the sphere which is due to Tomas \cite{Tom} and Stein \cite{St-beijing}  (Theorem \ref{stein-tomas}).  This is natural  in that  the  multiplier  which defines  $X_{\zeta(\tau)}^{-1/2}$ has mass concentrated  near the surface $\Sigma^\tau$ given by \eqref{sigmatau} while  the restriction estimate provides estimates for functions of which Fourier transform concentrates near hypersurface.  The use of the bilinear restriction estimate instead of the linear one has a couple of obvious advantages.  The bilinear restriction estimate not only has a wider range of boundedness  but also naturally fits with  the bilinear  operator $\mathcal B_{\nabla\! f}$.

In this paper we aim to  improve Haberman's  results by making use of the bilinear restriction estimate \eqref{adj-bil} for the elliptic surfaces  (see  Definition \ref{elliptic}  and Theorem \ref{bilinearthm}). However, the  bilinear estimates outside of the range of the $L^2$ restriction estimate are only true under the extra separation condition between the supports of Fourier transforms of the functions (see  Corollary \ref{bilinearco}).   Such estimates cannot be put in use directly. This leads to  considerable technical involvement.  The following is our main result.

\begin{figure}
\centering
\begin{tikzpicture}[scale=0.5]\scriptsize
\begin{scope}
	\path [fill=lightgray] (0,5)--(0,1)--(25/3,1)--(10,1+2/3)--(10,5); 
	\draw (0,1)--(10,1); 
	\draw[dash pattern={on 2pt off 1pt}] (25/3,1)--(10,1+2/3); 
	\draw[->] (0,0) node[below]{$0$} -- (10+1/2,0) node[right]{$\frac1p$}; 
	\draw (0,0)--(0,0.5); 
	\draw (-0.2,0.45)--(0.2,0.55); \draw (-0.2,0.55)--(0.2,0.65); 
	\draw[->] (0,0.6) -- (0,5) node[above]{$s$}; 
	\draw (0,1)node[left]{$1$};
	\draw[dash pattern={on 2pt off 1pt}, ultra thin] (25/3,1)--(25/3,0)node[below]{$\frac16$}; 
	\draw[dash pattern={on 2pt off 1pt}, ultra thin]  (0,1+2/3)node[left]{\tiny$\frac{41}{40}$}--(10,1+2/3)--(10,0)node[below]{$\frac15$}; 
	\draw (5,-0.8) node  {(I) When $d=5$};
\end{scope}
\begin{scope}[shift={(14,0)}]
	\path [fill=lightgray] (0,5)--(0,1)--(60/7,1+40/21)--(10,1+8/3)--(10,5); 
	\draw (0,1)--(10,1); 
	\draw[dash pattern={on 2pt off 1pt}] (0,1)--(60/7,1+40/21)--(10,1+8/3); 
	\draw[->] (0,0) node[below]{$0$} -- (10+1/2,0) node[right]{$\frac1p$}; 
	\draw (0,0)--(0,0.5); 
	\draw (-0.2,0.45)--(0.2,0.55); \draw (-0.2,0.55)--(0.2,0.65); 
	\draw[->] (0,0.6) -- (0,5) node[above]{$s$}; 
	\draw (0,1)node[left]{$1$};
	\draw[dash pattern={on 2pt off 1pt}, ultra thin] (0,1+40/21)node[left]{\tiny$\frac{15}{14}$}--(60/7,1+40/21)--(60/7,0)node[below]{$\frac17$}; 
	\draw[dash pattern={on 2pt off 1pt}, ultra thin]  (0,1+8/3)node[left]{\tiny$\frac{11}{10}$}--(10,1+8/3)--(10,0)node[below]{$\frac16$}; 
	\draw (5,-0.8) node  {(II) When $d=6$};
\end{scope}
\end{tikzpicture}
\caption{The range of $(\frac1p, s)$ in Theorem \ref{main}: the line $s=1$ for $p\ge d$ corresponds to Brown's conjecture.}
\end{figure}
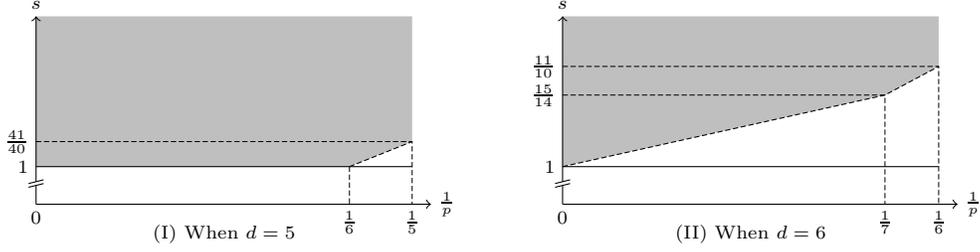

\begin{thm}\label{main}
Let $d=5,6$ and $\Omega$ be a bounded domain with Lipschitz boundary. Then  the map $\gamma\mapsto \Lambda_\gamma$ is injective if $\gamma \in W^{s,p}(\Omega)\cap A(\Omega)$ for $s> s_d(p)$, where 
\begin{equation}\label{sdp}
s_d(p) =
\begin{cases}
\, 	1 +  \frac{d - 5}{2p}   &\text{if }\ \   d+1 \le  p <\infty ,  \\[5pt]
\, 1 +  \frac{d^2-5d +6-p}{2p(d-1)} &\text{if }\ \  d\le  p < d+1. 
\end{cases}
\end{equation} 
Here, $W^{s,p}(\Omega)$ is the Sobolev-Slobodeckij space.
\end{thm}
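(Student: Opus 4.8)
\emph{Strategy.} I would follow the CGO scheme of \cite{HT,Haberman15}, isolating as the single new step the estimation of $\|\mathcal M_{\nabla f}\|_{X^{1/2}_{\zeta(\tau)}\to X^{-1/2}_{\zeta(\tau)}}$, which I would carry out with the sharp bilinear restriction estimate of Corollary \ref{bilinearco} in place of the Stein--Tomas estimate used by Haberman; since the former holds in a wider range and, as the identity $\mathcal B_{\nabla f}(u,v)=\langle\nabla f\,u,v\rangle$ makes plain, is genuinely bilinear, this should lower the threshold from $1+\frac{d-4}{2p}$ to $s_d(p)$. Concretely: assuming $\Lambda_{\gamma_1}=\Lambda_{\gamma_2}$, use boundary determination and extension to reduce to $\gamma_1,\gamma_2\in W^{s,p}(\mathbb R^d)\cap A(\mathbb R^d)$ agreeing outside a fixed ball, write the equation as $\Delta u+\nabla\log\gamma\cdot\nabla u=0$, seek $u=e^{\zeta\cdot x}(1+\psi)$ with $\zeta=\zeta(\tau)$, $\zeta\cdot\zeta=0$, $|\zeta|\sim\tau$, and solve for $\psi$ by contraction in $X^{1/2}_\zeta$, which succeeds once $\|\mathcal M_{\nabla\log\gamma_i}\|_{X^{1/2}_\zeta\to X^{-1/2}_\zeta}$ is small; inserting a pair of such solutions with $\zeta_1+\zeta_2=i\kappa$ into Alessandrini's identity and letting $\tau\to\infty$ gives $\widehat{\gamma_1-\gamma_2}\equiv0$, hence $\gamma_1=\gamma_2$ (the Schr\"odinger version with negative-order potentials is the same after writing the potential through $\nabla\log\gamma$ and $|\nabla\log\gamma|^2$). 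Thus everything reduces to \eqref{asymptotic} with $f=\log\gamma$, using only $\nabla f\in W^{s-1,p}$; and since \eqref{asymptotic} fails for a fixed direction of $\zeta$, I would, as in \cite{HT,Haberman15}, average $\|\mathcal M_{(\nabla f)\circ U}\|^2$ over $U\in O_d$ and over $\tau\in[R,2R]$, for which it suffices to get a bound $R^{-\epsilon}\|\nabla f\|_{W^{s-1,p}}^2$ with $\epsilon>0$ (a density argument then produces \eqref{asymptotic} itself).

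\emph{The bilinear estimate.} By duality the quantity to bound is $\sup\{|\langle\nabla f\,u,v\rangle|:\|u\|_{X^{1/2}_{\zeta(\tau)}}=\|v\|_{X^{1/2}_{\zeta(\tau)}}=1\}$. I would split $\nabla f=(\nabla f)_{\le\delta}+(\nabla f)_{>\delta}$ at a frequency $\delta=\delta(\tau)$ with $1\ll\delta\ll\tau$. The low-frequency part varies slowly on the scale of $\zeta(\tau)$, so its contribution is handled as in \cite{HT} (after Bernstein and the $O_d\times[R,2R]$ averaging it is of order $\tau^{-1/2}\delta^{1/2+d/p}\|\nabla f\|_{W^{s-1,p}}$). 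For the high-frequency part I would use, on the one hand, $\|(\nabla f)_{>\delta}\|_{L^p}\lesssim\delta^{-(s-1)}\|\nabla f\|_{\dot W^{s-1,p}}$ --- the source of the decaying power of $\delta$ --- and, on the other, that the multiplier defining $X^{-1/2}_{\zeta(\tau)}$ concentrates near $\Sigma^\tau$, a $(d-2)$-sphere of radius $\tau$ in a hyperplane of $\mathbb R^d$, so rescaling by $\tau^{-1}$ places the Fourier supports of $u,v$ in the $\tau^{-1}$-neighbourhood of the unit sphere $S^{d-2}\subset\mathbb R^{d-1}$. After a cap decomposition of $S^{d-2}$, the key point is that in $\langle(\nabla f)_{>\delta}u,v\rangle$ only cap-pairs $(\theta_1,\theta_2)$ at mutual separation $\gtrsim\delta/\tau$ survive --- $(\nabla f)_{>\delta}$ has frequencies $\gtrsim\delta$ --- so every surviving pair is transversal; rescaling each to unit separation, the thickened form of the bilinear restriction estimate for the elliptic surface $S^{d-2}$ (Theorem \ref{bilinearthm}, Corollary \ref{bilinearco}) controls $u_{\theta_1}\overline{v_{\theta_2}}$ in $L^{q/2}$ for some $q<\frac{2d}{d-2}$. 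Bringing these pieces into $L^{p'}$ by Bernstein, pairing against $(\nabla f)_{>\delta}\in L^p$ by H\"older, and summing over the caps by Cauchy--Schwarz gives a high-frequency contribution of shape $\tau^{\beta(d,p)}\delta^{\mu(d,p)-(s-1)}\|\nabla f\|_{W^{s-1,p}}$ with $\beta,\mu$ strictly better than their Stein--Tomas analogues.

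\emph{Main obstacle and conclusion.} The hard part will be precisely the difficulty the paper flags: in the range $q<\frac{2d}{d-2}$ where the bilinear estimate beats Stein--Tomas, it requires the two Fourier supports to be separated, while $u,v\in X^{1/2}_{\zeta(\tau)}$ carry no a priori separation, so the estimate cannot be applied directly. Manufacturing the transversality from the high-frequency truncation (as above) fixes this in principle, but doing it cleanly forces one to control the transitional cap scales near $\delta/\tau$, the anisotropic $X^{1/2}$-weights, and the $\tau^{-1}$-thickening of the supports --- and when $d\le p<d+1$, where the exponent $p'$ needed in the H\"older pairing lies outside Tao's sharp bilinear range, one must additionally interpolate with the transversal bilinear $L^2$ estimate, which is exactly why \eqref{sdp} breaks at $p=d+1$. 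Granting this, I would add the two contributions and optimize $\delta=\tau^{a}$: the low-frequency constraint $a<p/(p+2d)$ and the high-frequency requirement $s>1+\mu(d,p)+\beta(d,p)/a$ are compatible precisely when $s>s_d(p)$, which yields the averaged bound with a negative power of $R$, hence \eqref{asymptotic}; the reduction above then finishes the proof.
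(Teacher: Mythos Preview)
Your overall framework (CGO solutions, reduction to an averaged bound on $\|\mathcal M_{\nabla f}\|_{X^{1/2}_{\zeta(\tau)}\to X^{-1/2}_{\zeta(\tau)}}$, bilinear restriction replacing Stein--Tomas) matches the paper's, but the bilinear step as you outline it has a genuine gap that would keep you from reaching $s_d(p)$. Your high-frequency bound ``$\tau^{\beta(d,p)}\delta^{\mu(d,p)-(s-1)}\|\nabla f\|_{W^{s-1,p}}$'' is already rotation-invariant, so the $O_d\times[R,2R]$ average does nothing for it; in your scheme the averaging helps only the low-frequency part. In the paper the decisive gain from averaging lives precisely in the bilinear term: after the Whitney decomposition into cap pairs $(\bs_k^j,\bs_{k'}^j)$, the Fourier support of $u_k^j\,\overline{v_{k'}^j}$ is confined to a thin rectangle $\mathcal R_{k,k'}^{j,\pm}$, which lets one insert a frequency projection $P_{k,k'}^{j,\pm}$ (together with a slab projection $P_{\le\delta}^{e_1}$) onto $P_\lambda f$ \emph{before} the H\"older step. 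These projected norms are assembled into a $U$-dependent quantity $\mathcal A_p(f_{\tau U},\lambda,e_1)$, and it is the average $\int_1^2\!\int_{O_d}\mathcal A_p^p\,dm\,d\tau\lesssim\|\mathcal P_\lambda f\|_{L^p}^p$ (Lemmas~\ref{s-average}--\ref{n-average}, estimate~\eqref{key-avr}) that supplies the extra decay. Pairing directly against $(\nabla f)_{>\delta}\in L^p$ discards this mechanism, and the resulting exponents are strictly worse than $s_d(p)$.

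Two smaller issues. First, ``every surviving pair is transversal'' is not quite right: large chordal distance $|\theta_1-\theta_2|$ does not exclude nearly antipodal caps, which cannot be placed on a common elliptic graph and so lie outside the scope of Theorem~\ref{bilinearthm}; the paper isolates this case (\S\ref{sec_anti}), reduces it via $\|u_1u_2\|_{L^{p'}}=\|\overline{u_1}\,u_2\|_{L^{p'}}$, and exploits that the relevant rectangles sit in $2\mathbb S^{d-2}+O(2^{-2j})$ (forcing $\lambda\simeq 1$) to get a separate averaging lemma (Lemma~\ref{n-average}). Second, a single threshold $\delta$ on $\nabla f$ is too coarse to produce the break at $p=d+1$: in the paper the range $d\le p<d+1$ comes from interpolating the bilinear bound with the \emph{linear} bound~\eqref{linear-key} at the optimized parameter $\theta_1=(p-2)/(d-1)$ (Proposition~\ref{bilinear-P2}), and the threshold $s_d(p)=\gamma_d(p)+d/p$ arises from summing over all dyadic $\lambda,\delta_1,\delta_2$ via Lemma~\ref{with-d-av}, not from optimizing a single $\delta=\tau^a$.
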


In particular, uniqueness holds for  $\gamma \in W^{s,5}(\Omega)  \cap A(\Omega)$  if $d=5$ and $s> \frac{41}{40}$, and for $\gamma \in  W^{s,6}(\Omega)  \cap A(\Omega)$ if $d=6$ and $s> \frac{11}{10}$.  Since  $W^{s_d(p)+\epsilon, p} \not\hookrightarrow W^{1,\infty}$ if $\epsilon >0$ is small enough,  this result is not covered by the result in \cite{CR}.

Even though our estimates are stronger than those in \cite{Haberman15}, the estimates do not  immediately yield improved results in every dimensions.  As is to be seen later in the paper, our estimates for the low frequency part are especially improved but this is not the case for the high frequency part since we rely on the argument based on the properties of $X_\zeta^b$ spaces (\cite{HT,Haberman15}).

The argument based on the complex geometrical optics solutions shows that  the Fourier transforms of $q_i=\gamma_i^{-1/2}\Delta \gamma_i^{1/2}$, $i=1,2$, are identical as long as $\Lambda_{\gamma_1}=\Lambda_{\gamma_2}$.  As was indicated in \cite{Haberman15} this approach has a drawback when we deal with less regular conductivity. In order to use the Fourier  transform  one has to extend  $\gamma_1-\gamma_2 \in W_0^{s,p}(\Omega)$ to the whole space $\mathbb R^d$ such that $\gamma_1-\gamma_2=0$ on $\Omega^c$.  Such extension  is possible by  exploiting  the trace theorem (\cite[Theorem 1]{Marschall}) but only under the condition  $s - \frac 1 p \le 1$.  This additional restriction  allows new results only  for  $d=5,6$ in Theorem \ref{main}.\footnote{By the inclusion $W^{s_1,p}\subset W^{s_2,p}$ for $s_1\ge s_2$ and $1<p<\infty$, it is enough to show Theorem \ref{main} for $(s,p)$ satisfying $s_d(p)<s\le \frac1p +1$.}  The same was also true with the result in \cite{Haberman15}. However,  as is mentioned in \cite{Haberman15}, if we additionally  impose the condition ${\partial   \gamma_1}/{\partial \nu}={\partial   \gamma_2}/{\partial \nu}$ on the boundary $\partial\Omega$, then Theorem \ref{main} can be extended to higher dimensions $d \ge 7$.  In fact, by \cite[Theorem 1]{Marschall},   the restriction  $s - \frac 1 p \le 1$  can be relaxed so that $s - \frac1p \le 2$, which is valid  for  $s>s_d(p)$ for $d \ge 7$ and $p\ge d$.  See Remark \ref{sd} for the value of $s_d(p)$.  However, the additional condition on the boundary is not known to be true under the assumption $\Lambda_{\gamma_1}=\Lambda_{\gamma_2}$ for $\gamma_1, \gamma_2$ as in Theorem \ref{main}. (In  \cite{BT} Brown and Torres proved that if $\Lambda_{\gamma_1}=\Lambda_{\gamma_2}$, then ${\partial   \gamma_1}/{\partial \nu}={\partial   \gamma_2}/{\partial \nu}$ on $\partial \Omega$ for $\gamma_1, \gamma_2\in W^{3/2,p}$, $p>2d$,  and  $d\ge 3$.)

If we had the endpoint bilinear restriction estimate (i.e.,  the estimate  \eqref{adj-bil} with $q=\frac{2( d+1)}{d-1}$,  see Remark \ref{rmk-end}),  the argument in this paper would allow us to obtain the uniqueness result with $s=1$ and $p\ge 6$ when $d=5$,  and  with $s=1+1/p$ and $p\ge  8$ when $d=7$.  Unfortunately the endpoint bilinear restriction estimate is still left open.

\subsubsection*{Inverse problem for the Schr\"odinger operator} 
For $d\ge3$, let $\Omega\subset \mathbb R^{d}$ be a bounded domain with $C^{\infty}$ boundary. We now consider the Dirichlet problem: 
\begin{equation}\label{seq}
\left\{
\begin{aligned}
\, \Delta u-q u &= 0  &\text{ in }& \ \ \Omega, \\
\, u &= f   &\text{ on }&  \ \ \partial\Omega.
\end{aligned}
\right. 
\end{equation}

Let us  set 
\[  H^{s, p}_c(\Omega)=\{  q\in  H^{s, p}(\mathbb R^d): \supp q\subset \Omega  \}.\] 
Here $H^{s,p}$ is the Bessel potential space, see \emph{Notations} for its definition.
 Since $H^{s, p}$ is defined by Fourier multipliers, the space is more convenient for dealing with various operators which are defined by Fourier transform. If we disregard $\epsilon$--loss of the regularity, the spaces $H^{s,p}$ and $W^{s,p}$ are essentially equivalent because $W^{s_1,p}\hookrightarrow H^{s_2,p}$ and $H^{s_1,p}\hookrightarrow W^{s_2,p}$ provided $s_1>s_2$. (See \cite[Section 2.3]{Triebel} for more details.) Thus, the statement of Theorem \ref{Schrodinger} does not change if $H^{s, p}$ is replaced by $W^{s,p}$. 

Let $q\in H^{s, p}_c(\Omega)$ with $s, p$ satisfying \eqref{concon}.   We assume that  zero is not a Dirichlet eigenvalue of $\Delta -q$.  Then, by the standard argument (\cite{Mc})  we see that there is a unique solution $u_f\in H^1(\Omega) $ for every $f\in H^\frac12(\partial \Omega)$. In fact, this can be shown by a slight modification of the argument in \cite[Appendix A]{KU} (also see the proof of Lemma \ref{DtoN} where $\int quv\, dx $ is controlled while $q\in H^{s, p}_c(\Omega)$ and $u,v\in H^1(\Omega)$).

For  $q\in  H^{s, p}_c(\Omega)$  let $\mathcal L_{q}$ denote  the Dirichlet-to-Neumann map  given by 
\Be \label{DtN}   (\mathcal L_{q} f,g)= \int_\Omega \nabla u\cdot \nabla v+ q uv\, dx,\Ee
where $u$ is the unique solution  to \eqref{seq} and $v\in  H^1(\Omega)$ with $v|_{\partial \Omega}=g$.   As in the Calder\'on problem,  one may ask whether $q\mapsto \mathcal L_q$  is injective.  As is well known the problem is closely related to the Calder\'on problem. In fact,  the Calder\'on problem can be reduced to the inverse problem for $\Delta-q$ with $q = \gamma^{-1/2}\Delta \gamma^{1/2}$  (see \cite{SU}).  The  problem of injectivity  of $q\mapsto \mathcal L_q$  was originally considered with  $q\in H^{0, p}_c(\Omega)$, but it is not difficult to see that we may consider   $q_1, q_2\in H^{s, p}_c(\Omega)$ with $s<0$.  (See, for example, Brown-Torres \cite{BT}.)  Since $u,$ $v\in H^1(\Omega)$, it is natural to impose $s\ge -1$. In fact,  $\mathcal L_ {q} f$ is well defined  provided that $q\in H_c^{s,p}(\Omega)$ with
\Be \label{concon}
    \max\big\{ -2+\frac dp,\, -1\big\} \le s. 
\Ee
The standard argument shows  that $\mathcal L_ {q} : H^\frac12(\partial \Omega)\to H^{-\frac12}(\partial \Omega)$  is continuous.

The injectivity of the mapping 
\begin{equation}\label{Sch_DN}
H_c^{s,p}(\Omega) \ni  q\mapsto \mathcal L_q
\end{equation} 
was shown with $s=0,$ $p=\infty$ by Sylvester and Uhlmann \cite{SU}. The result was extended to include unbounded potential $q\in L^{\frac d2+\epsilon}$ by  Jerison and Kenig (see Chanillo \cite{ch}).  The  injectivity  for $q$ contained in the Fefferman-Phong class with small norm  was shown by Chanillo  \cite{ch} and the result  for $q\in L^\frac d2$ was announced by Lavine and Nachman in \cite{N}.\footnote{Their result can also be recovered by the argument in this paper. See Remark \ref{nachman}.} Their result was  recently  extended to compact Riemannian manifolds by Dos Santos Ferreira, Kenig, and Salo \cite{FKS}. Also see   \cite{KU} for extensions to the polyharmonic operators.

The regularity requirement for $q$ can be  relaxed.  Results in this direction were obtained by Brown \cite{Brown96}, P\"aiv\"arinta,  Panchenko, and   Uhlmann \cite{PPU},  Brown and Torres \cite{BT} in connection with the Calder\'on problem.  Those results can be improved to less regular $q$. In fact, Haberman's result implies that the injectivity holds with $q \in H^{-1, d}$ when $d=3,4$ (see \cite{Haberman16}).  It seems natural to conjecture that the same is true in any higher dimensions. Interpolating this conjecture with the result due to  Lavine and Nachman \cite{N} ($q\in L^\frac d2$) leads  to the following: 

\begin{conj} \label{conj} 
Let $d/2\le  p < \infty$, and $\Omega$ be a bounded domain with Lipschitz boundary. Suppose  $q_1, q_2\in  H_c^{s, p}(\Omega) $ and  $\mathcal L_ {q_1}=\mathcal L_{q_2}$.  If  $s\ge s^*_d(p) := \max\{-1, -2+\frac{d}{p}\}$, then $q_1 =q_2$. 
\end{conj}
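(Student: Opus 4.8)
The plan is to run the complex geometrical optics (CGO) scheme of Sylvester--Uhlmann, in the refined form used by Haberman--Tataru and Haberman, but with the bilinear restriction estimate of Theorem~\ref{bilinearthm} (and Corollary~\ref{bilinearco}) replacing the Stein--Tomas theorem at every step. First I would reduce the injectivity of \eqref{Sch_DN} to a purely Fourier-analytic statement: if $\mathcal L_{q_1}=\mathcal L_{q_2}$ then $\widehat{q_1-q_2}(\xi)=0$ for every $\xi\in\R^d$. This follows from the Alessandrini identity applied to two families of CGO solutions $u_i=e^{x\cdot\zeta_i}(1+\psi_i)$, $i=1,2$, where $\zeta_i\cdot\zeta_i=0$ and $\zeta_1+\zeta_2=i\xi$, with the free parameters chosen so that $|\zeta_i|\to\infty$ while $\xi$ stays fixed; the boundary terms cancel, and $\int(q_1-q_2)u_1u_2\,dx=0$ becomes $\widehat{q_1-q_2}(\xi)$ plus error terms linear and bilinear in $\psi_1,\psi_2$. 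Because $u,v\in H^1(\Omega)$ and $q\in H^{s,p}_c(\Omega)$ with $s\ge s^*_d(p)$, both the boundary pairing and the bulk term $\int q uv\,dx$ are well defined — this is exactly where the constraint \eqref{concon} enters — so the reduction is legitimate, and it suffices to construct remainders $\psi_i$ with $\|\psi_i\|_{X^{1/2}_{\zeta_i}}\to 0$.

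The heart of the matter is the solvability of the conjugated equation $\psi_i=\Delta_{\zeta_i}^{-1}\!\big(q_i(1+\psi_i)\big)$, where $\Delta_\zeta=\Delta+2\zeta\cdot\nabla$ is the conjugated Laplacian; this hinges on bounding $\mathcal M_{q_i}$ as an operator $X^{1/2}_{\zeta(\tau)}\to X^{-1/2}_{\zeta(\tau)}$ and, crucially, on producing $U_j\in O_d$ and $\tau_j\to\infty$ realizing the analogue of \eqref{asymptotic} for $q\in H^{s,p}_c$. By duality this is the estimate for the bilinear form $\mathcal B_q(u,v)=\inp{qu,v}$ on $X^{1/2}_{\zeta(\tau)}\times X^{1/2}_{\zeta(\tau)}$. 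I would dyadically decompose the Fourier supports of $u$ and $v$, split the sum into a low-frequency regime — where the relevant pieces of $\Sigma^\tau$ are genuinely transversal and the full-strength bilinear estimate \eqref{adj-bil} of Theorem~\ref{bilinearthm} applies, yielding the gain over Stein--Tomas — and a high-frequency regime, where one reverts to the $X^b_\zeta$ machinery of \cite{HT,Haberman15}. For the transversal interactions one invokes Corollary~\ref{bilinearco} together with the extra separation condition between the Fourier supports; one then sums the resulting geometric series in the dyadic parameters, which converges precisely because $s\ge s^*_d(p)$, and finally averages over $O_d$ and over $\tau$ as in \cite{HT,NS,Haberman15} to upgrade the uniform bound into the vanishing \eqref{asymptotic}. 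With $\|\psi_i\|_{X^{1/2}_{\zeta_i}}\to 0$ secured, the error terms in the Alessandrini identity vanish, forcing $\widehat{q_1-q_2}\equiv 0$ and hence $q_1=q_2$.

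The main obstacle — and the reason the statement remains a conjecture rather than a theorem — is the endpoint of the bilinear restriction estimate: \eqref{adj-bil} for elliptic surfaces is known for $q>\tfrac{2(d+1)}{d-1}$ but is open at $q=\tfrac{2(d+1)}{d-1}$ (see Remark~\ref{rmk-end}). The dyadic sum in the high-frequency regime closes only when $s$ is strictly above the threshold, so the plan above delivers Conjecture~\ref{conj} with $s>s^*_d(p)$ in place of $s\ge s^*_d(p)$; the endpoint exponent $s=s^*_d(p)$ would require the endpoint bilinear estimate. A second, separate obstruction is the extension step: to make sense of $\widehat{q_1-q_2}$ one must extend $q_1-q_2$ from $\Omega$ to all of $\R^d$ vanishing outside $\Omega$, and the trace theorem \cite{Marschall} allows this only when $s-\tfrac1p\le 1$ (or $s-\tfrac1p\le 2$ under the additional Neumann-matching hypothesis on $\partial\Omega$), which is what confines the unconditional result of Theorem~\ref{main} to $d=5,6$. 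Thus, modulo (i) the endpoint bilinear restriction estimate and (ii) the extension range, the argument sketched here would give Conjecture~\ref{conj} in full.
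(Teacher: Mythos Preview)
The statement is Conjecture~\ref{conj}; the paper does not prove it and there is no proof to compare against. What the paper proves is the partial result Theorem~\ref{Schrodinger}, and your proposal is essentially a sketch of the paper's own argument for that theorem: CGO solutions, the bilinear form $\mathcal B_q$, a Whitney-type decomposition near $\Sigma^\tau$, bilinear restriction via Corollary~\ref{bilinearco}, and averaging over $O_d\times[M,2M]$. The gap is in your assessment of what this argument actually delivers.

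You assert that ``the resulting geometric series in the dyadic parameters \dots\ converges precisely because $s\ge s^*_d(p)$,'' and hence that the scheme proves the conjecture for all $s>s^*_d(p)$ with only the endpoint bilinear estimate separating strict from non-strict inequality. This is incorrect. Carrying out the dyadic summations in detail (Propositions~\ref{bilinear-P}, \ref{bilinear-N}, \ref{bilinear-S}, \ref{bilinear-P2} and Section~\ref{sec_aver}) produces the exponents $\beta_d(p)$ and $\gamma_d(p)$, and after interpolation with \eqref{ha2} the threshold one actually reaches is $s>\max\{-1,\,r_d(p)\}$, not $s>s^*_d(p)$. For $d\ge 5$ there is a genuine gap $r_d(p)>s^*_d(p)$ on part of the range of $p$; see Figure~\ref{fig2}. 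The shortfall is not caused by the missing endpoint: the paper notes after Theorem~\ref{main} that even the endpoint bilinear estimate would yield only $s=1+\tfrac1p$ for $d=7$, $p\ge 8$ in the Calder\'on problem, still short of the conjectured $s=1$. The real bottleneck is that the bilinear restriction estimate improves only the low-frequency interactions; the high-frequency pieces are still controlled by the $X^b_\zeta$ arguments of \cite{HT,Haberman15}, which gain nothing here (as the paper says explicitly in the introduction).

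Your second obstruction --- the trace/extension constraint $s-\tfrac1p\le 1$ from \cite{Marschall} --- does not apply to this problem. That constraint enters the Calder\'on problem (Theorem~\ref{main}) because one must extend $\gamma_1-\gamma_2\in W^{s,p}_0(\Omega)$ by zero outside $\Omega$ while preserving regularity. In Conjecture~\ref{conj} the potentials are already assumed to lie in $H^{s,p}_c(\Omega)$, i.e.\ supported in $\Omega$, so no extension step is needed; this is why Theorem~\ref{Schrodinger} is stated for all $d\ge 3$ without dimensional restriction.
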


We define $r_d: [\frac{d}{2},\infty) \to \mathbb R $. For $3\le d \le   6$, set
\begin{align*} 
r_d(p) &= \begin{cases}
	-1+ \frac{d-5}{2p}   &\text{if }\ \   p \ge d+1,  \\[5pt] 
	-\frac 32 + \frac{d-2}p   &\text{if }\ \   d+1 > p\ge   4 ,   \\[5pt] 
	 -2 + \frac dp  &\text{if }\ \  4 > p\ge \frac{d}{2}, 
\end{cases} 
\intertext{ 
and, for $d\ge 7$, set}
%\[
r_d(p)  & =
\begin{cases}
\, 	-1 + \frac{d-5}{2p}   &\text{if }\ \   p \ge \frac{d+9}{2},  \\[5pt]
\,  -\frac 32 + \frac{3d-1}{4p}  &\text{if }\ \  \frac{d+9}{2} > p\ge \frac{3d+7}{6}, \\[5pt]
\, -\frac{12}{7} + \frac{6d}{7p}  &\text{if }\ \    \frac{3d+7}{6} > p\ge \frac d2. 
\end{cases}
\end{align*} 
%\] 

The  following  is   a partial result  concerning  Conjecture \ref{conj} when $d\ge 5$.    
\begin{thm}\label{Schrodinger}  
 Let $d\ge3$ and $d/2\le p< \infty $.  The mapping \eqref{Sch_DN} is injective if $s>  \max\{-1,\, r_d(p)\}$.
\end{thm}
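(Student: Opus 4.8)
The plan is to reduce the injectivity of \eqref{Sch_DN} to the vanishing of the Fourier transform of $q_1-q_2$, and then to establish the key estimate for the multiplication operator $\mathcal M_{q_1-q_2}$ between the Bourgain-type spaces $X^{1/2}_{\zeta(\tau)}$ and $X^{-1/2}_{\zeta(\tau)}$, using the bilinear restriction estimate. I would first recall the standard complex geometrical optics (CGO) machinery of Sylvester--Uhlmann in the Haberman--Tataru formulation: if $\mathcal L_{q_1}=\mathcal L_{q_2}$, then writing $q=q_1-q_2$ (extended by zero outside $\Omega$, which is unproblematic here since we do not differentiate the potential and $q\in H^{s,p}_c(\Omega)$ already), we seek CGO solutions $u_i = e^{x\cdot\zeta_i}(1+\psi_i)$ to $\Delta u_i = q_i u_i$ with $\zeta_i\in\mathbb C^d$, $\zeta_i\cdot\zeta_i=0$, $\zeta_1+\zeta_2 = i\xi$ for a fixed frequency $\xi\in\mathbb R^d$, and $|\zeta_i|\sim\tau\to\infty$. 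The integral identity $\int q\, u_1 u_2\,dx = 0$ then gives $\widehat q(\xi) = -\int q\,e^{x\cdot(\zeta_1+\zeta_2)}(\psi_1+\psi_2+\psi_1\psi_2)\,dx$, so it suffices to show that the correction terms $\psi_i$ can be made small along a suitable sequence $\tau_j\to\infty$. The existence of $\psi_i$ with $\|\psi_i\|_{X^{1/2}_{\zeta(\tau)}}\to 0$ follows, via the solvability theory for $\Delta_{\zeta}$ on these spaces (as in \cite{HT,Haberman15}), once we have
\[
\lim_{j\to\infty}\|\mathcal M_{q}\|_{X^{1/2}_{\zeta(\tau_j)}\to X^{-1/2}_{\zeta(\tau_j)}} = 0,
\]
together with control of $\|\mathcal M_{q_i}\|_{X^{1/2}_{\zeta(\tau)}\to X^{-1/2}_{\zeta(\tau)}}$ by a uniform constant (so that the Neumann series converges) — the latter is the content of a continuity lemma analogous to Lemma \ref{DtoN}.

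Thus the mathematical heart is the bilinear estimate for $\mathcal M_q$, or equivalently for the bilinear form $\mathcal B_q(u,v)=\langle q\,u,v\rangle$ on $X^{1/2}_{\zeta(\tau)}\times X^{1/2}_{\zeta(\tau)}$. Here I would dyadically decompose $q$ in frequency, $q=\sum_k q_k$ with $\widehat{q_k}$ supported in $|\xi|\sim 2^k$ (plus a low-frequency piece), and correspondingly decompose $u,v$ using the structure of $X^b_{\zeta(\tau)}$, whose weight concentrates mass near the characteristic hypersurface $\Sigma^\tau$ of \eqref{sigmatau}. On each dyadic piece one applies the bilinear adjoint restriction estimate \eqref{adj-bil} for elliptic surfaces (Theorem \ref{bilinearthm}, Corollary \ref{bilinearco}); the transversality required for the bilinear estimate is exactly why one must handle separately the part of $\Sigma^\tau$ where the two relevant caps are transversal and the near-tangential part, and why the extra separation condition between Fourier supports in Corollary \ref{bilinearco} forces the technical decomposition alluded to in the introduction. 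Summing the dyadic pieces, one places $q$ in $H^{s,p}$: the high-frequency pieces are controlled by the $X^b_\zeta$-based argument inherited from \cite{HT,Haberman15} (this part is \emph{not} improved), while the low-frequency/bounded-frequency pieces get the improved gain from the wider range of the bilinear estimate. Optimizing the resulting interpolation between the Stein--Tomas-type bound at $p$ near $d/2$ (where one recovers the Lavine--Nachman $L^{d/2}$ input, cf.\ Remark \ref{nachman}) and the bilinear bound at large $p$ produces the three-regime exponent $r_d(p)$ in the statement, with the breakpoints $p=d+1$, $p=4$ for $3\le d\le 6$ and $p=\frac{d+9}{2},\frac{3d+7}{6}$ for $d\ge 7$ coming from where successive estimates cross.

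The main obstacle, as the authors themselves flag, is that the sharp bilinear restriction estimate beyond the Stein--Tomas range \emph{only} holds under a separation hypothesis on the supports of the Fourier transforms, so $\mathcal B_q$ cannot be estimated by a single direct application. The delicate point is therefore the geometric decomposition of the interaction region on $\Sigma^\tau\times\Sigma^\tau$: one must split into (i) a transversal regime where Corollary \ref{bilinearco} applies after rescaling caps to unit size and (ii) the remaining nearly-parallel regime, where one falls back on the linear Stein--Tomas estimate (Theorem \ref{stein-tomas}) or on a square-function/orthogonality argument, and then sum the contributions without losing the gain. Getting the bookkeeping of these pieces to close — in particular making the losses in the separation parameter summable against the dyadic frequency sum defining $\|q\|_{H^{s,p}}$, and checking that the $\tau$-dependence still yields a sequence $\tau_j\to\infty$ along which the operator norm tends to $0$ (this is where the averaging over $O_d$ and over $\tau$ in the spirit of \eqref{asymptotic} re-enters) — is the technically heaviest part of the argument. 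Once that estimate is in hand, the passage from the vanishing of $\widehat{q_1-q_2}$ to $q_1=q_2$, and the verification that the hypotheses \eqref{concon} on $(s,p)$ are compatible with the CGO construction, are routine; I would also record, as in Remark \ref{nachman}, that the endpoint $p=d/2$ case simply reproduces the classical $L^{d/2}$ result.
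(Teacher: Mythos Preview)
Your outline is essentially the paper's approach and is correct at the strategic level: CGO solutions, the integral identity, reduction to decay of operator norms on $X^{1/2}_{\zeta}\to X^{-1/2}_{\zeta}$, a frequency-local bilinear argument using Tao's bilinear restriction where the caps are separated, and averaging over $O_d$ and $\tau$ to extract a good sequence.

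There are two places where your sketch is imprecise enough to constitute a gap. First, the contraction/solvability step: $\psi_k$ solves $(\Delta_{\zeta'_k}-q_k)\psi_k=q_k$, so the relevant operator norm is $\|\mathcal M_{q_k}\|_{X^{1/2}\to X^{-1/2}}$ for each $k$, not $\|\mathcal M_{q_1-q_2}\|$; and a mere uniform bound on $\|\mathcal M_{q_k}\|$ only gives existence of $\psi_k$ with $\|\psi_k\|_{X^{1/2}}\lesssim \|q_k\|_{X^{-1/2}}$, not smallness. To conclude $\psi_k\to 0$ you need the \emph{separate} vanishing statement $\|q_k\|_{\bdot X^{-1/2}_{\zeta(\tau)}}\to 0$ along the sequence. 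The paper establishes this independently (their Corollary~\ref{with-d-cor}, deduced from a rotation-averaged bound \eqref{g} on $\|q\|_{X^{-1/2}_{\zeta(\tau,U)}}$), and both pieces of information are fed into the pigeonhole that produces $(\tau_j,U_j)$. Your proposal never invokes this second ingredient, and without it the remainder terms $\langle q,\,e^{i\xi\cdot x}\psi_n\rangle$ cannot be shown to vanish.

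Second, your two-way split ``transversal versus nearly-parallel'' is not quite the paper's geometry. Because $u$ and $v$ are independent, their Fourier supports can sit near \emph{antipodal} caps on $\Sigma^1-e_2$; this case is neither transversal (in the bilinear sense) nor neighboring, and it contributes at $\lambda\simeq 1$ with its own support structure (the sets $\mathcal R^{j,-}_{k,k'}$ near $2\mathbb S^{d-2}$). The paper handles it by a Whitney decomposition analogous to the neighboring case and, crucially, gains an extra factor from the $\tau$-average in this regime (Lemma~\ref{n-average}). Omitting the antipodal case would leave a genuine hole in the summation that produces $r_d(p)$.
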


\begin{figure}
\centering
\begin{tikzpicture}[scale=0.5]\scriptsize
\begin{scope}
	\path [fill=lightgray] (0,1/2)--(0,-4)--(5,-4)--(10,0)--(10,1/2); 
	\draw (0,-4)--(5,-4)--(10,0)--(10,1/2); 
	\draw[->] (0,0) node[left]{$0$} -- (10+1/2,0) node[right] {$\frac1p$}; 
	\draw[->] (0,-4.5) -- (0,1/2) node[above] {$s$}; 
	\draw (0,-4)node[left]{$-1$};
	\draw (10,0)node[below]{$\frac2d$};
	\draw[dash pattern={on 2pt off 1pt}, ultra thin] (5,0)node[below]{$\frac1d$}--(5,-4); 
	\draw (5,-4.8) node  {(I) When $d=3$ or $d=4$};
\end{scope}
\begin{scope}[shift={(14,0)}]
	\path [fill=lightgray] (0,1/2)--(0,-4)--(25/6,-4)--(25/4,-3)--(10,0)--(10,1/2); 
	\draw (0,-4)--(5,-4)--(10,0)--(10,1/2); 
	\draw[dash pattern={on 2pt off 1pt}] (25/6,-4)--(25/4,-3); 
	\draw[->] (0,0) node[left]{$0$} -- (10+1/2,0) node[right] {$\frac1p$}; 
	\draw[->] (0,-4.5) -- (0,1/2) node[above] {$s$}; 
	\draw (0,-4)node[left]{$-1$};
	\draw (10,0)node[below]{$\frac25$};
	\draw[dash pattern={on 2pt off 1pt}, ultra thin] (5,0)node[below]{$\frac15$}--(5,-4); 
	\draw[dash pattern={on 2pt off 1pt}, ultra thin]  (25/6,0)node[above]{$\frac16$}--(25/6,-4); 
	\draw[dash pattern={on 2pt off 1pt}, ultra thin]  (25/4,0)node[above]{$\frac14$}--(25/4,-3); 
	\draw (5,-4.8) node  {(II) When $d=5$};
\end{scope}
\begin{scope}[shift={(0,-7)}]
	\path [fill=lightgray] (0,1/2)--(0,-4)--(30/7,-26/7)--(15/2,-2)--(10,0)--(10,1/2); 
	\draw (0,-4)--(5,-4)--(10,0)--(10,1/2); 
	\draw[dash pattern={on 2pt off 1pt}] (0,-4)--(30/7,-26/7)--(15/2,-2); 
	\draw[->] (0,0) node[left]{$0$} -- (10+1/2,0) node[right] {$\frac1p$}; 
	\draw[->] (0,-4.5) -- (0,1/2) node[above] {$s$}; 
	\draw (0,-4)node[left]{$-1$};
	\draw (10,0)node[below]{$\frac2d$};
	\draw[dash pattern={on 2pt off 1pt}, ultra thin] (5,0)node[below]{$\frac1d$}--(5,-4); 
	\draw[dash pattern={on 2pt off 1pt}, ultra thin]  (30/7,0)node[above]{$\frac1{d+1}$}--(30/7,-26/7); 
	\draw[dash pattern={on 2pt off 1pt}, ultra thin]  (15/2,0)node[above]{$\frac14$}--(15/2,-2); 
	\draw (5,-4.8) node  {(III) When $d=6$};
\end{scope}
\begin{scope}[shift={(14,-7)}]
	\path [fill=lightgray] (0,1/2)--(0,-4)--(80/17,-56/17)--(240/31,-48/31)--(10,0)--(10,1/2); 
	\draw (0,-4)--(5,-4)--(10,0)--(10,1/2); 
	\draw[dash pattern={on 2pt off 1pt}] (0,-4)--(80/17,-56/17)--(240/31,-48/31)--(10,0); 
	\draw[->] (0,0) node[left]{$0$} -- (10+1/2,0) node[right] {$\frac1p$}; 
	\draw[->] (0,-4.5) -- (0,1/2) node[above] {$s$}; 
	\draw (0,-4)node[left]{$-1$};
	\draw (10,0)node[below]{$\frac2d$};
	\draw[dash pattern={on 2pt off 1pt}, ultra thin] (5,0)node[below]{$\frac1d$}--(5,-4); 
	\draw[dash pattern={on 2pt off 1pt}, ultra thin]  (80/17,0)node[above]{$\frac2{d+9}$}--(80/17,-56/17); 
	\draw[dash pattern={on 2pt off 1pt}, ultra thin]  (240/31,0)node[above]{$\frac6{3d+7}$}--(240/31,-48/31); 
	\draw (5,-4.7) node  {(IV) When $d\ge 7$};
\end{scope}
\end{tikzpicture}
\caption{The range of $(\frac1p, s)$ in Conjecture \ref{conj} and Theorem \ref{Schrodinger}}
\label{fig2}
\end{figure}
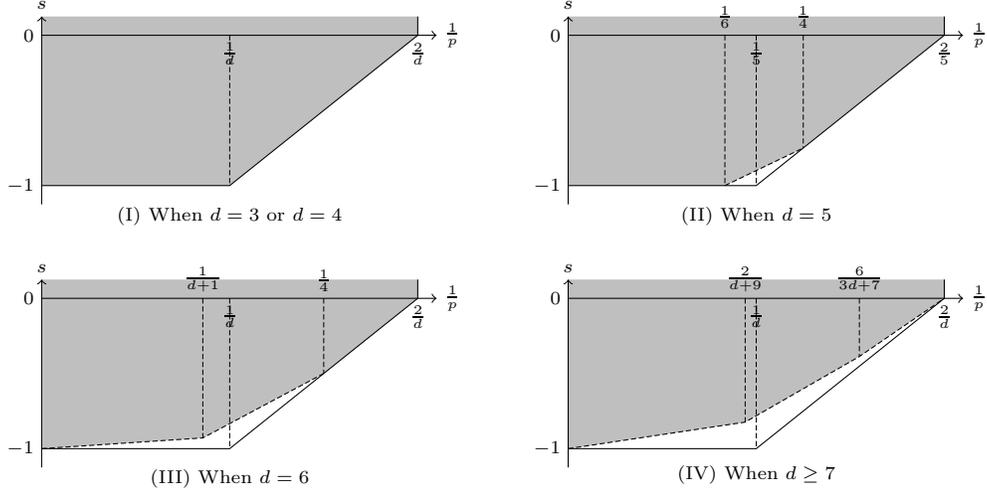

When $d=5$, Theorem \ref{Schrodinger} shows that $s>s^*_d(p) $  for $\frac d 2 \le p <4$ or  $ p \ge d+1$, where Conjecture \ref{conj} is verified   except for the critical case $s = s^*_d(p)$. Similarly, when $d=6$ injectivity of \eqref{Sch_DN} holds if $s > s^*_d(p)$ and $\frac d 2 \le p <4$. We illustrate our result in Figure \ref{fig2}.

\subsubsection*{Organization of the paper}
In Section \ref{sec_pre}, we recall basic properties of the spaces $X_\zeta^b$, and obtain estimates which rely on $L^2$ linear and $L^2$  bilinear restriction estimates. In Section \ref{sec_tech} which is the most technical part of the paper,  we make use  of the bilinear restriction estimates and a Whitney type decomposition  to get the crucial estimates while carefully considering orthogonality among the decomposed pieces. In Section \ref{sec_aver}   we take  average of the estimates from Section \ref{sec_tech}  over dilation and rotation, which allows us to exploit extra cancellation due to frequency localization. Combining the previous estimates together we prove  key estimates in Section \ref{sec_key}.  We prove Theorem \ref{main} and Theorem \ref{Schrodinger} in Section \ref{sec_pf_thm}.

\subsubsection*{Notations}  We list notations which are used throughout the paper. 
\vspace{-6pt}
\begin{itemize}
[leftmargin=0.15in]
         \item For $A, B>0$  we write $A\lesssim B$ if $A\le CB$ with some constant $C>0$. We also use the notation $A\simeq B$ if $A\lesssim B$ and $B\lesssim A$. 
	\item The orthogonal group in $\mathbb R^{d}$ is denoted by $O_d$.
	\item Let $\tau>0$. 
For a function $f:\R^d\to\C$ and a matrix $U\in O_d$  we define $f_{\tau U}(x):=\tau^{-d} f(\tau^{-1} Ux)$. If $U$ is the identity matrix,  
we denote $f_{\tau U}$  by $f_\tau$.
	\item The  Fourier  and inverse Fourier transforms: For an integrable function $u: \R^k \to \C$, we write $\mathcal F u(\xi) = \wh u(\xi) =\int_{\R^k} e^{-ix\cdot\xi} u(x)dx$ and $\mathcal F^{-1}u(\xi)=(2\pi)^{-k}\mathcal Fu(-\xi)$.
	\item For  a measurable function $a$ with polynomial growth, let $a(D)f=\F^{-1}(a\F f)$.
	\item For $E\subset\R^k$ and $x\in \R^k$, we write $\dist(x, E)=\inf\{ |x-y|: y\in E\}$.
	\item For $E\subset\R^k$ and $\delta>0$, we denote by $E+O(\delta)$ the $\delta$-neighborhood of $E$ in $\R^k$, i.e., $E+O(\delta)=\{x\in\R^k: \dist (x, E) < \delta\}$.
	\item We set $\mathbb S^{k-1}=\{x\in\mathbb R^k: |x|=1\}$. Also, for $a\in \R^k$ and $r>0$,  $B_k(a,r)=\{x\in \mathbb R^k: |x-a|<r\}$. 
	\item If $e_1$ and $e_2$ are a pair of orthonormal vectors in $\mathbb R^d$, we write $\xi_1=\xi\cdot e_1$, $\xi_2=\xi\cdot e_2$. 
	\item For $\xi\in\R^d$ we sometimes  write $\xi=(\xi_1, \widetilde \xi)\in \R \times\mathbb R^{d-1}$, $\xi=(\xi_1,\xi_2, \bar\xi)\in \mathbb R\times \R \times\mathbb R^{d-2}$. 
        \item  For $s \ge 0$ and  $p\in [1,\infty]$, we denote by $H^{s,p}$ the Bessel potential space
 $\{\varphi\in \mathcal S': (1+|D|^2)^\frac s2 \varphi \in L^p \}$ which is endowed with 	the norm $\|\varphi\|_{H^{s,p}}=\|	(1+|D|^2)^\frac s2 \varphi\|_{L^p}$. 
	\item For $s< 0$ and $p \in (1,\infty)$, we denote by  $H^{s,p'}(\Omega)$, $p'=\frac{p}{p-1}$, the dual space of $H_0^{-s,p}(\Omega)$. 
	\item We use $\inp{\cdot,\cdot}$ and $(\cdot,\cdot)$ to denote the inner product and the bilinear pairing between distribution and function, respectively.
\end{itemize}

\section{\texorpdfstring{$X^b_\zeta$}{Xb} spaces and \texorpdfstring{$L^2$}{L2space} linear and bilinear restriction estimates}\label{sec_pre}
In this section,  we  recall basic properties of the $X^b_\zeta$ spaces and  linear and bilinear restriction estimates, which are to be used later. 

\subsection{Basic properties of \texorpdfstring{$X^b_\zeta$}{Xbzetaa} spaces}  For a fixed pair of orthonormal vectors  $e_1$, $e_2$ in $\R^d$, let  us set 
\[\zeta(\tau) = \tau (e_1 -i e_2), \quad \tau>0. \] 
For $\zeta\in \C^d$ with $\zeta\cdot\zeta=0$, we denote the symbol of $e^{-x\cdot\zeta}\Delta e^{x\cdot\zeta}=\Delta+2\zeta\cdot\nabla$ by $p_\zeta$, i.e.,
\[  p_\zeta(\xi)=-|\xi|^2 +2 i \zeta\cdot \xi.\] 
By $\Sigma^\tau$ we denote  the zero set of the polynomial $p_{\zeta(\tau)}$, i.e.,  
\begin{equation}\label{sigmatau}
\Sigma^\tau=\{\xi\in\mathbb R^d: p_{\zeta(\tau)}(\xi) = 0 \}= \{ \xi \in \mathbb R^d: \xi_1 =0,\ |\xi - \tau e_2| =\tau\}.
\end{equation}
Clearly,  $\tau^{-1}\Sigma^\tau=\Sigma^1$ and it is easy to check that
\Be\label{geom_symb} 
|p_{\zeta(\tau)}(\xi) | 
	\simeq
\begin{cases}
\, \tau \dist(\xi,\Sigma^\tau) &\text{if }\ \ \dist(\xi,\Sigma^\tau) \le 2^{-7}\, \tau , \\[3pt]
\, \tau^2 + |\xi|^2 &\text{if }\ \ \dist(\xi,\Sigma^\tau) >  2^{-7}\,\tau .
\end{cases} 
\Ee
For  $\sigma,  \tau>0$ and $b\in\R$,  we denote by $X_{\zeta(\tau), \sigma}^b$ and $\bdot{X}_{\zeta(\tau)}^b$  the function spaces which were introduced in \cite{HT}(\cite{Tat, Haberman15}): 
\begin{gather*}
X_{\zeta(\tau), \sigma}^b := \big\{u\in \mathcal S'(\R^d): \|u\|_{X_{\zeta(\tau), \sigma}^b}=\|(|p_{\zeta(\tau)}|+\sigma)^b \wh u\|_{L^2(\R^d)} <\infty \big\}, \\
\bdot{X}_{\zeta(\tau)}^b := \big\{u\in \mathcal S'(\R^d): \|u\|_{\bdot{X}_{\zeta(\tau)}^b}=\| |p_{\zeta(\tau)}|^b \wh u\|_{L^2(\R^d)} <\infty \big\},
\end{gather*}
and for simplicity we also set   $X_{\zeta(\tau)}^b =X_{\zeta(\tau), \tau}^b$.  

Immediately, from the definition of $X_{\zeta(\tau)}^{1/2}$ we have 
\Be\label{x2} 
\| u \|_{L^2(\R^d)} \le C \tau^{-1/2} \| u \|_{X_{\zeta(\tau)}^{1/2}}
\Ee
with $C$ independent of $\tau>0$.

\begin{lem}[{\cite[Lemma 2.2]{HT}}, \cite{Haberman15}] \label{properties}
For $\phi\in\mathcal S(\R^d)$ the estimates  
\begin{align}
\label{lemma2.1}  
\| \phi u \|_{\bdot{X}_{\zeta(\tau)}^{-1/2}} &\le C \|  u \|_{ X_{\zeta(\tau)}^{-1/2}}, \\  
\| \phi u \|_{ X_{\zeta(\tau)}^{1/2} } &\le C \|  u \|_{\bdot{X}_{\zeta(\tau)}^{1/2}} \notag
\end{align}
hold, where $C$ depends on $\phi$, but is independent of $\tau>0$.   Consequently, for a compactly supported function $q$,  there is a constant $C>0$ such that 
\begin{align}\label{equivalence}
\| \mathcal M_q \|_{\bdot{X}_{\zeta(\tau)}^{1/2}\to\bdot{X}_{\zeta(\tau)}^{-1/2}} &\le C  \| \mathcal M_q \|_{  X_{\zeta(\tau)}^{1/2}\to  X_{\zeta(\tau)}^{-1/2}}, \\
\| \phi u \|_{L^2(\R^d)} &\le C \tau^{-1/2} \| u \|_{\bdot{X}_{\zeta(\tau)}^{1/2}}. \notag
\end{align}
\end{lem}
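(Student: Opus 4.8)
\emph{Sketch of the argument.} The plan is to obtain everything from the second estimate of the lemma, namely
\[
\|\phi u\|_{X_{\zeta(\tau)}^{1/2}}\le C(\phi)\,\|u\|_{\bdot{X}_{\zeta(\tau)}^{1/2}},\qquad \phi\in\mathcal S(\R^d),
\]
with $C(\phi)$ independent of $\tau>0$. The first estimate \eqref{lemma2.1} then follows by duality: the maps $u\mapsto|p_{\zeta(\tau)}|^{1/2}\wh u$ and $u\mapsto(|p_{\zeta(\tau)}|+\tau)^{1/2}\wh u$ identify $\bdot{X}_{\zeta(\tau)}^{1/2}$ and $X_{\zeta(\tau)}^{1/2}$ isometrically, up to a constant, with $L^2(\R^d)$, so with respect to the $L^2$ pairing one has $(\bdot{X}_{\zeta(\tau)}^{1/2})^*=\bdot{X}_{\zeta(\tau)}^{-1/2}$ and $(X_{\zeta(\tau)}^{1/2})^*=X_{\zeta(\tau)}^{-1/2}$; since the $L^2$-adjoint of $\mathcal M_\phi$ is $\mathcal M_{\overline\phi}$ and $\overline\phi\in\mathcal S(\R^d)$, the second estimate for all Schwartz functions yields the first. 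Moreover $X_{\zeta(\tau)}^b=X_{\zeta(\tau),\tau}^b$ gives the identity $\|w\|_{X_{\zeta(\tau)}^{1/2}}^2\simeq\|w\|_{\bdot{X}_{\zeta(\tau)}^{1/2}}^2+\tau\|w\|_{L^2}^2$, so it is enough to prove, uniformly in $\tau>0$,
\[
\text{(a)}\quad\|\phi u\|_{\bdot{X}_{\zeta(\tau)}^{1/2}}\lesssim\|u\|_{\bdot{X}_{\zeta(\tau)}^{1/2}}\qquad\text{and}\qquad\text{(b)}\quad\tau^{1/2}\|\phi u\|_{L^2}\lesssim\|u\|_{\bdot{X}_{\zeta(\tau)}^{1/2}}.
\]

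The main work is (b). On the Fourier side $\wh{\phi u}=(2\pi)^{-d}\wh\phi*\wh u$; writing $\wh u=|p_{\zeta(\tau)}|^{-1/2}\wh g$, so that $\|\wh g\|_{L^2}=\|u\|_{\bdot{X}_{\zeta(\tau)}^{1/2}}$, turns (b) into the assertion that the integral operator $T$ with kernel $\tau^{1/2}\,|\wh\phi(\xi-\eta)|\,|p_{\zeta(\tau)}(\eta)|^{-1/2}$ is bounded on $L^2$ with norm independent of $\tau$. The two inputs here are: the geometric estimate \eqref{geom_symb}, by which $|p_{\zeta(\tau)}|$ degenerates only like $\tau\,\dist(\cdot,\Sigma^\tau)$ in a $\tau$-neighbourhood of $\Sigma^\tau$ and is $\simeq\tau^2+|\xi|^2$ away from it; and the fact that $\Sigma^\tau$ has \emph{codimension two} in $\R^d$, so that $\tau^{1/2}|p_{\zeta(\tau)}|^{-1/2}$ — which is $\simeq\dist(\cdot,\Sigma^\tau)^{-1/2}$ near $\Sigma^\tau$ — lies in $L^2(Q)$ with a bound \emph{independent of $\tau$} on every unit cube $Q$. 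Tiling $\R^d$ by unit cubes and using the rapid decay of $\wh\phi$ to localise the convolution to neighbouring cubes, one obtains $\|T\wh g\|_{L^2(Q)}\lesssim\sum_{Q'}(1+\dist(Q,Q'))^{-N}\|\wh g\|_{L^2(Q')}$ — here the cancellation between the prefactor $\tau^{1/2}$ and the bound $\tau^{-1/2}$ for the $L^2(Q')$-norm of the weight is precisely what makes things $\tau$-uniform — and a Schur-type summation over the cubes closes the estimate. Estimate (a) follows along the same lines after splitting $u$ according to whether its frequency support lies in the $\tau$-neighbourhood of $\Sigma^\tau$: on the elliptic part (where $|p_{\zeta(\tau)}|\simeq\tau^2+|\xi|^2$) the bound reduces to the Leibniz rule together with the Sobolev embedding $\dot H^1\hookrightarrow L^{2d/(d-2)}$ ($d\ge3$), while the part with frequency near $\Sigma^\tau$ is controlled by a kernel estimate of the same flavour as in (b).

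The step I expect to be the main obstacle is exactly this kernel bound near $\Sigma^\tau$: controlling the singularity of $|p_{\zeta(\tau)}|^{-1/2}$ on $\Sigma^\tau$ while keeping all constants independent of $\tau$. Codimension two is what keeps the local $L^2$-norm of the weight finite and $\tau$-bounded, and the Schwartz decay of $\wh\phi$ — rather than mere compact support — is what lets the off-diagonal, cube-to-cube contributions be summed, the point being that the surfaces $\Sigma^\tau=\tau\Sigma^1$ live at scale $\tau$, so no simple rescaling normalises $\tau$. Granting (a) and (b), the two remaining conclusions are immediate. For a compactly supported $q$, choose $\chi\in C_c^\infty(\R^d)$ with $\chi\equiv1$ on a neighbourhood of $\supp q$; then $\mathcal M_q=\mathcal M_\chi\mathcal M_q\mathcal M_\chi$, and composing
\[
\bdot{X}_{\zeta(\tau)}^{1/2}\ \xrightarrow{\,\mathcal M_\chi\,}\ X_{\zeta(\tau)}^{1/2}\ \xrightarrow{\,\mathcal M_q\,}\ X_{\zeta(\tau)}^{-1/2}\ \xrightarrow{\,\mathcal M_\chi\,}\ \bdot{X}_{\zeta(\tau)}^{-1/2}
\]
by the second estimate, by the hypothesis on $\mathcal M_q$, and by the first estimate in turn gives \eqref{equivalence}; and applying \eqref{x2} to $\phi u$ followed by the second estimate gives the remaining inequality $\|\phi u\|_{L^2}\le C\tau^{-1/2}\|u\|_{\bdot{X}_{\zeta(\tau)}^{1/2}}$.
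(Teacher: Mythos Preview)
The paper does not supply a proof of this lemma; it is quoted from \cite[Lemma 2.2]{HT} and \cite{Haberman15} (note the citation in the lemma header), so there is no in-paper argument to compare against. Your sketch is essentially the Haberman--Tataru argument: the decisive geometric fact is that $\Sigma^\tau$ has codimension two, which makes $\dist(\cdot,\Sigma^\tau)^{-1/2}$ locally square-integrable and, after the $\tau^{1/2}$ normalisation coming from $|p_{\zeta(\tau)}|\simeq\tau\,\dist(\cdot,\Sigma^\tau)$, yields a $\tau$-uniform $L^2$ bound on unit cubes; the rapid decay of $\wh\phi$ then handles the off-diagonal cube-to-cube interaction via a Schur-type summation. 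Your deduction of the two consequences---factoring $\mathcal M_q=\mathcal M_\chi\mathcal M_q\mathcal M_\chi$ with a cutoff $\chi\equiv 1$ on $\supp q$, and chaining \eqref{x2} with the second estimate---is exactly how they follow and is correct.

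One small point on (a): the Sobolev embedding $\dot H^1\hookrightarrow L^{2d/(d-2)}$ is not really needed. For the elliptic piece one simply uses the pointwise bound $|p_{\zeta(\tau)}(\xi)|\lesssim \tau^2+|\xi|^2$ (valid everywhere), so that $\|\phi u_{\mathrm{far}}\|_{\bdot X^{1/2}_{\zeta(\tau)}}^2\lesssim \tau^2\|\phi u_{\mathrm{far}}\|_{L^2}^2+\|\nabla(\phi u_{\mathrm{far}})\|_{L^2}^2$, and both terms are controlled by $\|u_{\mathrm{far}}\|_{\bdot X^{1/2}_{\zeta(\tau)}}$ via the Leibniz rule and the fact that $|p_{\zeta(\tau)}|\simeq\tau^2+|\xi|^2$ on $\supp\wh{u_{\mathrm{far}}}$. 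For the near piece, the cleanest way to run the kernel estimate is to use the polynomial identity $|p_{\zeta(\tau)}(\xi)-p_{\zeta(\tau)}(\eta)|\lesssim |\xi-\eta|^2+\tau|\xi-\eta|$, which transfers the weight $|p(\xi)|^{1/2}$ to $|p(\eta)|^{1/2}$ up to terms absorbed by the decay of $\wh\phi$; this is how \cite{HT} organises it, and with that in hand your outline for (a) goes through.
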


By dilation $\xi \rightarrow \tau\xi$, we see that 
\begin{equation}\label{scaledX}
	\| u \|_{\bdot{X}_{\zeta(\tau)}^{b}} 
	= \tau^{2b-\frac d2}\, \|  u(\tau^{-1}\,\cdot \,) \|_{\bdot{X}_{\zeta(1)}^b}, \ \ 
	\| u \|_{{X}_{\zeta(\tau)}^{b}} 
	=\tau^{2b-\frac d2}\, \|  u(\tau^{-1}\, \cdot \, ) \|_{{X}_{\zeta(1), 1/\tau}^b}. 
\end{equation} 
For any $b\in\R$, $\tau\ge1$, and $u$ with $\mathrm {dist}(\supp \wh u ,\Sigma^\tau)\ge 2^{-7}\tau$, it is easy to check by \eqref{geom_symb} that  
\[	\| u \|_{X_{\zeta(\tau)}^{b}} \simeq \| u \|_{\bdot{X}_{\zeta(\tau)}^{b}}   \] 
uniformly in $\tau\ge1$.  Equivalently, using \eqref{scaledX}, we have $\ \| u \|_{X_{\zeta(1), 1/\tau}^{b}}\simeq \| u \|_{\bdot{X}_{\zeta(1)}^{b}}$ whenever $\mathrm {dist}(\supp \wh u ,\Sigma^1)\ge 2^{-7}$.

\begin{defn}\label{mk}
Let $\kappa \ge 0$. We denote by $m^\kappa$ any (scalar or vector-valued) function which is smooth on $\mathbb R^d\setminus\{0\}$ and satisfy 
\begin{equation}\label{mkmk}
|\partial^\alpha m^\kappa(\xi)|\lesssim 
         \begin{cases} 
          \, |\xi|^{\kappa-|\alpha|}  &\text{if }\ \ |\xi|\ge 1,
            \\[3pt] 
          \,  1  &\text{if }\ \ |\xi|< 1,
         \end{cases}
\end{equation} 
for all multi-indices $\alpha$ with $|\alpha|\le d+1$.  For $\tau> 0$ we also set  $m_\tau^\kappa(\xi):=\tau^{-\kappa} m^\kappa(\tau \xi)$. 
\end{defn}
Particular examples of $m^\kappa(\xi)$ include $(1+|\xi|^2)^\frac\kappa2$ and $\xi$ (when $\kappa=1$), and it is easy to see that 
\begin{equation}\label{prop_symb}
|\partial^\alpha m^\kappa_\tau (\xi)|\lesssim 
         \begin{cases} 
          \,  |\xi|^{\kappa-|\alpha|}  &\text{if } \ \    |\xi|\ge \tau^{-1},
            \\[3pt] 
          \, \tau^{-\kappa+|\alpha|}  &\text{if } \ \  |\xi|< \tau^{-1}.
         \end{cases} 
\end{equation}
 
\begin{lem}\label{equiv}
Let $\tau>0$. The following are equivalent:
\begin{align}
\label{1-sphere}
	|\langle (m^\kappa_\tau (D) f) u,v\rangle| \le \mathcal B \|f\|_{L^{p}(\R^d)} \|u\|_{X_{\zeta(1), 1/\tau}^{1/2}} \| v\|_{X_{\zeta(1),1/\tau}^{1/2}},\\
\label{1-sphere-scaled}
	|\langle (m^\kappa (D) f)  u,  v\rangle| \le  \mathcal B \, \tau^{\frac d {p} -2+\kappa}\, \|f\|_{L^{p}(\R^d)} \|  u\|_{X_{\zeta(\tau)}^{1/2}} \|  v\|_{X_{\zeta(\tau)}^{1/2}}.  
\end{align}
\end{lem}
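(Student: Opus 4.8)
The plan is to show that \eqref{1-sphere} and \eqref{1-sphere-scaled} are equivalent by the change of variables $\xi \mapsto \tau\xi$ (equivalently $x \mapsto \tau^{-1}x$), tracking how each of the three factors in the trilinear pairing scales. First I would recall the scaling identities already available: from \eqref{scaledX} we have $\| u \|_{X_{\zeta(\tau)}^{1/2}} = \tau^{1-\frac d2}\| u(\tau^{-1}\cdot) \|_{X_{\zeta(1),1/\tau}^{1/2}}$, so if we set $u^\tau := u(\tau^{-1}\cdot)$, $v^\tau := v(\tau^{-1}\cdot)$, then $\|u\|_{X_{\zeta(\tau)}^{1/2}} = \tau^{1-\frac d2}\|u^\tau\|_{X_{\zeta(1),1/\tau}^{1/2}}$ and likewise for $v$. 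For the $L^p$ factor, setting $f^\tau := f(\tau^{-1}\cdot)$ we have $\|f^\tau\|_{L^p(\R^d)} = \tau^{d/p}\|f\|_{L^p(\R^d)}$.

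Next I would handle the multiplier. The key point is the relation $m^\kappa_\tau(D)$ versus $m^\kappa(D)$ under dilation: by definition $m^\kappa_\tau(\xi) = \tau^{-\kappa} m^\kappa(\tau\xi)$, and a direct computation with the Fourier transform shows that $\big(m^\kappa_\tau(D) f\big)(\tau^{-1}x) = \tau^{-\kappa}\big(m^\kappa(D) f^\tau\big)(x)$, i.e. conjugating $m^\kappa(D)$ by the dilation $x\mapsto \tau^{-1}x$ produces exactly $\tau^{\kappa} m^\kappa_\tau(D)$ acting on the rescaled function. Then I would substitute into the trilinear form: by the change of variables $x \mapsto \tau^{-1}x$ in the integral defining $\langle (m^\kappa_\tau(D)f)u, v\rangle$, one gets
\[
\langle (m^\kappa_\tau(D)f)\, u,\, v\rangle = \tau^{-d}\,\tau^{-\kappa}\, \langle (m^\kappa(D)f^\tau)\, u^\tau,\, v^\tau\rangle.
\]
Now assume \eqref{1-sphere-scaled} holds; apply it with $f, u, v$ replaced by $f^\tau, u^\tau, v^\tau$, then rewrite every norm on the right-hand side back in terms of $f, u, v$ using the scaling identities above. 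Collecting the powers of $\tau$ — namely $\tau^{-d-\kappa}$ from the form, against $\tau^{\frac dp - 2+\kappa}$ from the hypothesis, $\tau^{d/p}$ from $\|f^\tau\|_{L^p}$, and $(\tau^{1-\frac d2})^{-2} = \tau^{d-2}$ from inverting the two $X^{1/2}_{\zeta(\tau)}$ norms — the total exponent of $\tau$ is $-d-\kappa + (\frac dp-2+\kappa) + \frac dp + (d-2)$, which I expect to cancel to $0$, yielding \eqref{1-sphere}. The reverse implication is symmetric: assume \eqref{1-sphere}, apply it to $f^\tau, u^\tau, v^\tau$, and run the same bookkeeping backwards.

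The only genuine obstacle is the arithmetic of the $\tau$-powers and, slightly more subtly, making sure the multiplier scaling is handled with the correct convention — one must be careful that $m^\kappa_\tau(D)$ is the operator with symbol $\tau^{-\kappa}m^\kappa(\tau\xi)$ rather than $m^\kappa(\tau^{-1}\xi)$, and that the dilation acts on the spatial side consistently with the Fourier-transform normalization fixed in the \emph{Notations}. Everything else is a routine substitution; there is no analytic input needed, since the equivalence is purely a scaling statement and the spaces $X^b_{\zeta(1),1/\tau}$ are defined precisely so that this dilation is an isometry up to the explicit powers recorded in \eqref{scaledX}. I would write the argument in the direction \eqref{1-sphere-scaled}$\Rightarrow$\eqref{1-sphere} in full and remark that the converse follows by reversing the substitution.
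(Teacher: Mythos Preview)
Your overall strategy is correct and is exactly what the paper does: the two estimates are related by the dilation $\xi\mapsto\tau\xi$, and one only needs to track the powers of $\tau$. However, you have the direction of the dilation backwards, and this causes a genuine error.

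Your claimed multiplier identity $\big(m^\kappa_\tau(D)f\big)(\tau^{-1}x)=\tau^{-\kappa}\big(m^\kappa(D)f^\tau\big)(x)$ with $f^\tau=f(\tau^{-1}\cdot)$ is false. A direct computation gives $\widehat{f^\tau}(\xi)=\tau^d\widehat f(\tau\xi)$, so after the change of variables $\eta=\tau\xi$ one finds that $(m^\kappa(D)f^\tau)(x)$ involves the symbol $m^\kappa(\tau^{-1}\eta)$, \emph{not} $m^\kappa(\tau\eta)$; these are not proportional unless $m^\kappa$ is exactly homogeneous (it need not be, e.g.\ $m^\kappa(\xi)=(1+|\xi|^2)^{\kappa/2}$). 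The correct relation uses the opposite dilation: setting $\check f(x)=f(\tau x)$, $\check u(x)=u(\tau x)$, $\check v(x)=v(\tau x)$, one has
\[
\big\langle (m^\kappa_\tau(D)f)\,u,\,v\big\rangle \;=\; \tau^{\,d-\kappa}\,\big\langle (m^\kappa(D)\check f)\,\check u,\,\check v\big\rangle,
\]
and then $\|\check f\|_{L^p}=\tau^{-d/p}\|f\|_{L^p}$ and, by \eqref{scaledX}, $\|\check u\|_{X^{1/2}_{\zeta(\tau)}}=\tau^{1-d/2}\|u\|_{X^{1/2}_{\zeta(1),1/\tau}}$. With these substitutions the powers of $\tau$ do cancel. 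In your bookkeeping they do not: your own exponent
\[
-d-\kappa+\Big(\tfrac dp-2+\kappa\Big)+\tfrac dp+(d-2)=\tfrac{2d}{p}-4
\]
is nonzero unless $p=d/2$, so the ``expect to cancel to $0$'' step fails. The paper carries out essentially this computation (in the direction \eqref{1-sphere}$\Rightarrow$\eqref{1-sphere-scaled}) using its notation $f_\tau(x)=\tau^{-d}f(\tau^{-1}x)$ and obtains $\langle (m^\kappa(D)f)u,v\rangle=\tau^{\kappa+2d}\langle (m^\kappa_\tau(D)f_\tau)u_\tau,v_\tau\rangle$; the fix is simply to reverse the direction of your dilation.
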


In particular, if  we take $\kappa=1$ and $m^1(D)=\nabla$, Lemma \ref{equiv} shows that the condition $p \ge d$ is necessary for \eqref{1-sphere-scaled} to hold uniformly in $\tau\ge1$. 

\begin{proof}[Proof of Lemma \ref{equiv}]	
First, we show \eqref{1-sphere-scaled} assuming \eqref{1-sphere}.  By Plancherel's theorem  and dilation $\xi \rightarrow \tau\xi$  we have    
\begin{align*}
\langle (m^\kappa (D) f)  u,  v \rangle 
	&=\int  \mathcal F( m^\kappa (D) f) \mathcal F^{-1}(u\overline v) d\xi
			= \frac{1}{(2\pi)^{2d}} \int m^\kappa (\xi) \widehat{f}(\xi) \int  \wh u(\eta-\xi) \overline{\widehat v}(\eta)d\eta d\xi  \\
	& = \frac{\tau^{\kappa+2d}}{(2\pi)^{2d}}   \int m^\kappa_\tau (\xi)  \wh {f_\tau}(\xi) \int \wh {u_\tau} (\eta - \xi) \overline{\wh{v_\tau}}(\eta) d\eta d\xi 
		= \tau^{\kappa+2d}   \langle (m^\kappa_\tau (D) f_\tau) u_\tau,  v_\tau\rangle. 
\end{align*}
Thus, from the assumption \eqref{1-sphere} 
 it follows that  
\begin{align*}
	 |\langle (m^\kappa (D) f)  u, v\rangle| 
		& \le \mathcal B \tau^{\kappa+2d}  \|f_\tau\|_{L^p(\R^d)}  \|u_\tau \|_{X_{\zeta(1), 1/\tau}^{1/2}} \|v_\tau \|_{X_{\zeta(1),1/\tau}^{1/2}}.
		\end{align*}
This gives the  bound \eqref{1-sphere-scaled} via \eqref{scaledX}. The same argument shows  the reverse implication from \eqref{1-sphere-scaled} to 
\eqref{1-sphere}. We omit the details. 
\end{proof}

\subsection{Linear and bilinear restriction estimates} The following is (the dual form of) the Stein-Tomas restriction theorem. The same estimate holds for any compact smooth surfaces with nonvanishing Gaussian curvature. 

\begin{thm}[\cite{Tom, St-beijing}] \label{stein-tomas} 
Let $d\ge 3$, and let $\mathbb S^{d-2}$ be the unit sphere in  $\mathbb R^{d-1}$ with the surface measure $d\sigma$.  Then  
\[  \bigg\|\int_{\mathbb S^{d-2}} e^{ix\cdot \omega} g(\omega) d\sigma(\omega)\bigg\|_{L^\frac{2d}{d-2}(\mathbb R^{d-1})}\lesssim \|g\|_{L^2(\mathbb S^{d-2})}.\] 
\end{thm}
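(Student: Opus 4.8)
The plan is to run the $TT^\ast$ argument. Writing $n=d-1$, the claimed exponent is $q=\tfrac{2d}{d-2}=\tfrac{2(n+1)}{n-1}$, the Stein--Tomas exponent for the sphere in $\mathbb R^n$, with conjugate $q'=\tfrac{2d}{d+2}$. The operator in the statement, $\mathcal E g(x)=\int_{\mathbb S^{n-1}}e^{ix\cdot\omega}g(\omega)\,d\sigma(\omega)$, is the adjoint of the restriction map $\mathcal R:f\mapsto\widehat f\,\big|_{\mathbb S^{n-1}}$; since $L^2(d\sigma)$ is a Hilbert space, the asserted bound $\|\mathcal Eg\|_{L^q}\lesssim\|g\|_{L^2(d\sigma)}$, the dual restriction estimate $\|\mathcal Rf\|_{L^2(d\sigma)}\lesssim\|f\|_{L^{q'}(\mathbb R^n)}$, and the operator bound $\|\mathcal R^\ast\mathcal R\|_{L^{q'}\to L^q}\lesssim1$ are all equivalent, and $\mathcal R^\ast\mathcal R f=f\ast\widehat{d\sigma}$ because the sphere is origin-symmetric. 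So it suffices to prove
\[ \|f\ast\widehat{d\sigma}\|_{L^q(\mathbb R^n)}\lesssim\|f\|_{L^{q'}(\mathbb R^n)}. \]
The only substantial input is the stationary-phase bound $|\widehat{d\sigma}(\xi)|\lesssim(1+|\xi|)^{-(n-1)/2}$, which uses the nonvanishing Gaussian curvature of the sphere; this and the finiteness of $d\sigma$ are the sole properties used below, so the same argument applies to any smooth compact hypersurface of nonvanishing curvature.

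To locate the difficulty I would first carry out a Littlewood--Paley decomposition $\widehat{d\sigma}=\sum_{j\ge0}K_j$ with $K_j$ supported in $\{|\xi|\simeq2^j\}$ for $j\ge1$. The decay bound gives $\|K_j\|_{L^\infty}\lesssim2^{-j(n-1)/2}$, so convolution with $K_j$ maps $L^1\to L^\infty$ with that norm; and $\widehat{K_j}$ is supported in an $O(2^{-j})$-neighborhood of the sphere with $\|\widehat{K_j}\|_{L^\infty}\lesssim2^j$ (as $d\sigma$ has bounded density on the sphere), so convolution with $K_j$ maps $L^2\to L^2$ with norm $\lesssim2^j$. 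Riesz--Thorin interpolation between these two bounds yields, for every $q\ge2$,
\[ \|f\ast K_j\|_{L^q(\mathbb R^n)}\lesssim2^{j\left(\frac{n+1}{q}-\frac{n-1}{2}\right)}\|f\|_{L^{q'}(\mathbb R^n)}. \]
For $q>\tfrac{2d}{d-2}$ the exponent is strictly negative and summing the geometric series in $j$ finishes the proof; but for the critical $q=\tfrac{2d}{d-2}$ it is exactly $0$, so this crude summation breaks down. Recovering the endpoint from these per-block estimates is the crux of the theorem and the step I expect to be the genuine obstacle.

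To cross the endpoint I would invoke Stein's theorem on interpolation of analytic families of operators. One takes the entire family of compactly supported distributions $\nu_z=(1-|\xi|^2)_+^{z}/\Gamma(z+1)$ on $\mathbb R^n$ (the $\Gamma$-factor cancels the poles of $(1-|\xi|^2)_+^{z}$, so $z\mapsto\nu_z$ is entire), which reduces to a constant multiple of $d\sigma$ at $z=-1$, and sets $T_zf=e^{z^2}\,f\ast\widehat{\nu_z}$. On the line $\mathrm{Re}\,z=0$ the multiplier $\nu_z$ is a bounded function, $\|\nu_{it}\|_{L^\infty}\lesssim1/|\Gamma(1+it)|$, with the $e^{z^2}$ factor beating the exponential growth of $1/\Gamma$, so $T_z$ is uniformly bounded $L^2\to L^2$; on the line $\mathrm{Re}\,z=-\tfrac{n+1}{2}$ the kernel $\widehat{\nu_z}$ is, via the Bessel-function formula for $\widehat{(1-|\xi|^2)_+^{z}}$ and the $r^{-1/2}$ decay of the resulting $J_{-1/2+it}(r)$, a bounded function, so $T_z$ is uniformly bounded $L^1\to L^\infty$. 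Since $z=-1$ lies strictly between these two lines --- here $n\ge2$, i.e.\ $d\ge3$, enters --- Stein's theorem applied at $z=-1$ gives the $L^{q'}\to L^q$ bound with $1/q$ the corresponding convex combination of $1/2$ and $0$, which a short computation identifies with $q=\tfrac{2(n+1)}{n-1}=\tfrac{2d}{d-2}$; and $T_{-1}$ is a constant times $f\mapsto f\ast\widehat{d\sigma}$, completing the proof. Apart from this analytic-interpolation step, everything is stationary phase, Plancherel, and Riesz--Thorin.
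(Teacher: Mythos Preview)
The paper does not prove this theorem; it is stated with a citation to Tomas and Stein and then used as a black box (immediately afterwards to derive Corollary~\ref{d-nbd}). Your proposal is the standard classical proof: the $TT^\ast$ reduction to convolution with $\widehat{d\sigma}$, Tomas's dyadic decomposition for the open range, and Stein's analytic interpolation with the Riesz family $\nu_z=(1-|\xi|^2)_+^{z}/\Gamma(z+1)$ to recover the endpoint. The sketch is correct in outline and matches the arguments in the cited references \cite{Tom, St-beijing}; there is nothing to compare against in the present paper since no proof is given here.

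One small presentational point: in the $L^2\!\to\! L^2$ bound along $\mathrm{Re}\,z=0$ you note that $\|\nu_{it}\|_{L^\infty}\lesssim 1/|\Gamma(1+it)|$ and rely on the damping factor $e^{z^2}$ to tame the exponential growth of $1/|\Gamma(1+it)|$; this is correct (by Stirling, $1/|\Gamma(1+it)|\sim e^{\pi|t|/2}$ while $|e^{(it)^2}|=e^{-t^2}$), but it would be worth stating the needed growth bounds explicitly since Stein's interpolation theorem requires admissible growth of the operator norms along the two boundary lines.
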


By the standard argument and Plancherel's theorem, Theorem \ref{stein-tomas} implies the following (see \cite[Corollary 3.2]{Haberman15}), which played a key role in proving the result in \cite{Haberman15}.  

\begin{cor}\label{d-nbd} Let $d\ge 3$ and $0<\delta \ll 1$. If $\supp \widehat f \subset \mathbb S^{d-2} + O(\delta)$, then 
\Be\label{L2foli}   
\|f\|_{L^\frac{2d}{d-2}(\mathbb R^{d-1})}\lesssim \delta^\frac12\|f\|_{L^2(\R^{d-1})}.
\Ee
\end{cor}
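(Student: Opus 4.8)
The plan is to deduce the neighborhood estimate \eqref{L2foli} from the Stein–Tomas theorem (Theorem \ref{stein-tomas}) by a slicing argument in the radial direction, writing the $\delta$-neighborhood of $\mathbb S^{d-2}$ as a union of $O(\delta^{-1})$ dilated spheres and applying Theorem \ref{stein-tomas} on each. First I would normalize: after a harmless rescaling it suffices to treat $f$ with $\supp\widehat f\subset\{\xi\in\R^{d-1}: \,||\xi|-1|<\delta\}$. Using polar coordinates $\xi=r\omega$ with $r\in(1-\delta,1+\delta)$ and $\omega\in\mathbb S^{d-2}$, I would write
\[
f(x)=\frac{1}{(2\pi)^{d-1}}\int_{1-\delta}^{1+\delta} r^{d-2}\!\int_{\mathbb S^{d-2}} e^{ix\cdot r\omega}\,\widehat f(r\omega)\,d\sigma(\omega)\,dr
=\int_{1-\delta}^{1+\delta} F_r(x)\,dr,
\]
where $F_r(x)=\frac{r^{d-2}}{(2\pi)^{d-1}}\int_{\mathbb S^{d-2}} e^{ix\cdot r\omega}\,\widehat f(r\omega)\,d\sigma(\omega)$ is (a constant times) the extension operator applied to the function $\omega\mapsto\widehat f(r\omega)$ on a sphere of radius $r$.

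Next I would bound $\|f\|_{L^{2d/(d-2)}}$ using Minkowski's integral inequality: $\|f\|_{L^{2d/(d-2)}(\R^{d-1})}\le\int_{1-\delta}^{1+\delta}\|F_r\|_{L^{2d/(d-2)}(\R^{d-1})}\,dr$. By Theorem \ref{stein-tomas} applied on the sphere of radius $r$ (the constant is uniform for $r\simeq 1$, since dilating the sphere by a bounded factor only changes the constant boundedly), $\|F_r\|_{L^{2d/(d-2)}(\R^{d-1})}\lesssim\|\widehat f(r\,\cdot)\|_{L^2(\mathbb S^{d-2})}$. Then Cauchy–Schwarz in $r$ over the interval of length $2\delta$ gives
\[
\|f\|_{L^{2d/(d-2)}(\R^{d-1})}\lesssim \delta^{1/2}\Big(\int_{1-\delta}^{1+\delta}\|\widehat f(r\,\cdot)\|_{L^2(\mathbb S^{d-2})}^2\,dr\Big)^{1/2}
\simeq \delta^{1/2}\|\widehat f\|_{L^2(\R^{d-1})}=c\,\delta^{1/2}\|f\|_{L^2(\R^{d-1})},
\]
where the last equivalence is just polar coordinates (using $r^{d-2}\simeq 1$) and the final equality is Plancherel. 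This yields \eqref{L2foli}.

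The only genuinely delicate point is the uniformity of the Stein–Tomas constant across the family of spheres $\{|\xi|=r: r\in(1-\delta,1+\delta)\}$; this is routine since all these surfaces are smooth, compact, have Gaussian curvature bounded above and below uniformly, and a scaling $\xi\mapsto r\xi$ with $r$ in a fixed compact subset of $(0,\infty)$ transfers the estimate with a comparable constant — so I would simply invoke the remark, already stated after Theorem \ref{stein-tomas}, that the same estimate holds for any compact smooth hypersurface with nonvanishing Gaussian curvature (with uniform constants under uniform curvature bounds). I expect no other obstacle; the argument is the standard ``foliation by spheres'' reduction and the $\delta^{1/2}$ gain is exactly the Cauchy–Schwarz loss from integrating over an interval of length $\delta$.
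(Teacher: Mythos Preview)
Your proof is correct and is exactly the ``standard argument and Plancherel's theorem'' that the paper alludes to without writing out (the paper simply cites \cite[Corollary 3.2]{Haberman15} in lieu of a proof). The radial foliation, Minkowski, Stein--Tomas on each sphere $\{|\xi|=r\}$, Cauchy--Schwarz over the $r$-interval of length $O(\delta)$, and Plancherel together constitute precisely the intended derivation.
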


Conversely, by a limiting argument it is easy to see that Corollary \ref{d-nbd}  implies  Theorem \ref{stein-tomas}.   

\subsubsection*{Bilinear restriction estimate for the elliptic surfaces}  
For $\varepsilon>0$ and $N\in \mathbb N$ we say $\psi:[-1,1]^{d-2}\to \mathbb R$ is of elliptic type $(\varepsilon, N)$ if  $\psi$ satisfies 
\begin{itemize}
	\item[$(i)$] $\psi(0)=0$ and $\nabla\psi(0)=0$;
	\item[$(ii)$] if $w(\xi')=\psi(\xi')-|\xi'|^2/2 $, then 
\[ \sup_{\xi'\in [-1,1]^{d-2}} \max_{0\le |\alpha|\le N} |\partial^\alpha w (\xi') | \le \varepsilon .\]
\end{itemize}

\begin{defn} \label{elliptic} We say that $S$ is an \emph{elliptic surface of type ($\varepsilon,N)$} if  $S$ is given by $S = \{ (\xi', \xi_{d-1})\in \mathbb R^{d-2}\times \mathbb R : \xi_{d-1} = \psi(\xi'), \, |\xi'|\le 1/2 \}$, where $\psi$ is of elliptic type ($\varepsilon,N)$.  
\end{defn}
Most typical examples are the paraboloid and the surface which is given by parabolic rescaling of a small subset of the sphere.  In general, any convex hypersurface with nonvanishing Gaussian curvature can be rescaled (after being decomposed into sufficiently small pieces and then translated and rotated) so that the resulting surfaces are of  elliptic type ($\varepsilon,N)$. The following sharp bilinear restriction estimate for elliptic surfaces is due to Tao \cite[Theorem 1.1]{Tao1}. 

\begin{thm}[\cite{Tao1}]\label{bilinearthm}
Let $d\ge 3$ and let $q > \frac{2(d+1)}{d-1}$. There are $\varepsilon>0$ and $N\in \mathbb N$ such that the estimate \eqref{adj-bil} holds (with $C$ independent of $S$, $S_1,$ and $S_2$) whenever  $S\subset \mathbb R^{d-1}$  is of type ($\varepsilon,N)$ and  $S_1$, $S_2\subset S$  are hypersurfaces with  $\dist(S_1, S_2) \simeq 1$.
\end{thm}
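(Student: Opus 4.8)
The estimate is a theorem of Tao~\cite{Tao1} (refining Wolff's bilinear cone estimate~\cite{Wolff}), and the natural route is his induction-on-scales scheme; the plan is first to reduce Theorem~\ref{bilinearthm} to the bilinear restriction estimate for small perturbations of the paraboloid, and then to carry out that scheme. An elliptic surface of type $(\varepsilon,N)$ is the graph $\xi_{d-1}=\psi(\xi')$ over $|\xi'|\le 1/2$ with $\psi-|\xi'|^2/2$ of size $\le\varepsilon$ in $C^N$; for $\varepsilon$ small and $N$ large, such a surface becomes, after an affine change of variables that preserves the class and the transversality, a small smooth perturbation of the standard paraboloid, while $\dist(S_1,S_2)\simeq1$ turns into the transversality that the unit normals of $S_1$ and of $S_2$ are separated by an angle $\simeq1$. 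Writing $E_jh=\widehat{h\,d\mu_j}$, the claimed inequality is a bilinear extension estimate, and by the $\varepsilon$-removal lemma of Tao--Vargas--Vega~\cite{TVV} it is enough to prove, for every $\epsilon>0$, the \emph{local} estimate
\Be\label{localbil}
 \| E_1 f\, E_2 g\|_{L^{q_0/2}(B(0,R))}\lesssim_\epsilon R^\epsilon\,\|f\|_{L^2}\|g\|_{L^2},\qquad q_0=\frac{2(d+1)}{d-1},
\Ee
uniformly in $R\ge1$, from which the open range $q>q_0$ follows; the constant in \eqref{localbil} depends only on $(\varepsilon,N,q_0,d)$, which gives the uniformity over $S,S_1,S_2$ claimed in the theorem.

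Next I would introduce wave packets and the induction. Fix a Schwartz extension of $f$ and write $f=\sum_{T\in\mathbb T_1}f_T$, where $T$ runs over tubes of length $R$ and radius $R^{1/2}$ with long axis in a direction normal to $S_1$ at an $R^{-1/2}$-cap, each $E_1f_T$ is concentrated on $T$, and $\{E_1f_T\}_T$ is $L^2$-orthogonal; decompose $g=\sum_{T'\in\mathbb T_2}g_{T'}$ the same way over $S_2$. Let $\mathcal Q(R)$ denote the best constant in \eqref{localbil} at scale $R$; the goal is $\mathcal Q(R)\lesssim_\epsilon R^\epsilon$, proved by inducting on $R$: assuming $\mathcal Q(R')\lesssim_\epsilon (R')^\epsilon$ for all $R'\le R^{1-\delta}$, one deduces the bound at scale $R$, and finally lets $\delta\to0$.

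The core is the inductive step, via a two-ends reduction and a transversality-driven incidence count. Pigeonholing the wave-packet amplitudes, and a dyadic ``multiplicity'' parameter, to comparable sizes, and then pigeonholing the radius at which $\|E_1f\,E_2g\|_{L^{q_0/2}(B(0,R))}$ concentrates, one reduces --- using the inductive hypothesis at smaller scales whenever concentration does occur --- to the two-ends case in which the packets of $f$ (and of $g$) do not concentrate in any ball of radius $\ll R$. Under two ends together with transversality, the decisive geometric fact is that a tube $T\in\mathbb T_1$ meets a tube $T'\in\mathbb T_2$ only inside a ball of radius $\simeq R^{1/2}$. Tiling $B(0,R)$ by $R^{1/2}$-balls $Q$, only the transverse pairs $(T,T')$ meeting $Q$ contribute on $Q$; since the $R^{-1/2}$-caps are separated, C\'ordoba's $L^2$ orthogonality holds on the dual ball $Q$, so $\|E_1f\,E_2g\|_{L^2(Q)}$ is controlled by a weighted count of incident transverse pairs over $Q$. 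Summing these local $L^2$ bounds, interpolating with the $L^\infty$ bound from the inductive hypothesis at scale $R^{1/2}$, and invoking the incidence estimate forced by transversality (every point of $B(0,R)$ lies in boundedly many essentially distinct transverse pairs after the multiplicity pigeonholing) closes the induction with a loss $R^{C\delta}$, harmless as $\delta\to0$.

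I expect the main difficulty to be precisely this incidence/orthogonality step: extracting from transversality a quantitative gain strong enough to beat the induction, while every estimate remains robust under the $C^N$-perturbation --- no exact algebraic structure of the surface may be used, which is why the hypothesis is stated through elliptic type $(\varepsilon,N)$ and why Tao's theorem applies verbatim to the parabolically rescaled caps of $\Sigma^1$ (or of the sphere) needed later in the paper. For $d=3$ the whole scheme degenerates to the elementary $L^2$ bilinear estimate (C\'ordoba, Klainerman--Machedon), which already yields the sharp range $q>4$; the induction on scales is needed only for $d\ge4$.
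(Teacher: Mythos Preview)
The paper does not supply its own proof of this theorem; it is quoted from Tao~\cite{Tao1} (Theorem~1.1 there) and used as a black box. Your outline is a faithful high-level sketch of Tao's induction-on-scales argument (wave packets, $\epsilon$-removal via~\cite{TVV}, the two-ends reduction, and the transversality-driven incidence/orthogonality step), so there is nothing in the present paper to compare it against.
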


When $d=3$, the estimate \eqref{adj-bil} is  true with $q=4$. This is an easy consequence of Plancherel's theorem.  Unlike the Stein-Tomas theorem, the bilinear restriction estimate for a surface with nonvanishing Gaussian curvature exhibits different natures depending whether the surface is elliptic or not. If the surface with nonvanishing Gaussian curvature is not elliptic, the separation condition $\dist(S_1,S_2) \simeq 1$ is not sufficient in order for \eqref{adj-bil} to hold for $q< \frac{2d}{d-2}$ (for example, see \cite{Lee1} for more details).
 
\begin{rmk}\label{rmk-end}
The constant $C$ in Theorem \ref{bilinearthm} is clearly stable under small smooth perturbation of $S$. It is known that the estimate \eqref{adj-bil} fails if $q<\frac{2(d+1)}{d-1}$ but the endpoint case $q=\frac{2(d+1)}{d-1}$ is still open when $d\ge 4$.   In this case, under the assumption of Theorem \ref{bilinearthm}  the following local estimate 
\begin{equation}\label{bilinear-local}
\| \widehat{f d\mu_1} \, \widehat{ g d\mu_2} \|_{L^{\frac{d+1}{d-1}}(B_{d-1}(0,R))} \le  CR^\epsilon \| f\|_{L^2(S_1, d\mu_1)} \| g \|_{L^2(S_2, d\mu_2)} 
\end{equation}
holds for any $\epsilon >0$ and $R\ge1$ (see \cite{Tao1}) provided that $S$ is an elliptic surface of type ($\varepsilon,N)$ with small enough $\varepsilon>0$ and large enough $N$. 
\end{rmk}

Making use of the bilinear estimate \eqref{adj-bil} from Theorem \ref{bilinearthm} and interpolation,  we obtain the following. 

\begin{cor}\label{bilinearco}  
Let $d$, $S$, $S_1$, and $S_2$ be as in Theorem \ref{bilinearthm} and let $0<\delta_1,\,  \delta_2\ll 1$. Suppose that $\supp \widehat{u_i}\subset S_i+ O(\delta_i) $, $i=1,2$.  Then, for any  $\epsilon>0$, there exists a constant $C= C(\varepsilon,N,\epsilon ,d)$ such that 
\begin{equation}\label{bibi}
\|u_1 u_2 \|_{L^{\frac{d+1}{d-1}}(\mathbb R^{d-1})} \le C  (\delta_1\delta_2)^{\frac12-\epsilon} \| u_1\|_{ L^2(\R^{d-1})} \| u_2\|_{ L^2(\R^{d-1})}.
\end{equation}
\end{cor}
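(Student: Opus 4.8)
\medskip

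The plan is to deduce \eqref{bibi} from the sharp bilinear restriction estimate \eqref{adj-bil} of Theorem \ref{bilinearthm} by foliating the $\delta_i$-neighborhoods $S_i + O(\delta_i)$ into parallel translates of $S_i$ and applying Minkowski's inequality, exactly in the spirit of how Corollary \ref{d-nbd} is derived from Theorem \ref{stein-tomas}. First, after a rotation I may assume that $S_i$ is a graph $\xi_{d-1} = \psi_i(\xi')$ over a subset of $\mathbb R^{d-2}$, so that the $\delta_i$-neighborhood $S_i + O(\delta_i)$ is comparable to the slab $\{(\xi', \psi_i(\xi') + t) : |t| \lesssim \delta_i\}$; writing the foliation parameter as $t$, I decompose $\widehat{u_i} = \int_{|t|\lesssim \delta_i} \widehat{u_i^t}\, dt$ where $\widehat{u_i^t}$ is the restriction of $\widehat{u_i}$ to the translated surface $S_i + t e_{d-1}$, viewed as a density $f_i^t\, d\mu_{i}$ against the surface measure (with a harmless Jacobian factor bounded above and below). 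Since translation by $t e_{d-1}$ does not affect the transversality hypothesis $\dist(S_1, S_2) \simeq 1$ and the constant $C$ in Theorem \ref{bilinearthm} is uniform over such surfaces (and stable under the small perturbations coming from the translates, by Remark \ref{rmk-end}), the estimate \eqref{adj-bil} applies with $q = \frac{2(d+1)}{d-1}$, i.e. with $q/2 = \frac{d+1}{d-1}$, to each pair $(u_1^{t_1}, u_2^{t_2})$.

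Next, I would expand the product and apply Minkowski's integral inequality twice:
\[
\| u_1 u_2 \|_{L^{\frac{d+1}{d-1}}}
= \Big\| \int\!\!\int u_1^{t_1} u_2^{t_2}\, dt_1\, dt_2 \Big\|_{L^{\frac{d+1}{d-1}}}
\le \int_{|t_1|\lesssim\delta_1}\!\int_{|t_2|\lesssim\delta_2} \| u_1^{t_1} u_2^{t_2} \|_{L^{\frac{d+1}{d-1}}}\, dt_1\, dt_2,
\]
which is legitimate since $\frac{d+1}{d-1}\ge 1$. Applying \eqref{adj-bil} to each integrand gives the bound $\lesssim \int\!\int \|f_1^{t_1}\|_{L^2(d\mu_1)} \|f_2^{t_2}\|_{L^2(d\mu_2)}\, dt_1\, dt_2$, and by Cauchy--Schwarz in $t_1$ and in $t_2$ this is dominated by $(\delta_1\delta_2)^{1/2} \big(\int \|f_1^{t_1}\|_{L^2(d\mu_1)}^2 dt_1\big)^{1/2}\big(\int \|f_2^{t_2}\|_{L^2(d\mu_2)}^2 dt_2\big)^{1/2}$. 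By Plancherel's theorem and the coarea formula, $\int \|f_i^{t}\|_{L^2(d\mu_i)}^2\, dt \simeq \|\widehat{u_i}\|_{L^2(S_i + O(\delta_i))}^2 \simeq \|u_i\|_{L^2(\mathbb R^{d-1})}^2$. This yields \eqref{bibi} with the sharp exponent $\frac12$ and no $\epsilon$-loss; the $\epsilon$ is then available for free, and in fact it can be used instead to run the argument through the local estimate \eqref{bilinear-local}: if one only has \eqref{bilinear-local} one localizes to balls $B_{d-1}(0,R)$ with $R \simeq \min(\delta_1,\delta_2)^{-1}$ (the natural scale determined by the thickness of the slabs via the uncertainty principle, after which $u_i$ is essentially constant on dual scales), picking up an $R^\epsilon \simeq \delta^{-\epsilon}$ factor, and then sums over a bounded-overlap cover of $\mathbb R^{d-1}$ by such balls using almost-orthogonality of the pieces; this is the route that makes the $(\delta_1\delta_2)^{\frac12-\epsilon}$ form natural, and it is why the statement is phrased with the $\epsilon$.

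\medskip

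The main obstacle I expect is the bookkeeping in the foliation step: one must check that the translated surfaces $S_i + t e_{d-1}$ remain elliptic of type $(\varepsilon', N)$ with $\varepsilon'$ still small enough for Theorem \ref{bilinearthm} to apply uniformly in $t$, that the transversality $\dist(S_1, S_2)\simeq 1$ is preserved under these translations (both are immediate since vertical translation does not change the second fundamental form and moves points by at most $O(\delta_i) \ll 1$), and, more delicately, that the Jacobian relating the slab $S_i + O(\delta_i)$ to the product of $S_i$ with the interval in $t$ is bounded above and below uniformly — which holds because $S_i$ is a bounded-curvature graph over $|\xi'|\le 1/2$, so its normal direction stays within a fixed small cone of $e_{d-1}$. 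If instead one must use the local version \eqref{bilinear-local}, the extra work is the standard almost-orthogonality argument for summing the localized bilinear estimates over a lattice of balls of radius $R \simeq \delta^{-1}$, using that the Fourier supports lie in slabs of thickness $\delta$ so that the pieces $(u_1 u_2)|_{B_{d-1}(x_0,R)}$ have essentially disjoint (or finitely overlapping) Fourier supports after a further partition of unity; this is routine but is the only genuinely technical point.
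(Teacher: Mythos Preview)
Your foliation argument is exactly the right idea, but there is a genuine gap: you apply Theorem~\ref{bilinearthm} at the endpoint $q=\tfrac{2(d+1)}{d-1}$, i.e.\ $q/2=\tfrac{d+1}{d-1}$, and this estimate is \emph{not available}. Theorem~\ref{bilinearthm} requires $q>\tfrac{2(d+1)}{d-1}$, and Remark~\ref{rmk-end} says explicitly that the endpoint is open for $d\ge4$. So your claim that the argument ``yields \eqref{bibi} with the sharp exponent $\tfrac12$ and no $\epsilon$-loss'' would, if correct, resolve an open problem; in fact it would be equivalent to the endpoint bilinear restriction estimate, since the converse implication (from \eqref{bibi} with $\epsilon=0$ to the endpoint) follows by a limiting argument.

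The fix is simple and is what the paper does: run your foliation/Minkowski/Cauchy--Schwarz argument at a non-endpoint exponent $p>\tfrac{d+1}{d-1}$, where Theorem~\ref{bilinearthm} \emph{does} apply, to obtain $\|u_1u_2\|_{L^p}\lesssim(\delta_1\delta_2)^{1/2}\|u_1\|_2\|u_2\|_2$. Then interpolate with the trivial bound $\|u_1u_2\|_{L^1}\le\|u_1\|_2\|u_2\|_2$ and let $p\to\tfrac{d+1}{d-1}^+$; the $\epsilon$-loss in the exponent of $\delta_1\delta_2$ comes precisely from this interpolation step. Your alternative route via the local estimate \eqref{bilinear-local} is plausible in spirit, but the ``almost-orthogonality'' you describe for summing over balls is not correct as stated: spatially localizing $u_1u_2$ to disjoint balls does not produce pieces with disjoint Fourier supports. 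One can make a localization argument work using that $u_i$ is essentially constant at scale $\delta_i^{-1}$ (via convolution with a rapidly decaying kernel adapted to the slab), but this is more involved than the interpolation argument and unnecessary here.
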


If $\epsilon=0$, then \eqref{bibi} is equivalent to the endpoint bilinear restriction estimate (\eqref{adj-bil} with $q=\frac{2(d+1)}{d-1}$). 

\begin{proof}
By Theorem \ref{bilinearthm}   we have (see \cite[Proof of Lemma 2.4]{Lee} for the details), for $p>\frac{d+1}{d-1}$, 
\begin{equation*}
\|u_1 u_2 \|_{L^{p}(\mathbb R^{d-1})} \le C  (\delta_1\delta_2)^{\frac12} \| u_1\|_{ L^2(\R^{d-1})} \| u_2\|_{ L^2(\R^{d-1})}.
\end{equation*}
Interpolating this with the trivial estimate $\|u_1 u_2 \|_{L^1}\le \|u_1\|_{L^2} \|u_2\|_{L^2}$ and taking $p$ arbitrarily close to $\frac{d+1}{d-1}$ give \eqref{bibi} for any $\epsilon>0$. 
\end{proof}

\subsection{Frequency localized estimates} \label{loc_est}
We  use Corollary \ref{d-nbd} and Corollary \ref{bilinearco} to show additional estimates which we need  for proving our main estimates  in Section \ref{sec_tech}.   We begin with introducing additional notations.
 
\subsubsection*{Linear estimates} Recalling \eqref{sigmatau}, for $\tau>0$, we define  $\Sigma_\mu^\tau$ and $\Sigma_{\le \mu}^\tau$ by 
\begin{gather*}
\Sigma_\mu^\tau   = \{ \xi\in \mathbb R^{d} : \mu/2 < \dist(\xi, \Sigma^\tau) \le  \mu \}, \quad
\Sigma_{\le \mu}^\tau   = \{ \xi\in \mathbb R^{d} :  \dist(\xi, \Sigma^\tau) \le \mu \} .
\end{gather*}
By $Q_\mu^\tau$ and $Q_{\le\mu}^\tau$ we denote  the multiplier operators given by 
\[ Q_{\mu}^\tau f = \rchi_{\Si^\tau_\mu }(D) f,	\quad  Q_{\le \mu}^\tau f  = \rchi_{\Sigma_{\le\mu}^\tau}(D) f. \]
For an orthonormal basis $\{ e_i \}_{i=1}^d$ for $\mathbb R^d$, the $i$-th coordinate  $\xi_i$ of $\xi$ with respect to $\{e_i\}$ is given by $\xi_i = \xi \cdot e_i$. We write 
\[ \xi = (\xi_1, \widetilde\xi\,) = (\xi_1,\xi_2,\bar\xi\,), \quad  \widetilde \xi\in \mathbb R^{d-1}, \quad \bar \xi\in \mathbb R^{d-2}. \]
For  $0<\de<\tau$ and $h>0$, we also set  
\[ \Sigma_{\le \delta}^{\tau, h} =\{ \xi\in \mathbb R^{d} : |\xi_1| \le h, \ \tau -  \delta \le |\widetilde \xi - \tau \widetilde {e_2}| \le \tau + \delta   \}, \]
and let $Q_{\le \delta}^{\tau, h} $ be  the multiplier operator given by 
\[ Q_{\le \delta}^{\tau, h}  f = \rchi_{\Sigma_{\le \delta}^{\tau, h} }(D) f. \]

\begin{lem}\label{lin} Let $d\ge3$, $1\le\tau \le 2$ and $0<\delta,\, h\le 1$. For $2 \le p \le 2d/(d-2)$
there exists a constant $C>0$, independent of $\tau$, $\delta$ and $h$, such that
\begin{equation}\label{linear}
	\| Q_{\le \delta}^{\tau, h}   u \|_{L^p(\R^d)} \le C \, h^{\frac12-\frac1p} \, \delta^{\frac d2(\frac12 - \frac 1p)} \| u\|_{L^2(\R^d)}.
\end{equation}
\end{lem}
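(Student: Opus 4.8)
The goal is to estimate $\|Q^{\tau,h}_{\le\delta}u\|_{L^p}$ in terms of $\|u\|_{L^2}$, where $\widehat{Q^{\tau,h}_{\le\delta}u}$ is supported on the set $\Sigma^{\tau,h}_{\le\delta}=\{|\xi_1|\le h,\ \tau-\delta\le|\widetilde\xi-\tau\widetilde{e_2}|\le\tau+\delta\}$, i.e.\ a slab of thickness $\sim h$ in the $\xi_1$-direction times a $\delta$-annular neighborhood (in $\R^{d-1}$) of the sphere $\{|\widetilde\xi-\tau\widetilde{e_2}|=\tau\}$. Since $1\le\tau\le2$, this sphere is, up to a harmless dilation and translation, a fixed smooth compact hypersurface in $\R^{d-1}$ with nonvanishing Gaussian curvature, so Corollary \ref{d-nbd} applies to functions on $\R^{d-1}$ whose Fourier support lies in its $\delta$-neighborhood. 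The two endpoints $p=2$ (trivial, by Plancherel, and the right side is $\gtrsim\|u\|_{L^2}$ since $h^0\delta^0=1$) and $p=2d/(d-2)$ will be proved directly, and the intermediate range follows by interpolation (e.g.\ Riesz--Thorin applied to the family of operators, or simply $L^p$-interpolation of the two endpoint inequalities for a fixed $u$, noting the exponents $h^{\frac12-\frac1p}\delta^{\frac d2(\frac12-\frac1p)}$ are log-linear in $1/p$).

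\textbf{The endpoint $p=2d/(d-2)$.} Here the main step is to combine a one-dimensional estimate in the $\xi_1=x_1$ variable with the $(d-1)$-dimensional Stein--Tomas bound in the remaining variables. First I would freeze $x_1$ and apply Minkowski's inequality in $x_1$ together with Corollary \ref{d-nbd} in the $\widetilde x\in\R^{d-1}$ variables: for each fixed $x_1$, the function $\widetilde x\mapsto Q^{\tau,h}_{\le\delta}u(x_1,\widetilde x)$ has (partial) Fourier transform in $\widetilde\xi$ supported in a $\delta$-neighborhood of the sphere, so
\[
\|Q^{\tau,h}_{\le\delta}u(x_1,\cdot)\|_{L^{\frac{2d}{d-2}}(\R^{d-1})}\lesssim \delta^{1/2}\,\|Q^{\tau,h}_{\le\delta}u(x_1,\cdot)\|_{L^2(\R^{d-1})}.
\]
Then I would take the $L^{\frac{2d}{d-2}}$ norm in $x_1$, using $\ell^{\frac{2d}{d-2}}\hookrightarrow$ no — rather, use that $\tfrac{2d}{d-2}\ge 2$ and Minkowski's integral inequality to move the $x_1$-norm inside, reducing matters to bounding $\|\,\|Q^{\tau,h}_{\le\delta}u(x_1,\cdot)\|_{L^2(\R^{d-1})}\,\|_{L^{\frac{2d}{d-2}}(\R,dx_1)}$. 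Writing $g(x_1)=\|Q^{\tau,h}_{\le\delta}u(x_1,\cdot)\|_{L^2_{\widetilde x}}$, Plancherel in $\widetilde x$ shows $g(x_1)$ is (the $L^2_{\widetilde\xi}$-norm of) a function whose $x_1$-Fourier support lies in $\{|\xi_1|\le h\}$; hence by Bernstein's inequality in one variable $\|g\|_{L^{\frac{2d}{d-2}}(\R)}\lesssim h^{\frac12-\frac{d-2}{2d}}\|g\|_{L^2(\R)}=h^{\frac1d}\|g\|_{L^2(\R)}$, and $\|g\|_{L^2(\R)}=\|Q^{\tau,h}_{\le\delta}u\|_{L^2(\R^d)}\le\|u\|_{L^2}$. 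Collecting, $\|Q^{\tau,h}_{\le\delta}u\|_{L^{2d/(d-2)}}\lesssim h^{1/d}\delta^{1/2}\|u\|_{L^2}$, which is exactly \eqref{linear} at $p=\tfrac{2d}{d-2}$ since $\tfrac12-\tfrac1p=\tfrac1d$ and $\tfrac d2(\tfrac12-\tfrac1p)=\tfrac12$.

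\textbf{Interpolation and the obstacle.} With the two endpoints in hand, \eqref{linear} for $2<p<\tfrac{2d}{d-2}$ follows by interpolating; concretely one can realize $Q^{\tau,h}_{\le\delta}$ as a fixed bounded operator on $L^2$ and on $L^2\to L^{2d/(d-2)}$ (with the stated operator norms) and apply Riesz--Thorin, or interpolate the two scalar inequalities using Hölder, since for $\tfrac1p=\tfrac{1-\theta}{2}+\theta\tfrac{d-2}{2d}$ the bound $h^{(1-\theta)\cdot0+\theta\cdot\frac1d}\delta^{(1-\theta)\cdot0+\theta\cdot\frac12}$ matches $h^{\frac12-\frac1p}\delta^{\frac d2(\frac12-\frac1p)}$. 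The one point requiring care — and the main obstacle — is the order of the two reductions at the endpoint: one must apply the curved ($\widetilde x$) estimate \emph{before} the flat Bernstein ($x_1$) estimate, and the exchange of norms needs $\tfrac{2d}{d-2}\ge2$ (true for $d\ge3$) so that Minkowski's integral inequality goes the right way; also one should check that the dilation $\xi\mapsto\tau^{-1}\xi$ bringing the sphere $\{|\widetilde\xi-\tau\widetilde{e_2}|=\tau\}$ to the unit sphere changes $\delta\mapsto\delta/\tau\simeq\delta$ and $h\mapsto h/\tau\simeq h$ with constants uniform in $\tau\in[1,2]$, so the implicit constant in \eqref{linear} is genuinely independent of $\tau,\delta,h$. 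None of these is serious, so the lemma reduces to the two endpoint computations plus a routine interpolation.
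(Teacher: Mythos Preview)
Your proposal is correct and follows essentially the same approach as the paper: endpoint $p=2$ by Plancherel, endpoint $p=\tfrac{2d}{d-2}$ by combining one-dimensional Bernstein in $x_1$ with Corollary~\ref{d-nbd} in $\widetilde x$ via Minkowski, then interpolate. The only cosmetic difference is the order---the paper applies Bernstein in $x_1$ first and then Corollary~\ref{d-nbd}, whereas you do the reverse; both orderings work, and your Bernstein step for $g(x_1)=\|Q^{\tau,h}_{\le\delta}u(x_1,\cdot)\|_{L^2_{\widetilde x}}$ is really the vector-valued version (one more Minkowski swap plus scalar Bernstein on each $\widetilde\xi$-slice), since $g$ itself need not have Fourier support in $[-h,h]$.
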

\begin{proof} 
When $p=2$, \eqref{linear} is obvious by Plancherel's theorem.  Thus, in view of interpolation, it suffices to  show  
\begin{equation}\label{end-linear}	
\| Q_{\le \delta}^{\tau, h}  u \|_{L^\frac{2d}{d-2}(\R^d)} \lesssim h^{\frac1d} \, \delta^{\frac12} \, \|u\|_{L^2(\R^d)} .	
\end{equation} 
Since  $\F (Q_{\le \delta}^{\tau, h}u)$ is supported in $\{ \xi: |\xi_1|\le h \}$,     one may use Bernstein's inequality to get 
\[	\| Q_{\le \delta}^{\tau, h} u (\,\cdot \, , \widetilde x) \|_{L^\frac{2d}{d-2}(\R)}
		\lesssim 	h^\frac1d \|Q_{\le \delta}^{\tau, h} u (\,\cdot \, ,   \widetilde x) \|_{L^2(\R)} \]
uniformly in $\widetilde  x\in \mathbb R^{d-1}$.  Applying  Minkowski's inequality and Corollary \ref{d-nbd} we obtain
\[	\| Q_{\le \delta}^{\tau, h} u \|_{L^\frac{2d}{d-2}(\R^d)}
	\lesssim h^\frac1d \big\| \| Q_{\le \delta}^{\tau, h} u(x_1, \, \cdot \, )\|_{ L^\frac{2d}{d-2}(\R^{d-1})} \big\|_{L^2(\R; dx_1)}\lesssim h^\frac1d \delta^\frac12  \|Q_{\le \delta}^{\tau, h} u\|_{ L^2(\R^d)}.	\qedhere\]
\end{proof}

From Lemma \ref{lin} the following is easy to show. 
\begin{lem}[{\cite[Lemma 3.3]{Haberman15}}] \label{hhh} 
For $1 < \mu \le \tau$,  we have
\begin{align} 
\label{tsts}  \| Q_{\le\mu}^\tau f\|_{L^{ \frac{ 2d}{d-2}} (\R^d)} & \lesssim (\mu /  \tau )^{1/d} \| f\|_{X_{\zeta(\tau)}^{1/2}}, \\
\label{x-1/2-0}  \|f\|_{L^\frac{2d}{d-2} (\R^d)}   & \lesssim \|f\|_{X_{\zeta(\tau)}^{1/2}}.
\end{align}
\end{lem}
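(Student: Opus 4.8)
\textbf{Proof plan for Lemma~\ref{hhh}.}
The plan is to deduce both estimates from Lemma~\ref{lin} by a dyadic decomposition in the distance to $\Sigma^\tau$, after reducing to the normalized scale $\tau\simeq1$ via \eqref{scaledX}. First I would prove \eqref{x-1/2-0}: decompose $f = \sum_{j} Q^\tau_{\mu_j} f + Q^\tau_{\le 1} f$ where $\mu_j$ ranges over dyadic numbers in $(1,\tau]$. On the piece $Q^\tau_{\mu_j}f$ the symbol satisfies $|p_{\zeta(\tau)}(\xi)|\simeq \tau\mu_j$ by \eqref{geom_symb}, so $\|Q^\tau_{\mu_j}f\|_{L^2}\lesssim (\tau\mu_j)^{-1/2}\|Q^\tau_{\mu_j}f\|_{X^{1/2}_{\zeta(\tau)}}$. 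The Fourier support of $Q^\tau_{\mu_j}f$ lies in a $\mu_j$-neighbourhood of $\Sigma^\tau$, which after covering $\Sigma^\tau$ by finitely many caps and using a rotation is contained in a set of the form $\Sigma^{\tau,h}_{\le\delta}$ with $h\simeq\delta\simeq\mu_j$ (at scale $\tau\simeq1$); apply Lemma~\ref{lin} with $p=\tfrac{2d}{d-2}$ to get $\|Q^\tau_{\mu_j}f\|_{L^{2d/(d-2)}}\lesssim \mu_j^{(1/d)+(d/2)(1/d)}\|Q^\tau_{\mu_j}f\|_{L^2} = \mu_j^{1/2+1/d}\|Q^\tau_{\mu_j}f\|_{L^2}$. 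Combining the two displays (and handling $\tau$-powers by scaling, using that we may assume $\tau\simeq1$) yields $\|Q^\tau_{\mu_j}f\|_{L^{2d/(d-2)}}\lesssim \mu_j^{1/d}\|Q^\tau_{\mu_j}f\|_{X^{1/2}_{\zeta(\tau)}}$, and since $\sum_j \mu_j^{2/d}<\infty$ is a geometric-type sum dominated by its largest term, a Cauchy--Schwarz / almost-orthogonality argument in $j$ (the $Q^\tau_{\mu_j}$ have essentially disjoint Fourier supports) gives $\|f\|_{L^{2d/(d-2)}}\lesssim \|f\|_{X^{1/2}_{\zeta(\tau)}}$ after also absorbing the far piece $Q^\tau_{\le1}f$ the same way.

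For \eqref{tsts} I would run the same decomposition but restricted to $\mu_j\le\mu$, i.e. $Q^\tau_{\le\mu}f = Q^\tau_{\le1}f+\sum_{1<\mu_j\le\mu}Q^\tau_{\mu_j}f$. Each term now contributes $\lesssim (\mu_j/\tau)^{1/d}\|Q^\tau_{\mu_j}f\|_{X^{1/2}_{\zeta(\tau)}}$ (keeping the $\tau$-dependence honest via \eqref{scaledX}, which converts the normalized-scale estimate into the stated $(\mu/\tau)^{1/d}$ factor), and summing the dyadic scale factors $(\mu_j/\tau)^{1/d}$ over $\mu_j\le\mu$ is again dominated by the endpoint value $(\mu/\tau)^{1/d}$; almost-orthogonality in $j$ finishes it.

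The main obstacle is the geometric bookkeeping needed to invoke Lemma~\ref{lin}: that lemma is stated for the flat slab $\Sigma^{\tau,h}_{\le\delta}$ (a neighbourhood of a sphere lying in the hyperplane $\xi_1=0$), whereas a $\mu_j$-neighbourhood of $\Sigma^\tau$ is a genuinely curved annular region. One must decompose $\Sigma^\tau$ into $O(1)$ boundedly-overlapping caps, on each of which, after an appropriate rotation sending the cap to a graph over a hyperplane, the $\mu_j$-neighbourhood is comparable to a slab $\Sigma^{\tau,h}_{\le\delta}$ with $h,\delta\simeq\mu_j$ — here one uses that $\Sigma^\tau$ at scale $\tau\simeq1$ has bounded curvature and $\mu_j\le\tau$, so the normal direction varies by $O(1)$ across a cap and the neighbourhood has thickness $\simeq\mu_j$ in every relevant direction. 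The rest is routine: tracking the exponent $\tfrac1d+\tfrac{d}{2}\cdot\tfrac1d = \tfrac12+\tfrac1d$ from \eqref{linear}, pairing it against the $(\tau\mu_j)^{-1/2}$ loss from passing between $X^{1/2}_{\zeta(\tau)}$ and $L^2$, summing the convergent dyadic series, and controlling the exceptional near-set $\Sigma^\tau_{\le1}$ (where $|p_{\zeta(\tau)}|\simeq\tau$) by the same device. The almost-orthogonality of the pieces $\{Q^\tau_{\mu_j}f\}_j$ in $L^2$ and the near-orthogonality of their Fourier supports make the summation step standard.
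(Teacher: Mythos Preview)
Your approach is essentially the paper's: rescale to $\tau\simeq1$ via \eqref{scaledX}, decompose dyadically in distance to $\Sigma^1$, apply Lemma~\ref{lin}, and sum. Two points.

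First, your decomposition for \eqref{x-1/2-0} is incomplete. Writing $f=Q^\tau_{\le1}f+\sum_{1<\mu_j\le\tau}Q^\tau_{\mu_j}f$ only covers the region $\{\dist(\xi,\Sigma^\tau)\lesssim\tau\}$, and you never treat the complementary far piece. The paper avoids this by proving \eqref{tsts} first and then, for \eqref{x-1/2-0}, splitting $f=Q^\tau_{\le 2^{-7}\tau}f+(f-Q^\tau_{\le 2^{-7}\tau}f)$: on the far part \eqref{geom_symb} gives $|p_{\zeta(\tau)}(\xi)|\simeq\tau^2+|\xi|^2\gtrsim|\xi|^2$, so Sobolev embedding and Plancherel yield $\|f-Q^\tau_{\le 2^{-7}\tau}f\|_{L^{2d/(d-2)}}\lesssim\|\,|D|(f-Q^\tau_{\le 2^{-7}\tau}f)\|_{L^2}\lesssim\|f\|_{X^{1/2}_{\zeta(\tau)}}$. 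This piece is elementary but is not captured by your dyadic sum, and Lemma~\ref{lin} does not apply to it.

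Second, what you call the ``main obstacle'' is a non-issue. After rescaling, $\Sigma^1=\{\xi_1=0,\ |\widetilde\xi-\widetilde{e_2}|=1\}$ already lies in the hyperplane $\{\xi_1=0\}$, so its $\delta$-neighbourhood $\Sigma^1_{\le\delta}$ is directly contained in $\Sigma^{1,\delta}_{\le\delta}$: if $\dist(\xi,\Sigma^1)\le\delta$ then $|\xi_1|\le\delta$ and $\big|\,|\widetilde\xi-\widetilde{e_2}|-1\,\big|\le\delta$. No cap decomposition or rotation is needed; Lemma~\ref{lin} applies as stated, which is exactly what the paper does.
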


\begin{proof} By rescaling the estimate \eqref{tsts} is equivalent to 
\Be\label{scaledXX}
\| Q_{\le \delta}^{\,1} f\|_{L^\frac{ 2d}{d-2}(\R^d)} \lesssim \delta^{\frac1d} \| f\|_{X_{\zeta(1),1/\tau}^{1/2}}, \quad 1/\tau <\delta\le1.
\Ee
To show this we decompose $Q_{\le \delta}^{\,1}$ dyadically as follows:
\begin{align*}
\| Q_{\le \delta}^{\,1} f\|_{L^\frac{2d}{d-2}}
	\le \| Q_{\le 1/\tau}^{\,1} f\|_{L^\frac{2d}{d-2}} +\sum_{1/\tau \le 2^j<2\delta} \| Q_{2^j}^{\,1} f\|_{L^\frac{2d}{d-2} }.
\end{align*}
Application of Lemma \ref{lin} gives the bound $\| Q_{\le 1/\tau}^{\,1} f\|_{L^\frac{2d}{d-2}} \lesssim \tau^{-\frac{d+2}{2d}} \|Q^{\,1}_{\le 1/\tau}f\|_{L^2}$, and the definition of $X^{1/2}_{\zeta(1),1/\tau}$ gives $\|Q^{\,1}_{\le 1/\tau}f\|_{L^2}\lesssim \tau^\frac12 \|f\|_{X^{1/2}_{\zeta(1),1/\tau}}$. Combining these  we get
\[	\| Q_{\le 1/\tau}^{\,1} f\|_{L^\frac{2d}{d-2}} \lesssim \tau^{-\frac1d}\|f\|_{X^{1/2}_{\zeta(1),1/\tau}}.	\]
Utilizing Lemma \ref{lin} and \eqref{geom_symb}, the similar argument gives $\| Q_{2^j}^{\,1} f\|_{L^\frac{2d}{d-2}}\lesssim 2^\frac jd \|f\|_{X^{1/2}_{\zeta(1),1/\tau}}$. Now summation of the estimates over $j$ gives the desired bound \eqref{scaledXX}.

The estimate  \eqref{x-1/2-0}  is even easier to prove once we have \eqref{tsts} since $|p_{\zeta(\tau)}(\xi)|\simeq \tau^2+ |\xi|^2$ if $\dist(\xi,\Sigma^\tau)> 2^{-7}\tau$. In fact, $\| f- Q_{\le 2^{-7}\tau}^{\, \tau} f\|_{L^{ \frac{ 2d}{d-2}}(\R^d)} \lesssim \| f\|_{X_{\zeta(\tau)}^{1/2}}$ follows by the Hardy-Littlewood-Sobolev inequality  and Plancherel's theorem.  Combining this and \eqref{tsts}  with $\mu=2^{-7}\tau$ yields \eqref{x-1/2-0}. 
\end{proof}

\subsubsection*{Bilinear estimates}  We obtain some bilinear estimates which are consequences of the bilinear restriction estimate  \eqref{bibi}.

\begin{lem}\label{1separation}
Let $d\ge3$, $0< \delta_2 \le \delta_1 \ll 1 $, $0 <h_2 \le h_1 \ll 1$, and let $S$, $S_1$, and $S_2$ be as in  Theorem \ref{bilinearthm}.  Suppose that 
\[  \supp\widehat {u_j} \subset \big\{  (\xi_1, \widetilde\xi) \in\R\times\R^{d-1} : |\xi_1|\le h_j, \,   \widetilde \xi\in  S_j + O(\delta_j) \big\}, \quad  j=1,2. \]
Then, for any $\epsilon>0$  and $\frac{d+1}{2} \le p\le \infty$, 
\begin{align} 
\label{interpolation}
\| u_1 u_2 \|_{L^{p'} (\mathbb R^d)}
	&\lesssim \,\delta_2^{-\epsilon} \, h_1^{\frac1p} \, ( \delta_1 \, \delta_2)^{\frac{d+1}{4p}} \, \|u_1\|_{L^2(\R^d)} \|u_2\|_{L^2(\R^d)}, \\
\label{interpolation2}
\| u_1 u_2 \|_{L^{p'} (\mathbb R^d)}
	&\lesssim \,\delta_2^{-\epsilon} \, (h_1/h_2)^{\frac{d-3}{4p}}h_2^{\frac1p} \, ( \delta_1 \, \delta_2)^{\frac{d+1}{4p}} \, \|u_1\|_{L^2(\R^d)} \|u_2\|_{L^2(\R^d)}. 
\end{align}
\end{lem}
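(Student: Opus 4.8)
The plan is to convert the $d$-dimensional bilinear estimate into the $(d-1)$-dimensional one \eqref{bibi} by freezing the first variable $x_1$, and then to compensate for the $\xi_1$-localization by a one-dimensional Bernstein inequality. For fixed $x_1\in\R$ put $v_j^{x_1}(\widetilde x)=u_j(x_1,\widetilde x)$. Since $\widehat{u_j}(\xi_1,\widetilde\xi)=0$ for $|\xi_1|>h_j$, the $(d-1)$-dimensional Fourier transform of $v_j^{x_1}$ is supported in the projection of $\supp\widehat{u_j}$ onto the $\widetilde\xi$-space, hence in $S_j+O(\delta_j)$. Thus Corollary~\ref{bilinearco} applies on each slice with a constant independent of $x_1$ and gives $\|v_1^{x_1}v_2^{x_1}\|_{L^{\frac{d+1}{d-1}}(\R^{d-1})}\lesssim(\delta_1\delta_2)^{\frac12-\epsilon}g_1(x_1)g_2(x_1)$, where $g_j(x_1):=\|u_j(x_1,\cdot)\|_{L^2(\R^{d-1})}$. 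Raising to the power $\tfrac{d+1}{d-1}$, integrating in $x_1$, and taking the root yields
\[
\|u_1u_2\|_{L^{\frac{d+1}{d-1}}(\R^d)}\lesssim(\delta_1\delta_2)^{\frac12-\epsilon}\,\|g_1g_2\|_{L^{\frac{d+1}{d-1}}(\R)}.
\]

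Next I would estimate $\|g_1g_2\|_{L^{\frac{d+1}{d-1}}(\R)}$ by a one-dimensional Bernstein bound. The functions $u_j(\cdot,\widetilde x)$ and $\overline{u_j(\cdot,\widetilde x)}$ both have $x_1$-Fourier transform supported in $[-h_j,h_j]$, so $|u_j(\cdot,\widetilde x)|^2$ is $x_1$-band-limited to $[-2h_j,2h_j]$; integrating in $\widetilde x$, so is $g_j^2\in L^1(\R)$. Hence $\|g_j^2\|_{L^{r/2}(\R)}\lesssim h_j^{1-\frac2r}\|g_j^2\|_{L^1(\R)}$ for $r\ge2$, i.e.\ $\|g_j\|_{L^r(\R)}\lesssim h_j^{\frac12-\frac1r}\|u_j\|_{L^2(\R^d)}$ for $2\le r\le\infty$, using $\|g_j\|_{L^2(\R)}=\|u_j\|_{L^2(\R^d)}$. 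Applying Hölder with $\frac1{r_1}+\frac1{r_2}=\frac{d-1}{d+1}$ then gives $\|g_1g_2\|_{L^{\frac{d+1}{d-1}}(\R)}\lesssim h_1^{\frac12-\frac1{r_1}}h_2^{\frac12-\frac1{r_2}}\|u_1\|_{L^2}\|u_2\|_{L^2}$. The choice $(r_1,r_2)=(\frac{2(d+1)}{d-3},2)$ (interpreting $\frac{2(d+1)}{d-3}=\infty$ when $d=3$) produces the endpoint case $p=\frac{d+1}{2}$ of \eqref{interpolation}; the choice $(r_1,r_2)=(\frac{d+1}{2},\frac{d+1}{d-3})$, admissible because both exponents are $\ge2$ exactly when $d\le7$, produces the endpoint case of \eqref{interpolation2}. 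For $d\ge8$, \eqref{interpolation2} is weaker than \eqref{interpolation} since $h_2\le h_1$, so it follows from the latter.

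Finally, for the remaining range $\frac{d+1}{2}<p\le\infty$ I would interpolate (by log-convexity of $L^{p'}$-norms on $\R^d$) each endpoint estimate at $p'=\frac{d+1}{d-1}$ with the trivial Cauchy--Schwarz bound $\|u_1u_2\|_{L^1(\R^d)}\le\|u_1\|_{L^2}\|u_2\|_{L^2}$ at $p'=1$. With the interpolation parameter $1-\theta=\frac{d+1}{2p}$ the exponents scale to $h_1^{1/p}(\delta_1\delta_2)^{\frac{d+1}{4p}}$ for \eqref{interpolation} and to $(h_1/h_2)^{\frac{d-3}{4p}}h_2^{1/p}(\delta_1\delta_2)^{\frac{d+1}{4p}}$ for \eqref{interpolation2}, while the loss $\delta_2^{-\epsilon}$ only improves and is retained after renaming $\epsilon$ (using $\delta_2\le\delta_1<1$).

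The only nonroutine steps are the two frequency-localization observations — that each $x_1$-slice of $u_j$ retains $(d-1)$-dimensional Fourier support in $S_j+O(\delta_j)$ (so Corollary~\ref{bilinearco} may be invoked slicewise with a uniform constant) and that $g_j^2$ is $x_1$-band-limited to width $\sim h_j$ — together with the requirement $r_1,r_2\ge2$ that makes the Bernstein step legitimate and pins down the $d\le7$ restriction for the balanced choice in \eqref{interpolation2}; the rest is Hölder and interpolation bookkeeping.
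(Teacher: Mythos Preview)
Your argument is correct. The overall architecture---slice in $x_1$, apply Corollary~\ref{bilinearco} on each slice, handle the $x_1$-integration by a one-dimensional Bernstein bound, then interpolate with the trivial $L^1$ estimate---matches the paper's proof of \eqref{interpolation}. The genuine difference is in how you reach \eqref{interpolation2}. The paper first proves \eqref{interpolation} by applying Bernstein to the product $u_1u_2$ (band-limited in $\xi_1$ to width $\sim h_1$), and then \emph{deduces} \eqref{interpolation2} from \eqref{interpolation} via an extra $\xi_1$-decomposition of $u_1$ into pieces of width $h_2$ together with the almost-orthogonality lemma of Tao--Vargas--Vega. Your route is more direct: by applying Bernstein to each $g_j^2$ separately you keep both scales $h_1,h_2$ available, and a single H\"older step with the exponent pair $(r_1,r_2)=(\tfrac{d+1}{2},\tfrac{d+1}{d-3})$ yields \eqref{interpolation2} outright for $d\le 7$, while for $d\ge 8$ you correctly observe that \eqref{interpolation2} is dominated by \eqref{interpolation}. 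This avoids the orthogonality machinery entirely, at the mild cost of a dimension split; the paper's decomposition argument, by contrast, gives \eqref{interpolation2} uniformly in $d$.
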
 

When $d=7$ the estimates \eqref{interpolation} and \eqref{interpolation2} are identical.  If $d< 7$,  \eqref{interpolation2} gives a bound  better than \eqref{interpolation}.  When $d> 7$, the bound from \eqref{interpolation} is stronger.  The bounds in \eqref{interpolation} and \eqref{interpolation2} are sharp in that the exponents of $\delta_1, \delta_2$ cannot be improved except for the $\delta_2^{-\epsilon}$ factor.  This can be shown without difficulty by modifying  the (squashed cap) example in \cite{TVV}, especially with $\delta_1\simeq\delta_2$ and $h_1\simeq h_2$. 
\begin{rmk}   
There are  linear counterparts of \eqref{interpolation} and \eqref{interpolation2}.  Let $\tau\simeq 1$, $0<h_2\le h_1$, and $ 0 < \delta_2\le \delta_1 $. Suppose $\supp \widehat{u_i}$ is contained in $\Sigma_{\le \delta_i}^{\tau, h_i}$, $i= 1,2.$  Then \eqref{end-linear} implies that
\begin{equation} \label{LIN}
\|u_1u_2\|_{L^\frac{d}{d-1}(\R^d)} \le \|u_1\|_{L^2} \|u_2\|_{L^\frac{2d}{d-2}} \lesssim  h_2^{\frac1d} \delta_2^{\frac12}\|u_1\|_{L^2} \|u_2\|_{L^2} .
\end{equation}
We may compare this estimate with the estimate \eqref{interpolation}.  In particular, with  $p = d$ in  \eqref{interpolation} and the assumption in Lemma \ref{1separation}, we have 
\begin{equation}\label{BIL} 
\| u_1 u_2\|_{L^\frac{d}{d-1}(\R^d)}  \lesssim \delta_2^{-\epsilon} h_1^{\frac 1 d} (\delta_1\delta_2)^{\frac{d+1}{4d}} \|u_1\|_{L^2} \|u_2\|_{L^2} .
\end{equation}  
If  $\delta_1\simeq\delta_2\simeq h_1\simeq h_2 \simeq \delta$, the bound of \eqref{BIL} is roughly  better than that of  \eqref{LIN} by a factor of $\delta^{\frac 1 {2d}}$.   However, since the  estimate \eqref{BIL} is only possible under additional assumption on the  supports of $\widehat{u_1}$,  $\widehat {u_2}$, we cannot  directly exploit this improvement. Nevertheless, this will be made possible via the bilinear method  which has been used in the study of restriction problem (\cite{TVV}). 
\end{rmk}

\begin{proof}[Proof of Lemma \ref{1separation}]
Let us first assume $d\ge4$. By interpolation with a trivial estimate  
\[
\|u_1 u_2\|_{L^1} \le \| u_1\|_{L^2} \| u_2\|_{L^2},
\]
it suffices to prove \eqref{interpolation} and \eqref{interpolation2}  for $p' = \frac{d+1}{d-1}$.  Since the one-dimensional Fourier transform  $ \mathcal F(u_1(\,\cdot\, , \widetilde x)\,  u_2(\,\cdot\, , \widetilde x))$ is supported in the interval $[-2 h_1, 2 h_1]$,  Bernstein's inequality gives 
\[  \| u_1 (\,\cdot\, , \widetilde x)\,  u_2(\, \cdot \, , \widetilde x)\|_{L^{\frac{d+1}{d-1}} (\mathbb R)}
		\lesssim h_1^{1-\frac{d-1}{d+1}} \| u_1(\,\cdot\, , \widetilde x)\,  u_2(\, \cdot \, , \widetilde x)\|_{L^1(\R)}.	\]
Since 
$ \supp\cF (u_i(x_1, \, \cdot\, ) ) \subset  S_i + O(\delta_i)$ for $i=1,2$, we   have
\[	\| u_1(x_1, \,\cdot\,)\, u_2(x_1, \,\cdot\, ) \|_{L^{\frac{d+1}{d-1}} (\mathbb R^{d-1})}   
		\lesssim \delta_2^{-\epsilon} \delta_1^{\frac12} \delta_2^{\frac12}  \|u_1(x_1, \,\cdot\,)\|_{L^2(\R^{d-1})} \|u_2(x_1, \,\cdot\,)\|_{L^2(\R^{d-1})},	\] 
which follows from Corollary \ref{bilinearco}. 
Thus, by Minkowski's inequality and the Cauchy-Schwarz inequality, we obtain
\begin{align*}
\| u_1 u_2 \|_{L^{\frac{d+1}{d-1}} (\mathbb R^d)} 
	& \lesssim  h_1^{1-\frac{d-1}{d+1}} \int  \| u_1(x_1, \,\cdot\,)\, u_2(x_1, \,\cdot\, ) \|_{L^{\frac{d+1}{d-1}} (\mathbb R^{d-1})}   d x_1  \\
	&  \lesssim \dep h_1^{1-\frac{d-1}{d+1}} \delta_1^{\frac12} \delta_2^{\frac12}\, \|u_1\|_{L^2} \|u_2\|_{L^2}. 
\end{align*}   
This completes the proof of \eqref{interpolation}.

Now we prove \eqref{interpolation2} for $p' = \frac{d+1}{d-1}$, which can be deduced from \eqref{interpolation}. 
Let $\{I_\ell\}$ be a collection of disjoint subintervals of $[- h_1,  h_1]$ each of which has side-length $\simeq h_2$ and $[- h_1,  h_1]=\bigcup_{\ell} I_\ell$.  
It is clear that
\[	u_1u_2= \sum_\ell \F^{-1} \big( \chi_{I_\ell}(\xi_1) \widehat{u_1}(\xi)\big) u_2 = : \sum_\ell u_1^\ell u_2 ,	\] 
and $\F( u_1^\ell u_2)$ is supported in $(I_\ell+[-h_2,h_2])\times \mathbb R^{d-1}$.  
Since $p'\in [1,2]$, from the well-known orthogonality argument (\cite[Lemma 6.1]{TVV}) we see that
\[	\B\|\sum_\ell u_1^\ell u_2 \B\|_{L^{p'} (\mathbb R^d)}
		\lesssim \B(\sum_{\ell}\| u_1^\ell u_2 \|_{L^{p'} (\mathbb R^d)}^{p'} \B)^\frac1{p'}.	\]
The length of the interval $I_\ell+[-h_2,h_2]$ is $\simeq h_2$. Hence, \eqref{interpolation} gives 
\[	\| u_1^\ell u_2 \|_{L^{p'} (\mathbb R^d)} \lesssim \,\dep\, h_2^{\frac1p} \, ( \delta_1 \, \delta_2)^{\frac{d+1}{4p}} \, \|u_1^\ell \|_{L^2} \|u_2\|_{L^2}.\]
Combining the above two inequalities and using H\"older's inequality, we get 
\begin{align*}
	\| u_1  u_2 \|_{L^{p'} (\mathbb R^d)}	
	&\lesssim \dep \, h_2^{\frac1p}  ( \delta_1 \, \delta_2)^{\frac{d+1}{4p}}  \| u_2 \|_{L^2} \B( \sum_{\ell}\|u_1^\ell \|_{L^2}^{p'} \B)^\frac1{p'} 	\\
	&\lesssim \dep (h_1/h_2)^{\frac12-\frac{2}{d+1}}\, h_2^{\frac1p} \, ( \delta_1 \, \delta_2)^{\frac{d+1}{4p}}  \|u_2\|_{L^2} \B(\sum_{\ell}\|u_1^\ell \|_{L^2}^2 \B)^\frac1{2},
\end{align*}
which gives \eqref{interpolation2} with $p' = \frac{d+1}{d-1}$. 

When $d=3$ we have the endpoint bilinear restriction estimate (\eqref{adj-bil} with $d=3$ and $q=4$). Hence the estimate \eqref{bibi} in Corollary \ref{bilinearco} is true without $\dep$.  The same argument gives \eqref{interpolation} and \eqref{interpolation2} without $\dep$.
\end{proof}

\section{Bilinear  ${X}_{\zeta(1), 1/\tau}^{1/2}$  estimates }\label{sec_tech}
As mentioned in the introduction, we regard  $\inp{ (\nabla f) u, v}$ as  a bilinear operator and attempt to obtain estimates while $u,v\in {X}_{\zeta(\tau)}^{1/2}$. In order to make use of  the restriction estimates and its variants we work in frequency local setting after rescaling  $\xi\to \tau\xi$. This enables us to  deal with $\Sigma^1$ instead of $\Sigma^\tau$  which varies along $\tau$.  In this section we use the estimates in the previous section to obtain estimates  for $\inp{(\nabla f) u, v}$  in terms of  ${X}_{\zeta(1), 1/\tau}^{1/2}$.

\subsection{Localization  near $\Sigma^1$} 
Throughout this section (Section \ref{sec_tech})  we assume that 
\[	\supp   \widehat u, \  \supp \widehat v \subset B_d(0,4),	\]  
and  obtain bounds on $\langle (m_\tau^\kappa (D) f) u,v\rangle$ while $u,v$ are in $X^{1/2}_{\zeta(1),1/\ta}$. Note that $\widehat u(-\,\cdot) \ast \overline{\widehat v}$  is supported in $B_d(0,8)$.   

Since $u,v$ are in $X^{1/2}_{\zeta(1),1/\tau}$,  $\widehat u$ and $\widehat v$ exhibit singular behavior  near the set  $\Si^1$.  Meanwhile, the desired estimates  are  easy to show if $\widehat u$ or $\widehat v$ is supported away from $\Si^1$ (see Section \ref{sec_key}).  Thus, for the rest of this section, we assume that
\Be \label{near}
	\supp \widehat u, \  \supp \widehat v \subset \Sigma^1 + O(2^{-2}\epsilon_\circ)	
\Ee
with a fixed small number $\epsilon_\circ\in(0, 2^{-7}/\sqrt d\,]$.   

Let $\beta \in C^\infty_c((\frac12,2))$ be such that $\sum_{j\in\Z} \beta(2^{-j}t) =1$ for $t>0$.  For a dyadic number $\lambda$, we define a Littlewood-Paley projection operator $P_\lambda$ by  $\mathcal F(P_\lambda f)(\xi) = \beta(|\xi|/ \lambda) \widehat f(\xi)$ and write  
\Be \label{littlewood-paley}
\begin{aligned}
	\langle  (m^\kappa_\tau (D) f) u,v\rangle 
		&= \sum_{\lambda\le 8} \langle (m^\kappa_\tau (D) P_\lambda  f) u,v\rangle \\
		&= (2\pi)^{-2d} \sum_{\lambda\le 8}\int_{\R^d}  m^\kappa_\tau (\xi) \beta\Big(\frac{|\xi|}{\lambda}\Big) \widehat f(\xi)  ( \widehat u(-\,\cdot) \ast \overline{\widehat v}) (\xi) d\xi .	
\end{aligned}
\Ee
In order to get estimate for  $\langle (m^\kappa_\tau (D) f) u,v\rangle$, we first obtain  estimate for $\langle (m^\kappa_\tau (D) P_\lambda  f) u,v\rangle$.

\subsubsection*{Primary decomposition} 
Before breaking the bilinear operator \eqref{littlewood-paley} into fine scales by a Whitney type decomposition, we first decompose  the unit sphere $\Si^1-e_2$   (see \eqref{sigmatau})  into small $\epsilon_\circ$-caps.  Let  $\{ \mathbb S_\ell\}$ be a collection of essentially disjoint\footnote{ $\mathbb S_\ell\cap \mathbb S_{\ell'}$ is of measure zero if $\ell\neq \ell'$.} subsets of $\mathbb S^{d-2}\subset \mathbb R^{d-1}$ such that $\diam(\mathbb S_\ell)\in[\frac{\epsilon_\circ}{10}, \epsilon_\circ]$ and $\mathbb S^{d-2}=\bigcup_{\ell} \mathbb S_\ell$. Thus, 
\[	\big(\Si^1-e_2\big) \times \big(\Si^1-e_2\big) = \bigcup_{\ell, \ell'} ( \{0\}\times \mathbb S_\ell )\times ( \{0\}\times  \mathbb S_{\ell'}).	\]
For the products $\mathbb S_\ell\times \mathbb S_{\ell'}$, we distinguish the following three cases: 
\begin{align}
\label{s-case}
	\textit{transversal} \colon &   \dist(\mathbb S_\ell, \mathbb S_{\ell'})\ge  \epsilon_\circ  \  \text{ and } \   \dist(-\mathbb S_\ell, \mathbb S_{\ell'})\ge  \epsilon_\circ, \\
\label{p-case}
	\textit{neighboring}\colon & \dist(\mathbb S_\ell, \mathbb S_{\ell'})<   \epsilon_\circ,  \\
\label{n-case}
	\textit{antipodal}\colon &  \dist(-\mathbb S_\ell, \mathbb S_{\ell'})<   \epsilon_\circ .  
\end{align}
This leads us to the primary decomposition
\begin{align}
 \label{primary}	& \qquad \langle (m^\kappa_\tau (D) P_\lambda  f) u,v\rangle
	= \sum_{(\mathbb S_\ell,\mathbb S_{\ell'}): \textit{transversal}}\langle (m^\kappa_\tau (D) P_\lambda  f) u_\ell,v_{\ell'}\rangle
		\\& + \sum_{(\mathbb S_\ell,\mathbb S_{\ell'}): \textit{neighboring}}\langle (m^\kappa_\tau (D) P_\lambda  f) u_\ell,v_{\ell'}\rangle
	+ \sum_{(\mathbb S_\ell,\mathbb S_{\ell'}): \textit{antipodal}}\langle (m^\kappa_\tau (D) P_\lambda  f) u_\ell,v_{\ell'}\rangle, \nonumber
\end{align}
where
\begin{equation}\label{uv_support}
	\supp \widehat {u_\ell} -e_2  \subset \{0\}\times \mathbb S_\ell + O(2^{-2}\epsilon_\circ) , \quad 
	\supp \widehat {v_{\ell'}} -e_2\subset \{0\}\times \mathbb S_{\ell'}+  O(2^{-2}\epsilon_\circ) .	
\end{equation}
For derivation of linear restriction estimate from bilinear restriction estimate, it is enough to consider the neighboring case \eqref{p-case} only, since we can decompose  a single function into functions with smaller frequency pieces.  However, in our situation  the functions $u$ and $v$ are completely independent.  So, we  cannot localize the supports of $\widehat u$, $\widehat v$ in such a favorable manner.  

In  \emph{transversal} case \eqref{s-case} we can apply the bilinear restriction estimate directly since the separation (transversality) condition is guaranteed.   For the other two cases \eqref{p-case} and  \eqref{n-case},  the  separation (transversality)  condition fails.  
To apply  the bilinear estimates \eqref{interpolation} and \eqref{interpolation2}, we need to decompose further the sets $\mathbb S_\ell$ and $\mathbb S_{\ell'}$ by making use of a Whitney type decomposition (for example, see \cite{TVV}).   This is to be done in the following section.

\subsection{Estimates with dyadic localization} \label{dylocal} 
To get control over the functions of which  Fourier transforms are confined in a narrow neighborhood of $\Sigma^1$ (recall \eqref{near}),  we first decompose the functions $\wh{u_\ell}$ and $\wh{v_{\ell'}}$ in \eqref{uv_support} dyadically away from $\Sigma^1$, and then break the resulting pieces in the angular directions via the Whitney type decomposition.  We obtain estimates for each piece (Lemma \ref{rescaling}) and combine those estimates  together to get the estimate \eqref{Large} in Proposition \ref{bilinear-sum}.

Let $0 < \delta_2\le \delta_1 \le 2\epc$.  In this section (Section \ref{dylocal})  we work with $u$, $v$ satisfying 
\Be\label{del_sup}	\supp\widehat u -e_2\subset  \{0\} \times \mathbb S_\ell + O(\delta_1), \quad \supp\widehat v-e_2\subset \{0\} \times \mathbb S_{\ell'}+O(\delta_2),	\Ee
under the assumption \eqref{p-case} or \eqref{n-case}.

\subsubsection*{Whitney type decomposition}
Let $j_\circ$ be the smallest  integer such that  $2^{-j_\circ+3}\le 1/\sqrt{d}$, and set  $\mathrm I_\circ=[-2^{-\jc}, 2^{-\jc}]$. For each $j\ge \jc$, we denote  by $\{\mathbf  I_k^j\}$ the collection of the dyadic cubes of side length $2^{-j}$ which are contained in $ \I_\circ^{d-2} $.\footnote{It should be noted that the index $k$ is subject to $j$.}  Fix $j_\ast >\jc+3$. For $\jc<j<\ja$,  $k\sim k'$ means that $ \mathbf  I_k^j $ and  $\mathbf  I_{k'}^j$  are not adjacent but have adjacent parent dyadic cubes.  If $j=\ja$,  by $k\sim k'$ we mean $\dist ( \mathbf  I_k^j, \mathbf  I_{k'}^j)\lesssim 2^{-j}$.   By a Whitney type decomposition of  $ \I_\circ^{d-2}\times  \I_\circ^{d-2}$ away from its diagonal, we have
\[	\I_\circ^{d-2} \times \I_\circ^{d-2}= \bigcup_{\jc < j  \le j_\ast} \Big(\bigcup_{k\sim k'} \mathbf  I_k^j\times \mathbf  I_{k'}^j \Big).  \]
The cubes $\mathbf  I_k^j\times \mathbf  I_{k'}^j$ appearing in the above are essentially disjoint.  Thus we may write 
\[    \rchi_{  \I_\circ^{d-2} \times \I_\circ^{d-2} }=    \sum_{\jc < j  \le j_\ast}  \sum _{k\sim k'}  \rchi_{ \mathbf  I_k^j\times \mathbf  I_{k'}^j }, \]
where $\rchi_{A}$ denotes the indicator function of a set $A$.

Since $\mathbb S_\ell \cup \mathbb S_{\ell'}$ or $(-\mathbb S_{\ell})\cup \mathbb S_{\ell'}$ is contained in $B_{d-1}(\theta, 3\ec)$ for some $\theta\in \mathbb S^{d-2}$,  considering $\I_\circ^{d-2}$ to be placed in the hyperplane $H$ in $\R^{d-1}$ that is orthogonal to $\theta$ and contains the origin (sharing the origin),  there is obviously  a  smooth  diffeomorphism    $\mathcal G:  \I_\circ^{d-2}  \to \mathbb S^{d-2}$\,\footnote{the inverse of the projection from $\mathbb S^{d-2}$ to the plane  $H$} such that  $\mathbb S_\ell \cup \mathbb S_{\ell'} \subset   \mathcal G( \I_\circ^{d-2} )$ in the case of \eqref{p-case} and  $(-\mathbb S_\ell) \cup \mathbb S_{\ell'}  \subset   \mathcal G( \I_\circ^{d-2} )$ in the case of \eqref{n-case}.   We now set 
\[ \mathbb S^j_k:=\mathcal G(\mathbf  I_k^j). \] 
Then, it follows that 
\begin{equation}\label{sdecomp}
\rchi_{ \mathcal G( \I_\circ^{d-2} ) \times \mathcal G( \I_\circ^{d-2} )} =    \sum_{\jc < j  \le j_\ast}  \sum _{k\sim k'}  \rchi_{ \bs_k^j\times\bs_{k'}^j }. 
\end{equation}
If $j<\ja$  we have $\dist ( \bs_k^j, \bs_{k'}^j)\simeq  2^{-j}$ and,   for $j=\ja$,    $\dist ( \bs_k^j, \bs_{k'}^j)\lesssim 2^{-j}$.

The following are rather direct consequences of Lemma \ref{1separation} via scaling. 
\begin{lem}\label{rescaling}
Assume  $d\ge 3$,  $0<h_2\le h_1\ll 1$, and $0<\delta_2\le \delta_1 \le 2^{-2j}\le c$ with a constant  $c$ small enough.  Suppose that $\dist ( \bs_{k}^j, \bs_{k'}^j ) \simeq 2^{-j}$,  and  suppose 
\begin{align*}
	\supp \widehat {u_1} &\subset   \{ \xi\in\R^d:  |\xi_1|\le h_1, \,  \widetilde\xi \in  \pm \, \bs_k^j + O(\delta_1)   \}, \\
	\supp \widehat {u_2} &\subset   \{ \xi\in\R^d:  |\xi_1|\le h_2, \,  \widetilde\xi \in  \bs_{k'}^j + O(\delta_2)  \}.
\end{align*}
Then, the following estimates hold for any $\epsilon>0$ and $\frac{d+1}{2} \le p\le \infty\colon$
\begin{align}
\label{j-sep} 
\| u_1{u_2} \|_{L^{p'}(\Rd)} 
	& \lesssim \dep \, 2^{\frac{j}{p}}\,  h_1^{\frac{1}{p}}\,  (\delta_1\delta_2)^{\frac{d+1}{4p}}\|u_1\|_{L^2(\R^d)} \|u_2\|_{L^2(\R^d)} ,\\
\label{j-sep2}
\| u_1 u_2 \|_{L^{p'} (\mathbb R^d)} 
	&\lesssim \,\dep \,    2^{\frac{j}{p}}   (h_1/h_2)^{\frac{d-3}{4p}}h_2^{\frac1p} \, ( \delta_1 \, \delta_2)^{\frac{d+1}{4p}} \, \|u_1\|_{L^2(\R^d)} \|u_2\|_{L^2(\R^d)}. 
\end{align}
\end{lem}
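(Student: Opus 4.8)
The plan is to deduce Lemma \ref{rescaling} from Lemma \ref{1separation} by an affine change of variables that straightens the caps $\bs_k^j$, $\bs_{k'}^j$ into the normalized configuration required by Theorem \ref{bilinearthm}. First I would recall that $\bs_k^j = \mathcal G(\mathbf I_k^j)$ where $\mathbf I_k^j$ is a dyadic cube of side $2^{-j}$ inside $\I_\circ^{d-2}$ and $\mathcal G$ is a fixed smooth diffeomorphism onto a portion of $\mathbb S^{d-2}$; since $2^{-j}$ is small, each $\bs_k^j$ is a smooth $(d-2)$-dimensional patch of the sphere of diameter $\simeq 2^{-j}$. The key device is the parabolic rescaling: if $\bs_k^j$ sits at a point $\omega_0\in\mathbb S^{d-2}$, translate so $\omega_0$ moves to the origin, rotate so the tangent plane becomes horizontal, and dilate the tangential variables by $2^{j}$ and the normal variable by $2^{2j}$. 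Under this map the patch $\bs_k^j$ becomes a graph $\{(\xi',\psi(\xi')):|\xi'|\le 1/2\}$ with $\psi$ of elliptic type $(\varepsilon,N)$ for the chosen $\varepsilon,N$ once $2^{-j}\le c$ is small enough — this is exactly the standard fact quoted after Definition \ref{elliptic} that a small cap of the sphere rescales to an elliptic surface with controllably small $\varepsilon$. Because $\dist(\bs_k^j,\bs_{k'}^j)\simeq 2^{-j}$, after the rescaling the two rescaled caps $S_1, S_2$ satisfy $\dist(S_1,S_2)\simeq 1$, so they fit the hypotheses of Theorem \ref{bilinearthm} and hence of Lemma \ref{1separation}.

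Next I would track how the $O(\delta_i)$ and $|\xi_1|\le h_i$ conditions transform. The tangential dilation by $2^j$ turns the $\delta_i$-neighborhood of the cap into an $O(2^{2j}\delta_i)$-neighborhood in the rescaled normal variable (the dominant direction), so one applies Lemma \ref{1separation} with thickened parameters $\delta_i \rightsquigarrow 2^{2j}\delta_i$; the hypothesis $\delta_i\le 2^{-2j}$ guarantees $2^{2j}\delta_i\ll 1$ so the lemma applies. The extra variable $\xi_1$ (the ``$\xi\cdot e_1$'' direction transverse to $\Sigma^1$) is untouched by the sphere rescaling except for an overall dilation; one keeps its restriction $|\xi_1|\le h_i$, possibly after rescaling $h_i\rightsquigarrow 2^{\sigma j}h_i$ for an appropriate $\sigma$, and checks it stays $\ll 1$. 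Then Lemma \ref{1separation} with exponent $p$ gives
\begin{align*}
\|u_1 u_2\|_{L^{p'}(\R^d)} &\lesssim (2^{2j}\delta_2)^{-\epsilon}\, h_1^{1/p}\, (2^{2j}\delta_1\cdot 2^{2j}\delta_2)^{\frac{d+1}{4p}}\,\|u_1\|_{L^2}\|u_2\|_{L^2}\\
&\lesssim \dep\, 2^{\frac{j}{p}}\, h_1^{1/p}\,(\delta_1\delta_2)^{\frac{d+1}{4p}}\,\|u_1\|_{L^2}\|u_2\|_{L^2},
\end{align*}
where I have absorbed $2^{4j\cdot\frac{d+1}{4p}}=2^{j\frac{d+1}{p}}$ together with the Jacobian factors coming from the change of variables in the $L^{p'}$ and $L^2$ norms; the final bookkeeping should collapse all powers of $2^j$ to the stated $2^{j/p}$ (and the $2^{-2j\epsilon}$ factor is harmlessly absorbed into $\dep=\delta_2^{-\epsilon}$ after renaming $\epsilon$). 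The second estimate \eqref{j-sep2} follows in exactly the same way starting from \eqref{interpolation2} instead of \eqref{interpolation}, keeping the additional $(h_1/h_2)^{\frac{d-3}{4p}}$ factor, which is scale-invariant and so survives the rescaling unchanged.

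The main obstacle I anticipate is the exponent bookkeeping: one must verify that the powers of $2^j$ picked up from (a) the parabolic dilation acting on the $\delta_i$'s inside the $\frac{d+1}{4p}$-power, (b) the Jacobian of the change of variables appearing in $\|u_1u_2\|_{L^{p'}}$ versus $\|u_i\|_{L^2}$, and (c) the dilation of the transverse variable $\xi_1$, all combine to exactly $2^{j/p}$ and no more. The cleanest route is to normalize so that the rescaled functions have unit $L^2$ norm, apply Lemma \ref{1separation} as a black box, and then undo the substitution; the Jacobian factors in the $L^2$ norms and in the $L^{p'}$ norm are powers of $2^j$ that must cancel against the explicit $2^j$-powers, and this cancellation is forced by the fact that the linear (two-parameter) family of estimates is dimensionally consistent. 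A secondary point to be careful about is that $\mathcal G$ is only a diffeomorphism, not an affine map, so $\bs_k^j$ is not literally a dilate of a fixed cap; but since $\mathcal G$ has bounded derivatives up to order $N$ on the compact set $\I_\circ^{d-2}$, the rescaled pieces are uniformly of elliptic type $(\varepsilon,N)$ for $2^{-j}$ small, and the implied constants in \eqref{j-sep}, \eqref{j-sep2} stay uniform in $j,k,k'$ — this is where the hypothesis $2^{-2j}\le c$ with $c$ small is used. I would also note that when $d=3$ the $\dep$ factor can be dropped, matching the last sentence of the proof of Lemma \ref{1separation}, and I would simply remark that the same reduction applies verbatim.
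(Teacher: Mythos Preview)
Your approach is correct and matches the paper's: parabolic rescaling of the caps $\bs_k^j,\bs_{k'}^j$ to unit scale, followed by a direct application of Lemma \ref{1separation}. To resolve the uncertainties you flagged: the $\xi_1$-direction is left untouched by the rescaling (only $(\xi_2,\bar\xi)\mapsto(2^{2j}\xi_2,2^{j}\bar\xi)$), so $\sigma=0$ and $h_i$ is unchanged; the $\pm$ sign is handled at the outset via $\|u_1u_2\|_{L^{p'}}=\|\overline{u_1}\,u_2\|_{L^{p'}}$ (conjugation flips the Fourier support from $-\bs_k^j$ to $+\bs_k^j$); and the Jacobian bookkeeping collapses exactly as $2^{dj/p'}\cdot 2^{j(d+1)/p}\cdot 2^{-dj}=2^{j/p}$.
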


\begin{proof}
Note that if $\supp\widehat {u_1} \subset [-h_1,h_1]\times(-\mathbb S_k^j +O(\delta_1))$, then $\supp\widehat {\overline{u_1}} \subset [-h_1,h_1]\times (\mathbb S_k^j +O(\delta_1) )$. Since $\| u_1 u_2 \|_{L^{p'}} =\| \overline {u_1}u_2 \|_{L^{p'}} $, we need only  to consider  the case 
\[ \supp \widehat {u_1} \subset   \{ \xi\in\R^d:  |\xi_1|\le h_1, \,  \widetilde\xi \in \, \bs_k^j + O(\delta_1)   \}. \] 
We first observe that  the supports of $\wh {u_1}$ and  $\wh {u_2} $ are contained in the set  $\{ \xi\in\R^d:  |\xi_1|\le h_1, \,  |(\widetilde\xi -\theta)\cdot \theta|\lesssim 2^{-2j} , \,  |\widetilde\xi -\theta -((\widetilde\xi -\theta)\cdot \theta) \theta |\lesssim 2^{-j} \}$ for some  $\theta\in \mathbb S^{d-2}$.  After rotation in $\widetilde\xi$ we may assume that  $\theta=-\widetilde {e_2}$, and  by translation $\widetilde\xi \to \widetilde\xi+\widetilde{e_2}$ (since these changes of variables do not affect the estimates)  we may assume that 
\[	\supp \widehat {u_i} \subset \big \{(\xi_1, \xi_2,\bar \xi):  |\xi_1|\le h_i, \,  \xi_2=\psi(\bar\xi)+O(\delta_i), \,  |\bar \xi|\lesssim 2^{-j} \big\}, \quad i=1,2,	\]
where $\psi(\bar\xi) = 1-( 1-|\bar \xi|^2)^{1/2} $.  Note that  $\supp \wh{u_1}$ and $\supp \wh{u_2}$ are separated by $\simeq 2^{-j}$.  By an anisotropic dilation  $(x_2, \bar x)\to (2^{2j}x_2, 2^{j}\bar x)$ we see that
\Be \label{this} 
	\| u_1 u_2 \|_{L^{p'}(\mathbb R^d)} = 2^{\frac{d}{p'}j}\|  \fu_1 \fu_2 \|_{L^{p'}(\Rd) }, 
\Ee
where $\fu_i (x_1, x_2, \bar  x)=u_i(x_1, 2^{2j}x_2, 2^j\bar x)$ for $i=1,2$.  Then it  follows that, for $i=1,2$,  
\[	
\supp \widehat {\fu_i}\subset \big\{ (\xi_1, \xi_2, \bar\xi ):  |\xi_1|\le h_i, \,  \xi_2= 2^{ 2j} \psi(2^{-j} \bar\xi)
+O(\delta_i 2^{2j}) , \,  |\bar\xi| \lesssim {1}\big\}	
\]
while  $\supp \widehat{\fu_1}$ and $\supp \widehat{\fu_2}$ are separated by $\simeq 1$.   Since $2^{2j} \psi(2^{-j}\bar \xi) = \frac12|\bar\xi|^2 + O(2^{-2j})$ by the Taylor expansion,  we see   $2^{2j} \psi(2^{-j}\bar \xi)$ is of elliptic type $(C2^{-2j},N)$ for some $N$.  Thus, for $j$  large enough\footnote{Otherwise,  we may directly use the bilinear estimate without rescaling since the surfaces are well separated.} we may apply  Lemma \ref{1separation}.   Rescaling gives 
\[  \|  \fu_1\fu_2 \|_{L^{p'}(\mathbb R^d) }  \lesssim  2^{-2\epsilon j}\dep 
	h_1^{\frac1p} \, ( 2^{2j} \delta_1 \, 2^{2j} \delta_2)^{\frac{d+1}{4p}} \,   2^{-dj}\|u_1\|_{L^2} \|u_2\|_{L^2}.	\]
Combining this with \eqref{this} we get the desired estimate  \eqref{j-sep}.  The same argument with \eqref{interpolation2} gives  \eqref{j-sep2}. So we omit the details. 
\end{proof}

\subsubsection*{Location  of $\pm\mathbb S_k^j+ \mathbb S_{k'}^j$ for $k\sim k'$}
Let us denote by $c(\bs_k^j)$ the barycenter of $\bs_k^j$ and set 
\[	c_{k, k'}^{j, \pm}=\mp c(\bs_k^j) +  c(\bs^j_{k^\prime}).\] 
Note that every $\bs_k^j$ is contained in a rectangle of dimensions about $2^{-2j}\times \overbrace{2^{-j}\times \dots \times 2^{-j}}^{d-2 \text{ times }} $.

For each $j$ we  observe that, if $k\sim k'$, 
\Be\label{thetas}
	-\bs^j_k+ \bs^j_{k^\prime}\subset  \mathcal R_{k,k'}^{j,+},
\Ee
where 
\begin{equation*} \label{Rj}
\mathcal R_{k,k'}^{j,+} 
	 =\left\{ \widetilde\xi \colon  \left | \left\langle \widetilde\xi- c_{k, k'}^{j,+}, \frac{c_{k, k'}^{j,-}}{|c_{k, k'}^{j,-}|}\right\rangle \right |\lesssim  2^{-2j},  \,
		\left |\widetilde\xi- c_{k, k'}^{j,+}-\left\langle  \widetilde\xi- c_{k, k'}^{j,+}, \frac{c_{k, k'}^{j,-}}{|c_{k, k'}^{j,-}|} \right\rangle  \frac{c_{k, k'}^{j,-}}{|c_{k, k'}^{j,-}|} \right |\lesssim  2^{-j}  \right\}.
\end{equation*} 
Thus, for    ${j_\circ}<j< j_\ast$, we have 
\Be \label{thetatheta} 
	\bigcup_{k\sim k^\prime}(-\bs^j_k+ \bs^j_{k^\prime})
		 \subset    B_{d-1}(0,C_1 2^{-j})\setminus  B_{d-1}(0,C_2 2^{-j})
		 \Ee
with some $C_1, C_2>0$ (see Figure \ref{fig_conv_near}). For $j=j_\ast$, since there is no separation between $ \bs^j_k$ and $\bs^j_{k^\prime}$, we just have 
\Be \label{thetatheta1} 
	\bigcup_{k\sim k^\prime}(-\bs^\ja_k+ \bs^\ja_{k^\prime})
		 \subset    B_{d-1}(0,C2^{-\ja})\Ee
for some $C>0$. 

\usetikzlibrary{calc}
\def\centerarc[#1](#2)(#3:#4:#5)
    { \draw[#1] ($(#2)+({#5*cos(#3)},{#5*sin(#3)})$) arc (#3:#4:#5); }
\begin{figure}
\captionsetup{type=figure,font=footnotesize}
\centering
\begin{tikzpicture}[scale=0.9]\scriptsize
	\centerarc[black](0,0)(0:360:3)	
	\centerarc[red](0,0)(0:360:0.18)
	\centerarc[red](0,0)(0:360:0.9)
	\centerarc[blue, line width=1mm](0,0)(30:35:3)
		\draw (0.22,0.2)+({3*cos(30)}, {3*sin(30)}) node{$\bs_{k}^j$};
	\centerarc[blue, line width=1mm](0,0)(210:215:3)
		\draw ({3*cos(210)}, {3*sin(210)})+(-0.3,-0.2) node{$-\bs_{k}^j$};
	\draw [dash pattern={on 2pt off 1pt}, line width=0.05mm]({3*cos(32.5)}, {3*sin(32.5)})--({3*cos(212.5)}, {3*sin(212.5)});
	\draw[fill] (0,0) circle [radius=0.03] node[below]{$0$};
	\centerarc[purple, line width=1mm](0,0)(40:45:3)
		\draw (0.1,0.3)+({3*cos(40)}, {3*sin(40)}) node{$\bs_{k'}^j$};
	\draw[dash pattern={on 2pt off 1pt}, line width=0.05mm] ({3*cos(42.5)}, {3*sin(42.5)})--(0,0);
	\draw[line width=3.5mm, orange] ({5*cos(212.5)+5*cos(42.5)}, {5*sin(212.5)+5*sin(42.5)}) -- ({cos(212.5)+cos(42.5)}, {sin(212.5)+sin(42.5)});
	\draw[fill] ({3*cos(212.5)}, {3*sin(212.5)})+({3*cos(42.5)}, {3*sin(42.5)}) circle [radius=0.03] node[right]{$c_{k,k'}^{j,+}$};
	\draw ({3*cos(212.5)}, {3*sin(212.5)})+({3*cos(42.5)}, {3*sin(42.5)}) node[above left]{$\mathcal R_{k,k'}^{j,+}$};
	\draw[dash pattern={on 2pt off 1pt}, line width=0.05mm] ({3*cos(212.5)}, {3*sin(212.5)})-- ({3*cos(212.5)+3*cos(42.5)}, {3*sin(212.5)+3*sin(42.5)}) -- ({3*cos(42.5)}, {3*sin(42.5)});
\end{tikzpicture}\caption{ The neighboring case \eqref{p-case}:  the point  $c_{k,k'}^{j,+}=-c(\bs_k^j) + c(\bs^j_{k^\prime})$ and the set $\mathcal R_{k,k'}^{j,+}\subset \R^{d-1}$ (the orange rectangle).}
\label{fig_conv_near}
\end{figure}
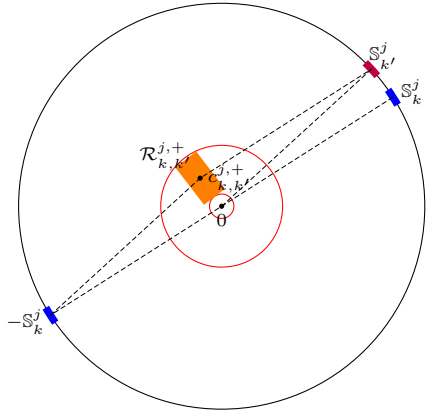

On the other hand, since $\dist ( \bs_k^j, \bs_{k'}^j)\lesssim 2^{-j}$ whenever $k\sim k'$,  we note that if $k\sim k'$ 
\Be\label{thetas_n}
	\bs^j_k+ \bs^j_{k^\prime}\subset  \mathcal R_{k,k'}^{j,-},
\Ee
where 
\begin{equation} \label{Rj_n}
\mathcal R_{k,k'}^{j,-} =\left\{ \widetilde\xi\colon  \left |\left\langle \widetilde\xi- c_{k, k'}^{j,-}, \frac{c_{k, k'}^{j,-}}{|c_{k, k'}^{j,-}|}\right\rangle\right |\lesssim 2^{-2j},  \,
	\left |\widetilde\xi- c_{k, k'}^{j,-}-\left\langle  \widetilde\xi- c_{k, k'}^{j,-}, \frac{c_{k, k'}^{j,-}}{|c_{k, k'}^{j,-}|} \right\rangle  \frac{c_{k, k'}^{j,-}}{|c_{k, k'}^{j,-}|} \right |\lesssim  2^{-j}  \right\}.
\end{equation} 

Clearly, $ \mathcal R_{k,k'}^{j,-} $ is contained in a $C2^{-2j}$-neighborhood of $2\mathbb S^{d-2}$ provided that  $k\sim k'$ (see Figure \ref{fig_conv_anti}).  We also see that  for every $j$ with ${j_\circ}<j\le j_\ast$,
\Be \label{thetatheta-} 
\bigcup_{k\sim k^\prime}(\bs^j_k+ \bs^j_{k^\prime})
	 \subset 2\mathbb S^{d-2} + O( C2^{-2j}).  
\Ee

Let us denote by $\phi_{k,k'}^{j,\pm}$ a smooth function adapted to  $\mathcal R_{k,k'}^{j,\pm}$ such that $\phi_{k,k'}^{j,\pm}$ is supported in the rectangle given by dilating  $\mathcal R_{k,k'}^{j,\pm}$ twice  from its center  and  $\phi_{k,k'}^{j,\pm}=1$  on $\mathcal R_{k,k'}^{j,\pm}$.  We also define the projection operator   $P_{k, k'}^{j,\pm} $ by 
\Be \label{projkk}
	\F\big(  P_{k, k'}^{j, \pm} g\big)(\xi) =\phi_{k,k'}^{j,\pm}(\widetilde\xi\,)\, \widehat g(\xi).
\Ee

In what follows we prove bilinear estimates which are the key ingredients for the main estimates. It  will be done by considering  the  three cases \eqref{p-case}, \eqref{n-case}, and \eqref{s-case}, separately.  For a unit vector $e\in \mathbb S^{d-1}$ and $\de>0$ let $P_{\le \delta}^{e}$ be the Littlewood-Paley projection in the $e$-direction which is defined by 
\[	\mathcal F \big(P_{\le \delta }^{e}  g\big)(\xi) = \beta_0\bigg(\frac{\xi\cdot e}{\delta}\bigg) \widehat g(\xi) \] 
with $\beta_0\in C^\infty_c ((-4,4))$ satisfying $\beta_0=1$ on $[-2,2]$.

\begin{figure}[t]
\captionsetup{type=figure,font=footnotesize}
\centering
\begin{tikzpicture}[scale=0.9]\scriptsize
	\centerarc[black](0,0)(0:360:3)
	\centerarc[red](0,0)(-27:27:6.5)
	\centerarc[red](0,0)(-30:30:5.8)
	\centerarc[blue, line width=1mm](0,0)(-2.5:2.5:3)
		\draw (3.3, -0.2) node{$\bs_{k}^j$};
	\centerarc[blue, line width=1mm](0,0)(180-2.5:180+2.5:3)
		\draw (-3.3,0) node{$-\bs_{k}^j$};
	\draw[fill] (0,0) circle [radius=0.03] node[below]{$0$};
	\centerarc[purple, line width=1mm](0,0)(7.5:12.5:3)
		\draw (0.3,0.2)+({3*cos(10)}, {3*sin(10)}) node{$\bs_{k'}^j$};
	\draw[dash pattern={on 2pt off 1pt}, line width=0.05mm] (3, 0)+({3*cos(10)}, {3*sin(10)}) -- ({3*cos(10)}, {3*sin(10)})--(0,0);
	\draw[line width=6mm, orange] ({3.07+3.07*cos(10)}, {3.07*sin(10)}) -- ({2.92+2.92*cos(10)}, {2.92*sin(10)});
		\draw[fill] (3, 0)+({3*cos(10)}, {3*sin(10)}) circle [radius=0.03] node[right]{$c_{k,k'}^{j,-}$};
		\draw (3, 0.25)+({3*cos(10)}, {3*sin(10)}) node[above]{$\mathcal R_{k,k'}^{j,-}$};
	\draw [dash pattern={on 2pt off 1pt}, line width=0.05mm](3, 0)+({3*cos(10)}, {3*sin(10)})--(3,0)--(-3,0);
\end{tikzpicture}\caption{The antipodal case \eqref{n-case}:  the point $c_{k,k'}^{j,-} =c(\bs_k^j) + c(\bs^j_{k^\prime})$ and the set $\mathcal R_{k,k'}^{j,-}\subset \R^{d-1}$ (the orange rectangle).}
\label{fig_conv_anti}
\end{figure}
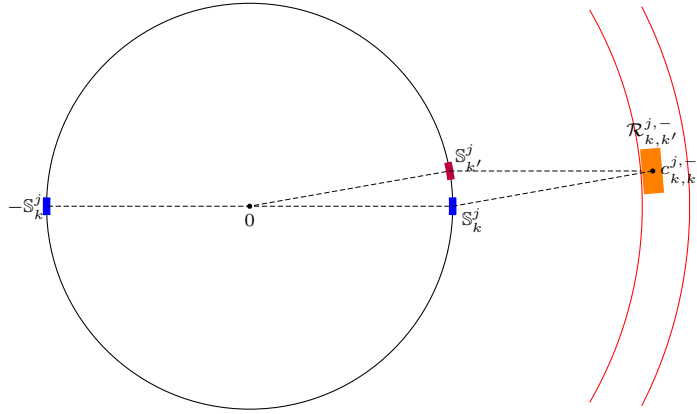

\subsection{Estimates for the neighboring case}\label{sec_neigh}
In this section (Section \ref{sec_neigh}) we consider the neighboring  case in the primary decomposition \eqref{primary}.  So, $\mathbb S_\ell$, $\mathbb S_{\ell'}$ satisfy \eqref{p-case}, and $u$, $v$ satisfy \eqref{uv_support}  in place of $u_\ell$, $v_{\ell'}$.

\subsubsection*{{Decomposition of  $u$ and $v$}}
Let us define $u_k^j$ and $v_{k}^j$ by 
\Be\label{decom_result}
	\F (u_k^j) (\xi) = \rchi_{\bs_k^j}\bigg(\frac{\widetilde\xi - \widetilde{e_2}}{|\widetilde\xi - \widetilde{e_2}|}\bigg) \widehat u(\xi), \quad \F ( v_{k}^j )(\xi) = \rchi_{\bs_{k}^j}\bigg(\frac{\widetilde\xi - \widetilde{e_2}}{|\widetilde\xi - \widetilde{e_2}|}\bigg) \widehat v(\xi).
\Ee 
Then by \eqref{sdecomp} it follows that  
\begin{align}\label{whitney}
\langle (m^\kappa_\tau (D) P_\lambda  f) u,v\rangle  =\sum_{\jc < j  \le j_\ast}  \sum _{k\sim k'}   \langle (m^\kappa_\tau (D) P_\lambda  f) u_k^j,v_{k'}^j \rangle =   \sum_{\jc < j  \le j_\ast}  \mathcal I_{j},
\end{align}
where
\Be\label{jjkk}
	\mathcal I_j= \sum_{k\sim k'} \langle (m^\kappa_\tau (D) P_\lambda  f) u_k^j, {v_{k'}^j}\rangle, \quad  \jc<j\le  j_\ast.
\Ee
For $0<\delta\le 2\ec$, we denote by $j_\ast(\delta)$ the largest integer $j_\ast$ satisfying $\sqrt{\de}\le 2^{-j_\ast}$.  For $\lambda,\, \delta>0$ and $f\in L^p(\R^d)$, let us set 
\[	\Gamma_{\lambda,\delta}^{p, +}(f) = 
		\sup_{\jc< j \le  j_\ast(\delta)} \sup_{k\sim k'} \left\{ \lambda^\frac1p \delta^{-\frac1p} \|P_{k, k'}^{j,+}  P_{\le \delta}^{e_1} P_\lambda f\|_{L^p(\R^d)} \right\}.	\]
\begin{lem}\label{bilinear+} 
Let $d\ge3$, $p\ge \frac{d+1}2$, $\frac1\tau\le\lambda \lesssim 1$,  $0 < \delta_2\le \delta_1 \le 2\epc$, 
and let $f\in L^p(\mathbb R^{d})$. 
Assume that $\mathbb S_\ell$, $\mathbb S_{\ell'}$ satisfy \eqref{p-case} and   $u, $ $v$ satisfy \eqref{del_sup}. 
Then,  for any  $\epsilon>0$, 
\begin{align}
\label{large}  
	\big|\langle (m^\kappa_\tau (D) P_\lambda  f) u,v\rangle\big| 
		&\lesssim \dep \lambda^{\kappa-\frac2p}   
		\delta_1^{\frac{d+9 }{4p} } 		\delta_2^{\frac{d+1 }{4p} }   \Gamma_{\lambda, \delta_1}^{p, +}(f) \|u\|_{L^2(\R^d)} \|v\|_{L^2(\R^d)},    \\ 
\label{large2}
	\big|\langle (m^\kappa_\tau (D) P_\lambda  f) u,v\rangle\big| 
		&\lesssim \dep \lambda^{\kappa-\frac2p}  
		\delta_1^{\frac{d+1 }{2p} } 
		\delta_2^{\frac{2 }{p} }   \Gamma_{\lambda, \delta_1}^{p, +}(f) \|u\|_{L^2(\R^d)} \|v\|_{L^2(\R^d)}. 
\end{align}
\end{lem}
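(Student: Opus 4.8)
The plan is to feed the Whitney decomposition \eqref{whitney} into a duality argument: write $\langle(m^\kappa_\tau(D)P_\lambda f)u_k^j,v_{k'}^j\rangle=\langle m^\kappa_\tau(D)P_\lambda f,\,\overline{u_k^j}v_{k'}^j\rangle$, localize the first factor by the same multipliers that appear in $\Gamma_{\lambda,\delta_1}^{p,+}$, control it by $\lambda^\kappa$ times the $\Gamma$-quantity, and estimate the product $\overline{u_k^j}v_{k'}^j$ by the rescaled bilinear estimate of Lemma \ref{rescaling}. The summation over $j$ then produces a factor $\lambda^{-1/p}$ rather than $\delta_1^{-1/(2p)}$ because $\mathcal I_j$ vanishes unless $2^{-j}\gtrsim\lambda$; this is exactly what upgrades the exponent of $\lambda$ from $\kappa-\tfrac1p$ to $\kappa-\tfrac2p$.

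\textbf{Per-piece bound.} Fix $j$ with $\jc<j\le j_\ast(\delta_1)$ and $k\sim k'$, and set $w:=\overline{u_k^j}v_{k'}^j$. By \eqref{del_sup}, \eqref{decom_result} and \eqref{thetas}, $\widehat w$ is supported in $\{\xi:|\xi_1|\le 2\delta_1,\ \widetilde\xi\in\mathcal R_{k,k'}^{j,+}+O(2\delta_1)\}$; since $j\le j_\ast(\delta_1)$ forces $\delta_1\le 2^{-2j}$, this set lies where the symbols of $P_{k,k'}^{j,+}$ and $P_{\le\delta_1}^{e_1}$ are identically $1$ (after the standard slight enlargement of their plateaus). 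As $\widehat{m^\kappa_\tau(D)P_\lambda f}$ is supported in $\{|\xi|\sim\lambda\}$, Plancherel gives $\langle m^\kappa_\tau(D)P_\lambda f,w\rangle=\langle P_{k,k'}^{j,+}P_{\le\delta_1}^{e_1}m^\kappa_\tau(D)P_\lambda f,\,w\rangle$. Two facts follow. First, $\mathcal R_{k,k'}^{j,+}\subset B_{d-1}(0,C2^{-j})$ by \eqref{thetatheta}, \eqref{thetatheta1} and $|\xi_1|\le 2\delta_1\le 2^{-2j}$, so $\widehat w$ lives in $\{|\xi|\lesssim 2^{-j}\}$; hence the pairing, and therefore $\mathcal I_j$, vanishes unless $2^{-j}\gtrsim\lambda$. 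Second, because $\lambda\ge 1/\tau$, on $\{|\xi|\sim\lambda\}$ the symbol $m^\kappa_\tau$ equals $\lambda^\kappa$ times a symbol with uniform Mikhlin bounds (by \eqref{prop_symb}), so $m^\kappa_\tau(D)$ precomposed with a fattened $P_\lambda$ is bounded on $L^p$ with norm $\lesssim\lambda^\kappa$ and commutes with $P_{k,k'}^{j,+},P_{\le\delta_1}^{e_1}$; combined with H\"older and the definition of $\Gamma_{\lambda,\delta_1}^{p,+}$ this yields
\[
\big|\langle(m^\kappa_\tau(D)P_\lambda f)u_k^j,v_{k'}^j\rangle\big|
\ \lesssim\ \lambda^{\kappa}\,\|P_{k,k'}^{j,+}P_{\le\delta_1}^{e_1}P_\lambda f\|_{L^p}\,\|u_k^j v_{k'}^j\|_{L^{p'}}
\ \lesssim\ \lambda^{\kappa-\frac1p}\,\delta_1^{\frac1p}\,\Gamma_{\lambda,\delta_1}^{p,+}(f)\,\|u_k^j v_{k'}^j\|_{L^{p'}}.
\]
For the last factor, $\|u_k^j v_{k'}^j\|_{L^{p'}}=\|\,\overline{u_k^j}v_{k'}^j\|_{L^{p'}}$, and (after the harmless modulation removing the $\widetilde{e_2}$-translation) Lemma \ref{rescaling} applies with $h_1=\delta_1$, $h_2=\delta_2$ and $\dist(\bs_k^j,\bs_{k'}^j)\simeq 2^{-j}$; estimate \eqref{j-sep} gives $\|u_k^j v_{k'}^j\|_{L^{p'}}\lesssim\dep\,2^{j/p}\delta_1^{1/p}(\delta_1\delta_2)^{\frac{d+1}{4p}}\|u_k^j\|_{L^2}\|v_{k'}^j\|_{L^2}$.

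\textbf{Summation.} Summing over $k\sim k'$ and using the almost orthogonality of the pieces $u_k^j,v_{k'}^j$ (so $\sum_k\|u_k^j\|_{L^2}^2=\|u\|_{L^2}^2$) together with the bounded overlap of the relation $k\sim k'$, Cauchy--Schwarz gives $\sum_{k\sim k'}\|u_k^j\|_{L^2}\|v_{k'}^j\|_{L^2}\lesssim\|u\|_{L^2}\|v\|_{L^2}$, hence
\[
|\mathcal I_j|\ \lesssim\ \dep\,\lambda^{\kappa-\frac1p}\,2^{\frac jp}\,\delta_1^{\frac2p}(\delta_1\delta_2)^{\frac{d+1}{4p}}\,\Gamma_{\lambda,\delta_1}^{p,+}(f)\,\|u\|_{L^2}\|v\|_{L^2}.
\]
Now sum over $\jc<j\le j_\ast(\delta_1)$; since the summand vanishes unless $2^{-j}\gtrsim\lambda$, the geometric series $\sum 2^{j/p}$ is dominated by its largest term, which is $\lesssim\lambda^{-1/p}$. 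Using $\delta_1^{2/p}(\delta_1\delta_2)^{\frac{d+1}{4p}}=\delta_1^{\frac{d+9}{4p}}\delta_2^{\frac{d+1}{4p}}$ produces \eqref{large}. Estimate \eqref{large2} is obtained in the same way, using \eqref{j-sep2} instead of \eqref{j-sep}; the exponents then recombine to $\delta_1^{\frac{d+1}{2p}}\delta_2^{\frac2p}$. The top scale $j=j_\ast(\delta_1)$, where the Whitney pairs are only at distance $\lesssim2^{-j}$ and transversality fails, is treated separately: there $2^{-2j_\ast}\simeq\delta_1$, so $\widehat{u_k^{j_\ast}}$, $\widehat{v_{k'}^{j_\ast}}$ sit in slabs $\Sigma_{\le C\delta_i}^{1,C\delta_i}$, and one bounds $\|u_k^{j_\ast}v_{k'}^{j_\ast}\|_{L^{p'}}$ by H\"older plus the linear Stein--Tomas estimate \eqref{end-linear} (with Bernstein in the $e_1$- and tangential directions for the extreme $p$); this single term obeys the asserted bounds and is absorbed. (The degenerate case $j_\ast(\delta_1)\le\jc$, i.e. $\delta_1$ comparable to $\epc$, is handled directly by the same linear estimate without angular decomposition.)

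\textbf{Main obstacle.} The delicate point is the bookkeeping of Fourier supports that makes the localizations $P_{k,k'}^{j,+}$ and $P_{\le\delta_1}^{e_1}$ transparent — which hinges on $\delta_1\le 2^{-2j}$ — together with the observation that $\mathcal I_j$ vanishes for $2^{-j}\ll\lambda$. It is precisely this vanishing that converts the unavoidable $2^{j/p}$ losses coming from the bilinear estimates \eqref{j-sep}--\eqref{j-sep2} into a single factor $\lambda^{-1/p}$, hence into the correct power $\lambda^{\kappa-2/p}$; without it, the crude summation would only give $\lambda^{\kappa-1/p}$ times an extra $\delta_1^{-1/(2p)}$, which is a strictly weaker estimate when $\delta_1>\lambda^2$.
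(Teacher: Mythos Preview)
Your argument is essentially the same as the paper's: insert the projections $P_{k,k'}^{j,+}P_{\le\delta_1}^{e_1}$ using the Fourier support of $\overline{u_k^j}v_{k'}^j$, apply Lemma \ref{Lit_Pal} and H\"older, then Lemma \ref{rescaling}, and use $\lambda\lesssim 2^{-j}$ to convert $2^{j/p}$ into $\lambda^{-1/p}$. Your way of handling the $j$-sum (a genuine geometric series capped at $2^{-j}\gtrsim\lambda$) is even a bit cleaner than the paper's, which bounds every $\mathcal I_j$ by the same quantity and absorbs the resulting factor $j_\ast\simeq\log(1/\delta_1)$ into $\dep$.

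There is, however, a real gap in your treatment of the top scale $j=j_\ast$ for the \emph{second} estimate \eqref{large2}. Your sketch ``H\"older plus the linear Stein--Tomas estimate \eqref{end-linear} with Bernstein'' is exactly the paper's argument for the $j_\ast$ case of \eqref{large} (it yields \eqref{bernstein}, i.e.\ $\delta_1^{\frac{d+3}{2(d+1)}-\frac1d}\delta_2^{\frac{d+2}{2d}}$ at $p=\tfrac{d+1}{2}$, and since $\delta_2\le\delta_1$ one trades $\delta_2^{1/d}$ for $\delta_1^{1/d}$ to hit the target \eqref{final_scale}). But for \eqref{large2} the target at $j=j_\ast$ is \eqref{secondcase}, namely $\delta_1^{\frac{d-2}{d+1}}\delta_2^{\frac{4}{d+1}}$ at $p=\tfrac{d+1}{2}$; comparing exponents one needs $\tfrac{d+2}{2d}\ge\tfrac{4}{d+1}$, i.e.\ $d^2-5d+2\ge0$, which \emph{fails} for $d=3,4$. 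In those dimensions your bound has too little weight on $\delta_2$ and cannot be rearranged (the inequality $\delta_2\le\delta_1$ goes the wrong way). The paper fixes this by an extra orthogonality step in the $\xi_1$-variable: decompose $[-\delta_1,\delta_1]$ into intervals $I_\ell$ of length $\simeq\delta_2$, write $\bu\bv=\sum_\ell\bu^\ell\bv$, apply the almost-orthogonality lemma \cite[Lemma 6.1]{TVV} in $L^{p'}$, Bernstein in $x_1$ on each $\bu^\ell\bv$, and only then H\"older + Bernstein (on $\bu^\ell$) + Corollary \ref{d-nbd} (on $\bv$) in the remaining $d-1$ variables. This is the argument for \eqref{interpolation2} rerun at the non-transversal scale, and it is precisely what redistributes the exponent from $\delta_1$ to $\delta_2$ to reach \eqref{secondcase}. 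You should add this step; the rest of your proof is correct.
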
 

\begin{proof}  In the decomposition \eqref{whitney}  we take $j_\ast=j_\ast(\delta_1)$.  Since $u, $ $v$ satisfy \eqref{del_sup}, we see that $\supp \F u_k^j \subset \big(\{0\}\times \mathbb S_k^j+e_2\big) +O(\delta_1)$ and $\supp \F v_{k'}^j \subset \big(\{0\}\times \mathbb S_{k'}^j+e_2\big) +O(\delta_2)$. Hence,  we have
\[\supp \big(\F u_k^j(-\,\cdot)\ast \overline{\F v_{k'}^j}\big) \subset \big(\{0\}\times (-\mathbb S_k^j+\mathbb S_{k'}^j)\big) +O(2\delta_1) \subset \{|\xi_1|\lesssim \delta_1\} \times \mathcal R_{k,k'}^{j,+}, \]
where the last inclusion follows from \eqref{thetas} since $2^{-2j}\ge \delta_1$  for $j\le \ja$. This observation enables us to insert the projection operator  $P_{k, k'}^{j,+}$   to write 
\Be \label{decomp2}
	\mathcal I_j = \sum_{k\sim k' } \langle  (P_{k, k'}^{j,+}    m^\kappa_\tau (D) P_\lambda  f)  u_k^j, v_{k'}^j \rangle.
\Ee
We first obtain estimates for each  $\mathcal I_j$.  By  \eqref{thetatheta} and \eqref{thetatheta1},  it follows   that  $\mathcal I_j\neq 0$ only if $\lambda\lesssim 2^{-j}$.  From   \eqref{del_sup} we see
\[ \langle  (P_{k, k'}^{j,+} m^\kappa_\tau (D) P_\lambda  f)  u_k^j, v_{k'}^j \rangle = \langle  (m^\kappa_\tau (D) P_{k, k'}^{j,+}  P_{\le \delta_1}^{e_1} P_\lambda f)  u_k^j, v_{k'}^j \rangle.\]
Thus, H\"older's inequality and Lemma \ref{Lit_Pal} below give us that for every $\jc<j< j_\ast$ and $k\sim k'$,
\[	 |\langle  (P_{k, k'}^{j,+} m^\kappa_\tau (D) P_\lambda    f)  u_k^j, v_{k'}^j \rangle |
	\lesssim \lambda^\kappa  \| P_{k, k'}^{j,+} P_{\le \delta_1}^{e_1}  P_\lambda f\|_{L^p(\R^d)} \, \| u_k^j\, \overline{ v_{k'}^j} \|_{L^{p'}(\Rd)} .	\]
As already mentioned, by \eqref{thetatheta}  we may assume   $ \lambda \lesssim  2^{-j}$ since  $P_{k, k'}^{j,+}P^{e_1}_{\le \delta_1} P_{\lambda} =0$ otherwise.  Translation in the frequency space does not have any effect on the estimates, so we may apply \eqref{j-sep} in Lemma \ref{rescaling} (with $h_1 = \delta_1$ and $h_2 =\delta_2$)  to get, for $\jc<j< \ja$, 
\begin{equation}\label{jjast}
	\| u_k^j\, v_{k'}^j \|_{L^{p'}(\Rd)}  \lesssim  \dep  2^{\frac{j}{p}} \delta_1^{\frac{d+5 }{4p} } \delta_2^{\frac{d+1}{4p} }     \|u_k^j\|_{L^2} \|v_{k'}^j\|_{L^2} .
\end{equation}
Combining these estimates with \eqref{decomp2}, we have,  for $\jc<j< \ja$,
\[ |\mathcal I_j| \lesssim \dep \lambda^\kappa
          2^{\frac j p}\delta_1^{\frac{d+5 }{4p} } \delta_2^{\frac{d+1}{4p} } \sup_{k\sim k'} \| P_{k, k'}^{j,+} P_{\le \delta_1}^{e_1}  P_\lambda f\|_{L^p} \sum_{k\sim k'} 
   \|u_k^j\|_{L^2}\|v_{k'}^j\|_{L^2}.
   \] 
Since $\lambda \lesssim  2^{-j}$ and $\sum_{k\sim k'} \|u_k^j\|_{L^2}\|v_{k'}^j\|_{L^2}\lesssim \|u\|_{L^2}\|v\|_{L^2}$, for $\jc<j< \ja$ it follows that 
\begin{align}\label{q_whi_n}		
|{\mathcal I_j} |  
 \lesssim
\dep  \lambda^{\kappa-\frac{2}p}  \delta_1^{\frac{d+9 }{4p} } \delta_2^{\frac{d+1}{4p} } 
         \Gamma_{\lambda,\delta_1}^{p, +} (f)  \|u \|_{L^2}\|v \|_{L^2} .	\end{align}
Since $j_\ast \simeq \log \de_1^{-1}\lesssim \delta_2^{-\epsilon}$, summation along $j$ gives 
\Be\label{sumjstar}
\sum_{\jc<j<j_\ast}|{\mathcal I_j} | 
	\lesssim \dep \lambda^{\kappa-\frac2{p}}  \delta_1^{\frac{d+9 }{4p} } \delta_2^{\frac{d+1 }{4p} }   \Gamma_{\lambda, \delta_1}^{p, +}(f)  \|u\|_{L^2} \|v\|_{L^2}.	
\Ee
Now we consider  $\mathcal I_{j_\ast}$ in \eqref{whitney}. Since there is no separation between the supports of $\mathcal F(u_k^{j_\ast})$, $\mathcal F(v_{k'}^{j_\ast})$, the bilinear restriction estimates are no longer available. Instead, we use more elementary argument which relies on Lemma \ref{lin}.  In fact, as is clear for the argument in the above,  it is sufficient to  show 
\[ \|  u_k^\ja  v_{k'}^\ja \|_{L^{p'}(\Rd)}  \lesssim  \lambda^{-\frac1p} 
 \delta_1^{\frac{d+5 }{4p} } \delta_2^{\frac{d+1}{4p} }     \| u_k^\ja \|_{L^2} \| v_{k'}^\ja\|_{L^2} , \]
which plays the role of \eqref{jjast}.  For the purpose we may assume $\lambda\lesssim 2^{-\ja}$, otherwise $P_{k,k'}^{\ja,+}P_\lambda=0$.  Since  $2^{-j_\ast} \simeq \sqrt{\delta_1}$ we have $1\lesssim \lambda^{-\frac1p} \delta_1^\frac1{2p}$. Thus, we only need to show that,  for $p\ge \frac{d+1}2$, 
\Be\label{final_scale}
	\| u_k^\ja v_{k'}^\ja  \|_{L^{p'}(\R^d)} \lesssim   \delta_1^{\frac{d+3 }{4p} } \delta_2^{\frac{d+1}{4p} }     \|u_k^\ja\|_{L^2}\|v_{k'}^\ja\|_{L^2} .
	\end{equation}
The estimate is trivial with $p=\infty$. By interpolation it is sufficient to show \eqref{final_scale} with $p=\frac{d+1}2$.  We note that  $\F u_k^{j_\ast}$ is supported in a rectangle of dimensions  approximately 
\[	\delta_1\times\delta_1\times \overbrace{\sqrt{\delta_1} \times \dots \times \sqrt{\delta_1}}^{ d-2 \text{ times}}.	\]
Let $r$ be the number such that $\frac{d-1}{d+1}=\frac1r+\frac{d-2}{2d}$. By H\"older's inequality  $\| u_k^\ja  v_{k'}^\ja  \|_{L^\frac{d+1}{d-1} }\le \|  u_k^\ja  \|_{L^r}  \|  v_{k'}^\ja \|_{L^\frac{2d}{d-2}}$. Then, applying Bernstein's inequality and Lemma  \ref{lin} to $u_k^\ja$ and $v_{k'}^\ja$, respectively,   we have 
\Be\label{bernstein} 
	 \| u_k^\ja  v_{k'}^\ja  \|_{L^\frac{d+1}{d-1}}\lesssim \delta_1^{\frac{d+2}2(\frac12-\frac1r)}  \delta_2^{\frac{d+2}{2d}}\|u_k^\ja\|_{L^2}\|v_{k'}^\ja\|_{L^2} = \delta_1^{\frac{d+3}{2(d+1)}-\frac1d} \delta_2^{\frac{d+2}{2d}} \|u_k^\ja\|_{L^2}\|v_{k'}^\ja\|_{L^2}.  
\Ee
Since $\delta_2\le \delta_1$, this gives the desired \eqref{final_scale} with $p=\frac{d+1}2$. 

We now prove \eqref{large2}.  We claim that,   if $\jc< j\le j_\ast $ and $k\sim k'$,  \begin{equation}
\label{secondcase0}
	\| u_k^j v_{k'}^j \|_{L^{p'}(\Rd)}  \lesssim \dep 2^\frac{j}{p} \delta_1^{\frac{d-1 }{2p} } \delta_2^{\frac{2}{p} }  \|u_k^j\|_{L^2} \|v_{k'}^j\|_{L^2}.	
\end{equation}
Once we have \eqref{secondcase0},  by repeating the same argument as in the above we get  the  bound
\[	\sum_{\jc<j\le j_\ast}| \mathcal I_j | 
	\lesssim \dep \lambda^{\kappa-\frac2p} \delta_1^\frac{d+1}{2p} \delta_2^\frac{2}{p}  
	\Gamma_{\lambda, \delta_1}^{p, +}(f)  \|u\|_{L^2} \|v\|_{L^2}.	 \]
So, it remains to show \eqref{secondcase0} for $\jc< j\le j_\ast$. If $\jc< j< j_\ast$ \eqref{secondcase0} follows by the estimate \eqref{j-sep2} (in Lemma \ref{rescaling}) with $h_2=\delta_2$, $h_1=\delta_1$. Thus, it  remains to show  \eqref{secondcase0} with $j=j_\ast$.

As before, the case $j=\ja$ is handled differently because there is no separation between the supports of  $\mathcal F(u_k^{j_\ast})$, $\mathcal F(v_{k'}^{j_\ast})$. In fact, we  claim that 
\begin{equation}
\label{secondcase}
	\| u_k^\ja v_{k'}^\ja \|_{L^{p'}(\mathbb R^d)}  \lesssim \delta_1^{\frac{d-2 }{2p} } \delta_2^{\frac{2}{p} }  \| u_k^\ja\|_{L^2} \| v_{k'}^\ja\|_{L^2}.
\end{equation}
Since the estimate  is trivial with $p=\infty$, by interpolation it is enough to show  \eqref{secondcase} with $p=\frac{d+1}2$.  For simplicity we set 
	\[\mathbf {u}= u_k^\ja, \quad  \bv = v_{k'}^\ja , \] 
and  we use the argument for \eqref{interpolation2}.  Let $\{I_\ell\}$ be a collection of essentially disjoint intervals  of length $\simeq \delta_2$ such that  $I_\ell\subset [- \delta_1,  \delta_1]$, and $[- \delta_1,  \delta_1]=\bigcup_{\ell} I_\ell$.  Then, we have 
\[ \bu\bv= \sum_\ell \bu^\ell \bv,    \quad  \bu^\ell:= \F^{-1} \big( \chi_{I_\ell}(\xi_1) \widehat{\bu}(\xi)\big).\] 
Since $\F( \bu^\ell \bv)$ is supported in $(I_\ell+[-\delta_2,\delta_2])\times \mathbb R^{d-1}$, by \cite[Lemma 6.1]{TVV}, and successively applying Bernstein's inequality, Minkowski's inequality  and  H\"older's inequality we see that 
\begin{equation}\label{secondcase1}
	\begin{aligned}
	\B\|\sum_\ell \bu^\ell \bv \B\|_{L^{p'} (\mathbb R^d)}
		&\lesssim \B(\sum_{\ell}\| \bu^\ell \bv \|_{L^{p'} (\mathbb R^d)}^{p'} \B)^\frac1{p'} 
		\lesssim \delta_2^{\frac1p}		\B(\sum_{\ell}\| \bu^\ell(x_1, \cdot) \bv(x_1, \cdot) \|_{L_{x_1}^1L^{p'}_{\widetilde x}}^{p'} \B)^\frac1{p'} \\	
		&\lesssim \delta_2^{\frac1p}(\delta_1/\delta_2)^{\frac{1}{2}-\frac1p }
		\B(\sum_{\ell}\| \bu^\ell(x_1, \cdot) \bv(x_1, \cdot) \|_{L_{x_1}^1L^{p'}_{\widetilde x}}^{2} \B)^\frac1{2} .	
	\end{aligned}
\end{equation}
Let $r$ be the number such that $\frac{d-1}{d+1}=\frac1r+\frac{d-2}{2d}$.  By H\"older's inequality 
\[ \| \bu^\ell(x_1, \cdot) \bv(x_1, \cdot)  \|_{L^{\frac{d+1}{d-1}} (\R^{d-1})}\le \|   \bu^\ell(x_1, \cdot)  \|_{L^r(\R^{d-1})}  \|   \bv(x_1, \cdot) \|_{L^{\frac{2d}{d-2}}(\R^{d-1})}.\]
Since $\mathcal F(\bu^\ell(x_1, \cdot)) $ is supported in a rectangle of dimensions about $\delta_1\times \overbrace{\sqrt{\delta_1} \times \dots \times \sqrt{\delta_1}}^{ d-2 \text{ times}}$ and  $\mathcal F(\bv(x_1, \cdot)) $  is supported in $\wt{e_2}+ \mathbb S^{d-2} + O(\delta_2)$,  applying Bernstein's inequality to $\|\bu^\ell(x_1, \cdot)\|_{L^r}$ and Corollary \ref{d-nbd} to $\|\bv(x_1, \cdot)\|_{L^\frac{2d}{d-2}}$,   we have 
\[ \| \bu^\ell(x_1, \cdot) \bv(x_1, \cdot)  \|_{L^{\frac{d+1}{d-1}} (\R^{d-1}) }\lesssim  \delta_1^{\frac d2(\frac12-\frac1r)} \delta_2^\frac12
 \| \bu^\ell(x_1, \cdot)\|_{L^2  (\R^{d-1})}\|\bv(x_1, \cdot)  \|_{L^2 (\R^{d-1})}.\] 
Putting this in \eqref{secondcase1} with $p=\frac{d+1}2$,  we have 
\begin{align*}
\|  \bu \bv \|_{L^\frac{d+1}{d-1}} 
	\lesssim \delta_1^{\frac{d-2}{d+1}}\delta_2^{\frac{4}{d+1}} \B(\sum_{\ell}\big\| \|\bu^\ell(x_1, \cdot)\|_{L^2}\| \bv(x_1, \cdot)\|_{L^2} \big\|_{L_{x_1}^1}^2 \B)^\frac1{2} 
	\le \delta_1^{\frac{d-2}{d+1}}\delta_2^{\frac{4}{d+1}} \|\bu\|_{L^2}\|\bv\|_{L^2}.
\end{align*}
Thus we get \eqref{secondcase} with $p=\frac{d+1}2$.
\end{proof}

\begin{lem}\label{Lit_Pal}
If $\kappa\ge 0$ and $\lambda\gtrsim 1/\tau$, then 
\[	\|m^{\kappa}_\tau(D) P_\lambda f\|_{L^p(\R^d)} \le C\lambda^\kappa \|P_\lambda f\|_{L^p(\R^d)}	\]
with $C$ independent of $\lambda$ and $\tau$.
\end{lem}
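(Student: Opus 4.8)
I would prove this by the standard Littlewood--Paley route: factor the operator as the scalar $\lambda^\kappa$ times a Fourier multiplier that, once rescaled to frequency $\simeq 1$, belongs to a bounded family in $C^{d+1}_c$, so its convolution kernel has $L^1$ norm bounded uniformly in $\lambda$ and $\tau$; Young's inequality then closes the argument for every exponent at once. Concretely, fix $\widetilde\beta\in C_c^\infty((1/4,4))$ with $\widetilde\beta\equiv 1$ on $(1/2,2)$, so that $\widetilde\beta(|\xi|/\lambda)=1$ on $\supp\beta(|\cdot|/\lambda)$, and set $a_\lambda(\xi):=\lambda^{-\kappa}\,m^\kappa_\tau(\xi)\,\widetilde\beta(|\xi|/\lambda)$. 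Then $m^\kappa_\tau(\xi)\beta(|\xi|/\lambda)=\lambda^\kappa a_\lambda(\xi)\beta(|\xi|/\lambda)$, hence $m^\kappa_\tau(D)P_\lambda f=\lambda^\kappa a_\lambda(D)(P_\lambda f)$, and it suffices to prove $\|\mathcal F^{-1}a_\lambda\|_{L^1(\Rd)}\le C$ with $C$ independent of $\lambda,\tau$.

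\textbf{Symbol bounds.} The next step is to check that $a_\lambda$ is a normalized bump: it is supported in $\{|\xi|\simeq\lambda\}$ and satisfies $|\partial^\alpha a_\lambda(\xi)|\lesssim\lambda^{-|\alpha|}$ for $|\alpha|\le d+1$. On $\supp\widetilde\beta(|\cdot|/\lambda)$ we have $|\xi|\simeq\lambda\gtrsim 1/\tau$, so \eqref{prop_symb} gives $|\partial^\alpha m^\kappa_\tau(\xi)|\lesssim\lambda^{\kappa-|\alpha|}$ for $|\alpha|\le d+1$: on the part where $|\xi|\ge\tau^{-1}$ this reads $|\partial^\alpha m^\kappa_\tau(\xi)|\lesssim|\xi|^{\kappa-|\alpha|}\simeq\lambda^{\kappa-|\alpha|}$, while on the (possibly nonempty) part where $|\xi|<\tau^{-1}$ one has $|\xi|\simeq\tau^{-1}\simeq\lambda$, so $|\partial^\alpha m^\kappa_\tau(\xi)|\lesssim\tau^{-\kappa+|\alpha|}\simeq\lambda^{\kappa-|\alpha|}$ as well. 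Since $|\partial^\alpha[\widetilde\beta(|\xi|/\lambda)]|\lesssim\lambda^{-|\alpha|}$, the Leibniz rule yields the claimed bounds for $a_\lambda$, with constants depending only on $d$ and on the constants in Definition \ref{mk}.

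\textbf{Kernel estimate and conclusion.} Finally, put $b(\xi):=a_\lambda(\lambda\xi)$, which is supported in $\{|\xi|\simeq1\}$ with $\|\partial^\alpha b\|_{L^\infty}\lesssim 1$ for $|\alpha|\le d+1$. Its inverse Fourier transform $K=\mathcal F^{-1}b$ obeys $|K(x)|\lesssim\|b\|_{L^1}\lesssim 1$ and, integrating by parts $d+1$ times in each coordinate, $|x_j|^{d+1}|K(x)|\lesssim\|\partial_{\xi_j}^{d+1}b\|_{L^1}\lesssim 1$; summing in $j$ gives $|K(x)|\lesssim(1+|x|^{d+1})^{-1}\in L^1(\Rd)$, so $\|K\|_{L^1}\lesssim 1$. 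Because the $L^1$ norm of a convolution kernel is invariant under dilation of the symbol, $\|\mathcal F^{-1}a_\lambda\|_{L^1}=\|K\|_{L^1}\lesssim 1$ uniformly in $\lambda,\tau$; Young's inequality then gives $\|a_\lambda(D)g\|_{L^p}\le\|\mathcal F^{-1}a_\lambda\|_{L^1}\|g\|_{L^p}\lesssim\|g\|_{L^p}$ for every $p\in[1,\infty]$, and taking $g=P_\lambda f$ and multiplying by $\lambda^\kappa$ finishes the proof. The only delicate point is the verification in the second step near the transition scale $|\xi|\simeq\tau^{-1}$, where the two branches of \eqref{prop_symb} must be reconciled, together with the bookkeeping that exactly $d+1$ derivatives --- precisely what Definition \ref{mk} controls --- are what is needed to make the kernel integrable; everything else is routine.
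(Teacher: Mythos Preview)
Your proof is correct and follows essentially the same route as the paper: both reduce, via scaling and Young's inequality, to showing that the kernel of the multiplier $m^\kappa_{\tau\lambda}(\xi)\,\widetilde\beta(|\xi|)$ (your $b$) has uniformly bounded $L^1$ norm, and both verify this by integration by parts using the symbol bounds \eqref{prop_symb} together with $(\lambda\tau)^{-1}\lesssim 1$. You are simply more explicit than the paper about the case split near $|\xi|\simeq\tau^{-1}$ and about why $d+1$ derivatives suffice.
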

\begin{proof}
By scaling and Young's inequality, it suffices to show that
\[	\Big\| \int m_{\tau\lambda}^\kappa (\xi) \bar{\beta}(|\xi|) e^{ix\cdot\xi} d\xi \Big \|_{L^1(\R^d; dx)} \lesssim 1, \]
where $\bar\beta\in C_c^\infty ((1/4, 4))$ such that $\bar \beta \beta= \beta$. Since $(\lambda\tau)^{-1}\lesssim 1$, this readily follows from integration by parts and \eqref{prop_symb}.
\end{proof}

\subsubsection*{Estimates in  ${X}_{\zeta(1), 1/\tau}^b$} We define a function  $\beta_d: [ \frac{d+1}{2},\infty]\to  [0,1]$  as follows. For $3\le d\le 6,$
\begin{align*} \beta_d(p) &  = \begin{cases}
\,	1- \frac{d+5}{2p}  &\text{if }\ \  p \ge d+1,  \\[5pt] 
\,	\frac12 - \frac2p   &\text{if }\ \   d+1 > p\ge   4 ,   \\[5pt] 
\,	 0  &\text{if }\ \  4 > p\ge  \frac{d+1}{2}, 
\end{cases}
\intertext{and,  for $d\ge7$,}
 \beta_d(p) &  =
\begin{cases} 
\, 1- \frac{d+5}{2p}  &\text{if }\ \  p \ge \frac{d+9}{2}, \\[5pt]  
\, \frac12- \frac{d+1}{4p}    &\text{if }\ \   \frac{d+9}{2} > p\ge \frac{d+1}{2} .
\end{cases}
\end{align*}

\begin{prop} \label{bilinear-P}  
Let $d\ge3$, $\tau\gtrsim \epsilon_\circ^{-1}$, $\frac1\tau \le \lambda \lesssim 1$, $p\ge\frac{d+1}{2}$, and let $f\in L^p(\mathbb R^{d})$. Suppose that $\mathbb S_{\ell}$ and $\mathbb S_{\ell'}$ satisfy \eqref{p-case}, and that $u$, $v$ satisfy \eqref{uv_support} in place of $u_\ell$, $v_{\ell'}$, respectively. Then, for  any $\epsilon>0$,  there is a constant  $C=C(\epsilon,p,d)>0$, independent of $\tau$, $\lambda$, $f$, $u$, $v$, $\mathbb S_\ell$, and $\mathbb S_{\ell'}$, such that 
\Be\label{P-est}	
| \langle ({m^\kappa_\tau (D) P_\lambda  f}) u,v\rangle | 
	\le C \lambda^{\kappa-\frac{2}p} \tau^{\beta_d(p)+\epsilon} \sup_{\frac1\tau\le\delta\lesssim1}   \Gamma_{\lambda, \delta}^{p, +}(f)   \|u\|_{X_{\zeta(1), 1/\tau}^{1/2}}\|v\|_{X_{\zeta(1), 1/\tau}^{1/2}}.
\Ee
\end{prop}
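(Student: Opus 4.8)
The plan is to reduce \eqref{P-est} to Lemma \ref{bilinear+}: decompose $u$ and $v$ into dyadic pieces according to the distance of their Fourier supports from $\Sigma^1$, apply Lemma \ref{bilinear+} to each pair of pieces, and then sum the resulting double series, trading the $L^2$ norms for the $X_{\zeta(1),1/\tau}^{1/2}$ norms.

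First I would write $u=\sum_{\delta}u_{\delta}$ and $v=\sum_{\delta'}v_{\delta'}$, where $\delta,\delta'$ run over dyadic numbers in $[1/\tau,\,2^{-2}\epsilon_\circ]$ (the hypothesis $\tau\gtrsim\epsilon_\circ^{-1}$ makes this range nonempty): for $\delta>1/\tau$ the function $\widehat{u_{\delta}}$ is the restriction of $\widehat u$ to $\Sigma_{\delta}^1$, and $\widehat{u_{1/\tau}}$ is the restriction of $\widehat u$ to $\Sigma_{\le 1/\tau}^1$. Since $\epsilon_\circ\le 2^{-7}/\sqrt d$, \eqref{geom_symb} (with $\tau=1$) gives $|p_{\zeta(1)}(\xi)|+1/\tau\simeq\delta$ on each of these sets, so that $\|u_{\delta}\|_{L^2(\R^d)}\simeq\delta^{-1/2}\|u_{\delta}\|_{X_{\zeta(1),1/\tau}^{1/2}}$ and, since the $\widehat{u_{\delta}}$ have pairwise disjoint supports, $\|u\|_{X_{\zeta(1),1/\tau}^{1/2}}^{2}=\sum_{\delta}\|u_{\delta}\|_{X_{\zeta(1),1/\tau}^{1/2}}^{2}$ (likewise for $v$). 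Each $u_{\delta}$ still has Fourier support in an $O(\delta)$-neighborhood of $\Sigma^1$ inside the cap prescribed by \eqref{uv_support}, so Lemma \ref{bilinear+} is applicable to the pair $(u_{\delta},v_{\delta'})$ with parameters $\big(\max(\delta,\delta'),\ \min(\delta,\delta')\big)$; since passing to complex conjugates leaves $\|f\|_{L^p}$ and the quantities $\Gamma_{\lambda,\cdot}^{p,+}(f)$ unchanged, it suffices to sum over the pairs with $\delta'\le\delta$.

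For such a pair I would insert the bound of Lemma \ref{bilinear+}, using \eqref{large2} when $d\le 7$ and \eqref{large} when $d\ge 7$ --- for $d\le 7$ the ratio of the two (after the $L^2\to X^{1/2}$ conversion) is $(\delta'/\delta)^{(7-d)/(4p)}\le 1$, so \eqref{large2} is always at least as good, and symmetrically for $d\ge 7$, in line with the remark following Lemma \ref{1separation}. After replacing the $L^2$ norms by the relation above and bounding $\Gamma_{\lambda,\delta}^{p,+}(f)$ by $\Gamma:=\sup_{1/\tau\le\sigma\lesssim 1}\Gamma_{\lambda,\sigma}^{p,+}(f)$, this yields for $d\le 7$ an estimate of the shape $\big|\langle (m_\tau^\kappa(D)P_\lambda f)\,u_{\delta},v_{\delta'}\rangle\big|\lesssim \delta'^{-\epsilon}\,\lambda^{\kappa-2/p}\,\delta^{\,a}\,\delta'^{\,b}\,\Gamma\,\|u_{\delta}\|_{X_{\zeta(1),1/\tau}^{1/2}}\|v_{\delta'}\|_{X_{\zeta(1),1/\tau}^{1/2}}$ with $a=\frac{d+1}{2p}-\frac12$ and $b=\frac2p-\frac12$ (and the analogous exponents from \eqref{large} when $d\ge 7$). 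Summing over $\delta'\le\delta$ and then over $\delta$ via a Cauchy--Schwarz step against the sequences $(\|u_{\delta}\|_{X_{\zeta(1),1/\tau}^{1/2}})_{\delta}$ and $(\|v_{\delta'}\|_{X_{\zeta(1),1/\tau}^{1/2}})_{\delta'}$, the problem reduces to two geometric sums of powers of $\delta,\delta'$: the inner sum over $\delta'\le\delta$ is dominated by $\delta'\simeq\delta$ when $b>0$, by $\delta'\simeq1/\tau$ when $b<0$, and contributes only $O(\log\tau)$ when $b=0$; the outer sum in $\delta$ is then governed by the endpoint $\delta\simeq 1/\tau$ (negative exponent), the endpoint $\delta\simeq\epsilon_\circ\simeq1$ (positive exponent), or $O(\log\tau)$ (zero exponent). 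The $\delta'^{-\epsilon}$ factor and the $O(\log\tau)$'s, each $\lesssim\tau^\epsilon$, are absorbed into the $\tau^\epsilon$ of \eqref{P-est}, while $\lambda^{\kappa-2/p}$ and $\Gamma$ pass through untouched. A short case check over the three ranges of $p$ defining $\beta_d(p)$ then shows the surviving power of $\tau$ is exactly $\tau^{\beta_d(p)}$.

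The main obstacle is organizational rather than analytic: one must verify, in each range of $p$ and for $d\le 7$ versus $d\ge 7$, that the two geometric sums collapse to precisely $\tau^{\beta_d(p)}$ with no loss beyond the admissible $\tau^\epsilon$ --- i.e. that \eqref{large} versus \eqref{large2} has been chosen optimally and that $1/\tau$ and $\epsilon_\circ$ really are the dominant scales. The genuinely analytic input (the bilinear restriction estimate and the Whitney decomposition) is already packaged inside Lemma \ref{bilinear+}; the only delicate point here is the $L^2\leftrightarrow X_{\zeta(1),1/\tau}^{1/2}$ conversion and the orthogonality of the pieces, which rest on using sharp dyadic cutoffs in $\dist(\cdot,\Sigma^1)$ together with \eqref{geom_symb}.
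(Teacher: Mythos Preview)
Your proposal is correct and follows essentially the same approach as the paper: dyadic decomposition of $u,v$ by distance to $\Sigma^1$, application of Lemma \ref{bilinear+} (choosing \eqref{large2} for small $d$ and \eqref{large} for large $d$), conversion of $L^2$ norms to $X_{\zeta(1),1/\tau}^{1/2}$ norms via \eqref{geom_symb}, and summation of the resulting geometric series to recover $\tau^{\beta_d(p)}$. The only cosmetic differences are that the paper uses the crude bound $\|u_\delta\|_{X_{\zeta(1),1/\tau}^{1/2}}\le\|u\|_{X_{\zeta(1),1/\tau}^{1/2}}$ rather than a Cauchy--Schwarz/orthogonality step (which is simpler and already sufficient), and handles the case $\delta_1<\delta_2$ by directly invoking the symmetry of Lemma \ref{bilinear+} in $u,v$ rather than via complex conjugation.
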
 
 
\begin{proof}
Let $\delta_\ast$ be the dyadic number such that  $\tau^{-1}\le \delta_\ast < 2\tau^{-1}$. We begin with decomposing $u$ and $v$ as follows: 
\begin{align}
\label{dyadic1}
	u &=\sum_{\delta_\ast\le \delta: \text{dyadic}} u_\delta:= Q^{\,1}_{\le \delta_\ast}u+   \sum_{\delta_\ast< \delta: \text{dyadic}} Q^1_\delta  u,  \\
\label{dyadic2}
	v &=\sum_{\delta_\ast\le \delta: \text{dyadic}} v_\delta:= Q^{\,1}_{\le \delta_\ast}v+   \sum_{\delta_\ast< \delta: \text{dyadic}} Q^1_\delta  v.
\end{align}
Thus, we may write 
\Be\label{dd}	\langle (m^\kappa_\tau (D) P_\lambda  f) u,v \rangle  = I+ I\!I, \Ee
where
\[ I	=\sum_{\delta_\ast\le \delta_2\le \delta_1} \langle (m^\kappa_\tau (D) P_\lambda  f)  u_{\delta_1},v_{\delta_2}\rangle, 
\quad  I\!I= \sum_{\delta_\ast\le \delta_1< \delta_2} \langle (m^\kappa_\tau (D) P_\lambda  f)  u_{\delta_1},v_{\delta_2} \rangle. \] 

We first consider the case $d\ge 7$. Since $\de_\ast\simeq1/\tau$, it is easy to see that
\Be	\label{u_del}
\|u_\de\|_{L^2} \lesssim \min\{\de, 1/\tau\}^{-\frac12} \|u_\de\|_{X_{\zeta(1),1/\tau}^{1/2}} \lesssim \de^{-\frac12} \|u\|_{X_{\zeta(1),1/\tau}^{1/2}}
\Ee
for any $\de\ge\delta_\ast$. Hence, utilizing \eqref{large} we have 
\begin{equation}  \label{key}
	|\langle (m^\kappa_\tau (D) P_\lambda  f)  u_{\delta_1},  v_{\delta_2}\rangle | 
		\lesssim \dep \lambda^{\kappa-\frac2p} \delta_1^{\frac{d+9-2p }{4p} } \delta_2^{\frac{d+1-2p }{4p} }   \Gamma_{\lambda, \delta_1}^{p, +}(f) 
			\|u \|_{X_{\zeta(1),1/\ta}^{1/2}} \|v \|_{X_{\zeta(1),1/\ta}^{1/2}} 	
\end{equation}
for  $p\ge \frac{d+1}2$ and $\delta_\ast\le\delta_2\le \delta_1$.  Considering the cases $p\ge \frac{d+9}2$ and $p< \frac{d+9}2$ separately and taking summation along the dyadic numbers $\delta_1, \delta_2$, we have that, for $p>\frac{d+1}2$, 
\begin{align*}
| I | \le\!  \sum_{\delta_\ast\le \delta_2\le \delta_1}  | \langle (m^\kappa_\tau (D) P_\lambda  f)  u_{\delta_1},v_{\delta_2}\rangle |  
	\lesssim  \lambda^{\kappa-\frac2p} \tau^{\beta_d(p)+\epsilon}  \sup_{\delta_1}  \Gamma_{\lambda, \delta_1}^{p, +}(f)      \|u \|_{X_{\zeta(1), 1/\tau}^{1/2}} \|v \|_{X_{\zeta(1),1/\tau}^{1/2}}. 
\end{align*}
Symmetrically, interchanging the roles of $\delta_1,$ $\delta_2$ and  repeating the argument for $I\!I$ give  the same bound as for $I$.  Thus we get  \eqref{P-est} for $d\ge 7 $.

We now consider the case $3\le d \le6 $. If   $\delta_\ast\le\delta_2\le \delta_1$,   we use \eqref{large2} instead of \eqref{large} to get   
 \begin{equation}\label{key*+}
	|\langle (m^\kappa_\tau (D) P_\lambda  f)  u_{\delta_1},  v_{\delta_2}\rangle | 
		\lesssim \dep \lambda^{\kappa-\frac2p} \delta_1^{\frac{d+1}{2p}-\frac12 } \delta_2^{\frac{2}{ p}-\frac12 }   \Gamma_{\lambda, \delta_1}^{p, +}(f) 
			\|u \|_{X_{\zeta(1),1/\tau}^{1/2}} \|v \|_{X_{\zeta(1),1/\tau}^{1/2}}. 	
\end{equation}
Similarly we also have the estimate for  $\delta_\ast\le\delta_1< \delta_2$.
Recalling \eqref{dd} and summing  along $\delta_1$, $\delta_2$,
we obtain \eqref{P-est} for $3\le d\le 6$. 
\end{proof}

\subsection{Estimates for the antipodal case} \label{sec_anti} 
We now  consider the case in which 
 $u$, $v$ satisfy \eqref{uv_support} in place of $u_\ell$, $v_{\ell'}$, respectively, while $\mathbb S_\ell$ and $\mathbb S_{\ell'}$ satisfy \eqref{n-case}. 
 In this case  $\mathbb S_\ell$ and $\mathbb S_{\ell'}$ are not close to each other, but so are $-\mathbb S_\ell$ and $\mathbb S_{\ell'}$.  To use the  decomposition \eqref{sdecomp}  we need to modify the definition of  $u_k^j$  as follows: 
\[	\F u_k^j (\xi) = \rchi_{\bs_k^j}\bigg(\frac{ \widetilde{e_2}-\widetilde\xi }{|\widetilde\xi - \widetilde{e_2}|}\bigg) \widehat u(\xi). \]
But we  keep the definition of $v_{k}^j$ the same as in  \eqref{decom_result}. 
As before, by \eqref{sdecomp} we have \eqref{whitney} and \eqref{jjkk}.  
Now,  for $\lambda,\, \delta>0$ and $f\in L^p(\R^d)$, we set 
\[	\Gamma_{\lambda,\delta}^{p, -}(f) = 
		\sup_{j_\circ< j\le j_\ast(\delta)}\,\, \sup_{k\sim k'} \left\{ 2^\frac{2j}{p} \delta^{-\frac 1p} \|P_{k, k'}^{j,-}  P_{\le \delta}^{e_1} P_\lambda f\|_{L^p(\R^d)} \right\}.\]

\begin{lem}\label{bilinear-}
Let  $d\ge3$,  $p\ge \frac{d+1}2$,  $\frac1\tau\le\lambda \lesssim 1$, $0< \delta_2\le \delta_1 \le 2\epsilon_\circ$, and let $f\in L^p(\mathbb R^{d})$.    
Suppose that $\mathbb S_\ell$, $\mathbb S_{\ell'}$ satisfy \eqref{n-case}, and $u,$ $v$ satisfy \eqref{del_sup}.  
Then,  for any $\epsilon>0$,
\begin{align}
\label{large_n}  
	\big|\langle (m^\kappa_\tau (D) P_\lambda  f) u,v\rangle\big| 
		&\lesssim \dep  \delta_1^{\frac{d+9 }{4p} }  \delta_2^{\frac{d+1 }{4p} }   \Gamma_{\lambda, \delta_1}^{p, -}(f) \|u\|_{L^2(\R^d)} \|v\|_{L^2(\R^d)},    \\ 
\label{large2_n}
	\big|\langle (m^\kappa_\tau (D) P_\lambda  f) u,v\rangle\big| 
		&\lesssim \dep   \delta_1^{\frac{d +1 }{2p} } \delta_2^{\frac{2 }{p} }   \Gamma_{\lambda, \delta_1}^{p, -}(f) \|u\|_{L^2(\R^d)} \|v\|_{L^2(\R^d)},
\end{align}
when $\lambda \simeq 1$. If $\lambda \le  1/2$, $\langle (m^\kappa_\tau (D) P_\lambda  f) u,v\rangle=0$. 
\end{lem}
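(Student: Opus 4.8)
The plan is to follow the proof of Lemma \ref{bilinear+} almost verbatim, the only genuine difference being that in the antipodal configuration the convolution $\widehat u(-\,\cdot)\ast\overline{\widehat v}$ is concentrated near $\{0\}\times 2\mathbb S^{d-2}$ rather than near the origin. First I would dispose of the case $\lambda\le 1/2$. Since $\supp\widehat u-e_2\subset\{0\}\times\mathbb S_\ell+O(\delta_1)$ and $\supp\widehat v-e_2\subset\{0\}\times\mathbb S_{\ell'}+O(\delta_2)$ with $\dist(-\mathbb S_\ell,\mathbb S_{\ell'})<\epsilon_\circ$, the set $\supp\big(\widehat u(-\,\cdot)\ast\overline{\widehat v}\big)$ is contained in $\{|\xi_1|\lesssim\delta_1\}\times\big(\mathbb S_{\ell'}-\mathbb S_\ell+O(\delta_1)\big)$, and $|\omega'+\omega''|\ge 2-C\epsilon_\circ^2>1$ for every $\omega'\in\mathbb S_{\ell'}$, $\omega''\in-\mathbb S_\ell$ (because $\diam\mathbb S_\ell,\diam\mathbb S_{\ell'}\le\epsilon_\circ$ and $\dist(-\mathbb S_\ell,\mathbb S_{\ell'})<\epsilon_\circ$); hence $|\xi|>1$ on that support, so $\beta(|\xi|/\lambda)$ vanishes there for $\lambda\le 1/2$ and the pairing is zero. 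From now on $\lambda\simeq 1$, so $\lambda^\kappa\simeq 1$.

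For $\lambda\simeq 1$ I would run the Whitney decomposition \eqref{whitney} with the modified $u_k^j$ and with $j_\ast=j_\ast(\delta_1)$, so that the pairing equals $\sum_{j_\circ<j\le j_\ast}\mathcal I_j$ with $\mathcal I_j$ as in \eqref{jjkk}. For $j\le j_\ast$ one has $\delta_1\le 2^{-2j}$, so by \eqref{thetas_n} the support of $\widehat{u_k^j}(-\,\cdot)\ast\overline{\widehat{v_{k'}^j}}$ lies in $\{|\xi_1|\lesssim\delta_1\}\times\mathcal R_{k,k'}^{j,-}$, which allows inserting the multipliers $P_{k,k'}^{j,-}$ and $P_{\le\delta_1}^{e_1}$ to write $\mathcal I_j=\sum_{k\sim k'}\langle(m^\kappa_\tau(D)P_{k,k'}^{j,-}P_{\le\delta_1}^{e_1}P_\lambda f)u_k^j,v_{k'}^j\rangle$. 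Then H\"older's inequality, Lemma \ref{Lit_Pal} (with $\lambda^\kappa\simeq1$), the definition of $\Gamma_{\lambda,\delta_1}^{p,-}$ which gives $\|P_{k,k'}^{j,-}P_{\le\delta_1}^{e_1}P_\lambda f\|_{L^p}\le 2^{-2j/p}\delta_1^{1/p}\Gamma_{\lambda,\delta_1}^{p,-}(f)$, the estimate \eqref{j-sep} of Lemma \ref{rescaling} with $h_1=\delta_1$, $h_2=\delta_2$ (using its $-$ alternative for $u_k^j$, legitimate after a frequency translation by $-\widetilde{e_2}$), and the almost-orthogonality $\sum_{k\sim k'}\|u_k^j\|_{L^2}\|v_{k'}^j\|_{L^2}\lesssim\|u\|_{L^2}\|v\|_{L^2}$, would give, for $j_\circ<j<j_\ast$,
\[
|\mathcal I_j|\ \lesssim\ \dep\, 2^{-j/p}\,\delta_1^{\frac{d+9}{4p}}\delta_2^{\frac{d+1}{4p}}\,\Gamma_{\lambda,\delta_1}^{p,-}(f)\,\|u\|_{L^2}\|v\|_{L^2}.
\]
The crucial structural point is that the gain $2^{-2j/p}$ coming from $\Gamma^{p,-}$ beats the loss $2^{j/p}$ coming from the rescaled bilinear estimate, so the $j$-series now converges geometrically; in the neighboring case this was not available and one instead used $\lambda\lesssim 2^{-j}$ together with a crude count of the $\lesssim\log(1/\delta_1)$ relevant scales. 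Summing over $j_\circ<j<j_\ast$ therefore yields \eqref{large_n} for $\sum_{j_\circ<j<j_\ast}|\mathcal I_j|$.

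It remains to treat $\mathcal I_{j_\ast}$, where the Whitney pieces are no longer transversal. Here I would reuse the elementary bound \eqref{final_scale} established in the proof of Lemma \ref{bilinear+}: it depends only on the rectangular frequency supports of $u_k^{j_\ast}$, $v_{k'}^{j_\ast}$, which are unaffected by the reflection defining $u_k^j$ in the antipodal case. Since $2^{-j_\ast}\simeq\sqrt{\delta_1}$ gives $\|P_{k,k'}^{j_\ast,-}P_{\le\delta_1}^{e_1}P_\lambda f\|_{L^p}\lesssim\delta_1^{2/p}\Gamma_{\lambda,\delta_1}^{p,-}(f)$, combining this with \eqref{final_scale} and almost-orthogonality bounds $|\mathcal I_{j_\ast}|$ by $\delta_1^{(d+11)/(4p)}\delta_2^{(d+1)/(4p)}\Gamma_{\lambda,\delta_1}^{p,-}(f)\|u\|_{L^2}\|v\|_{L^2}$, which is absorbed into \eqref{large_n}. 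The estimate \eqref{large2_n} follows in exactly the same manner, replacing \eqref{j-sep} by \eqref{j-sep2} of Lemma \ref{rescaling} for $j_\circ<j<j_\ast$ and \eqref{final_scale} by \eqref{secondcase} for $j=j_\ast$.

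I expect the only delicate point to be the frequency-support bookkeeping: verifying that $P_{k,k'}^{j,-}$ and $P_{\le\delta_1}^{e_1}$ may legitimately be inserted for every $j\le j_\ast$ (this is where $\delta_1\le 2^{-2j}$ enters, exactly as in the neighboring case), and keeping precise track of the powers of $2^{-j}$ so that the $j$-summation closes with the asserted exponents of $\delta_1,\delta_2$. The analytic input itself is routine once Lemmas \ref{rescaling} and \ref{Lit_Pal} and the endpoint computations \eqref{final_scale}, \eqref{secondcase} from the proof of Lemma \ref{bilinear+} are in hand.
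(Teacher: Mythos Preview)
Your proposal is correct and follows essentially the same route as the paper's own proof: Whitney decomposition with $j_\ast=j_\ast(\delta_1)$, insertion of $P_{k,k'}^{j,-}P_{\le\delta_1}^{e_1}$ via the support inclusion $\{|\xi_1|\lesssim\delta_1\}\times\mathcal R_{k,k'}^{j,-}$, H\"older together with Lemma~\ref{Lit_Pal} and \eqref{j-sep} (resp.\ \eqref{j-sep2}) for $j_\circ<j<j_\ast$, and the elementary endpoint bounds \eqref{final_scale}, \eqref{secondcase} for $j=j_\ast$. Your explicit disposal of $\lambda\le 1/2$ and your observation that the $2^{-2j/p}$ gain from $\Gamma^{p,-}$ makes the $j$-sum geometric (in contrast to the logarithmic count in the neighboring case) are a bit more detailed than the paper, but the argument is the same.
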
 
The proof is similar to that of Lemma \ref{bilinear+} except for different support property of $\phi_{k,k'}^{j,-}$ in the frequency domain. So, we shall be brief.
\begin{proof}
% Since $u, $ $v$ satisfy \eqref{del_sup}, we see that $\supp \F u_k^j \subset \big(\{0\}\times \mathbb S_k^j+e_2\big) +O(\delta_1)$ and $\supp \F v_{k'}^j \subset \big(\{0\}\times \mathbb S_{k'}^j+e_2\big) +O(\delta_2)$. Hence,  we have
As before, we choose the stopping step $j_\ast=\ja(\delta_1)$ in  \eqref{whitney}.  Noting  $\supp\F u_k^j\subset \big(\{0\}\times (-\mathbb S_k^j)+e_2\big) +O(\delta_1)$ and  $\supp \F v_{k'}^j \subset \big(\{0\}\times \mathbb S_{k'}^j+e_2\big) +O(\delta_2)$, we see from \eqref{thetas_n} that 
\[	\supp \big( \F{u_k^j}(-\cdot) \ast \overline{\F v_{k'}^j} \big)\subset \{ |\xi_1 |  \lesssim \de_1\} \times \mathcal R_{k,k'}^{j,-}, \quad \jc< j\le j_\ast	\]
since $2^{-2\ja}\ge \delta_1$. This is the main difference  from the previous neighboring case. See  \eqref{Rj_n}, \eqref{thetatheta-}, and Figure \ref{fig_conv_anti}.  
Hence, we can insert the harmless projection operator  $P_{k, k'}^{j,-}$ and $ P_{\le \delta_1}^{e_1} $ to  write 
\Be \label{decomp22}
 \langle  ( m^\kappa_\tau (D) P_\lambda  f)  u_k^j, v_{k'}^j \rangle	=  \langle  (P_{k, k'}^{j,-}   P_{\le \delta_1}^{e_1}  m^\kappa_\tau (D) P_\lambda  f)  u_k^j, v_{k'}^j \rangle,  \quad   \jc< j\le j_\ast .
\Ee
From  \eqref{thetatheta-},  it follows  that $\langle  ( m^\kappa_\tau (D) P_\lambda  f)  u_k^j, v_{k'}^j \rangle \neq 0$ only if $\lambda\simeq 1$.  Thus,  for the rest of this proof we may assume that $\lambda\simeq 1$, and, for  $\jc<j \le j_\ast$ and  $k\sim k'$, we have
\[	|\langle  (m^\kappa_\tau (D) P_\lambda   f)  u_k^j, v_{k'}^j \rangle |
	\lesssim  \| P_{k, k'}^{j,-} P_{\le \delta_1}^{e_1}  P_\lambda f\|_{L^p(\R^d)} \, \| u_k^j\, { v_{k'}^j} \|_{L^{p'}(\Rd)}  	\]
by Lemma \ref{Lit_Pal}.  Applying \eqref{j-sep} (with $h_1 = \delta_1$, $h_2 =\delta_2$)  we get \eqref{jjast} for $\jc< j< j_\ast$.
Hence,  as in the proof of Lemma  \ref{bilinear+},   from \eqref{jjkk} and \eqref{decomp22}  we see that  
\begin{equation*}   \begin{aligned}	  
|\mathcal I_j| 
&\lesssim   \dep 2^{\frac j p} \delta_1^{\frac{d+5 }{4p} } \delta_2^{\frac{d+1}{4p} }  \sum_{k\sim k'}  \| P_{k, k'}^{j,-} P_{\le \delta_1}^{e_1}  P_\lambda f \|_{L^p}      \|u_k^j\|_{L^2} \|v_{k'}^j\|_{L^2}  \\  
& \lesssim  \dep 2^{-\frac j p}  \delta_1^\frac{d+9}{4p} \delta_2^\frac{d+1}{4p} \Gamma_{\lambda, \delta_1}^{p, -}(f) \|u\|_{L^2} \|v\|_{L^2},
\end{aligned} \end{equation*}
and, consequently, we get
\Be\label{q_whi_a}	
\sum_{j_\circ < j<j_\ast} |\mathcal I_j | 
	\lesssim \dep \delta_1^\frac{d+9}{4p} \delta_2^\frac{d+1}{4p} \Gamma_{\lambda, \delta_1}^{p, -}(f) \|u\|_{L^2} \|v\|_{L^2}.	
\Ee
The same estimates for $|\mathcal I_{j_\ast}|$ can be obtained exactly in the same way as in the proof of Lemma \ref{bilinear+} since \eqref{final_scale} holds with $j=\ja$. This is easy to show using \eqref{bernstein}. So, we omit the details.

On the other hand, applying \eqref{j-sep2} instead of \eqref{j-sep} and using \eqref{secondcase},  we have \eqref{secondcase0} for $\jc<j\le j_\ast$. Thus, following the same argument as in the above  we obtain 
\[
	|\langle (m^\kappa_\tau (D) P_\lambda  f) u,v\rangle | 
	 \le \sum_{j_\circ < j\le j_\ast} |{\mathcal I_j}| 
	\, \lesssim  \, \dep \delta_1^\frac{d+1 }{2p} \delta_2^\frac{2}{p} \Gamma_{\lambda, \delta_1}^{p, -}(f)  \|u\|_{L^2} \|v\|_{L^2}.   \qedhere
\]
\end{proof}

The following  can be shown in  the same way as in the proof of Proposition \ref{bilinear-P} exploiting Lemma \ref{bilinear-} instead of  Lemma \ref{bilinear+}.  
So, we omit its proof. 

\begin{prop} \label{bilinear-N} 
Let $d\ge3$, $\tau\gtrsim \epsilon_\circ^{-1}$, $\lambda\simeq 1$, $p\ge\frac{d+1}{2}$, and let $f\in L^p(\mathbb R^d)$.  
Suppose that $\mathbb S_{\ell}$ and $\mathbb S_{\ell'}$ satisfy \eqref{n-case}, and that $u$, $v$ satisfy \eqref{uv_support} in place of $u_\ell$, $v_{\ell'}$, respectively.
Then, for  any $\epsilon>0$ and     there is a constant $C=C(\epsilon,p,d)>0$  such that 
\[	| \langle ({m^\kappa_\tau (D) P_\lambda  f}) u,v\rangle | 
		\le   C \tau^{\beta_d(p)+\epsilon} \sup_{\frac1\tau\le\delta\lesssim1}   \Gamma_{\lambda, \delta}^{p, -}(f)    \|u\|_{X_{\zeta(1), 1/\tau}^{1/2}}\|v\|_{X_{\zeta(1),1/\tau}^{1/2}}.	\]
If $0<\lambda\le 1/2$, the left side is zero.
\end{prop}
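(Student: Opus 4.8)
The plan is to reproduce, essentially verbatim, the proof of Proposition \ref{bilinear-P}, with the neighboring-case input Lemma \ref{bilinear+} replaced by the antipodal-case input Lemma \ref{bilinear-}. First I would fix the dyadic number $\delta_\ast$ with $\tau^{-1}\le\delta_\ast<2\tau^{-1}$ and decompose $u,v$ away from $\Sigma^1$ exactly as in \eqref{dyadic1}, \eqref{dyadic2}, i.e.\ $u=\sum_{\delta_\ast\le\delta}u_\delta$ with $u_{\delta_\ast}=Q^1_{\le\delta_\ast}u$ and $u_\delta=Q^1_\delta u$ for $\delta>\delta_\ast$, and likewise for $v$. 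This splits the pairing into $I+I\!I$, with $I$ over $\delta_\ast\le\delta_2\le\delta_1$ and $I\!I$ over $\delta_\ast\le\delta_1<\delta_2$. Each $u_\delta$, $v_\delta$ still satisfies the support hypothesis \eqref{del_sup} relative to $\mathbb S_\ell,\mathbb S_{\ell'}$ (with neighborhood parameter $\delta$), so Lemma \ref{bilinear-} applies to every pair $(u_{\delta_1},v_{\delta_2})$.

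Next I would use $\|u_\delta\|_{L^2}\lesssim\delta^{-1/2}\|u\|_{X^{1/2}_{\zeta(1),1/\tau}}$, which follows from the definition of $X^{1/2}_{\zeta(1),1/\tau}$ and \eqref{geom_symb} exactly as in \eqref{u_del}, and feed it into \eqref{large_n} when $d\ge7$ and into \eqref{large2_n} when $3\le d\le6$. Since the antipodal geometry forces $\lambda\simeq1$ (see below), this yields, for $\delta_\ast\le\delta_2\le\delta_1$,
\[ |\langle(m^\kappa_\tau(D)P_\lambda f)u_{\delta_1},v_{\delta_2}\rangle|\lesssim\delta_2^{-\epsilon}\,\delta_1^{a(p)}\,\delta_2^{b(p)}\,\Gamma^{p,-}_{\lambda,\delta_1}(f)\,\|u\|_{X^{1/2}_{\zeta(1),1/\tau}}\|v\|_{X^{1/2}_{\zeta(1),1/\tau}}, \]
where $(a(p),b(p))=(\tfrac{d+9-2p}{4p},\tfrac{d+1-2p}{4p})$ for $d\ge7$ and $(a(p),b(p))=(\tfrac{d+1}{2p}-\tfrac12,\tfrac{2}{p}-\tfrac12)$ for $3\le d\le6$, and $\Gamma^{p,-}_{\lambda,\delta_1}(f)\le\sup_{1/\tau\le\delta\lesssim1}\Gamma^{p,-}_{\lambda,\delta}(f)$. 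Summing the resulting geometric series in the dyadic parameters $\delta_1\ge\delta_2\ge\delta_\ast\simeq\tau^{-1}$ (and symmetrically for $I\!I$), the dominant endpoint $\delta_1\simeq1$ or $\delta_1\simeq\delta_2\simeq\tau^{-1}$ produces exactly the factor $\tau^{\beta_d(p)}$ recorded before Proposition \ref{bilinear-P}; when the relevant exponent vanishes (the kinks of $\beta_d$) the sum has $O(\log\tau)$ equal terms, absorbed into $\tau^{\epsilon}$, using also $j_\ast\simeq\log\delta_1^{-1}\lesssim\tau^{\epsilon}$ as in \eqref{sumjstar}. Finally, the vanishing of the pairing for $0<\lambda\le1/2$ is immediate from Lemma \ref{bilinear-}, or directly from \eqref{thetatheta-}: the frequency support of $\widehat{u_k^j}(-\,\cdot)\ast\overline{\widehat{v^j_{k'}}}$ lies in an $O(2^{-2j})$-neighborhood of $2\mathbb S^{d-2}$, hence away from the origin, so $P_\lambda$ annihilates it unless $\lambda\simeq1$.

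I expect no genuine obstacle beyond bookkeeping the exponents at the kinks of $\beta_d$. The one conceptual difference from Proposition \ref{bilinear-P} is that here the frequency support $\mathcal R^{j,-}_{k,k'}$ of the product concentrates near the sphere $2\mathbb S^{d-2}$ rather than near the origin; consequently the Littlewood--Paley constraint $\lambda\simeq1$ is forced throughout and the $\lambda^{\kappa-2/p}$ gain present in Proposition \ref{bilinear-P} is simply absent, which is why the bound in Proposition \ref{bilinear-N} carries no $\lambda$-power. Apart from this, the dyadic decomposition, the $L^2$ bound \eqref{u_del}, the case split $d\ge7$ versus $3\le d\le6$, and the geometric summation are identical to the proof of Proposition \ref{bilinear-P}, so the proof can indeed be omitted.
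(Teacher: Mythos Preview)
Your proposal is correct and follows exactly the approach the paper indicates: it reproduces the proof of Proposition~\ref{bilinear-P} verbatim, substituting Lemma~\ref{bilinear-} (estimates \eqref{large_n}, \eqref{large2_n}) for Lemma~\ref{bilinear+}, and you have correctly identified the one structural difference---that the antipodal geometry forces $\lambda\simeq 1$ via \eqref{thetatheta-}, which both explains the absence of the $\lambda^{\kappa-2/p}$ factor and gives the vanishing for $0<\lambda\le 1/2$.
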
 

\subsection{Estimates for  the transversal case} 
When $\mathbb S_\ell$ and $\mathbb S_{\ell'}$ satisfy \eqref{s-case} and  $u$, $v$ satisfy \eqref{uv_support} in place of $u_\ell$, $v_{\ell'}$, respectively, we can obtain bilinear estimates without invoking the decomposition \eqref{sdecomp}  since the supports of  $\widehat u$, $\widehat v$ are well separated.  Also, note that
\Be\label{supp_annul}
	(-\supp\wh u\pm \supp\wh v\,)\cap B_d(0, 2^{-7}\epsilon_\circ)=\emptyset . 
\Ee
For $\lambda,\, \delta>0$ and $f\in L^p(\R^d)$, we set 
\[\Gamma_{\lambda, \delta}^{p}(f) =  \lambda^\frac1p \delta^{-\frac 1p} \| P_{\le \delta}^{e_1} P_\lambda f\|_{L^p(\R^d)}.\]

\begin{prop}\label{bilinear-S}
Let $d\ge3$, $p\ge\frac{d+1}{2}$, $\tau\gtrsim \epsilon_\circ^{-1}$, $\frac1\tau\le\lambda\lesssim 1$,  and let $f\in L^p(\mathbb R^d)$.  Suppose that $\mathbb S_{\ell}$, $\mathbb S_{\ell'}$  satisfy \eqref{s-case} and $u$, $v$ satisfy \eqref{uv_support} in place of $u_\ell$, $v_{\ell'}$, respectively. Then, for any $\epsilon>0$,
\Be \label{est-S} 
	|\langle (m^\kappa_\tau (D) P_\lambda  f)  u,v\rangle | 
	\lesssim \,  \la^{\kappa-\frac1p} \tau^{\beta_d(p)+\epsilon} \sup_{\frac1\tau\le\delta\lesssim1}\Gamma_{\lambda, \delta}^{p}(f) \,\,\, \|u\|_{X_{\zeta(1), 1/\tau}^{1/2}}\|v\|_{X_{\zeta(1),1/\tau}^{1/2}}.
\Ee
\end{prop}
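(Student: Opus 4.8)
The plan is to follow the proof of Proposition \ref{bilinear-P}, which simplifies considerably here because the separation between the caps is available from the outset and no Whitney‑type refinement is needed. First, by \eqref{supp_annul} the pairing $\langle (m^\kappa_\tau(D)P_\lambda f)u,v\rangle$ vanishes unless $\lambda\gtrsim\epsilon_\circ$ (the Fourier support of $\widehat u(-\,\cdot)\ast\overline{\widehat v}$ avoids $B_d(0,2^{-7}\epsilon_\circ)$, while $\beta(|\xi|/\lambda)$ lives in $|\xi|\simeq\lambda$); since $\lambda\lesssim1$ and $\epsilon_\circ$ is a fixed constant we may assume $\lambda\simeq1$, so all powers of $\lambda$ below are $\simeq1$. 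Next, exactly as in \eqref{dyadic1}--\eqref{dyadic2}, I would decompose $u=\sum_{\delta_\ast\le\delta_1}u_{\delta_1}$ and $v=\sum_{\delta_\ast\le\delta_2}v_{\delta_2}$ dyadically away from $\Sigma^1$, where $\delta_\ast\simeq1/\tau$, $u_{\delta_\ast}=Q^{\,1}_{\le\delta_\ast}u$, $u_{\delta_1}=Q^{\,1}_{\delta_1}u$ for $\delta_1>\delta_\ast$ (and likewise for $v$). By symmetry it suffices to estimate $\langle (m^\kappa_\tau(D)P_\lambda f)u_{\delta_1},v_{\delta_2}\rangle$ for $\delta_\ast\le\delta_2\le\delta_1\lesssim\epsilon_\circ$ and then sum the dyadic series.

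For the individual term, $\widehat{u_{\delta_1}}(-\,\cdot)\ast\overline{\widehat{v_{\delta_2}}}$ is supported in $\{|\xi_1|\lesssim\delta_1\}\times\mathbb R^{d-1}$, so we may insert $P^{e_1}_{\le\delta_1}$, apply H\"older's inequality and Lemma \ref{Lit_Pal} (valid since $\lambda\gtrsim1/\tau$) to get
\[ |\langle (m^\kappa_\tau(D)P_\lambda f)u_{\delta_1},v_{\delta_2}\rangle|\lesssim\lambda^\kappa\|P^{e_1}_{\le\delta_1}P_\lambda f\|_{L^p}\,\|u_{\delta_1}v_{\delta_2}\|_{L^{p'}}=\lambda^{\kappa-\frac1p}\delta_1^{\frac1p}\,\Gamma_{\lambda,\delta_1}^{p}(f)\,\|u_{\delta_1}v_{\delta_2}\|_{L^{p'}}. \]
The Fourier supports of $u_{\delta_1}$ and $v_{\delta_2}$ lie in $\delta_i$‑neighborhoods of translates of $\mathbb S_\ell$, $\mathbb S_{\ell'}$ inside the slabs $|\xi_1|\le\delta_i$; by \eqref{s-case} these caps, and also $-\mathbb S_\ell$ and $\mathbb S_{\ell'}$, are separated by $\simeq1$, so the bilinear restriction estimate applies directly and yields, for $p\ge\frac{d+1}2$ and any $\epsilon>0$, the analogues of \eqref{interpolation} and \eqref{interpolation2} with $h_1=\delta_1$, $h_2=\delta_2$. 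Feeding in $\|u_\delta\|_{L^2}\lesssim\delta^{-1/2}\|u\|_{X_{\zeta(1),1/\tau}^{1/2}}$ (and likewise for $v$), which holds for every $\delta\ge\delta_\ast$ by \eqref{u_del}, I obtain
\[ |\langle (m^\kappa_\tau(D)P_\lambda f)u_{\delta_1},v_{\delta_2}\rangle|\lesssim\lambda^{\kappa-\frac1p}\delta_2^{-\epsilon}\,\delta_1^{\frac{d+9}{4p}-\frac12}\delta_2^{\frac{d+1}{4p}-\frac12}\,\Gamma_{\lambda,\delta_1}^{p}(f)\,\|u\|_{X_{\zeta(1),1/\tau}^{1/2}}\|v\|_{X_{\zeta(1),1/\tau}^{1/2}} \]
from the \eqref{interpolation}-analogue (which I would use for $d\ge7$), and the same bound with $\delta_1^{\frac{d+1}{2p}-\frac12}\delta_2^{\frac2p-\frac12}$ in place of $\delta_1^{\frac{d+9}{4p}-\frac12}\delta_2^{\frac{d+1}{4p}-\frac12}$ from the \eqref{interpolation2}-analogue (used for $3\le d\le6$).

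It then remains to sum over the dyadic $\delta_1,\delta_2\in[\delta_\ast,C\epsilon_\circ]$ with $\delta_2\le\delta_1$. Comparing the signs of the exponents of $\delta_1,\delta_2$ above against the three regimes in the definition of $\beta_d(p)$, one checks that each geometric sum is controlled by its endpoint value — $\delta_i\simeq\delta_\ast\simeq1/\tau$ where the exponent is nonpositive, $\delta_i\simeq\epsilon_\circ\simeq1$ where it is positive — and in every case the outcome is $\tau^{\beta_d(p)}$, up to the harmless factors $\delta_\ast^{-\epsilon}\simeq\tau^\epsilon$ and the $O(\log\tau)$ count of dyadic scales. Replacing $\Gamma_{\lambda,\delta_1}^{p}(f)$ by $\sup_{1/\tau\le\delta\lesssim1}\Gamma_{\lambda,\delta}^{p}(f)$ gives \eqref{est-S}; the portion of the sum with $\delta_1<\delta_2$ is handled identically after interchanging $u\leftrightarrow v$ and $\delta_1\leftrightarrow\delta_2$ (inserting $P^{e_1}_{\le\delta_2}$ instead). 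The one step requiring genuine care is the appeal to the bilinear restriction estimate for the two caps: unlike in the neighboring and antipodal cases, when $\mathbb S_\ell$ and $\mathbb S_{\ell'}$ are far apart on $\mathbb S^{d-2}$ they need not sit inside a single elliptic chart, so what is invoked is the bilinear restriction estimate for transversal, non‑antipodal caps of the sphere, which is available from Tao's theorem \cite{Tao1} (see the discussion following Theorem \ref{bilinearthm} and \cite{Tao2,Lee1}). Once that is granted, the transversal case is the least delicate of the three and all that is left is bookkeeping of the dyadic sums.
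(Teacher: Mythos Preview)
Your proof is correct and follows essentially the same route as the paper's: dyadic decomposition of $u,v$ away from $\Sigma^1$, insertion of $P^{e_1}_{\le\delta_1}$, H\"older together with Lemma \ref{Lit_Pal}, application of the bilinear restriction estimates \eqref{interpolation}/\eqref{interpolation2} (with $h_i=\delta_i$), and summation of the resulting geometric series in $\delta_1,\delta_2$ against the definition of $\beta_d(p)$. Your observation that $\lambda\simeq1$ via \eqref{supp_annul} and your flagging of the elliptic-chart issue for Lemma \ref{1separation} are both apt; the paper invokes Lemma \ref{1separation} without further comment, implicitly relying on the fact that Tao's bilinear restriction estimate holds for transversal non-antipodal caps on the sphere.
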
 

It should be also noted that $\langle (m^\kappa_\tau (D) P_\lambda  f)  u,v\rangle\neq 0$ only if $ \la\gtrsim \epsilon_\circ$ because of \eqref{supp_annul}. 

\begin{proof}
We follow the same way as in  the proof of Proposition \ref{bilinear-P}. Using  \eqref{interpolation} in  Lemma \ref{1separation},  the dyadic decomposition \eqref{dyadic1}, \eqref{dyadic2}, and \eqref{dd}, we see that 
\begin{align*}	
|\langle (m^\kappa_\tau   (D) P_\lambda  f)  u_{\delta_1},  v_{\delta_2}\rangle | 
	& = |\langle ( m^\kappa_\tau (D) P_\lambda  P_{\le \delta_1}^{e_1} f)   u_{\delta_1},  v_{\delta_2}\rangle |   \lesssim  \lambda^\kappa \| P_\lambda P_{\le \delta_1}^{e_1} f\|_{L^p} \| u_{\delta_1} v_{\delta_2}\|_{L^{p'}} \\
	&\lesssim \dep \lambda^\kappa  \delta_1^{\frac{d+5}{4p}} \delta_2^{\frac{d+1}{4p}} \| P_\lambda P_{\le \delta_1}^{e_1} f\|_{L^p}   \| u_{\delta_1}\|_{L^2} \|v_{\delta_2}\|_{L^2}    \\
	&\lesssim \dep \la^{\kappa-\frac1p} \delta_1^{\frac{d+9-2p }{4p} } \delta_2^{\frac{d+1-2p }{4p}}  \sup_{\delta_1 \ge \frac1\tau} \big\{  \lambda^{\frac1p}\delta_1^{-\frac1p} \|  P_{\le \delta_1}^{e_1}   P_\la f\|_{L^p} \big\} \|u \|_{X_{\zeta(1),1/\tau}^{1/2}} \|v \|_{X_{\zeta(1),1/\tau}^{1/2}} 	
\end{align*}
whenever $\delta_\ast\le \delta_2\le \delta_1$.  This plays the role of \eqref{key} in the proof of Proposition \ref{bilinear-P}.  As before summation over $\delta_\ast \le \delta_2\le  \delta_1$ gives the desired bound on $I$. Interchanging the roles of $\delta_1$ and $\delta_2$  yields  the estimate for $I\!I$.  Thus,   we get \eqref{est-S} for $d\ge 7$. Applying \eqref{interpolation2} instead of \eqref{interpolation}  gives  the estimate \eqref{est-S} for $3\le d\le 6$. 
\end{proof}

Combining  the three (neighboring, antipodal, and transversal) cases  and using  Proposition \ref{bilinear-P}, Proposition \ref{bilinear-N}, and Proposition \ref{bilinear-S},  we obtain the following. 
 
\begin{prop} \label{bilinear-sum} Let $d\ge 3$, $p\ge \frac{d+1}2$,  $\tau\gtrsim \epsilon_\circ^{-1}$, $\frac1\tau\le \lambda \lesssim 1$, and  let  $f\in L^p (\R^d)$. 
 Suppose $u$ and $v$ satisfy \eqref{near}. 
Then
\begin{equation} \label{Large}  
|\langle (m^\kappa_\tau (D) P_\lambda  f) u,v\rangle | 
	\lesssim  \lambda^\nu \tau^{\mu}  \mathcal A_{p}(f, \lambda,  e_1) \|u\|_{X_{\zeta(1),  1/\tau}^{1/2}}\|v\|_{X_{\zeta(1), 1/\tau}^{1/2}},
\end{equation}
where $\nu={\kappa-\frac 2 p}$, $\mu>\beta_d(p)$, and
\[	 \mathcal A_{p}(f, \lambda, e_1)
	= \max_{\substack{(\mathbb S_\ell, \mathbb S_{\ell'}):\\ \text{neighboring}} } \sup_{0<\delta\lesssim 1} \Gamma_{\lambda, \delta}^{p, +}(f) 
		+ \max_{\substack{(\mathbb S_\ell, \mathbb S_{\ell'}):\\ \text{antipodal}} } \sup_{0<\delta\lesssim 1}  \Gamma_{\lambda, \delta}^{p, -}(f) 
		+ \sup_{0<\delta\lesssim 1}  \Gamma_{\lambda, \delta}^{p}(f) . 	\] 
\end{prop}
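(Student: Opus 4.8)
The plan is to apply the primary decomposition \eqref{primary} and then estimate its three constituent sums by invoking, term by term, the case-by-case bounds already established: Proposition~\ref{bilinear-P} for the neighboring pairs, Proposition~\ref{bilinear-N} for the antipodal pairs, and Proposition~\ref{bilinear-S} for the transversal pairs. Since $u$ and $v$ satisfy \eqref{near}, the identity \eqref{primary} writes $\langle (m^\kappa_\tau (D) P_\lambda  f) u,v\rangle$ as a sum, over pairs $(\mathbb S_\ell,\mathbb S_{\ell'})$ of $\epsilon_\circ$-caps, of the pairings $\langle (m^\kappa_\tau (D) P_\lambda  f) u_\ell,v_{\ell'}\rangle$ with $\widehat{u_\ell}$, $\widehat{v_{\ell'}}$ supported as in \eqref{uv_support}; in particular each $u_\ell$ and $v_{\ell'}$ satisfies the support hypothesis needed in Propositions~\ref{bilinear-P}, \ref{bilinear-N}, and \ref{bilinear-S}.

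First I would bound an individual pairing. For a neighboring pair, Proposition~\ref{bilinear-P} controls $|\langle (m^\kappa_\tau (D) P_\lambda  f) u_\ell,v_{\ell'}\rangle|$ by a constant times $\lambda^{\kappa-2/p}\tau^{\beta_d(p)+\epsilon}\,\big(\sup_{1/\tau\le\delta\lesssim1}\Gamma_{\lambda,\delta}^{p,+}(f)\big)\,\|u_\ell\|_{X_{\zeta(1), 1/\tau}^{1/2}}\|v_{\ell'}\|_{X_{\zeta(1), 1/\tau}^{1/2}}$; for an antipodal pair, Proposition~\ref{bilinear-N} gives the same bound with $\Gamma^{p,-}$ in place of $\Gamma^{p,+}$, and the pairing vanishes unless $\lambda\simeq1$, so the missing $\lambda$-power is harmless ($\lambda^{\kappa-2/p}\simeq1$ there); for a transversal pair, Proposition~\ref{bilinear-S} gives a bound with $\lambda^{\kappa-1/p}$ and $\Gamma^{p}$, and $\lambda^{\kappa-1/p}\lesssim\lambda^{\kappa-2/p}$ because $\lambda\lesssim1$. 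In all three cases the $\Gamma$-factor is at most $\mathcal A_p(f,\lambda,e_1)$: this is exactly why $\mathcal A_p$ is defined with a maximum over the neighboring pairs and over the antipodal pairs — the multipliers $P_{k,k'}^{j,+}$ and $P_{k,k'}^{j,-}$, and hence $\Gamma^{p,+}$ and $\Gamma^{p,-}$, depend on the chosen cap pair through the diffeomorphism $\mathcal G$ and the rectangles $\mathcal R_{k,k'}^{j,\pm}$ — and with a supremum over all $0<\delta\lesssim1$, which dominates the dyadic supremum over $1/\tau\le\delta\lesssim1$ in each proposition. Thus every pairing is bounded by $\lambda^{\kappa-2/p}\tau^{\beta_d(p)+\epsilon}\,\mathcal A_p(f,\lambda,e_1)\,\|u_\ell\|_{X_{\zeta(1), 1/\tau}^{1/2}}\|v_{\ell'}\|_{X_{\zeta(1), 1/\tau}^{1/2}}$.

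It remains to sum over the pairs. The key input is the almost orthogonality
\[	\sum_\ell \| u_\ell \|_{X_{\zeta(1), 1/\tau}^{1/2}}^2 \lesssim \| u \|_{X_{\zeta(1), 1/\tau}^{1/2}}^2, \qquad \sum_{\ell'} \| v_{\ell'} \|_{X_{\zeta(1), 1/\tau}^{1/2}}^2 \lesssim \| v \|_{X_{\zeta(1), 1/\tau}^{1/2}}^2,	\]
which holds because the caps $\mathbb S_\ell$ are essentially disjoint, so $\sum_\ell|\widehat{u_\ell}|^2=|\widehat u|^2$ a.e., while the weight $(|p_{\zeta(1)}|+1/\tau)^{1/2}$ defining the $X_{\zeta(1), 1/\tau}^{1/2}$ norm is unchanged by the angular truncation. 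Since each cap $\mathbb S_\ell$ is neighboring (resp.\ antipodal) to only $O(1)$ of the caps $\mathbb S_{\ell'}$, the Cauchy--Schwarz inequality gives
\[	\sum_{(\mathbb S_\ell,\mathbb S_{\ell'}):\ \text{neighboring}} \|u_\ell\|_{X_{\zeta(1), 1/\tau}^{1/2}}\|v_{\ell'}\|_{X_{\zeta(1), 1/\tau}^{1/2}} \lesssim \|u\|_{X_{\zeta(1), 1/\tau}^{1/2}}\|v\|_{X_{\zeta(1), 1/\tau}^{1/2}},	\]
and likewise for the antipodal sum; the transversal sum has at most $C(\epsilon_\circ,d)$ pairs (a fixed number, $\epsilon_\circ$ being fixed), so it obeys the same bound. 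Adding the three contributions and choosing $\epsilon=\mu-\beta_d(p)>0$ yields \eqref{Large}. I expect the only genuinely delicate point — beyond this bookkeeping and the finite-overlap arguments, which are routine — to be the verification that the several frequency-localized quantities $\Gamma_{\lambda,\delta}^{p,\pm}(f)$ and $\Gamma_{\lambda,\delta}^{p}(f)$ arising in the three regimes are all majorized by the single quantity $\mathcal A_p(f,\lambda,e_1)$, which is precisely what its definition (with the maxima over the finitely many cap pairs) is designed to encode.
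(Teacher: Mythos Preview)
Your proposal is correct and follows essentially the paper's approach: apply the primary decomposition \eqref{primary}, invoke Propositions~\ref{bilinear-P}, \ref{bilinear-N}, \ref{bilinear-S} on each pairing, unify the $\lambda$-powers using $\lambda\lesssim1$ (and $\lambda\simeq1$ in the antipodal case), and absorb the $\Gamma$-factors into $\mathcal A_p(f,\lambda,e_1)$ via the maxima in its definition. The only difference is in the summation step: you invoke almost orthogonality of the $u_\ell$ in $X_{\zeta(1),1/\tau}^{1/2}$ together with Cauchy--Schwarz and finite overlap, whereas the paper simply observes that the total number of cap pairs $(\mathbb S_\ell,\mathbb S_{\ell'})$ is a fixed constant $C(\epsilon_\circ,d)$ (since $\epsilon_\circ$ is fixed), so the sum over all pairs is trivially bounded by $C(\epsilon_\circ,d)$ times the maximum term, using only $\|u_\ell\|_{X^{1/2}_{\zeta(1),1/\tau}}\le\|u\|_{X^{1/2}_{\zeta(1),1/\tau}}$. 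Your argument is correct but more work than needed here; it would become relevant only if the number of caps depended on a parameter tending to infinity.
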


\begin{proof}
In the primary decomposition \eqref{primary}, the number of pairs $( \mathbb S_\ell, \mathbb S_{\ell'})$ is finite. 
Thus, from the estimates in  Proposition \ref{bilinear-P}, Proposition \ref{bilinear-N}, and  Proposition \ref{bilinear-S}  it follows that
\begin{multline*}
 |\langle (m^\kappa_\tau (D) P_\lambda  f) u,v\rangle |   
\lesssim  
	  \tau^{\beta_d(p)+\epsilon}  \|u\|_{X_{\zeta(1),  1/\tau}^{1/2}}\|v\|_{X_{\zeta(1), 1/\tau}^{1/2}} \times \\
\Big(  \lambda^{\kappa-\frac 2p} \max_{\substack{(\mathbb S_\ell, \mathbb S_{\ell'}):\\ \text{neighboring}} } \sup_{\frac1\tau\le\delta\lesssim 1} \Gamma_{\lambda, \delta}^{p, +}(f)     
		+ \max_{\substack{(\mathbb S_\ell, \mathbb S_{\ell'}):\\ \text{transversal}} } \sup_{\frac1\tau\le\delta\lesssim 1}   \Gamma_{\lambda, \delta}^{p, -}(f)   
		+ \lambda^{\kappa-\frac 1 p}   \sup_{\frac1\tau\le\delta\lesssim 1} \Gamma_{\lambda, \delta}^{p}(f) \Big). 
\end{multline*}
This gives \eqref{Large}  since $\lambda \lesssim 1$. 
\end{proof}

\begin{rmk} When $d=3$, it is possible to remove $\tau^\epsilon$ by replacing $\lambda^{\kappa -\frac2p}$ with $\log(1/\lambda)\lambda^{\kappa-\frac2p}$ since the number of \emph{nonzero} terms in the summation \eqref{sumjstar} is $\simeq \log(1/\lambda)$.
\end{rmk} 

\subsection{Strengthening  the estimates \eqref{Large} when $3\le d\le 8$ and $p\ge d$}\label{sec_str}
The estimates in Proposition \ref{bilinear-P}, Proposition \ref{bilinear-N}, and Proposition \ref{bilinear-S}  can be improved if we restrict the range of $p$ to the interval $[d,\infty]$, and combine them with the following which is a consequence of the linear estimate \eqref{LIN}.  
 
\begin{lem} 
Let $p \ge d\ge 3$,  $\frac1\tau\le \delta_2\le \delta_1\lesssim 1$, and $\frac1\tau\le \lambda \lesssim 1$.  Suppose that $f \in L^p(\mathbb R^{d})$  and   $u$, $v$ satisfy \eqref{del_sup}. Then, 
\begin{equation} \label{linear-key}
	|\langle (m^\kappa_\tau (D) P_\lambda  f)  u,  v \rangle | 
	\lesssim \lambda^{\kappa-\frac 1p} \, \delta_1^{\frac 1 p -\frac 12} \delta_2^{\frac{d+2}{2p} - \frac 12}
	\Gamma_{\lambda, \delta_1}^{p}(f) 
	\|u \|_{X_{\zeta(1),1/\tau}^{1/2}} \|v \|_{X_{\zeta(1),1/\tau}^{1/2}}.
\end{equation} 
\end{lem}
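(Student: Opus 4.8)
The plan is to reduce the estimate, by Hölder's inequality and Lemma~\ref{Lit_Pal}, to a bilinear $L^{p'}$ bound on $u v$ and a control of $\|P_{\le\delta_1}^{e_1}P_\lambda f\|_{L^p}$ by $\Gamma_{\lambda,\delta_1}^{p}(f)$, and then to feed in the linear restriction estimate \eqref{LIN} together with the Cauchy--Schwarz bookkeeping of $X_{\zeta(1),1/\tau}^{1/2}$. First I would note that, as in the proof of Proposition~\ref{bilinear-S}, the support condition \eqref{del_sup} lets us insert $P_{\le\delta_1}^{e_1}$ into the frequency argument of $m^\kappa_\tau(D)P_\lambda f$ freely, since $\supp(\widehat u(-\,\cdot)\ast\overline{\widehat v})$ is contained in $\{|\xi_1|\lesssim\delta_1\}\times\mathbb R^{d-1}$. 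Then Hölder's inequality gives
\[
	|\langle (m^\kappa_\tau(D)P_\lambda f)u,v\rangle|
	\lesssim \|m^\kappa_\tau(D)P_\lambda P_{\le\delta_1}^{e_1}f\|_{L^p}\,\|uv\|_{L^{p'}}
	\lesssim \lambda^\kappa\|P_\lambda P_{\le\delta_1}^{e_1}f\|_{L^p}\,\|uv\|_{L^{p'}},
\]
using Lemma~\ref{Lit_Pal} for the last step (here $\lambda\gtrsim 1/\tau$). Writing $\|P_\lambda P_{\le\delta_1}^{e_1}f\|_{L^p}=\lambda^{-1/p}\delta_1^{1/p}\Gamma_{\lambda,\delta_1}^{p}(f)$ by definition isolates the factor $\lambda^{\kappa-1/p}\delta_1^{1/p}\Gamma_{\lambda,\delta_1}^{p}(f)$, and it remains to bound $\|uv\|_{L^{p'}}$.

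For the bilinear term I would interpolate between the trivial bound $\|uv\|_{L^1}\le\|u\|_{L^2}\|v\|_{L^2}$ and the endpoint $L^{d/(d-1)}$ estimate. For the latter, since $u,v$ satisfy \eqref{del_sup}, after rescaling near $\Sigma^1$ their Fourier supports lie in sets of the form $\Sigma_{\le\delta_i}^{\tau,h_i}$ with $h_i\simeq\delta_i$; applying \eqref{end-linear} (equivalently the linear estimate \eqref{LIN} with $h_2=\delta_2$) we obtain
\[
	\|uv\|_{L^{d/(d-1)}(\mathbb R^d)}\le\|u\|_{L^2}\|v\|_{L^{2d/(d-2)}}
	\lesssim \delta_2^{1/d}\,\delta_2^{1/2}\,\|u\|_{L^2}\|v\|_{L^2},
\]
where the $\delta_2^{1/d}$ comes from Bernstein in the $e_1$-direction on $v$ (whose $\xi_1$-support has length $\lesssim\delta_2$) and $\delta_2^{1/2}$ from Corollary~\ref{d-nbd}; note we put the worse (smaller) parameter $\delta_2$ on the factor estimated in $L^{2d/(d-2)}$. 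Interpolating the $L^1$ and $L^{d/(d-1)}$ bounds and choosing the exponent so that the Lebesgue index is $p'$ (i.e. taking the interpolation parameter $\theta=d/p$) yields
\[
	\|uv\|_{L^{p'}(\mathbb R^d)}\lesssim \delta_2^{\frac{d+2}{2p}}\,\|u\|_{L^2}\|v\|_{L^2}.
\]

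Finally I would convert the $L^2$ norms of $u,v$ into $X_{\zeta(1),1/\tau}^{1/2}$ norms. Since $\supp\widehat u$ is contained in a $\delta_1$-neighborhood of $\Sigma^1$, the inequality $|p_{\zeta(1)}(\xi)|+1/\tau\gtrsim\delta_1$ on that support (by \eqref{geom_symb}, as $\delta_1\ge 1/\tau$) gives $\|u\|_{L^2}\lesssim\delta_1^{-1/2}\|u\|_{X_{\zeta(1),1/\tau}^{1/2}}$, and likewise $\|v\|_{L^2}\lesssim\delta_2^{-1/2}\|v\|_{X_{\zeta(1),1/\tau}^{1/2}}$. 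Collecting all powers: $\lambda^{\kappa-1/p}$ from the first step, $\delta_1^{1/p}$ from $\Gamma$, $\delta_2^{(d+2)/(2p)}$ from the bilinear estimate, and $\delta_1^{-1/2}\delta_2^{-1/2}$ from the $X$-conversion, produces exactly $\lambda^{\kappa-1/p}\delta_1^{1/p-1/2}\delta_2^{(d+2)/(2p)-1/2}\Gamma_{\lambda,\delta_1}^{p}(f)\|u\|_{X_{\zeta(1),1/\tau}^{1/2}}\|v\|_{X_{\zeta(1),1/\tau}^{1/2}}$, which is \eqref{linear-key}. The only mildly delicate point is arranging the interpolation so that precisely the smaller parameter $\delta_2$ carries the restriction-gain $\delta_2^{(d+2)/(2p)}$ rather than $\delta_1$ — this is what makes the bound useful when combined with the bilinear estimates later — but this is dictated by putting $v$ (the narrower function) in $L^{2d/(d-2)}$ from the start.
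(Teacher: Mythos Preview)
Your proof follows the paper's route step for step: insert $P_{\le\delta_1}^{e_1}$ via the support condition, apply H\"older and Lemma~\ref{Lit_Pal}, interpolate \eqref{LIN} against the trivial $L^1$ bound to obtain $\|uv\|_{L^{p'}}\lesssim\delta_2^{(d+2)/(2p)}\|u\|_{L^2}\|v\|_{L^2}$, and then convert the $L^2$ norms to $X_{\zeta(1),1/\tau}^{1/2}$ norms. All of this is correct except the justification of the very last conversion.

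You assert that on $\supp\widehat u$ one has $|p_{\zeta(1)}(\xi)|+1/\tau\gtrsim\delta_1$. This is false: condition \eqref{del_sup} only places $\supp\widehat u$ in a $\delta_1$-neighborhood of $\Sigma^1$, so \eqref{geom_symb} yields the \emph{upper} bound $|p_{\zeta(1)}(\xi)|\lesssim\delta_1$, not a lower bound. For instance any $\xi\in\Sigma^1$ lies in that neighborhood, and there $|p_{\zeta(1)}(\xi)|+1/\tau=1/\tau$, which can be $\ll\delta_1$. Hence the inequality $\|u\|_{L^2}\lesssim\delta_1^{-1/2}\|u\|_{X_{\zeta(1),1/\tau}^{1/2}}$ does not follow from \eqref{del_sup} alone. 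The paper closes this step by invoking \eqref{u_del}, which is valid for the \emph{dyadic} pieces $u_\delta$ of the decomposition \eqref{dyadic1}--\eqref{dyadic2}: those have Fourier support at distance $\simeq\delta$ from $\Sigma^1$ (or are the innermost piece with $\delta\simeq1/\tau$), giving the two-sided control $|p_{\zeta(1)}|+1/\tau\simeq\delta$ needed for the conversion. Since the lemma is only ever applied to such pieces (cf.\ the proof of Proposition~\ref{bilinear-P2}), this suffices; your argument is repaired by replacing the incorrect pointwise claim with a reference to \eqref{u_del}.
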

\begin{proof} 
Interpolation between \eqref{LIN} (with $h_2=\delta_2$) and the trivial estimate $\|u v\|_{L^1} \le  \| u\|_{L^2}  \| v\|_{L^2}$ gives
\[ \| u v \|_{L^{p'}(\R^d)} \lesssim \delta_2^{\frac{d+2}{2p}} \| u \|_{L^2 (\R^d)}  \| v\|_{L^2(\R^d)}	\]
for $p \ge d$.
Using this estimate and Lemma \ref{Lit_Pal},  we see that
\begin{equation*} 
\begin{aligned}
	|\langle (m^\kappa_\tau (D) P_\lambda  f)  u,  v \rangle | 
	&= |\langle ( m^\kappa_\tau (D) P_{\le \delta_1}^{e_1} P_\lambda f)  u,  v\rangle | 
		 \lesssim \lambda^\kappa \|P_{\le \delta_1}^{e_1} P_\lambda f\|_{L^p} \| u v \|_{L^{p'}} \\
	&\lesssim  \lambda^{\kappa-\frac1p} \delta_1^\frac1p \delta_2^{\frac{d+2}{2p} }  \Gamma_{\lambda, \delta_1}^p(f)  \| u \|_{L^2} \|v \|_{L^2}.
\end{aligned}
\end{equation*} 
Combining this with  \eqref{u_del} we get \eqref{linear-key}.
\end{proof}

For $3\le d\le 8$, we define $\gamma_d: [d,\infty]\to [0,1]$.  For $3\le d\le 6$,  we set 
\begin{align} \label{gamma3-7}
\gamma_d(p) &= 
\begin{cases} 
\,	 1- \frac{d+5}{2p}   &\text{if }\ \   p \ge d+1,  \\[5pt]
\,	\frac {(2d-3)p- ( d^2+3d -6) }{2 (d-1)  p}   &\text{if }\ \   d+1  > p \ge \max\{ d,  \frac{d^2+3d-6}{2d-3} \} , \\[5pt] 
\,	0   &\text{if }\ \  \max\{ d,  \frac{d^2+3d-6}{2d-3}  \} > p \ge d,
\end{cases}
\intertext{and, for $d=7, 8$, we set}
\notag \gamma_d( p)  & =
\begin{cases} 
\,	 1- \frac{d+5}{2p}  &\text{if }\ \  p \ge \frac{d+9}{2}, \\[5pt]
\,	\frac{ 2(d+4)p- (d^2+9d +16) }{2 (d+5)  p}    &\text{if }\ \ \frac{d+9}{2} > p \ge d.
\end{cases}
\end{align}

Note that $\frac{d^2+3d-6}{2d-3} \le d$ if and only if $d \ge 5$, so there  is no $p$  that  belongs to the range of  third line in \eqref{gamma3-7} when $d=5,6$.  In higher dimensions $d\ge 9$ the bounds in Proposition \ref{bilinear-P}, Proposition \ref{bilinear-N},  Proposition \ref{bilinear-S} are already better than the estimates which we can deduce by combining the linear and bilinear estimates.   Improved bounds are possible  for all three cases \eqref{s-case}, \eqref{p-case}, and \eqref{n-case} by the similar argument. So we provide the details only for the  case \eqref{p-case} and  state the estimates for the other cases without providing the proof. 

\begin{prop}\label{bilinear-P2} 
Let $3\le d\le 8$, $\tau \gtrsim \epsilon_\circ^{-1}$, $\frac1\tau \le \lambda \lesssim 1$, $p\ge d$, and let $f\in L^p(\mathbb R^{d})$. Suppose that $\mathbb S_{\ell}$ and $\mathbb S_{\ell'}$ satisfy \eqref{p-case},  and   $u$, $v$ satisfy \eqref{uv_support} in place of $u_\ell$, $v_{\ell'}$, respectively. For $\nu={\kappa-\frac 2p}$ and $\mu>\gamma_d(p)$,  there is a constant $C>0$,  independent of $\tau$ and $\lambda$, such that 
\begin{equation} \label{bi-P2}
|\langle (m^\kappa_\tau (D) P_\lambda  f)  u ,  v \rangle | 
	\le C \lambda^\nu  \tau^{ \mu  }     
		\sup_{ \frac1\tau \le \delta\lesssim 1} \big( \Gamma_{\lambda, \delta }^{p, +}(f) + \Gamma_{\lambda, \delta }^{p }(f)\big) 
			\|u \|_{X_{\zeta(1),1/\tau}^{1/2}} \|v \|_{X_{\zeta(1),1/\tau}^{1/2}}.
\end{equation}
\end{prop}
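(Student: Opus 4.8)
The plan is to follow the scheme of Proposition \ref{bilinear-P} but to interpolate, at the level of each dyadic piece $\langle (m^\kappa_\tau(D)P_\lambda f)u_{\delta_1},v_{\delta_2}\rangle$, between the bilinear bound already available and the new linear bound \eqref{linear-key}. First I would perform the dyadic decomposition \eqref{dyadic1}--\eqref{dyadic2} of $u$ and $v$ near $\Sigma^1$, writing $\langle (m^\kappa_\tau(D)P_\lambda f)u,v\rangle = I+I\!I$ exactly as in \eqref{dd}, and by symmetry reduce to estimating the sum $I$ over $\delta_\ast\le\delta_2\le\delta_1$. For a single such pair we have two bounds at our disposal: the bilinear one obtained from \eqref{large} (if $d\ge 7$) or \eqref{large2} (if $3\le d\le6$) combined with \eqref{u_del}, and the linear one \eqref{linear-key}. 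Both carry the factor $\lambda^{\kappa-\frac1p}\,\Gamma^{p,+}_{\lambda,\delta_1}(f)$ or $\lambda^{\kappa-\frac1p}\,\Gamma^{p}_{\lambda,\delta_1}(f)$ together with $\|u\|_{X^{1/2}}\|v\|_{X^{1/2}}$; what differs is the exponent of $\delta_1,\delta_2$ and, crucially, whether the $\lambda$-power is $\lambda^{\kappa-\frac2p}$ (bilinear, because of the gain from the $e_1$-localization $P^{e_1}_{\le\delta_1}$) or $\lambda^{\kappa-\frac1p}$ (linear). Since $\lambda\lesssim1$, the bilinear $\lambda$-power is the better one, so I would keep $\lambda^{\kappa-\frac2p}$ out front and only interpolate the $\delta$-exponents, at the cost of the linear estimate's weaker $\delta$-decay being tempered by a small power.

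More precisely, for $\theta\in[0,1]$ take the geometric mean of the bilinear bound (weight $\theta$) and the linear bound \eqref{linear-key} (weight $1-\theta$). The resulting exponent pair $(a(\theta),b(\theta))$ of $(\delta_1,\delta_2)$ is affine in $\theta$, and the exponent of $\lambda$ interpolates between $\kappa-\frac2p$ and $\kappa-\frac1p$; choosing $\theta$ to optimize the final $\tau$-power while keeping both $\delta$-exponents in the summable regime yields, after summing the geometric series in $\delta_1,\delta_2$ over the dyadic range $[\delta_\ast,\,O(1)]$ with $\delta_\ast\simeq1/\tau$, a bound of the form $\lambda^{\kappa-\frac2p}\tau^{\gamma_d(p)+\epsilon}$. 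The three regimes in the definition \eqref{gamma3-7} of $\gamma_d$ correspond exactly to which of the two ingredient estimates is "binding": for $p$ large ($p\ge d+1$, resp. $p\ge\frac{d+9}{2}$) the bilinear estimate already dominates and $\gamma_d(p)=\beta_d(p)=1-\frac{d+5}{2p}$, so one takes $\theta=1$; for intermediate $p$ the optimal $\theta$ is interior and produces the middle formula; for the smallest $p$ (only relevant when $d=3,4$) the pure linear estimate wins and $\gamma_d(p)=0$. I would also have to treat the boundary stopping scale $j=j_\ast(\delta_1)$ separately, exactly as in Lemma \ref{bilinear+}, using \eqref{bernstein}-type Bernstein estimates rather than the bilinear restriction estimate; this contributes no worse than the generic term because $2^{-j_\ast}\simeq\sqrt{\delta_1}$ converts the loss into an extra power of $\delta_1$.

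The summation over the finitely many primary pairs $(\mathbb S_\ell,\mathbb S_{\ell'})$ in \eqref{primary} and over $I\!I$ (interchanging $\delta_1,\delta_2$) is routine. The main obstacle I anticipate is purely bookkeeping: verifying that the interpolated $\delta_1$- and $\delta_2$-exponents remain strictly summable (i.e. the exponent of $\delta_1$, after absorbing the $\|u\|_{L^2}\lesssim\delta_1^{-1/2}\|u\|_{X^{1/2}}$ loss, stays positive, and likewise for $\delta_2$) across exactly the parameter ranges declared in \eqref{gamma3-7}, and that the optimal $\theta$ one computes is the one producing precisely the stated piecewise-linear $\gamma_d(p)$; the borderline cases (the breakpoints $p=d+1$, $p=\frac{d+9}{2}$, $p=\frac{d^2+3d-6}{2d-3}$, and $p=d$) are where a factor $\log(1/\delta_1)\lesssim\tau^\epsilon$ or a critical $\delta$-exponent of $0$ appears, which is why the statement is phrased with $\mu>\gamma_d(p)$ rather than $\mu=\gamma_d(p)$. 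Everything else — the insertion of $P^{e_1}_{\le\delta_1}$, Lemma \ref{Lit_Pal}, the orthogonality of the Whitney pieces, the dimension split $d\ge7$ versus $3\le d\le6$ — is imported verbatim from the proofs of Lemma \ref{bilinear+} and Proposition \ref{bilinear-P}.
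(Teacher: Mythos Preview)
Your proposal is essentially correct and follows the same approach as the paper: decompose as in \eqref{dd}, take the geometric mean (parameter $\theta\in[0,1]$) of the bilinear bound \eqref{key*+} (resp.\ \eqref{key} when $d=7,8$) and the linear bound \eqref{linear-key}, bound the interpolated $\lambda$-power by $\lambda^{\kappa-2/p}$ since $\lambda\lesssim1$, then optimize $\theta$ so that the double dyadic sum in $\delta_1,\delta_2$ gives $\tau^{\gamma_d(p)+\epsilon}$. Two small corrections: there is no separate $j_\ast$ treatment at this stage, since the inputs \eqref{key}, \eqref{key*+}, \eqref{linear-key} are already post-Whitney; and in the ``smallest $p$'' regime ($d=3,4$) the paper chooses an \emph{interior} $\theta\in(\theta_1,\theta_2)$ making both $\delta$-exponents strictly positive, not the pure linear $\theta=0$.
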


\begin{proof}   
We first consider the case $3\le d\le 6$ under the assumption that  $d\le p < d+1$.  Recalling \eqref{dd}, it is sufficient to handle $I$ because  $I\!I$ can be handled symmetrically. So, we  assume  $\delta_2 \le \delta_1$.  From \eqref{key*+} and \eqref{linear-key}  we have, for $0\le \theta\le 1$,  
\Be\label{means}
|\langle (m^\kappa_\tau (D) P_\lambda  f)  u_{\delta_1},  v_{\delta_2}\rangle |  \\
		\lesssim \delta_2^{-\epsilon} \lambda^{\kappa-\frac2p} \delta_1^{b_1(\theta) } \delta_2^{b_2(\theta) }  
			\Gamma_\theta\|u \|_{X_{\zeta(1),1/\tau}^{1/2}} \|v \|_{X_{\zeta(1),1/\tau}^{1/2}} 
\Ee
where $\Gamma_\theta =  \big( \Gamma_{\lambda, \delta_1}^{p, +}(f))^{\theta}(\Gamma_{\lambda,\delta_1}^{p}(f) )^{1-\theta}$ and 
\[	b_1(\theta)   =  \frac{( d-1) \theta +2}{2p} -\frac 12, \quad
	b_2(\theta)   = \frac{ (d+2) - (d-2)\theta}{2p} - \frac 12.	\]
  
Note that $\Gamma_\theta\le \theta \Gamma_{\lambda, \delta_1}^{p, +}(f) +  (1-\theta) \Gamma_{\lambda,\delta_1}^{p}(f) $. Hence, in order to show \eqref{bi-P2} it is sufficient to find $\theta\in [0,1]$ such that  
\Be \label{target} 
	\Sigma(\theta):=\sum_{\frac1\tau \lesssim  \delta_2 \le \delta_1\lesssim 1}  \delta_1^{b_1(\theta) } \delta_2^{b_2(\theta) }\le C( \log \tau ) \tau^{\gamma_d(p)}.
\Ee
For the purpose  let $\theta_1$ and $\theta_2$ be such that $b_1(\theta_1) = 0$ and $b_2(\theta_2) =0$. That is to say,  $\theta_1 = \frac{p-2}{d-1}$ and $\theta_2 = \frac{d+2-p}{d-2}$.  Since $ d\le p<d+1$ we see that $0<\theta_1<1$ and $\theta_2>0$.  We consider the two cases
\[	(i)\colon  \theta_2 \le \theta_1<1,  \quad 
	(ii)\colon   \theta_1 < \theta_2,	\]
which are equivalent to
\[	(i)\colon  (d^2+3d-6)/(2d-3) \le p < d+1,  \quad  
	(ii)\colon  d\le p < (d^2+3d-6)/(2d-3),	\]
respectively. If $d=3$, the case $(i)$ is void. If $d\ge 5$, since $d\ge (d^2+3d-6)/(2d-3)$, the case $(ii)$ is void and the other $(i)$ is equivalent to $d\le p <d+1$.  For $d=3,4$, the case $(ii)$ is easy to handle. Indeed, if we choose any $\theta\in (\theta_1,\theta_2)$, \eqref{target} holds with $\gamma_d(p)=0$ since  $b_1(\theta), b_2(\theta)>0$. 
 
Now it remains to consider the case $(i)$ when $d\ge 4$.  We separately consider the following three cases: 
\[	(\rm A)\colon 0\le \theta \le \theta_2, \quad (\rm B)\colon \theta_2<  \theta \le \theta_1,\quad (\rm C)\colon \theta_1< \theta \le 1.	\]
We note that   $ b_1(\theta) + b_2(\theta) = \frac{\theta + d+4}{2p} -1$ is increasing in $\theta$. Hence for $0\le \theta\le\theta_1$, $b_1(\theta) + b_2(\theta)\le b_2(\theta_1)  \le 0 $. In the case $(\rm A)$,  $b_2(\theta)\ge 0$. Thus, 
\[	\Sigma(\theta)
 	=\sum_{\frac1\tau\lesssim \delta_2 \le \delta_1\lesssim 1} \delta_1^{ b_1(\theta) + b_2(\theta)} \Big(\frac{\delta_2}{\delta_1} \Big)^{b_2(\theta) }
	\lesssim \sum_{\frac1\tau\lesssim \delta_1\lesssim 1}  \delta_1^{ b_1(\theta) + b_2(\theta)}
	\lesssim  ( \log \tau ) \tau^{-( b_1(\theta)+ b_2(\theta) )}.
\]
In the case $(\rm B)$, $b_1(\theta)\le 0$ and $b_2(\theta) <0$. Thus, $\Sigma(\theta) \lesssim ( \log \tau )  \tau^{-( b_1(\theta)+ b_2(\theta) )}.$  In the case $(\rm C)$, $b_1(\theta) >0 $ and $b_2(\theta) <0$, so summation along $\delta_1$ is finite and we get  $\Sigma(\theta)  \lesssim \tau^{-b_2(\theta)}$.  Therefore, recalling $b_1(\theta)+b_2(\theta)$ is increasing and non-positive when $0\le \theta\le \theta_1$  and $b_2(\theta)$ is decreasing and non-positive when $\theta\ge\theta_1$,  we choose $\theta=\theta_1$ that makes  \eqref{target} true with  the smallest exponent of $\tau$, and we  have \eqref{target} with 
\begin{gather*}
\gamma_d(p) = - b_2(\theta_1)   = \frac{(2d-3)p - (d^2+3d -6)}{2p(d-1)} 
\end{gather*}
provided that  $\max\{d, \frac{d^2+3d-6}{2d-3} \} \le p < d+1$. 

If $p\ge d+1$, since $\theta_2\le1 \le \theta_1$, the case $(ii)$ is void and we need only to consider the cases $(\rm A)$ and $(\rm B)$. From the above computation we take $\gamma_d(p)=-(b_1(1)+b_2(1))=1-\frac{d+5}{2p}$, which corresponds to \eqref{means} with $\theta=1$. 

We now turn to the case $d=7, 8$. Combining \eqref{key} and \eqref{linear-key}, we  have
\[	|\langle (m^\kappa_\tau (D) P_\lambda  f)  u_{\delta_1},  v_{\delta_2}\rangle |  
		\lesssim \tau^\epsilon \lambda^{\kappa-\frac2p} \delta_1^{b_1 (\theta) } \delta_2^{b_2 (\theta) }   \, \Gamma_\theta
			\|u \|_{X_{\zeta(1),1/\tau}^{1/2}} \|v \|_{X_{\zeta(1),1/\tau}^{1/2}},	\]
where  $\theta \in [0,1]$ and $b_1 (\theta)  =  \frac{( d+5) \theta +4}{4p} -\frac 12,$ $b_2  (\theta)= \frac{ 2(d+2) - (d+3)\theta}{4p} - \frac 12.$  Once we have this estimate we can repeat the same argument to get the desired bound \eqref{bi-P2}. So, we omit the details. 
\end{proof}

For the other cases \eqref{n-case} and \eqref{s-case},  we apply the same argument to get improved estimates. In fact, for the antipodal case \eqref{n-case}, we use  \eqref{large_n}, \eqref{large2_n},  and \eqref{linear-key}. Thus, we  get
\begin{equation}\label{bi-N}
|\langle (m^\kappa_\tau (D) P_\lambda  f)  u ,  v \rangle | 
	\lesssim   \tau^{ \gamma_d(p)+ \epsilon} \sup_{ \frac1\tau \le \delta\lesssim 1} \big( \Gamma_{\lambda, \delta }^{p, -}(f) + \Gamma_{\lambda, \delta }^{p }(f)\big) 
		\|u \|_{X_{\zeta(1),1/\tau}^{1/2}} \|v \|_{X_{\zeta(1),1/\tau}^{1/2}}
\end{equation}
for any $\epsilon>0$. For the transversal case \eqref{s-case},  we have, for $\epsilon>0$, 
\begin{equation}\label{bi-S}
|\langle (m^\kappa_\tau (D) P_\lambda  f)  u ,  v \rangle | 
	\lesssim \lambda^{\kappa-\frac 1 p} \tau^{ \gamma_d(p)+ \epsilon}   \sup_{\frac1\tau \le \delta\lesssim 1}   \Gamma_{\lambda, \delta }^{p }(f) 
		\|u \|_{X_{\zeta(1),1/\tau}^{1/2}} \|v \|_{X_{\zeta(1),1/\tau}^{1/2}}.
\end{equation} 
As in Proposition \ref{bilinear-sum}, combining the estimates \eqref{bi-P2}, \eqref{bi-N}, and \eqref{bi-S} of the three cases \eqref{p-case},  \eqref{n-case}, and \eqref{s-case}, we obtain the following. 
 
\begin{prop} \label{bilinear-sum-2}  
Let $3\le d\le 8$, $\tau \gtrsim \epsilon_\circ^{-1}$, $\frac1\tau \le \lambda \lesssim 1$,  $p\ge d$, and let $f\in L^p(\mathbb R^{d})$.  Suppose  $u$ and $v$ satisfy \eqref{near}. Then \eqref{Large} holds for $\nu={\kappa-\frac 2 p}$ and $\mu>\gamma_d(p)$. 
\end{prop}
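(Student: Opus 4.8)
The plan is to repeat the proof of Proposition~\ref{bilinear-sum} almost verbatim, the only change being that the three input bounds supplied by Proposition~\ref{bilinear-P}, Proposition~\ref{bilinear-N}, and Proposition~\ref{bilinear-S} are replaced by the strengthened bounds \eqref{bi-P2} (from Proposition~\ref{bilinear-P2}), \eqref{bi-N}, and \eqref{bi-S}, which are available precisely because now $p\ge d$. First I would apply the primary decomposition \eqref{primary}, writing $\langle (m^\kappa_\tau(D)P_\lambda f)u,v\rangle$ as a sum over the finitely many pairs $(\mathbb S_\ell,\mathbb S_{\ell'})$ of terms of \emph{transversal}, \emph{neighboring}, and \emph{antipodal} type. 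Since $u,v$ satisfy \eqref{near}, the pieces $u_\ell,v_{\ell'}$ obey the support condition \eqref{uv_support}, so to each neighboring term I apply \eqref{bi-P2}, to each antipodal term \eqref{bi-N}, and to the transversal term \eqref{bi-S}.

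Each of these bounds has the shape (a power of $\lambda$) $\cdot\,\tau^{\gamma_d(p)+\epsilon}\cdot$ (a supremum over $\tfrac1\tau\le\delta\lesssim1$ of one of $\Gamma^{p,+}_{\lambda,\delta}(f)$, $\Gamma^{p,-}_{\lambda,\delta}(f)$, $\Gamma^{p}_{\lambda,\delta}(f)$) $\cdot\,\|u\|_{X_{\zeta(1),1/\tau}^{1/2}}\|v\|_{X_{\zeta(1),1/\tau}^{1/2}}$. The power of $\lambda$ is $\lambda^{\kappa-\frac2p}=\lambda^{\nu}$ in the neighboring case, $\lambda^{0}$ in the antipodal case (with the term vanishing unless $\lambda\simeq1$), and $\lambda^{\kappa-\frac1p}$ in the transversal case; since $\lambda\lesssim1$ and $\kappa-\frac1p\ge\kappa-\frac2p=\nu$, in every case the $\lambda$-factor is $\lesssim\lambda^{\nu}$. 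Each supremum is dominated by $\mathcal A_{p}(f,\lambda,e_1)$ — note $\mathcal A_p$ is defined with the wider range $0<\delta\lesssim1$, and the $\Gamma$-functionals only grow when that range is enlarged — and the number of pairs $(\mathbb S_\ell,\mathbb S_{\ell'})$ is $O_d(1)$. Summing the finitely many contributions and absorbing the $\tau^{\epsilon}$ into $\tau^{\mu}$ for any prescribed $\mu>\gamma_d(p)$ yields \eqref{Large}.

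There is no genuine obstacle here: the substantive work is already carried out in Proposition~\ref{bilinear-P2} and in its stated analogues \eqref{bi-N}, \eqref{bi-S}. The only points needing a moment's care are (i) checking that the hypothesis $p\ge d$ is compatible with the hypotheses ($p\ge\frac{d+1}{2}$ for the bilinear inputs, $p\ge d$ for the linear input \eqref{linear-key}) underlying \eqref{bi-P2}--\eqref{bi-S}, which is immediate since $d\ge\frac{d+1}{2}$; and (ii) matching the supremum ranges and the powers of $\lambda$ across the three cases so that a single factor $\lambda^{\nu}\,\mathcal A_{p}(f,\lambda,e_1)$ can be pulled out, which is the computation indicated above using $\lambda\lesssim1$.
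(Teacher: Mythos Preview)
Your proposal is correct and follows exactly the approach the paper intends: the paper itself merely says to combine \eqref{bi-P2}, \eqref{bi-N}, and \eqref{bi-S} ``as in Proposition~\ref{bilinear-sum},'' and you have accurately unpacked what that entails, including the bookkeeping with the $\lambda$-powers and the passage from the $\Gamma$-suprema to $\mathcal A_p(f,\lambda,e_1)$.
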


\section{Average over rotation and dilation} \label{sec_aver}
In  this section, we consider the average of $\mathcal A_{p}(f,\tau\la,\tau^{-1}Ue)$ over $U\in O_d$ and $\tau\in [1,2]$.  The projection operators engaged in the definition of  $\mathcal A_{p}(f,\tau\la,\tau^{-1}Ue)$ break the Fourier support of $f$ into small pieces.  Average over $U\in O_d$ and $\tau\in [1,2]$ makes it possible to exploit  such smallness of Fourier supports.  This gives considerably better bounds which are not viable when one attempts to control  $\mathcal A_{p}(f,\la,e)$ for a fixed $e$ with $\|P_\lambda f\|_{L^p}$.  

For an invertible $d\times d$ matrix $U$, let us define the projection operator   $(P_{k, k'}^{j, \pm })^U$ by 
\Be \label{projkku}
	\F\big((P_{k, k'}^{j, \pm})^U g \big)(\xi) = \phi_{k,k'}^{j,\pm}(\widetilde{U^t\xi})\, \widehat g(\xi),
\Ee 
where $U^t$ is the transpose of  $U$.  Let $dm$ be the normalized Haar measure on $O_d$.  Then we have, for any $\theta\in \mathbb S^{d-1}$ and $f\in L^1(\mathbb S^{d-1})$,
\Be\label{haar_msr}
	\int_{O_d}f(U\theta) dm(U) =  c_d \int_{\mathbb S^{d-1}} f(\omega)d\sigma(\omega)
\Ee
for some dimensional constant $c_d$. Let $\mathcal P_\lambda$ denote the operator given by $\mathcal F(\mathcal P_\lambda f)= \widetilde \beta(|\cdot|/\lambda) \widehat f$ with $\widetilde \beta\in C_c^\infty ((2^{-2}, 2^3))$ such that $\widetilde \beta=1$ on $[2^{-1}, 2^2]$. The following lemma can be obtained  in the same manner as in the proof of \cite[Lemma 5.1]{Haberman15}.  
 
\begin{lem}\label{s-average}   Let $\delta, \lambda>0$.    If $f\in L^p(\mathbb R^{d})$, $p\in[2,\infty)$, then  
\begin{equation} \label{s-avr}
	\int_1^2  \int_{O_d}     \|  P_{\le  \delta  }^{\tau^{-1} U e_1}  P_{\tau\lambda} f \|_{L^p(\R^d)}^p    d m(U) d\tau \lesssim(\delta/\lambda) \|\mathcal P_{\lambda} f\|_{L^p(\R^d)}^p, 
\end{equation} 
where the implicit constant is independent of $\delta, \lambda$.
\end{lem}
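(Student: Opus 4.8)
The plan is to reduce \eqref{s-avr} to an $L^p$ estimate for a single averaged multiplier operator, exploiting the fact that the $\tau^{-1}Ue_1$-direction Littlewood--Paley cutoff $P_{\le\delta}^{\tau^{-1}Ue_1}$ only sees a slab of thickness $\simeq \delta$ around the hyperplane perpendicular to $Ue_1$. First I would use Fubini to bring the spatial $L^p$ norm to the outside, so that the left side of \eqref{s-avr} becomes
\[
\int_{\R^d}\int_1^2\int_{O_d}\bigl|P_{\le\delta}^{\tau^{-1}Ue_1}P_{\tau\lambda}f(x)\bigr|^p\,dm(U)\,d\tau\,dx.
\]
The multiplier of $P_{\le\delta}^{\tau^{-1}Ue_1}P_{\tau\lambda}$ at frequency $\xi$ is $\beta_0\!\bigl(\tau\,\xi\cdot Ue_1/\delta\bigr)\beta(|\xi|/(\tau\lambda))$; after writing $P_{\tau\lambda}f=\mathcal P_\lambda(P_{\tau\lambda}f)$ (legitimate since $\tau\in[1,2]$ and $\widetilde\beta\equiv1$ on the relevant annulus) and decomposing, the $U$-average of $\beta_0(\tau\,\xi\cdot Ue_1/\delta)$ is, by \eqref{haar_msr}, governed by the surface measure of the cap $\{\omega\in\mathbb S^{d-1}: |\xi\cdot\omega|\lesssim \delta/(\tau\lambda)\}$, which has size $\simeq \delta/\lambda$ uniformly on the annulus $|\xi|\simeq\lambda$.

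The cleanest route, mirroring \cite[Lemma 5.1]{Haberman15}, is to avoid pointwise multiplier manipulation and instead run a duality/$TT^*$ argument directly at the level of $L^p$. I would test against $g\in L^{p'}$, expand
\[
\int_1^2\int_{O_d}\langle P_{\le\delta}^{\tau^{-1}Ue_1}P_{\tau\lambda}f,\,g_{U,\tau}\rangle\,dm(U)\,d\tau,
\]
apply H\"older in $(U,\tau)$ with exponents $(p,p')$, and bound the $g$-side trivially by $\|g\|_{L^{p'}}$ after observing that the adjoint cutoffs are uniformly bounded on $L^{p'}$; the content is then exactly the claimed bound for the $f$-side. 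Alternatively — and this is what I would actually write — one invokes the fact that for $p\ge 2$ an operator-valued Minkowski/interpolation argument reduces matters to the two endpoint cases $p=2$ and $p=\infty$. At $p=2$, \eqref{s-avr} is immediate from Plancherel and \eqref{haar_msr}: the $L^2$ mass of $P_{\le\delta}^{\tau^{-1}Ue_1}P_{\tau\lambda}f$ is $\int|\widehat f(\xi)|^2\beta_0(\tau\xi\cdot Ue_1/\delta)^2\beta(|\xi|/\tau\lambda)^2\,d\xi$, and integrating the $U$-average of the cutoff contributes the factor $\simeq\delta/\lambda$. At $p=\infty$ one uses that $P_{\le\delta}^{\tau^{-1}Ue_1}P_{\tau\lambda}$ has a convolution kernel with uniformly bounded $L^1$ norm (being a product of two nice Littlewood--Paley kernels), giving the trivial bound without any gain, and the $\delta/\lambda\le 1$ factor is harmless since $\delta\lesssim\lambda$ is the regime of interest; interpolating $(2,\infty)$ with the geometric decay entering only through the $L^2$ endpoint gives $(\delta/\lambda)$ at every $p\in[2,\infty)$.

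The main obstacle I anticipate is the interpolation step: the gain $\delta/\lambda$ is genuinely present only at $p=2$, so a naive Riesz--Thorin between $p=2$ and $p=\infty$ would degrade the exponent to $(\delta/\lambda)^{2/p}$ rather than $(\delta/\lambda)$. The fix is to interpolate the \emph{vector-valued} inequality — viewing $U\mapsto P_{\le\delta}^{\tau^{-1}Ue_1}P_{\tau\lambda}f$ as an $L^p(O_d\times[1,2];L^p(\R^d))$-valued object and using that, by Fubini, this space coincides with $L^p$ in all variables jointly, so that the relevant linear map is from $L^p(\R^d)$ into $L^p(O_d\times[1,2]\times\R^d)$ and one interpolates the single scalar quantity $\bigl(\int\!\!\int\!\!\int|\cdots|^p\bigr)^{1/p}$ against $\|f\|_{L^p}$; since the $p=2$ bound already carries the full $\delta/\lambda$ and the $p=\infty$ bound carries $(\delta/\lambda)^0$, and the target exponent of $\delta/\lambda$ must be read off correctly, I would instead follow \cite{Haberman15} and prove the $p=2$ case by Plancherel and then \emph{directly} upgrade to general $p\ge 2$ by Bernstein in the $\tau^{-1}Ue_1$ direction together with the $p=2$ orthogonality — i.e. freeze $x$ outside the slab direction, apply the one-dimensional $L^2\to L^p$ Bernstein estimate (which costs $(\delta/\lambda)^{1/2-1/p}$), and then integrate the remaining $(d-1)$ variables and the $(U,\tau)$ average using the $p=2$ computation (which supplies the remaining $(\delta/\lambda)^{1/p}$ — or rather, the full factor after accounting for the $p$-th power normalization in \eqref{s-avr}). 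Tracking the exponents through this Bernstein-plus-$L^2$ argument so that they combine to exactly $(\delta/\lambda)$ in \eqref{s-avr} is the one place requiring care; everything else is routine, and I would explicitly refer the reader to \cite[Lemma 5.1]{Haberman15} for the bookkeeping.
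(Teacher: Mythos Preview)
Your initial approach is correct, and it is essentially what the paper intends (the paper does not write out a proof but defers to \cite[Lemma~5.1]{Haberman15}, and the same interpolation scheme is carried out explicitly in the proof of Lemma~\ref{n-average} just below). The issue is that you then talk yourself out of it.

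Your worry that ``a naive Riesz--Thorin between $p=2$ and $p=\infty$ would degrade the exponent to $(\delta/\lambda)^{2/p}$'' is mistaken. View $T: g\mapsto \bigl(P_{\le\delta}^{\tau^{-1}Ue_1}P_{\tau\lambda}g\bigr)_{(U,\tau,x)}$ as a linear map from $L^p(\R^d)$ into $L^p(O_d\times[1,2]\times\R^d)$. Your $p=2$ computation via Plancherel and \eqref{haar_msr} gives $\|T\|_{2\to 2}\lesssim (\delta/\lambda)^{1/2}$, and the uniform kernel bound gives $\|T\|_{\infty\to\infty}\lesssim 1$. Riesz--Thorin then yields $\|T\|_{p\to p}\lesssim (\delta/\lambda)^{1/p}$, and raising to the $p$-th power recovers exactly the factor $(\delta/\lambda)$ in \eqref{s-avr}. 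There is no degradation: the statement of the lemma is written for the $p$-th power of the norm, not the norm itself, and this is precisely what makes the exponent uniform in $p$. Your proposed Bernstein-plus-$L^2$ detour is therefore unnecessary (and, as you sketch it, the exponent bookkeeping is not obviously consistent anyway). Drop the second half of your proposal and keep the clean interpolation argument.
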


The following lemma is a consequence of Lemma \ref{s-average} and properties of the projection operator $P_{k,k'}^{j,+}$. Recall the definition of $\jc$ and $\ja=\ja(\delta)$ from Section \ref{dylocal} and Section \ref{sec_neigh}.

\begin{lem} \label{p-average}  
Let $0<\delta, \lambda\lesssim 1$ and $f\in L^p(\mathbb R^{d})$, $p \in [ 2,\infty)$. For  $(\mathbb S_\ell, \mathbb S_{\ell'})$ satisfying \eqref{p-case} and  $\jc<j<\ja=\ja(\delta)$, we have 
\begin{equation}\label{p-avr}
	\int_1^2  \int_{O_d} \sup_{k\sim k'} \|(P_{k,k'}^{j,+})^{\tau^{-1}U} P_{\le \delta }^{\tau^{-1}U e_1}  P_{\tau\lambda} f \|_{L^p(\R^d)}^p d m(U) d\tau  
	\lesssim 2^{j} \delta  \|\mathcal P_{\lambda} f\|_{L^p(\R^d)}^p,
\end{equation}
and 
\begin{equation} \label{p-avr2}
\int_1^2  \int_{O_d} \sup_{k\sim k'} \|(P_{k,k'}^{j_\ast,+})^{\tau^{-1}U} P_{\le \delta }^{\tau^{-1}U e_1}  P_{\tau\lambda} f \|_{L^p(\R^d)}^p d m(U) d\tau  
	\lesssim   ( \delta/\lambda)  \|\mathcal P_{\lambda} f\|_{L^p(\R^d)}^p.
\end{equation}
Here the implicit constants are independent of $\delta$, $\la$, and $j$. 
\end{lem}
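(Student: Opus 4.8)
The plan is to deduce Lemma \ref{p-average} from Lemma \ref{s-average} by absorbing the extra multiplier $(P_{k,k'}^{j,+})^{\tau^{-1}U}$ at negligible cost, and then, for the sharper bound \eqref{p-avr}, to recover the improvement from a frequency-support (vanishing) observation that is special to the range $\jc<j<\ja$.

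First I would note that $(P_{k,k'}^{j,+})^{\tau^{-1}U}$ is a Fourier multiplier whose symbol $\xi\mapsto\phi_{k,k'}^{j,+}(\tau^{-1}\widetilde{U^{t}\xi})$ is a smooth bump adapted to a parallelepiped with one side of length $\simeq\tau 2^{-2j}$ and $d-2$ sides of length $\simeq\tau 2^{-j}$, and which is constant along the direction $Ue_{1}$. Its convolution kernel is therefore a ridge bump whose $L^{1}(\R^{d})$ norm is bounded uniformly in $k,k',j,U$ and $\tau$ (a smooth bump adapted to any box, tensored with a line, has kernel of $O(1)$ mass, and rotation and dilation preserve this), so by Minkowski's inequality $\|(P_{k,k'}^{j,+})^{\tau^{-1}U}g\|_{L^{p}(\R^{d})}\le C\|g\|_{L^{p}(\R^{d})}$ uniformly. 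Hence, for every $U$ and $\tau$,
\[ \sup_{k\sim k'}\big\|(P_{k,k'}^{j,+})^{\tau^{-1}U}P_{\le\delta}^{\tau^{-1}Ue_{1}}P_{\tau\lambda}f\big\|_{L^{p}(\R^{d})}^{p}\le C\big\|P_{\le\delta}^{\tau^{-1}Ue_{1}}P_{\tau\lambda}f\big\|_{L^{p}(\R^{d})}^{p}, \]
and integrating over $U\in O_{d}$ and $\tau\in[1,2]$ and invoking \eqref{s-avr} of Lemma \ref{s-average} gives the bound $(\delta/\lambda)\|\mathcal P_{\lambda}f\|_{L^{p}(\R^{d})}^{p}$. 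For $j=\ja(\delta)$ this is precisely \eqref{p-avr2}: there only the outer-radius bound \eqref{thetatheta1} is available and no gain over $\delta/\lambda$ is expected, so that part is finished.

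It remains to upgrade $(\delta/\lambda)$ to $2^{j}\delta$ when $\jc<j<\ja=\ja(\delta)$. For such $j$ one has $2^{-2j}\ge\delta$, and $\supp\phi_{k,k'}^{j,+}$ is contained in the twofold dilate of $\mathcal R_{k,k'}^{j,+}$, which by \eqref{thetas} and \eqref{thetatheta} (recall also $\dist(\bs_{k}^{j},\bs_{k'}^{j})\simeq 2^{-j}$ for $j<\ja$) lies in an annulus $B_{d-1}(0,C_{1}2^{-j})\setminus B_{d-1}(0,C_{2}2^{-j})$. Consequently, on the Fourier support of $(P_{k,k'}^{j,+})^{\tau^{-1}U}P_{\le\delta}^{\tau^{-1}Ue_{1}}g$ one has $|\widetilde{U^{t}\xi}|\simeq\tau 2^{-j}$ and $|(U^{t}\xi)_{1}|\lesssim\tau\delta\le\tau 2^{-2j}\le\tau 2^{-j}$, hence $|\xi|=|U^{t}\xi|\simeq\tau 2^{-j}$; since $P_{\tau\lambda}g$ has Fourier support in $|\xi|\simeq\tau\lambda$, the composed operator, and hence the entire left side of \eqref{p-avr}, vanishes identically unless $2^{-j}\simeq\lambda$. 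On the range $2^{-j}\simeq\lambda$ we have $\delta/\lambda\simeq 2^{j}\delta$, and combining with the estimate already obtained yields \eqref{p-avr}.

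The one point I expect to need genuine care is the uniform $L^{p}$-boundedness of $(P_{k,k'}^{j,+})^{\tau^{-1}U}$, since it is a ``ridge'' multiplier adapted to a long thin box sitting at distance $\simeq 2^{-j}$ from the origin rather than a standard Littlewood--Paley piece; a secondary point is keeping careful track of the absolute constants in the vanishing observation, so that ``non-vanishing'' genuinely forces $2^{-j}\simeq\lambda$ with both inequalities and thus $\delta/\lambda\lesssim 2^{j}\delta$. The curvature gain $\delta/\lambda$ itself, powered by the rotation average \eqref{haar_msr}, is inherited wholesale from Lemma \ref{s-average}; the present lemma contributes only the bookkeeping for the box projection.
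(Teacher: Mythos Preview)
Your proposal is correct and follows essentially the same approach as the paper: both argue the uniform $L^p$-boundedness of $(P_{k,k'}^{j,+})^{\tau^{-1}U}$, reduce to Lemma \ref{s-average} to obtain the bound $(\delta/\lambda)\|\mathcal P_\lambda f\|_{L^p}^p$, and then for $\jc<j<\ja$ use the annulus support \eqref{thetatheta} to force $\lambda\simeq 2^{-j}$ and convert $\delta/\lambda$ into $2^j\delta$. Your write-up is in fact slightly more explicit than the paper's on both the kernel-$L^1$ justification of the multiplier bound and the verification that $|\xi|\simeq\tau 2^{-j}$ on the combined Fourier support.
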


\begin{proof}
For any $p$, the norms $\|(P_{k,k'}^{j,+})^{\tau^{-1 } U}\|_{L^p\to L^p}$ are bounded uniformly with respect to $\jc<j\le\ja$,\, $k\sim k',\, \tau\in [1,2],$ and $U\in O_d$. Hence 
\Be \label{easyeasy}
\sup_{k\sim k'} \|(P_{k,k'}^{j,+})^{\tau^{-1}U} P_{\le \delta }^{\tau^{-1}U e_1}  P_{\tau\lambda} f \|_{L^p(\R^d)}^p 
	\lesssim \| P_{\le \delta }^{\tau^{-1}U e_1}  P_{\tau \lambda}  f \|_{L^p(\R^d)}^p 
\Ee
holds uniformly for all $j$, $\tau$, and $U$.  When $\jc<j<\ja$ we note from  \eqref{thetatheta} that  the support of the multiplier of $ (P_{k,k'}^{j,+})^{\tau^{-1 } U} $  is contained  in the annulus $  \{ \xi : |\widetilde{U^t \xi}| \simeq 2^{-j}\}$ since $\tau\in [1,2]$. Hence, we may assume $\lambda \simeq 2^{-j}$ as seen in the proof of Lemma \ref{bilinear+}, and \eqref{p-avr} follows from \eqref{easyeasy} and  \eqref{s-avr}.  Also, \eqref{p-avr2} follows similarly by using \eqref{thetatheta1} instead of \eqref{thetatheta}.
\end{proof}

In Lemma \ref{p-average}, the average in $\tau$ does not have any significant role. However, in what follows, the average in dilation yields additional improvement.  To show this  we exploit the  support properties of the multiplier of $P_{k,k'}^{j,-}$.

\begin{lem} \label{n-average}  
Let $0<\delta,  \lambda\lesssim  1$ and $f\in L^p(\mathbb R^{d})$ for $p \in [ 2,\infty)$. For  $(\mathbb S_\ell, \mathbb S_{\ell'})$ satisfying \eqref{n-case} and  $\jc< j\le  \ja=\ja(\delta) $, we have 
\begin{equation}\label{n-avr}
	\int_1^2  \int_{O_d} \sup_{k\sim k'} \| (P_{k,k'}^{j,-})^{\tau^{-1}U} P_{\le \delta }^{\tau^{-1}U e_1}  P_{\tau\lambda} f \|_{L^p(\R^d)}^p d m(U) d\tau 
	\lesssim 2^{-2j} \delta  \|\mathcal P_\lambda f\|_{L^p(\R^d)}^p,
\end{equation}
where the implicit constant is independent of $\delta$, $\la$, and $j$. 
\end{lem}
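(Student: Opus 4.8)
The plan is to follow the template of Lemma~\ref{p-average} but to extract the extra factor $2^{-2j}$ (rather than merely $2^{-j}$) by exploiting the thin support of the multiplier of $(P_{k,k'}^{j,-})^{\tau^{-1}U}$ in the \emph{radial} direction together with the average over dilations $\tau\in[1,2]$. First I would recall from \eqref{thetas_n}, \eqref{Rj_n}, and \eqref{thetatheta-} that for $k\sim k'$ in the antipodal case the set $\mathcal R_{k,k'}^{j,-}$ is contained in a $C2^{-2j}$-neighborhood of $2\mathbb S^{d-2}$, i.e.\ the multiplier $\phi_{k,k'}^{j,-}(\widetilde\xi)$ is supported where $\big|\,|\widetilde\xi|-2\,\big|\lesssim 2^{-2j}$ and, in the angular variable, in a cap of radius $\simeq 2^{-j}$ about the direction $c_{k,k'}^{j,-}/|c_{k,k'}^{j,-}|$. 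Consequently the multiplier of $(P_{k,k'}^{j,-})^{\tau^{-1}U}$ is supported in $\{\xi:\big|\,|\widetilde{U^t\xi}|-2\,\big|\lesssim 2^{-2j}\}$; combined with the Littlewood--Paley cutoff $P_{\tau\lambda}$ this localizes $\widetilde{U^t\xi}$ to a thin spherical shell, and (as in the proof of Lemma~\ref{bilinear+}, using \eqref{thetatheta-}) forces $\lambda\simeq 1$, so we may assume $\lambda\simeq1$ throughout and the $1/\lambda$ factors are harmless.

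The key step is the following averaged estimate, which replaces the use of Lemma~\ref{s-average} in \eqref{p-avr}: since $\|(P_{k,k'}^{j,-})^{\tau^{-1}U}\|_{L^p\to L^p}$ is bounded uniformly in $j,k\sim k',\tau\in[1,2],U\in O_d$, it suffices, after taking the $L^p\to L^p$ bound out, to show
\[
\int_1^2\!\int_{O_d}\big\|\, \rchi_{\{\,|\,|\widetilde{U^t\xi}|-2|\lesssim 2^{-2j}\,\}}(D)\, P_{\le\delta}^{\tau^{-1}Ue_1} P_{\tau\lambda} f\,\big\|_{L^p(\R^d)}^p\,dm(U)\,d\tau\lesssim 2^{-2j}\,\delta\,\|\mathcal P_\lambda f\|_{L^p}^p.
\]
To prove this I would diagonalize: after the change of variables $\xi\mapsto U\xi$ and using \eqref{haar_msr}, the composition of the spherical-shell cutoff (of width $\simeq2^{-2j}$ in $\widetilde\xi$), the slab cutoff $P_{\le\delta}^{e_1}$ (of width $\simeq\delta$ in $\xi_1$), and the dilation average in $\tau$ is handled exactly as in the proof of \cite[Lemma~5.1]{Haberman15}: the $\tau$-average over $[1,2]$ of a fixed Fourier multiplier supported in a set of the stated dimensions contributes, via an $L^2$-type orthogonality/$TT^*$ argument combined with interpolation against the trivial $L^\infty$ bound $\lesssim1$, a factor comparable to the measure of the frequency region divided by $\lambda^{d}$; here that region has measure $\simeq 2^{-2j}\cdot\delta$ (one thin direction of width $2^{-2j}$ coming from the radial localization in $\widetilde\xi$, one of width $\delta$ coming from the $\xi_1$-slab), which produces the claimed $2^{-2j}\delta$. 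I would organize this by writing $f=\mathcal P_\lambda f$ (legitimate since the cutoffs force the support into $\{|\xi|\simeq\lambda\}$) and applying the $p=2$ Plancherel estimate together with the uniform $L^\infty$ bound, then Hölder/interpolation in $p\in[2,\infty)$ as in Lemma~\ref{s-average}; the dilation average is what upgrades the radial gain from the trivial $2^{-j}$ (angular only) to $2^{-2j}$.

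The main obstacle I anticipate is making the dilation-averaging gain rigorous: one must verify that averaging over $\tau\in[1,2]$ genuinely sees the $2^{-2j}$-thin radial direction of $\mathcal R_{k,k'}^{j,-}$ and not just its $2^{-j}$-thin angular extent. This is precisely the point where the antipodal geometry (the set $\mathcal R_{k,k'}^{j,-}$ hugging a sphere of radius $2$, so that a small dilation sweeps across its thin direction) differs from the neighboring case, where $\mathcal R_{k,k'}^{j,+}$ sits near the origin and the $\tau$-average plays no role (hence only $2^{j}\delta$ in \eqref{p-avr}). Concretely I would need: (i) the observation that under $\xi\mapsto\tau^{-1}U\xi$ the shell $\{\,|\,|\widetilde\xi|-2|\lesssim2^{-2j}\}$ moves at unit speed in $\tau$, so the $\tau$-integral of the corresponding cutoff multiplier is $\lesssim 2^{-2j}$ pointwise after the change of variables; and (ii) the standard $L^2$ orthogonality for the remaining $\xi_1$-slabs as $U$ varies, giving the extra $\delta$, followed by interpolation with the $L^\infty$ bound to reach general $p$. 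Once (i) and (ii) are in place, combining them with \eqref{easyeasy}-type uniform boundedness of $(P_{k,k'}^{j,-})^{\tau^{-1}U}$ and summing trivially over the finitely many relevant pairs yields \eqref{n-avr}.
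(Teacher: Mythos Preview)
Your geometric insight is exactly right and matches the paper: the gain $2^{-2j}$ (versus $2^{j}$ in the neighboring case) comes from the $\tau$-average seeing the $O(2^{-2j})$-thin radial extent of $\mathcal R_{k,k'}^{j,-}\subset 2\mathbb S^{d-2}+O(2^{-2j})$, and the pointwise steps (i)--(ii) you describe are precisely what the paper establishes as \eqref{11goal} via \eqref{tau-d}--\eqref{hohoha}. The gap is in your reduction step and its proposed proof. After ``taking the $L^p\to L^p$ bound out'' you are left with a thin spherical-shell multiplier in $\widetilde{U^t\xi}$, and you plan to prove the resulting averaged $L^p$ inequality ``as in \cite[Lemma~5.1]{Haberman15}'' by interpolating the $p=2$ Plancherel bound against a uniform $L^\infty$ bound. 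But a smooth annular cutoff of width $2^{-2j}$ about $2\mathbb S^{d-2}\subset\R^{d-1}$ is \emph{not} uniformly $L^\infty$-bounded: stationary phase on the sphere together with the radial width gives a kernel obeying $|K(y)|\sim 2^{-2j}(1+|y|)^{-(d-2)/2}$ for $|y|\lesssim 2^{2j}$, so $\|K\|_{L^1(\R^{d-1})}\simeq 2^{(d-2)j}$. The $L^\infty$ endpoint therefore blows up with $j$, and interpolation yields only $(2^{-2j}\delta)^{1/p}\cdot 2^{(d-2)j(1-2/p)}$, useless for $p>2$. Nor can you interpolate the shell operator at $L^2$ against the original $\sup_{k\sim k'}(P_{k,k'}^{j,-})^{\tau^{-1}U}$ at $L^\infty$: Riesz--Thorin needs a single linear operator, and the $\sup_k$ is nonlinear. (Your volume heuristic ``measure of the frequency region divided by $\lambda^d$'' also gives the wrong answer, $\delta\,2^{-dj}$ rather than $\delta\,2^{-2j}$; this is a symptom of the same issue.)

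The paper circumvents this by \emph{not} collapsing to a single shell cutoff. It dominates $\sup_{k\sim k'}$ by $(\sum_{k\sim k'})^{1/p}$ and interpolates the \emph{linear} map $f\mapsto \big((P_{k,k'}^{j,-})^{\tau^{-1}U}P_{\le\delta}^{\tau^{-1}Ue_1}P_{\tau\lambda}f\big)_{k,\tau,U}$ between $\ell_k^\infty L^\infty_{\tau,U,x}$ and $\ell_k^2 L^2_{\tau,U,x}$. Each $\phi_{k,k'}^{j,-}$ is a bump adapted to a \emph{rectangle} (not a curved annulus), hence uniformly $L^\infty$-bounded; the $\ell_k^2 L^2$ endpoint then reduces, via Plancherel and the bounded overlap of the $\phi_{k,k'}^{j,-}$, to exactly your pointwise estimate (i)+(ii). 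To repair your argument, keep the rectangle multipliers $(P_{k,k'}^{j,-})^{\tau^{-1}U}$ through the interpolation step rather than replacing them by the shell.
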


\begin{proof} 
Note that the multiplier of the operator $(P_{k,k'}^{j,-})^{\tau^{-1}U} P_{\le \delta }^{\tau^{-1}U e_1} $ is supported in a dilation (from its center) of the rectangle $\tau U\big([-\delta, \delta]\times  \mathcal R_{k,k'}^{j,-}\big)$ of dimensions about  $\delta\times  2^{-2j} \times \overbrace{2^{-j} \times \dots \times 2^{-j}}^{ d-2 \text{ times}}$. From \eqref{thetatheta-} we note that   $\{ \tau \mathcal R_{k,k'}^{j,-}\}_{k\sim k'}$ are boundedly overlapping and contained in $ 2\tau\mathbb S^{d-2}+O(2^{-2j})$.  Thus we may assume  $\lambda \simeq 1$; otherwise the left side of \eqref{n-avr} vanishes, so the estimate is trivial. For the proof of \eqref{n-avr} it suffices to prove  that
\Be\label{easyl2}
\int_1^2  \int_{O_d} \sum_{k\sim k'} \| (P_{k,k'}^{j,-})^{\tau^{-1}U} P_{\le \delta }^{\tau^{-1}U e_1}  P_{\tau\lambda} f \|_{L^p(\R^d)}^p d m(U) d\tau 
	\lesssim 2^{-2j} \delta  \|\mathcal P_\lambda f\|_{L^p}^p.
\Ee

It is easy to see that 
\[
 \| (P_{k,k'}^{j,-})^{\tau^{-1}U} P_{\le \delta }^{\tau^{-1}U e_1}  P_{\tau\lambda}  f \|_{L^\infty(\R^d)} \le C \|\mathcal P_{\lambda} f \|_{L^\infty(\R^d)} 
\]
with $C$ independent of  $j$, $k$, $k'$, $\tau$, and $U$.  Hence, 
by interpolation  between $\ell_k^\infty L^\infty$  and $\ell_k^2 L^2$,  and  by Plancherel's theorem, to get \eqref{easyl2} for $2\le p<\infty$, it is enough to show 
\begin{align*}
\int_1^2 \int_{O_d} \int  \sum_{k\sim k'} \Big| \beta_0 \Big(\frac{U e_1\cdot \xi}{2 \delta}\Big)  \phi_{k,k'}^{j,-} 
 \Big(\frac {\widetilde{U^t \xi}}\tau \Big)  \beta\Big(\frac{|\xi|}{\lambda\tau} \Big) \widehat f(\xi) \Big|^2  d\xi d m (U)  d\tau 
\lesssim
 2^{-2j}  \delta   \|\mathcal P_\lambda f\|_{L^2}^2.
\end{align*}  
Again, by interpolation with the trivial $\ell_k^\infty L^\infty_{\xi, U, \tau}$ estimate,  
it is enough to show that for any $j$,
\[	\int_1^2 \int_{O_d} \int  \beta_0\Big(\frac{U e_1\cdot \xi}{2 \delta}\Big) \sum_{k\sim k'}  \phi_{k,k'}^{j,-} \Big(\frac {\widetilde{U^t \xi}}\tau \Big)    \Big|  \beta \Big( \frac{|\xi|}{\lambda\tau} \Big)  g(\xi) \Big|  d\xi dm(U)  d\tau \lesssim 2^{-2j} \delta    \|g\|_{L^1}.	\]
This follows if we show that, for $2^{-1}\lambda\le |\xi| \le 2^2 \lambda$, 
\begin{equation}
\label{11goal}
\int_1^2 \int_{O_d} 
                  \beta_0\Big(\frac{U e_1\cdot \xi}{2 \delta}\Big)  
                         \sum_{k\sim k'} \phi_{k,k'}^{j,-} 
                            \Big(\frac {\widetilde{U^t \xi}}\tau \Big)   dm(U)  d\tau 
\lesssim
2^{-2j} \delta .
\end{equation}

By \eqref{haar_msr} and Fubini's theorem, we have
\[	\iint \sum_{k\sim k'} \phi_{k,k'}^{j,-} \Big(\frac {\widetilde{U^t \xi}}\tau \Big) \beta_0 \Big(\frac{U e_1\cdot \xi}{2 \delta}\Big) dm(U)  d\tau \simeq
	\int_{\mathbb S^{d-1}}  \Big( \int_1^2 \sum_{k\sim k'} \phi_{k,k'}^{j,-} \B( \frac{|\xi|\widetilde \omega}{\tau}\B) d\tau\Big)  \beta_0 \Big(\frac{ e_1 \cdot |\xi| \omega }{2 \delta}\Big) d\sigma(\omega)  .
\]
Since  $|\xi| \simeq \lambda \simeq 1$ and $\bigcup_{k\sim k'} \mathcal R_{k,k'}^{j,-}$ is contained in $2\mathbb S^{d-2}+O(C2^{-2j}) $, we see that
\begin{equation} \label{tau-d}
\int_1^2  \sum_{k\sim k'}   \phi_{k,k'}^{j,-}  \Big(\frac {|\xi| \widetilde \omega} \tau \Big) d\tau
	 \lesssim  \int_1^2     \chi_{  \bigcup_{k\sim k'} \mathcal R_{k,k'}^{j,-} }\Big(\frac{|\xi| \widetilde{\omega}} \tau \Big) d\tau 
	 \simeq \int_{|\xi|/2}^{|\xi|}  \chi_{  \bigcup_{k\sim k'} \mathcal R_{k,k'}^{j,-} } ( t \widetilde{\omega} ) \frac{dt}{t^2} \lesssim  2^{-2j} 
\end{equation}
for $\omega \in \mathbb S^{d-1}$.  For $|\xi| \ge 2^{-1}\lambda$,   it is easy to see that
\begin{align}\label{hohoha}
\int_{\mathbb S^{d-1}} \beta_0\Big(\frac{ e_1\cdot |\xi| \omega }{2 \delta}\Big)  d \sigma(\omega) 
	\lesssim \min\{\delta,1\}.
\end{align}
Combining \eqref{tau-d} and \eqref{hohoha}, we get \eqref{11goal}. 
\end{proof}

Combining Lemma \ref{s-average}, Lemma \ref{p-average}, and  Lemma \ref{n-average},  we obtain the following.
\begin{prop}
Let  $0< \lambda \lesssim 1$ and let $f \in L^p(\mathbb R^{d})$ with $p \in [2,\infty)$.  
Then,
\begin{equation}\label{key-avr}
\int_1^2 \int_{O_d} \big[ \mathcal A_p (f_{\tau U}, \lambda, e_1 ) \big]^p dm(U) d\tau \le C  \| \mathcal P_\lambda f\|_{L^p(\R^d) }^p,
\end{equation}
where $f_{\tau U}(x)=\tau^{-d} f(\tau^{-1}Ux)$, and $C$ is independent of $\lambda$ and $e_1\in\mathbb S^{d-1}$.
\end{prop}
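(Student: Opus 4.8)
The plan is to unwind the definition of $\mathcal A_p(f_{\tau U},\lambda,e_1)$ and estimate each of its three constituent suprema after integrating over $U\in O_d$ and $\tau\in[1,2]$. Recall from Proposition \ref{bilinear-sum} that
\[
\mathcal A_p(f_{\tau U},\lambda,e_1)
= \max_{\text{nbr}}\sup_{0<\delta\lesssim1}\Gamma_{\lambda,\delta}^{p,+}(f_{\tau U})
+ \max_{\text{anti}}\sup_{0<\delta\lesssim1}\Gamma_{\lambda,\delta}^{p,-}(f_{\tau U})
+ \sup_{0<\delta\lesssim1}\Gamma_{\lambda,\delta}^{p}(f_{\tau U}).
\]
First I would reduce the dilated/rotated quantities $\Gamma^{p,\pm}_{\lambda,\delta}(f_{\tau U})$ and $\Gamma^p_{\lambda,\delta}(f_{\tau U})$ back to statements about $f$ itself with a $\tau^{-1}Ue_1$-oriented Littlewood--Paley cutoff and a $\tau^{-1}U$-conjugated projector $ (P^{j,+}_{k,k'})^{\tau^{-1}U}$, exactly as in \eqref{projkku}. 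Concretely, $P^{e_1}_{\le\delta}P_\lambda f_{\tau U}$ transforms into $P^{\tau^{-1}Ue_1}_{\le\delta}P_{\tau\lambda}f$ up to the harmless dilation factor that is absorbed by the $L^p$ norm, and similarly for $P^{j,+}_{k,k'}$ turning into $(P^{j,+}_{k,k'})^{\tau^{-1}U}$. So the three averaged suprema are precisely what Lemma \ref{s-average}, Lemma \ref{p-average}, and Lemma \ref{n-average} control.

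Next I would handle the three pieces separately. For the transversal piece, $\int_1^2\int_{O_d}[\sup_\delta\Gamma^p_{\lambda,\delta}(f_{\tau U})]^p$: here $\Gamma^p_{\lambda,\delta}=\lambda^{1/p}\delta^{-1/p}\|P^{e_1}_{\le\delta}P_\lambda f\|_{L^p}$, so after the change of variables the $p$-th power is $\simeq (\lambda/\delta)\|P^{\tau^{-1}Ue_1}_{\le\delta}P_{\tau\lambda}f\|_{L^p}^p$, and \eqref{s-avr} gives exactly the bound $(\lambda/\delta)\cdot(\delta/\lambda)\|\mathcal P_\lambda f\|_{L^p}^p = \|\mathcal P_\lambda f\|_{L^p}^p$; taking the sup over the dyadic $\delta$ (which costs only a $\log$ loss that I will absorb by summing a geometric series, since the bound is uniform in $\delta$) gives the claim for this term. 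For the neighboring piece, $\Gamma^{p,+}_{\lambda,\delta}$ involves $\lambda^{1/p}\delta^{-1/p}$ times the sup over $j\le j_\ast(\delta)$ and $k\sim k'$ of $\|P^{j,+}_{k,k'}P^{e_1}_{\le\delta}P_\lambda f\|_{L^p}$. After reduction, I split off $j=j_\ast$ and $j_\circ<j<j_\ast$: by \eqref{p-avr} the contribution of each $j$ with $j_\circ<j<j_\ast$ is $\lesssim 2^j\delta\|\mathcal P_\lambda f\|_{L^p}^p$, but on the support of the multiplier $\lambda\simeq 2^{-j}$, so $(\lambda/\delta)\cdot 2^j\delta\simeq 2^j\lambda\simeq 1$, again yielding $\|\mathcal P_\lambda f\|_{L^p}^p$; the $j=j_\ast$ term is handled by \eqref{p-avr2}, giving $(\lambda/\delta)\cdot(\delta/\lambda)=1$. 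The antipodal piece is analogous using \eqref{n-avr}: there the multiplier forces $\lambda\simeq1$, and $(\lambda/\delta)\cdot 2^{-2j}\delta = 2^{-2j}\lambda\lesssim1$.

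For the book-keeping of the suprema over $\delta$ I would note that every per-$\delta$ bound above is of the form $\lesssim\|\mathcal P_\lambda f\|_{L^p}^p$ \emph{uniformly} in the relevant dyadic $\delta$ and $j$; to pass from the integral of the $p$-th power of a $\sup$ to a finite bound it suffices to replace $\sup_\delta(\cdots)$ by $(\sum_\delta(\cdots)^p)^{1/p}$ when $p<\infty$, which is lawful, and then the geometric decay present in each estimate (in $2^j\lambda\le1$, or in $\delta/\lambda\le1$ after summing over $\delta\le\lambda$, plus the trivial bound $\delta\lesssim1$) makes the sum converge up to an absolute constant. Finally, combining the three pieces via the elementary inequality $(a+b+c)^p\lesssim a^p+b^p+c^p$ yields \eqref{key-avr}. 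The main obstacle I anticipate is precisely this last combinatorial/summation step: one must be careful that in the neighboring and antipodal cases the constraint linking $\lambda$ and $2^{-j}$ (coming from \eqref{thetatheta}, \eqref{thetatheta1}, \eqref{thetatheta-}) is what converts the apparently growing factor $2^j$ (resp.\ the constant $2^{-2j}$) into something summable, and that the number of distinct scales $j$ and $\delta$ entering the sup is only $O(\log(1/\lambda))$ — harmless after the geometric series but needs to be invoked explicitly so that no spurious $\log$ divergence appears.
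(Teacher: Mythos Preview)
Your approach matches the paper's: unwind $\mathcal A_p(f_{\tau U},\lambda,e_1)$ via the frequency change of variables $\xi\mapsto\tau^{-1}U^t\xi$ into the three terms displayed in \eqref{A_delta}, then appeal to Lemmas~\ref{s-average}, \ref{p-average}, and \ref{n-average}. One correction: in the antipodal piece the prefactor in $\Gamma_{\lambda,\delta}^{p,-}$ is $2^{2j/p}\delta^{-1/p}$, not $\lambda^{1/p}\delta^{-1/p}$; its $p$-th power $2^{2j}/\delta$ combined with the factor $2^{-2j}\delta$ from \eqref{n-avr} gives exactly $1$, not $2^{-2j}\lambda$. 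Also, your summability claim over $\delta$ is shakier than you suggest: the per-$(\delta,j)$ bounds coming out of the three lemmas, after multiplying by the respective prefactors, are each \emph{uniformly} of size $\|\mathcal P_\lambda f\|_{L^p}^p$ with no geometric decay in $\delta$, so replacing $\sup_\delta$ by $\sum_\delta$ over all dyadic $\delta\in(0,1]$ would diverge. What helps is that the support constraints \eqref{thetatheta}--\eqref{thetatheta-} tie the scales together (for the neighboring piece $2^{-j}\simeq\lambda$ pins $j$ to $O(1)$ values; for the antipodal piece $\lambda\simeq 1$), but the residual $\sup_\delta$ still requires an argument beyond what you wrote. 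The paper's own proof is equally terse on this last point, simply citing the three lemmas after \eqref{A_delta}.
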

\begin{proof}
Since $\widehat{f_{\tau U}}(\xi) = \widehat f(\tau U\xi)$,  by changing variables $\xi \to \tau^{-1}U^t \xi$ in the frequency side,  we have
\begin{align*} 
	\| P_{k,k'}^{j,\pm} P_{\le \delta}^{e_1} P_\lambda f_{\tau U}  \|_{L^p}& = \tau^{-d + \frac d p} \| \big(P_{k,k'}^{j,\pm}\big)^{\tau^{-1  }U  } P_{\le \delta}^{\tau^{-1} U e_1} P_{\tau\lambda}  f  \|_{L^p}  , \\
	\| P_{\le \delta}^{e_1} P_\lambda f_{\tau U} \|_{L^p} &= \tau^{-d + \frac d p} \|   P_{\le \delta}^{\tau^{-1} U e_1} P_{\tau\lambda}  f \|_{L^p}.
\end{align*}
Since $\tau\simeq 1$, from  the definition  of $\Gamma_{\lambda, \delta}^{p, +}(f)$ and $ \Gamma_{\lambda, \delta}^{p, -}(f) $,  it follows that
\begin{equation}\label{A_delta}
\begin{aligned}
	\mathcal A_p (f_{\tau U},\lambda, e_1)  
	&\lesssim    
		\max_{\substack{(\mathbb S_\ell, \mathbb S_{\ell'}): \\ \text{neighboring}}} 
		\sup_{0<\delta\lesssim 1}   \sup_{\jc<j\le j_\ast} \sup_{k\sim k'} 
			\B\{(\lambda/\delta)^\frac1p \| \big(P_{k,k'}^{j,+}\big)^{\tau^{-1}U  } P_{\le \delta}^{\tau^{-1} U e_1} P_{\tau\lambda}  f \|_{L^p} \B\}   \\
	&\quad +	\max_{\substack{(\mathbb S_\ell, \mathbb S_{\ell'}): \\ \text{antipodal}}} 
		\sup_{0<\delta\lesssim 1}    \sup_{\jc<j \le j_\ast} \sup_{k\sim k'} 
			\B\{ (2^{2j}/\delta)^\frac 1p \| \big(P_{k,k'}^{j,-}\big)^{\tau^{-1  }U  } P_{\le \delta}^{\tau^{-1} U e_1} P_{\tau\lambda}  f \|_{L^p} \B\}  \\
	&\quad +	\sup_{0<\delta\lesssim 1}   (\lambda/\delta)^{\frac 1p} \|   P_{\le \delta}^{\tau^{-1} U e_1} P_{\tau\lambda}  f \|_{L^p}.
\end{aligned}
\end{equation}
Hence, we get  \eqref{key-avr} by Lemma \ref{s-average}, Lemma \ref{p-average}, and Lemma \ref{n-average}. 
\end{proof}

\section{Key estimates :  asymptotically   vanishing   averages }\label{sec_key}
In this section, we assemble the various estimates in the previous sections and obtain the estimates that are the key ingredients for the proofs of Theorem \ref{main} and Theorem \ref{Schrodinger}. 
 
\begin{prop} \label{low-high} 
Let $0\le \kappa \le 1$, $\tau\gg 1$, $\frac{d}{2-\kappa}\le p<\infty$, and let $g \in L^p(\mathbb R^d)$ with $\supp g\subset B_d(0,\tau)$.  Suppose \eqref{Large} holds. Then, we have 
\Be \label{result1} 
\|\mathcal M_{m^\kappa_\tau (D) g}\|_{X_{\zeta(1),1/\tau}^{1/2} \to X_{\zeta(1),1/\tau}^{-1/2}}   
	\lesssim \tau^{2-\kappa-\frac d p} \|g\|_{L^p} + \sum_{\frac1\tau \le \lambda \lesssim 1\colon \rm{dyadic} }   \lambda^{\nu} \tau^{\mu} \mathcal A_{p}(g, \lambda, e_1).
\Ee
\end{prop}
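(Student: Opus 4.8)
The plan is to dualize and then estimate the resulting bilinear form after splitting the test functions according to their frequency position relative to $\Sigma^1$. By a standard duality (the spaces $X^{\pm1/2}_{\zeta(1),1/\tau}$ are mutually dual under the $L^2$ pairing, since their multipliers involve $|p_{\zeta(1)}|$ alone), $\|\mathcal M_{m^\kappa_\tau(D)g}\|_{X^{1/2}_{\zeta(1),1/\tau}\to X^{-1/2}_{\zeta(1),1/\tau}}$ is comparable to the supremum of $|\langle (m^\kappa_\tau(D)g)u,v\rangle|$ over $u,v$ of unit norm in $X^{1/2}_{\zeta(1),1/\tau}$. Fixing such $u,v$, I would take $\Theta\in C^\infty$ with $\Theta\equiv1$ on $\Sigma^1+O(2^{-3}\epsilon_\circ)$ and $\supp\Theta\subset\Sigma^1+O(2^{-2}\epsilon_\circ)$, and write $u=u'+u''$, $v=v'+v''$ with $\widehat{u'}=\Theta\widehat u$, $\widehat{u''}=(1-\Theta)\widehat u$, etc.; each summand still has $X^{1/2}_{\zeta(1),1/\tau}$-norm $\le 1$. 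Two elementary facts will be used throughout: (a) for every $w$, $\|w\|_{L^{2d/(d-2)}(\R^d)}\lesssim\|w\|_{X^{1/2}_{\zeta(1),1/\tau}}$ uniformly in $\tau$, which is \eqref{x-1/2-0} rescaled via \eqref{scaledX}; and (b) on $\supp\widehat{u''}\subset\{\dist(\cdot,\Sigma^1)\ge2^{-3}\epsilon_\circ\}$ one has $|p_{\zeta(1)}(\xi)|\gtrsim\epsilon_\circ\langle\xi\rangle^2$ by \eqref{geom_symb}, so $\|u''\|_{H^1(\R^d)}\lesssim\|u''\|_{X^{1/2}_{\zeta(1),1/\tau}}$, and similarly for $v''$.

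I would then split $\langle (m^\kappa_\tau(D)g)u,v\rangle$ into the diagonal term $\langle (m^\kappa_\tau(D)g)u',v'\rangle$ plus the three off-diagonal terms containing $u''$ or $v''$. For the diagonal term, $\widehat{u'}(-\cdot)\ast\overline{\widehat{v'}}$ is supported in a fixed ball $B_d(0,C_0)$, so only the dyadic pieces $P_\lambda g$ with $\lambda\le C_0$ contribute; decompose $g=P_{\le1/\tau}g+\sum_{1/\tau<\lambda\le C_0}P_\lambda g$. The low piece $\langle (m^\kappa_\tau(D)P_{\le1/\tau}g)u',v'\rangle$ is handled by H\"older, fact (a), the Mikhlin estimate $\|m^\kappa_\tau(D)P_{\le1/\tau}\|_{L^{d/2}\to L^{d/2}}\lesssim\tau^{-\kappa}$ coming from \eqref{prop_symb}, and $\|g\|_{L^{d/2}}\lesssim|B_d(0,\tau)|^{2/d-1/p}\|g\|_{L^p}\simeq\tau^{2-d/p}\|g\|_{L^p}$ (legitimate since $p\ge d/2$), which together yield the first term $\tau^{2-\kappa-d/p}\|g\|_{L^p}$ of \eqref{result1}. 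Each remaining piece $\langle (m^\kappa_\tau(D)P_\lambda g)u',v'\rangle$ with $1/\tau\le\lambda\lesssim1$ is estimated by the hypothesis \eqref{Large} (note $u',v'$ satisfy \eqref{near}), and summing over the dyadic $\lambda\in[1/\tau,C_0]$ gives $\sum_{1/\tau\le\lambda\lesssim1}\lambda^\nu\tau^\mu\mathcal A_p(g,\lambda,e_1)$.

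For an off-diagonal term, say $\langle (m^\kappa_\tau(D)g)u'',v\rangle$ (the case with $v''$ being symmetric), I would move the multiplier onto the product, $\langle (m^\kappa_\tau(D)g)u'',v\rangle=\langle g,\widetilde m^\kappa_\tau(D)(u''\bar v)\rangle$ with $\widetilde m^\kappa_\tau$ again a rescaled symbol of the type in Definition \ref{mk}. Since $\supp g\subset B_d(0,\tau)$ and $p\ge\frac{d}{2-\kappa}$, H\"older on $B_d(0,\tau)$ gives $|\langle g,\widetilde m^\kappa_\tau(D)(u''\bar v)\rangle|\lesssim\tau^{2-\kappa-d/p}\|g\|_{L^p}\,\|\widetilde m^\kappa_\tau(D)(u''\bar v)\|_{L^{d/(d-2+\kappa)}(\R^d)}$. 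Factoring $\widetilde m^\kappa_\tau(D)=a_\tau(D)\langle D\rangle^\kappa$ with $a_\tau(\xi)=\widetilde m^\kappa_\tau(\xi)\langle\xi\rangle^{-\kappa}$, a case check on $|\xi|<1/\tau$, $1/\tau\le|\xi|\le1$, and $|\xi|>1$ using \eqref{prop_symb} shows $a_\tau$ satisfies the Mikhlin condition uniformly in $\tau\ge1$, hence $\|\widetilde m^\kappa_\tau(D)(u''\bar v)\|_{L^{d/(d-2+\kappa)}}\lesssim\|\langle D\rangle^\kappa(u''\bar v)\|_{L^{d/(d-2+\kappa)}}$. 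Then I would apply the fractional Leibniz (Kato--Ponce) rule, choosing the H\"older exponents so that the factor not carrying $\langle D\rangle^\kappa$ always sits in $L^{2d/(d-2)}$ and the differentiated factor sits in $L^{2d/(d-2+2\kappa)}$: since $0\le\kappa\le1$ one has $\langle D\rangle^\kappa\colon H^1\to H^{1-\kappa}\hookrightarrow L^{2d/(d-2+2\kappa)}$ and $H^1\hookrightarrow L^{2d/(d-2)}$, while if $v=v'$ the operator $\langle D\rangle^\kappa$ on its bounded frequency support is harmless. Combining with fact (a) for the near factor and fact (b) to pass from $H^1$ to $X^{1/2}_{\zeta(1),1/\tau}$ for $u''$ (and Sobolev applied to $v''$ as well when $v=v''$) bounds the off-diagonal term by $\tau^{2-\kappa-d/p}\|g\|_{L^p}\|u\|_{X^{1/2}_{\zeta(1),1/\tau}}\|v\|_{X^{1/2}_{\zeta(1),1/\tau}}$. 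Summing the diagonal and off-diagonal contributions gives \eqref{result1}.

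The main obstacle — notwithstanding the paper's label ``easy'' — is the bookkeeping of powers of $\tau$ in the off-diagonal terms. A factor supported near $\Sigma^1$ must be measured in $L^{2d/(d-2)}$ \emph{exactly} (the Stein--Tomas endpoint embedded in fact (a)): the weight $(|p_{\zeta(1)}|+1/\tau)^{-1/2}$ has size $\tau^{1/2}$ on $\Sigma^1$, so any smaller Lebesgue exponent would cost a positive power of $\tau$ that overwhelms the gain $\tau^{2-\kappa-d/p}$ exactly when $p$ approaches the endpoint $\frac{d}{2-\kappa}$. This is precisely what forces the $\kappa$-derivative produced by the Leibniz rule to be steered entirely onto the elliptic factor, and why both its $H^1$-membership (fact (b)) and the factorization $m^\kappa_\tau(D)=a_\tau(D)\langle D\rangle^\kappa$ with $a_\tau$ uniformly Mikhlin are needed. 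By contrast, once \eqref{Large} is granted the diagonal contribution is essentially immediate.
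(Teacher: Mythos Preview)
Your strategy---dualize, split $u,v$ into near/far pieces relative to $\Sigma^1$, apply \eqref{Large} to the near--near piece after a Littlewood--Paley decomposition of $g$, and use Kato--Ponce for the remaining terms---is exactly the paper's. The diagonal contribution and the very-low-frequency piece $P_{\le1/\tau}g$ are handled correctly.

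There is a gap in the cross term $\langle(m^\kappa_\tau(D)g)\,u'',v'\rangle$. In the second Kato--Ponce term the operator $\langle D\rangle^\kappa$ lands on $v'$, and your prescription places $\langle D\rangle^\kappa v'$ in $L^{2d/(d-2+2\kappa)}$. Bounded Fourier support does make $\langle D\rangle^\kappa$ harmless as a multiplier, but you are then left needing $\|v'\|_{L^{2d/(d-2+2\kappa)}}\lesssim\|v'\|_{X^{1/2}_{\zeta(1),1/\tau}}$ uniformly in $\tau$, and this \emph{fails} for $\kappa>0$: interpolating fact~(a) against $\|v'\|_{L^2}\lesssim\tau^{1/2}\|v'\|_{X^{1/2}_{\zeta(1),1/\tau}}$ costs a factor $\tau^{\kappa/2}$. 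You cannot ``steer'' the derivative onto the elliptic factor---both Leibniz terms are always present. The fix is to take \emph{different} H\"older exponents in the two Kato--Ponce terms: in the second, put $\langle D\rangle^\kappa v'$ in $L^{2d/(d-2)}$ (harmless on bounded frequencies, then fact~(a) applies) and $u''$ in $L^{2d/(d-2+2\kappa)}$ (fine since $H^1\hookrightarrow L^q$ for every $2\le q\le\tfrac{2d}{d-2}$). The paper achieves the same thing more cleanly by pulling off a full $|D|^{-1}$ rather than $\langle D\rangle^{-\kappa}$ and using the fixed split $L^2\times L^{2d/(d-2)}$ throughout: the point is that $\||D|\,Q^{\,1}_{>\delta_\circ}u\|_{L^2}\lesssim\|u\|_{X^{1/2}_{\zeta(1),1/\tau}}$ by ellipticity away from $\Sigma^1$, while $\||D|\,Q^{\,1}_{\le\delta_\circ}v\|_{L^{2d/(d-2)}}\lesssim\|v\|_{X^{1/2}_{\zeta(1),1/\tau}}$ because $\Sigma^1$ sits in a bounded frequency region, so $|D|$ is harmless there in \emph{every} Lebesgue norm.
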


Therefore, by Proposition \ref{bilinear-sum}, the estimate \eqref{result1} holds provided that $d\ge3$, $0\le \kappa \le 1$, $\max\{\frac{d+1}2, \frac{d}{2-\kappa}\}\le p<\infty$, $\nu=\kappa-\frac2p$, and $\mu>\beta_d(p)$. Moreover, when $3\le d\le 8$ and $p\ge d$, we can exploit Proposition \ref{bilinear-sum-2} to obtain better bounds \eqref{result1} with $\mu>\gamma_d(p)$.

Recalling the definitions of $Q_\mu^\tau$ and $Q_{\le \mu}^\tau$ in Section \ref{loc_est},  we define the Fourier multiplier operator $Q^{\tau}_{> {\mu}}$ by $Q_{>\mu }^{\tau} u   := {u} -  Q_{\le \mu}^{\tau} u$. 

\begin{proof}[Proof of Proposition \ref{result1}]
To begin with, we fix a small number  $\delta_\circ\in [2^{-3}\ec, 2^{-2}\ec]$. It is easy to see that
\begin{gather}
\label{A}
	\| |D| Q^{\,1}_{> {\delta_\circ}} u\|_{L^2(\R^d)} \lesssim  \| u\|_{X_{\zeta(1),1/\tau}^{1/2}}, \\
\label{B}
	\|Q^{\,1}_{>  {\delta_\circ}} v\|_{L^\frac{2d}{d-2}(\R^d)} \lesssim \| Q^{\,1}_{>  {\delta_\circ}} v\|_{H^{1} } \lesssim \| v\|_{X_{\zeta(1),1/\tau}^{1/2}}, \\
\label{C}
	\| |D| Q^{\,1}_{\le {\delta_\circ}} u\|_{L^\frac{2d}{d-2}(\R^d)} \lesssim  \| u\|_{X_{\zeta(1),1/\tau}^{1/2}}, \quad 
	\| Q^{\,1}_{\le {\delta_\circ}} v\|_{L^\frac{2d}{d-2}(\R^d)} \lesssim  \|v\|_{X_{\zeta(1),1/\tau}^{1/2}}.
\end{gather}
The  estimate in  \eqref{A} follows from  \eqref{geom_symb}. The  estimate \eqref{B} is a consequence of the Hardy-Littlewood-Sobolev inequality, and   \eqref{C}  follows from \eqref{x-1/2-0}   and rescaling.  Setting
\begin{gather*}
	I =|\langle( m^\kappa_\tau (D)   g) Q^{\,1}_{> {\delta_\circ}} u,Q^{\,1}_{>  {\delta_\circ}} v\rangle |,  \quad 
	I\!I = |\langle (m^\kappa_\tau (D)   g) Q^{\,1}_{>  {\delta_\circ}} u, Q^{\,1}_{\le {\delta_\circ}} v\rangle |, \\ 
	I\!I\!I=|\langle (m^\kappa_\tau (D)   g) Q^{\,1}_{\le {\delta_\circ}} u,Q^{\,1}_{ >  {\delta_\circ}} v\rangle |,    \quad
	I\!V = |\langle (m^\kappa_\tau (D) g) Q^{\,1}_{\le {\delta_\circ}} u,Q^{\,1}_{\le {\delta_\circ}} v\rangle |  , 
\end{gather*}
we have
\[	|\langle (m^\kappa_\tau (D)   g) u,v\rangle | \le   I +I\!I+ I\!I\!I+ I\!V.	\]
The estimates for $ I,$ $ I\!I$, and $ I\!I\!I$ are easy to show. Indeed,  by H\"older's inequality  
\[	I\!=\big|\Langle {m}_\tau^\kappa(D) |D|^{-1}  g,   |D| ( \overline{Q^{\,1}_{> {\delta_\circ}} u}\,Q^{\,1}_{>  {\delta_\circ}} v)\Rangle \big| 
	\le \| m_\tau^\kappa(D)|D|^{-1}  g\|_{L^d}  \|   |D| ( \overline{Q^{\,1}_{> {\delta_\circ}} u}\,Q^{\,1}_{>  {\delta_\circ}} v)\|_{L^\frac d{d-1}}.	\]
The Hardy-Littlewood-Sobolev inequality and the fractional Leibniz rule  (see, for example, \cite{MusSch, FGO})  yield
\begin{align*}
I	&\lesssim \| g\|_{L^\frac{d}{2-\kappa}} \Big(  \|  |D| \overline{Q^{\,1}_{> {\delta_\circ}} u}\|_{L^2}  \|Q^{\,1}_{>  {\delta_\circ}} v\|_{L^{\frac{2d}{d-2}}} + \ \| Q^{\,1}_{> {\delta_\circ}}   u\|_{L^{\frac{2d}{d-2}}} \|| D| Q^{\,1}_{>{\delta_\circ}} v\|_{L^{2}}  \Big) \\
	& \lesssim \tau^{2-\kappa-\frac d p} \|g\|_{L^p} \|   u\|_{X_{\zeta(1),1/\tau}^{1/2}}  \|  v\|_{X_{\zeta(1),1/\tau}^{1/2}},
\end{align*}
where the last inequality follows from \eqref{A} and \eqref{B}. Using \eqref{A}, \eqref{B}, and \eqref{C}, the same argument gives 
\begin{align*}
I\!I	&\lesssim \| g\|_{L^\frac{d}{2-\kappa}} \Big( \| |D| \overline{Q^{\,1}_{> {\delta_\circ}} u}\|_{L^2}  \|    Q^{\,1}_{\le  {\delta_\circ}} v\|_{L^{\frac{2d}{d-2}}} + \ \|    Q^{\,1}_{> {\delta_\circ}}   u\|_{L^2} \| |D| Q^{\,1}_{\le {\delta_\circ}} v\|_{L^{\frac{2d}{d-2}}}  \Big) \\
	& \lesssim  \tau^{2-\kappa-\frac d p} \|g\|_{L^p} \|   u\|_{X_{\zeta(1),1/\tau}^{1/2}}  \|  v\|_{X_{\zeta(1),1/\tau}^{1/2}} .
\end{align*}
Similarly, we  have $ I\!I\!I \lesssim  \tau^{2-\kappa-\frac d p} \|g\|_{L^p} \|   u\|_{X_{\zeta(1),1/\tau}^{1/2}}  \|  v\|_{X_{\zeta(1),1/\tau}^{1/2}} $ by interchanging the roles of $u$ and $v$. Therefore 
\[	I +I\!I+ I\!I\!I \lesssim  \tau^{2-\kappa-\frac d p}  \|g\|_{L^p} \|   u\|_{X_{\zeta(1),1/\tau}^{1/2}}  \|  v\|_{X_{\zeta(1),1/\tau}^{1/2}}.	\] 

Now we consider ${I\!V}$ that is given by the low frequency parts $Q^{\,1}_{\le{\delta_\circ}} u$ and $Q^{\,1}_{ \le{\delta_\circ}} v$. By Littlewood-Paley decomposition, we have 
\[	|I\!V|	\le
	 \big|\Lang (m^\kappa_\tau (D) P_{\le \frac1\tau}  g)  Q^{\,1}_{\le{\delta_\circ}} u,  Q_{\le{\delta_\circ}}^{\,1}  v \Rang \big|   
	+ \sum_{\frac1\tau \le \lambda \lesssim 1\colon \rm{dyadic}  }   \big|\Lang (m^\kappa_\tau (D)  P_\lambda  g)  Q_{\le {\delta_\circ}}^{\,1} u,  Q_{\le {\delta_\circ}}^{\,1}  v \Rang \big|,	\]
where 
\begin{equation} \label{proj}
	\F(P_{\le r} u)(\xi) :=\bigg(1-\sum_{j\ge0}\beta(2^{-j} r^{-1}|\xi|) \bigg)\wh u(\xi) , \  \ \  P_{>r} u:= u-  P_{\le r} u.
\end{equation}
By the definition of $X_{\zeta(1),1/\tau}^{1/2}$  we see that 
 \begin{equation} 
 \label{x2scale}
   \| u\|_{L^2} \lesssim  \tau^{ 1/2} \|u\|_{X_{\zeta(1),1/\tau}^{1/2}}.
 \end{equation} 
  It follows from  Bernstein's inequality and Mikhlin's multiplier theorem  that
\begin{align} 
\notag
	\big|\Lang (m^\kappa_\tau (D)  P_{\le \frac1\tau}  g)   Q_{\le {\delta_\circ}}^{\,1}  u,  Q_{\le {\delta_\circ}}^{\,1} v \Rang \big|  
	&\le \|m^\kappa_\tau (D) P_{\le \frac1\tau g} \|_{ L^{\infty}} \| Q_{\le {\delta_\circ}}^{\,1} u\|_{L^2} \| Q_{\le {\delta_\circ}}^{\,1} v\|_{L^2} \\
\label{near-orgin}
	& \lesssim \tau^{1-\frac d p}  \| m^\kappa_\tau (D) P_{\le \frac1\tau} g\|_{L^p}  \|u\|_{X_{\zeta(1),1/\tau}^{1/2}} \|v\|_{X_{\zeta(1),1/\tau}^{1/2}} \\
\notag
	&  \lesssim \tau^{1-\kappa-\frac d p} \|g\|_{L^p} \| u \|_{X_{\zeta(1),1/\tau}^{1/2}} \| v \|_{X_{\zeta(1),1/\tau}^{1/2}}.
\end{align}
Finally, applying the assumption \eqref{Large},  we obtain 
\[	\sum_{\frac1\tau \le \lambda \lesssim 1 }    \big|\Lang (m^\kappa_\tau (D) P_\lambda  g)  Q_{\le {\delta_\circ}}^{\,1} u,  Q_{\le {\delta_\circ}}^{\,1}  v \Rang \big|    
	\lesssim
\!\!\!  \sum_{\frac1\tau \le \lambda \lesssim 1 } \!\!  
\lambda^{\nu} \tau^{\mu} {\mathcal A_{p}(g, \lambda,  e_1)}  \|u\|_{X_{\zeta(1),  1/\tau}^{1/2}}\|v\|_{X_{\zeta(1), 1/\tau}^{1/2}}.	\]
This completes the proof.
\end{proof}

As seen in Lemma \ref{equiv}, by scaling  we can obtain an estimate  in terms of  $X_{\zeta(\tau, U)}^{1/2}$ and $X_{\zeta(\tau, U)}^{-1/2}$ that is equivalent to  \eqref{result1}.  Here 
\Be\label{zeta}\zeta(\tau,U) = \tau U(e_1 -i e_2) \in \mathbb C^{d},  \quad  U\in O_d. \Ee
\begin{cor}\label{scaled}
Let $\tau \gg 1$ and $g \in L^p(\mathbb R^d)$ with $\supp g\subset B_d(0,C)$.  Suppose \eqref{result1} holds.  Then, with $  s =  \mu+\frac d p -2+\kappa$ we have
\begin{equation} \label{result2} 
\| \mathcal M_{m^\kappa  (D) g}  \|_{ X_{\zeta(\tau,U) }^{1/2} \to  X_{\zeta(\tau, U)}^{-1/2} }  
	\lesssim \| g\|_{L^p} + \sum_{\frac1\tau \le \lambda \lesssim 1\colon {\rm dyadic} }  \lambda^\nu   \tau^{s} \big[ \tau^{d-\frac d p } \mathcal A_{p}(g_{\tau U}, \lambda,  e_1) \big] .
\end{equation} 
\end{cor}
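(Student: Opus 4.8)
The plan is to deduce Corollary~\ref{scaled} from the hypothesis \eqref{result1} by the scaling-and-rotation correspondence already used in the proof of Lemma~\ref{equiv}; no new estimate is required, only changes of variables and the bookkeeping of powers of~$\tau$.

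I would first reduce to $U=I$. Since $\zeta(\tau,U)=\tau U(e_1-ie_2)$, the symbol factors as $p_{\zeta(\tau,U)}(\eta)=p_{\zeta(\tau)}(U^t\eta)$; consequently the substitution $\xi\mapsto U\eta$ on the Fourier side shows that $u\mapsto u\circ U$ is an $L^2$-isometry from $X_{\zeta(\tau,U)}^{b}$ onto $X_{\zeta(\tau)}^{b}$ for every $b\in\R$. The substitution $x\mapsto Ux$ on the physical side gives $(m^\kappa(D)g)\circ U=\widetilde m^\kappa(D)(g\circ U)$ with $\widetilde m^\kappa(\xi)=m^\kappa(U\xi)$, and $\widetilde m^\kappa$ still lies in the class of Definition~\ref{mk} because $|U\xi|=|\xi|$. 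Hence $\|\mathcal M_{m^\kappa(D)g}\|_{X_{\zeta(\tau,U)}^{1/2}\to X_{\zeta(\tau,U)}^{-1/2}}=\|\mathcal M_{\widetilde m^\kappa(D)(g\circ U)}\|_{X_{\zeta(\tau)}^{1/2}\to X_{\zeta(\tau)}^{-1/2}}$, while $\|g\circ U\|_{L^p}=\|g\|_{L^p}$, $\supp(g\circ U)\subset B_d(0,C)$, and $(g\circ U)_\tau=g_{\tau U}$, so that $\mathcal A_p((g\circ U)_\tau,\lambda,e_1)=\mathcal A_p(g_{\tau U},\lambda,e_1)$. Thus it suffices to establish \eqref{result2} in the case $U=I$ (with $m^\kappa$ replaced by the admissible symbol $\widetilde m^\kappa$, which changes nothing).

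For $U=I$, I would invoke the scaling identity from the proof of Lemma~\ref{equiv}, namely $\langle(m^\kappa(D)g)u,v\rangle=\tau^{\kappa+2d}\langle(m^\kappa_\tau(D)g_\tau)u_\tau,v_\tau\rangle$ with $h_\tau(x)=\tau^{-d}h(\tau^{-1}x)$, together with the $X$-space scaling \eqref{scaledX}, which yields $\|w\|_{X_{\zeta(\tau)}^{1/2}}=\tau^{1+d/2}\|w_\tau\|_{X_{\zeta(1),1/\tau}^{1/2}}$, and with $\|g_\tau\|_{L^p}=\tau^{d/p-d}\|g\|_{L^p}$. Since $g_\tau$ is supported in $B_d(0,C\tau)$, the estimate \eqref{result1} applies to $g_\tau$ with $C\tau$ in place of $\tau$ (legitimate by scale-covariance: replacing the radius by a fixed multiple only affects implicit constants, as \eqref{result1} is obtained via H\"older on a ball of radius $\simeq\tau$ and via the rescaled $X$-norms). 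Using duality between $X_{\zeta(\tau)}^{\pm1/2}$, this gives
\[
\|\mathcal M_{m^\kappa(D)g}\|_{X_{\zeta(\tau)}^{1/2}\to X_{\zeta(\tau)}^{-1/2}}
\lesssim \tau^{\kappa+2d}\,\tau^{-(2+d)}\Big(\tau^{2-\kappa-d/p}\,\tau^{d/p-d}\|g\|_{L^p}+\sum_{1/\tau\le\lambda\lesssim1}\lambda^\nu\tau^\mu\,\mathcal A_p(g_\tau,\lambda,e_1)\Big).
\]
Collecting the powers of $\tau$, the first term reduces to $\|g\|_{L^p}$, and in the second one $\tau^{(\kappa+2d)-(2+d)+\mu}=\tau^{\kappa+d-2+\mu}=\tau^{s}\cdot\tau^{d-d/p}$ precisely when $s=\mu+\tfrac dp-2+\kappa$, so the sum becomes $\sum_\lambda\lambda^\nu\tau^{s}\big[\tau^{d-d/p}\mathcal A_p(g_\tau,\lambda,e_1)\big]$. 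Reinstating the rotation of the first step (replace $g_\tau$ by $g_{\tau U}$ and $X_{\zeta(\tau)}$ by $X_{\zeta(\tau,U)}$) yields \eqref{result2}.

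No substantive obstacle arises; the only points requiring care are the exponent bookkeeping across the two nested scalings — the $X$-norm scaling also carries the frequency-offset parameter $\sigma=1/\tau$, while $\mathcal A_p$ contributes only the factor $\tau^{d-d/p}$ since its building blocks $\Gamma_{\lambda,\delta}^{p,\pm}$ and $\Gamma_{\lambda,\delta}^{p}$ are $L^p$-based and already carry their own $\lambda^{1/p}\delta^{-1/p}$ normalisations — and the harmless discrepancy between the support radius $C$ of $g$ and the radius $\tau$ appearing in Proposition~\ref{low-high}.
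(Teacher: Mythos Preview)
Your proposal is correct and follows essentially the same approach as the paper: both arguments unwind the scaling/rotation identity $\langle(m^\kappa(D)g)u,v\rangle=\tau^{\kappa+2d}\langle(m^\kappa_\tau(D)g_{\tau U})u_{\tau U},v_{\tau U}\rangle$, apply \eqref{result1} to $g_{\tau U}$ (supported in a ball of radius $\simeq\tau$), and then convert back via \eqref{scaledX} and $\|g_{\tau U}\|_{L^p}=\tau^{-d+d/p}\|g\|_{L^p}$. The only cosmetic difference is that you first reduce to $U=I$ and then scale, whereas the paper performs the combined change of variables $\xi\to\tau U\xi$ in one step; the exponent bookkeeping is identical.
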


\begin{proof}
By Parseval's identity and change of variables $\xi \to \tau U\xi$,  we see 
\[
\langle  (m^\kappa  (D) g)  u,v \rangle  =\tau^{\kappa+2d}  \langle (  m^\kappa_\tau ( UD) g_{\tau U})  u_{\tau U}, v_{\tau U} \rangle .
\]
Since $g_{\tau U}$ is supported in a ball of radius $\simeq \tau$, 
applying \eqref{result1} to the right hand side of the above we have 
\[	|\langle (m^\kappa  (D) g) u,v\rangle| \lesssim 
        \tau^{\kappa+2d} \Big(  \tau^{2-\kappa-\frac d p} \|g_{\tau U}\|_{L^p} + \sum_{\frac1\tau \le \lambda \lesssim 1 }  
        \lambda^\nu \tau^\mu \mathcal A_p (g_{\tau U}, \lambda,  e_1) \Big)  \| u_{\tau U}\|_{X_{\zeta(1),1/\tau}^{1/2}} \| v_{\tau U}\|_{X_{\zeta(1),1/\tau}^{1/2}} .
\]
By \eqref{scaledX}, we have $\| u_{\tau U}\|_{X_{\zeta(1),1/\tau}^{1/2}}  \| v_{\tau U}\|_{X_{\zeta(1),1/\tau}^{1/2}} = \tau^{-d-2} \| u \|_{X_{\zeta(\tau,U) }^{1/2}} \| v \|_{X_{\zeta(\tau, U)}^{1/2}}$ and $\| g_{\tau U} \|_{L^p} $ $=\tau^{-d+\frac d p} \| g\|_{L^p}$. Thus \eqref{result2} follows. 
\end{proof}

Now  we extend  Corollary \ref{scaled} to $g \in H^{r,p}_c(\mathbb R^{d})$ with $r <0$. Naturally,  one may 
attempt to replace $g$ with $(1 + |D|^2)^{\frac r 2} g$ in \eqref{result1}  while taking $m^{\kappa}(D)=(1 + |D|^2)^{-\frac r 2}$. However, 
this simple strategy does not work  since compactness of the support of $(1 + |D|^2)^{\frac r 2} g$ is not guaranteed.  We need to slightly modify the argument using  the following easy lemma. 
 
\begin{lem}\label{elementary} Let $1<p\le q<\infty$. If $s_1<s_2$, then   $H^{s_2,q}_c\subseteq H^{s_1,p}_c$.  \end{lem}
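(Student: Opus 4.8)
The plan is to prove Lemma \ref{elementary} by reducing it, via Bessel potentials, to the classical Sobolev embedding on $\mathbb R^d$ together with the compact-support condition. First I would observe that for $g \in H^{s_2,q}_c$ with support in a fixed ball $B = B_d(0,R)$, the Bessel potential $(1+|D|^2)^{s_2/2} g \in L^q(\mathbb R^d)$ by definition of the Bessel potential space. Since $s_1 < s_2$ and $1 < p \le q < \infty$, the classical embedding between Bessel potential spaces $H^{s_2,q}(\mathbb R^d) \hookrightarrow H^{s_1,p}(\mathbb R^d)$ holds provided $s_2 - d/q \ge s_1 - d/p$ — but here we do not even need the sharp Sobolev condition, because after multiplying by a fixed cutoff the loss in integrability exponent is free. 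Concretely, the key point is that localization converts a gain of regularity into a gain (or at least no loss) of integrability: if $\varphi \in C_c^\infty(\mathbb R^d)$ equals $1$ on $B$, then $g = \varphi g$, and I would estimate $\|g\|_{H^{s_1,p}} = \|\varphi g\|_{H^{s_1,p}} \lesssim \|g\|_{H^{s_1,q}}$ using that multiplication by $\varphi$ maps $H^{s,q}$ to itself and that, on functions with Fourier support unrestricted but physical support in $B$, Hölder's inequality gives $\|\cdot\|_{L^p(B)} \lesssim_R \|\cdot\|_{L^q(B)}$ when $p \le q$. The remaining ingredient is the strict inequality $s_1 < s_2$, which allows $H^{s_2,q} \hookrightarrow H^{s_1,q}$ even without support restrictions, since $(1+|\xi|^2)^{(s_1-s_2)/2}$ is an $L^q$-multiplier (indeed it lies in $L^\infty$ and is Mikhlin-type).

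So the steps, in order, would be: (1) fix $\varphi \in C_c^\infty$ with $\varphi \equiv 1$ on the ball containing $\operatorname{supp} g$, so $g = \varphi g$ and $\operatorname{supp} g \subset \operatorname{supp}\varphi$; (2) use that $(1+|D|^2)^{(s_1-s_2)/2}$ is bounded on $L^q$ (Mikhlin multiplier theorem, since $s_1 - s_2 < 0$) to get the chain $H^{s_2,q} \hookrightarrow H^{s_1,q}$; (3) for the step $H^{s_1,q}_c \hookrightarrow H^{s_1,p}_c$ with $p \le q$, write $h = (1+|D|^2)^{s_1/2} g$ and note that although $h$ need not have compact support, we have $g = (1+|D|^2)^{-s_1/2} h$ with $g$ compactly supported; then multiply by $\varphi$ and use the commutator/localization bound $\|\varphi (1+|D|^2)^{-s_1/2} h\|_{?}$ — more cleanly, bound $\|g\|_{H^{s_1,p}} = \|(1+|D|^2)^{s_1/2}(\varphi g)\|_{L^p} \lesssim \|\varphi g\|_{H^{s_1,q}} + (\text{lower order})$ by the fact that $\varphi \cdot : H^{s,q} \to H^{s,p}$ for $p \le q$ on a compact set (this follows from the mapping $\varphi\cdot : H^{s,q}\to H^{s,q}$ composed with Hölder on the fixed compact support, valid for $s \ge 0$; for $s < 0$ argue by duality, since the statement is symmetric under $s \to -s$, $p \leftrightarrow q'$ up to renaming).

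The main obstacle I anticipate is handling \textbf{negative smoothness} $s_1 < 0$ cleanly, since then "multiplication by a cutoff decreases the $L^p$ exponent on a compact set" is not literally a statement about functions but about distributions, and one must be careful that $\varphi g$ still makes sense and that the norm inequality survives. I would resolve this by duality: $H^{s_1,p}_c(\Omega)$ is the dual of $H^{-s_1,p'}_0(\Omega)$, and testing $g$ against $\psi \in H^{-s_1,p'}_0$ with $\psi$ supported in a slightly larger ball, $|\langle g,\psi\rangle| = |\langle g, \varphi_1\psi\rangle| \le \|g\|_{H^{s_1,q}} \|\varphi_1\psi\|_{H^{-s_1,q'}} \lesssim \|g\|_{H^{s_1,q}}\|\psi\|_{H^{-s_1,p'}}$, where the last step uses $q' \le p'$ (equivalent to $p \le q$) and boundedness of multiplication by $\varphi_1$ on these spaces localized to a compact set. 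Taking the supremum over $\psi$ gives $\|g\|_{H^{s_1,p}} \lesssim \|g\|_{H^{s_1,q}} \lesssim \|g\|_{H^{s_2,q}}$, which is the claim. For $s_1 \ge 0$ the direct argument in step (3) suffices and no duality is needed; in all cases the strict inequality $s_1 < s_2$ is what makes the first embedding automatic without any dimensional balance condition.
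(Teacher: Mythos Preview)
Your argument has a genuine gap at the step you label (3): the embedding $H^{s_1,q}_c \hookrightarrow H^{s_1,p}_c$ for $p<q$ at a \emph{fixed} (non-integer) smoothness $s_1$. This is precisely the point the paper flags just before the lemma: unlike the $L^p$ scale, such an inclusion is \emph{not} known for Bessel potential spaces and in fact fails for the Sobolev--Slobodeckij scale by Mironescu--Sickel \cite{MS}. Your justification (``multiply by $\varphi$ and use H\"older on the compact support'') does not go through because $(1+|D|^2)^{s_1/2}(\varphi g)$ is not compactly supported even when $g$ is, so there is nothing to apply H\"older to. The duality detour for $s_1<0$ simply sends the problem back to the same unproven inequality at positive smoothness $-s_1>0$ with exponents $q'\le p'$; the statement is self-dual, so duality cannot create the missing ingredient.

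The paper's proof avoids this trap by \emph{not} decoupling the smoothness drop from the integrability drop. It views $T f=\psi f$ as a linear operator, uses the trivial diagonal bounds $T:H^{s,r}\to H^{s,r}$, and then interpolates against a single off-diagonal endpoint $T:L^\infty\to H^{-\epsilon,1}$, i.e.\ $\|(1+|D|^2)^{-\epsilon/2}(\psi f)\|_{L^1}\lesssim\|f\|_{L^\infty}$. That endpoint is where the strict inequality $s_1<s_2$ is actually spent: the $\epsilon>0$ of smoothing is exactly what makes a Littlewood--Paley sum $\sum_k 2^{-\epsilon k}\|P_k(\psi f)\|_{L^1}$ converge, with $\|P_k(\psi f)\|_{L^1}$ uniformly bounded thanks to the compact support of $\psi$. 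So the fix is to replace your factorization $H^{s_2,q}\hookrightarrow H^{s_1,q}\hookrightarrow H^{s_1,p}$ by an interpolation that trades a small amount of regularity for the full shift in integrability in one stroke.
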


Unlike the $L^p$ spaces over a compact set  the inclusion $H^{s,q}_c\subseteq H^{s,p}_c$ with $p< q$ does not seem to be true in general unless $s$ is an integer.  Failure of the embedding $W^{s,q}_c\subseteq W^{s,p}_c$ with $p< q$ and  non-integer $s$ was shown by  Mironescu and Sickel \cite{MS}. However, if we sacrifice a little bit of regularity such embedding remains true. Though this is easy to show, we  couldn't find a proper reference, so we include a proof. 

\begin{proof}[Proof of Lemma \ref{elementary}]
If $p=q$ the inclusion is clear by Mikhlin's multiplier theorem, so it is enough to consider the case $p<q$. Without loss of generality we may assume that $f$ is supported in $B_d(0,1)$.  Let  $\psi$ be a smooth function supported in  $B_d(0,3/2)$  and $\psi=1$ on $B_d(0,1)$.  We consider the operator $T(f)=\psi f$. It is  sufficient to show  $\|  Tf\|_{H^{s_1,p}} \lesssim \|f\|_{H^{s_2,q}}.$  Trivially $\|Tf\|_{H^{s,r}}\lesssim \|f\|_{H^{s,r}}$  for any $s$ and $1\le r\le \infty$. Thus by interpolation it is enough to show that, for any $\epsilon>0$,
\Be\label{e-sob}
	\|(1+|D|^2)^{-\frac{\epsilon}2} Tf \|_{L^1} \lesssim \|f\|_{L^\infty}.
\Ee 
Using the typical dyadic decomposition we write $(1+|\xi|^2)^{-\frac \epsilon 2}=  \beta_0(\xi)  + \sum_{k\ge 1} 2^{-\epsilon k} \beta_k(\xi),$ where  $\beta_0$ is a smooth function supported in $B_d(0,1)$, and $\beta_k$  is a smooth function supported in $\{\xi:  2^{k-2}\le |\xi|\le 2^{k} \}$ satisfying $|\partial^\alpha  \beta_k|\lesssim 2^{-|\alpha|k}$ for any multi-index $\alpha$.  Let us set  $P_k f=\mathcal F^{-1}(\beta_k \widehat f\,)$. Since $\psi$ is supported in  $B_d(0,3/2)$,  from the rapid decay of $\mathcal F^{-1}(\beta_k(2^k\cdot))$ we have, for  any $|x|\ge 2$ and $N$, 
\[  | P_k Tf(x)|  \lesssim 2^{-Nk}  \int \frac{|\psi(y) f(y)| }{(1+ 2^k|x-y|)^{N}}  dy.  \]
Thus, it  follows that 
\[	\| P_k Tf\|_{L^1} \le \| P_k Tf\|_{L^1(B_d(0,2))}+ \| P_k Tf\|_{L^1(\mathbb R^d\setminus B_d(0,2))}
  	\lesssim  (1+ 2^{-Nk})\|f\|_{L^\infty}.	\]
Clearly, $  \|(1+|D|^2)^{-\frac{\epsilon}2} Tf \|_{L^1}  \lesssim \sum_{k} 2^{-\epsilon k} \| P_k Tf\|_{L^1}$.   So, summation along $k$ gives the desired estimate \eqref{e-sob}. 
\end{proof}

\begin{cor}\label{scaled-n}
 Let $-1 \le  r \le 0$, $\frac d{2+r}\le p<\infty$, $\tau\gg 1$,  and let $m^r(\xi)=(1+|\xi|^2)^\frac r2$. Suppose that \eqref{Large} holds and  $f$ is supported in a bounded set. Then, for any $\epsilon>0$, 
\begin{equation}\label{result3}
\| \mathcal M_{f}  \|_{ X_{\zeta(\tau,U) }^{1/2} \to  X_{\zeta(\tau, U)}^{-1/2} }  
\lesssim 
 \| f\|_{H^{r +\epsilon,p}} +\!\!\! \sum_{\frac1\tau \le \lambda \lesssim 1\colon {\rm dyadic} }   
 \lambda^\nu \tau^{\mu +\frac d p -2}  \big[ \tau^{d-\frac d p } \mathcal A_{p}\big(  m^r_\tau(D) f_{\tau U}, \lambda,  e_1 \big) \big],
\end{equation}
where $m^r_\tau(\xi)=\tau^{-r}m^r(\tau\xi)=(\tau^{-2}+|\xi|^2)^{\frac r2}$ as in Definition \ref{mk}. 
\end{cor}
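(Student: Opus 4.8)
The plan is to reduce \eqref{result3} to Corollary~\ref{scaled} by writing $f=m^{-r}(D)\big(m^{r}(D)f\big)$ and applying that corollary with $\kappa=-r\in[0,1]$; the hypothesis $\tfrac d{2-\kappa}=\tfrac d{2+r}\le p$ is precisely the one assumed here, and \eqref{result1} (needed for Corollary~\ref{scaled}) follows from the standing assumption \eqref{Large} via Proposition~\ref{low-high}. The obstruction is that $m^{r}(D)f=(1+|D|^{2})^{r/2}f$ is no longer compactly supported, so I would fix $\psi\in C_{c}^{\infty}(\Rd)$ with $\psi\equiv1$ on a neighbourhood of $\supp f$ and split
\[
 f=m^{-r}(D)\big(\psi\,m^{r}(D)f\big)+m^{-r}(D)\big((1-\psi)\,m^{r}(D)f\big)=:m^{-r}(D)g_{1}+h ,
\]
where $g_{1}:=\psi\,m^{r}(D)f$ is compactly supported. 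The first term is treated by Corollary~\ref{scaled}; the second is a fixed smooth, rapidly decaying error.

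For the main term, Corollary~\ref{scaled} with $\kappa=-r$, $\nu=-r-\tfrac2p$ (so $\nu=\kappa-\tfrac2p$) and $s=\mu+\tfrac dp-2-r$ gives
\[
 \|\mathcal M_{m^{-r}(D)g_{1}}\|_{X_{\zeta(\tau,U)}^{1/2}\to X_{\zeta(\tau,U)}^{-1/2}}\lesssim\|g_{1}\|_{L^{p}}+\sum_{\frac1\tau\le\lambda\lesssim1}\lambda^{\nu}\tau^{s}\big[\tau^{d-\frac dp}\mathcal A_{p}\big((g_{1})_{\tau U},\lambda,e_{1}\big)\big].
\]
Here $\|g_{1}\|_{L^{p}}\le\|\psi\|_{L^{\infty}}\|m^{r}(D)f\|_{L^{p}}=\|\psi\|_{L^{\infty}}\|f\|_{H^{r,p}}\lesssim\|f\|_{H^{r+\epsilon,p}}$, so the first term is acceptable. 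For the sum I would use the scaling identity $\big(m^{r}(D)f\big)_{\tau U}=\tau^{r}\,m^{r}_{\tau}(D)f_{\tau U}$ (Plancherel plus the substitution $\xi\mapsto\tau U\xi$), which yields $(g_{1})_{\tau U}=\psi(\tau^{-1}U\,\cdot\,)\cdot\tau^{r}m^{r}_{\tau}(D)f_{\tau U}$. Since $\psi(\tau^{-1}U\,\cdot\,)$ is bounded by $\|\psi\|_{L^{\infty}}$, equals $1$ on a ball of radius $\simeq\tau$, and has Fourier transform concentrated in $\{|\xi|\lesssim\tau^{-1}\}\subset\{|\xi|\lesssim\lambda\}$, multiplying by it nearly commutes with the Littlewood--Paley and directional projections defining $\mathcal A_{p}$, up to remainders with rapidly decaying kernels; hence $\mathcal A_{p}\big((g_{1})_{\tau U},\lambda,e_{1}\big)\lesssim\tau^{r}\mathcal A_{p}\big(m^{r}_{\tau}(D)f_{\tau U},\lambda,e_{1}\big)$ modulo a negligible term, and since $s+r=\mu+\tfrac dp-2$ this reproduces exactly the sum in \eqref{result3}.

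For the error term I would first use $\psi f=f$ to write $h=-\,m^{-r}(D)[\psi,m^{r}(D)]f$. The commutator $[\psi,m^{r}(D)]$ has order $r-1$ and kernel $(\psi(x)-\psi(y))K_{r}(x-y)$, where $K_{r}$ is the Bessel kernel of $m^{r}(D)$; once paired against $f$, which lives in $\{\psi=1\}$, this kernel is effectively supported in $\{|x-y|\gtrsim1\}$, where $K_{r}$ is smooth and exponentially decaying, so $h$ is smooth and rapidly decaying with $\|h\|_{L^{d/2}}\lesssim_{N}\|f\|_{H^{-N,q}}$ for every $N$ and every $q$; by Lemma~\ref{elementary} the latter is $\lesssim\|f\|_{H^{r+\epsilon,p}}$, the $\epsilon$-loss of regularity being absorbed at this point. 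Finally, since $p\ge\tfrac d2$, H\"older's inequality together with \eqref{x-1/2-0} (cf. Remark~\ref{nachman}, and using that $L^{q}$ norms are rotation invariant) gives $|\langle h\,u,v\rangle|\le\|h\|_{L^{d/2}}\|u\|_{L^{2d/(d-2)}}\|v\|_{L^{2d/(d-2)}}\lesssim\|f\|_{H^{r+\epsilon,p}}\|u\|_{X_{\zeta(\tau,U)}^{1/2}}\|v\|_{X_{\zeta(\tau,U)}^{1/2}}$, uniformly in $\tau\gg1$ and $U\in O_{d}$; adding this to the main term yields \eqref{result3}.

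The main obstacle is the loss of compact support under $m^{r}(D)$. Concretely, the two delicate points are (i) verifying that the commutator remainder $h$ is genuinely negligible in the operator norm $X_{\zeta(\tau,U)}^{1/2}\to X_{\zeta(\tau,U)}^{-1/2}$, which forces one to exploit the buffer $\psi\equiv1$ near $\supp f$ together with the exponential decay of the Bessel kernel, and (ii) removing the dilated cutoff $\psi(\tau^{-1}U\,\cdot\,)$ from inside $\mathcal A_{p}$ without disturbing the frequency decompositions it encodes. Item (ii) is where I expect most of the bookkeeping to lie, although it is morally just the assertion that composing with a fixed smooth cutoff, after a dilation, nearly commutes with projections onto frequencies $\gtrsim\tau^{-1}$.
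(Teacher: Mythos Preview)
Your overall strategy---write $f=m^{-r}(D)g$ and invoke Corollary~\ref{scaled} with $\kappa=-r$---is the right starting point, and your identification of the compact-support obstruction is exactly the issue. But the paper resolves it by a different, more direct route that avoids both of your delicate points (i) and (ii) entirely. Rather than cutting off $g=m^r(D)f$ and treating the commutator, the paper simply reopens the proof of Proposition~\ref{low-high} at the rescaled level and observes that the compact support of $g$ is used there \emph{only once}: in the H\"older step $\|g\|_{L^{d/(2-\kappa)}}\lesssim\tau^{2-\kappa-d/p}\|g\|_{L^p}$ for the terms $I,\,I\!I,\,I\!I\!I$. The paper skips that step, keeping the bound $I+I\!I+I\!I\!I\lesssim\tau^{r-2d}\|(1+|D|^2)^{r/2}f\|_{L^{d/(2+r)}}\|u\|_{X^{1/2}_{\zeta(\tau,U)}}\|v\|_{X^{1/2}_{\zeta(\tau,U)}}$, and then applies Lemma~\ref{elementary} to the compactly supported $f$ (not $g$) to pass to $\|f\|_{H^{r+\epsilon,p}}$; this is precisely where the $\epsilon$-loss enters. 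For the term $I\!V$, the proof of Proposition~\ref{low-high} never used compact support of $g$ (the Littlewood--Paley sum is truncated by the supports of $\widehat u,\widehat v$, not of $\widehat g$), so \eqref{Large} applies directly with $g=m^r_\tau(D)f_{\tau U}$ and no cutoff is needed.

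Your route is not wrong, but item (ii) is more than bookkeeping. The quantity $\mathcal A_p$ contains a supremum over $0<\delta\lesssim 1$, and the projection $P^{e_1}_{\le\delta}$ can be at scale $\delta\ll\tau^{-1}$; at those scales, multiplication by $\psi(\tau^{-1}U\,\cdot\,)$ does \emph{not} nearly commute with $P^{e_1}_{\le\delta}$, since convolution with $\widehat{\psi(\tau^{-1}U\,\cdot\,)}$ spreads frequencies by $\sim\tau^{-1}\gg\delta$. One can still recover by bounding $\mathcal A_p\big(((1-\psi)m^r(D)f)_{\tau U},\lambda,e_1\big)$ directly using that $(1-\psi)m^r(D)f$ is Schwartz (e.g.\ via Hausdorff--Young to get $\delta^{-1/p}\|P^{e_1}_{\le\delta}P_\lambda G\|_{L^p}\lesssim\delta^{1-2/p}$ for $p\ge2$), but this is a separate argument you would have to supply, not a commutator remainder. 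The paper's approach buys you a clean proof with no such side estimates.
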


\begin{proof} 
Let us set $\kappa=-r$ and take $m^\kappa(D)=(1+|D|^2)^{\frac \kappa 2}$, $g=(1+|D|^2)^{-\frac \kappa 2} f$ so that $f=m^\kappa(D) g$. Scaling shows that $\langle f u,v\rangle=\tau^{\kappa+2d}\langle (m_\tau^{\kappa}(D)g_{\tau U})u_{\tau U}, v_{\tau U}\rangle$.  As before in the proof of Proposition \ref{result1}, we decompose frequencies of the bilinear operator to get
\[ |\langle f u,v\rangle |  \le  \tau^{\kappa+2d}\big( I +I\!I+ I\!I\!I+ I\!V \big), \] 
where 
\begin{gather*} 
I =|\langle (m_\tau^\kappa(D)g_{\tau U})  Q^{\,1}_{> {\delta_\circ}}  u_{\tau U} ,Q^{\,1}_{>   \delta_\circ } v_{\tau U} \rangle |, \quad
I\!I = |\langle (m_\tau^\kappa(D)g_{\tau U}) Q^{\,1}_{>  \delta_\circ}  u_{\tau U}, Q^{\,1}_{\le \delta_\circ} v_{\tau U} \rangle |, \\
I\!I\!I=|\langle  (m_\tau^\kappa(D)g_{\tau U}) Q^{\,1}_{\le \delta_\circ} u_{\tau U} , Q^{\,1}_{ > \delta_\circ} v_{\tau U}  \rangle |, \quad
I\!V = |\langle  (m_\tau^\kappa(D)g_{\tau U}) Q^{\,1}_{\le \delta_\circ} u_{\tau U} , Q^{\,1}_{\le \delta_\circ}  v_{\tau U} \rangle |.
\end{gather*}
Then, following the argument  in the proof of Proposition \ref{low-high} (then rescaling back), it is easy to see that, for $r\in [-1,0]$, 
\[ 
 {I +I\!I+ I\!I\!I}\lesssim \tau^{r-2d} \| (1+|D|^2)^{\frac r2} f\|_{L^\frac{d}{2+r}}   \|   u\|_{X_{\zeta(\tau, U)}^{1/2}}  \|  v\|_{X_{\zeta(\tau, U)}^{1/2}}. 
\] 
Using  Lemma \ref{elementary},  for any $\epsilon>0$, $-1\le r\le 0$, and $\frac{d}{2+r} \le p<\infty$,  we have
\[
{I +I\!I+ I\!I\!I}\lesssim \tau^{r-2d}  \|  f\|_{H^{r+\epsilon, p}} \|   u\|_{X_{\zeta(\tau, U)}^{1/2}}  \|  v\|_{X_{\zeta(\tau, U)}^{1/2}}.  
\]
For the remaining $I\!V$, we may routinely  repeat the same argument as before  making use of \eqref{Large} with $\kappa=-r$   to get 
\[   I\!V  \lesssim \tau^{r-2d} \bigg( \|f\|_{H^{r+\epsilon, p}} +   \sum_{\frac1\tau \le \lambda \lesssim 1 }   
 \lambda^\nu \tau^{\mu +\frac d p -2}  \big[ \tau^{d-\frac d p } \mathcal A_{p}\big(  m^r_\tau(D) f_{\tau U}, \lambda,  e_1 \big) \big] \bigg) \|u\|_{X^{1/2}_{\zeta(\tau, U)}} \|v\|_{X^{1/2}_{\zeta(\tau, U)}}.\]
Combining the estimates for $I+I\!I+I\!I\!I$ and $I\!V$ yields \eqref{result3}. 
\end{proof}

\subsection{Non-averaged estimates} 
We recall the following ((13), (15), and (16) in  \cite{Haberman15}) which are immediate consequences of \eqref{x2} and \eqref{x-1/2-0}:
\begin{align}
\label{ha1}
	\|\mathcal M_q\|_{X_{\zeta(\tau,U)}^{1/2}\to X_{\zeta(\tau,U)}^{-1/2}}& \lesssim \tau^{-1} \| q\|_{L^\infty} ,\\
\label{ha2}
	\|\mathcal M_q\|_{X_{\zeta(\tau,U)}^{1/2}\to X_{\zeta(\tau,U)}^{-1/2}} &\lesssim  \| q\|_{L^\frac d2(\R^d)}, \\
\label{x-1/2}
	\|f\|_{X_{\zeta(\tau,U)}^{-1/2}}&\lesssim \|f\|_{L^\frac{2d}{d+2}(\R^d)}.
\end{align}

From  \cite[Lemma 2.3]{HT} we also have
\begin{equation}  \|\mathcal M_{\nabla q}\|_{X_{\zeta(\tau,U)}^{1/2}\to X_{\zeta(\tau,U)}^{-1/2}} \lesssim  \| q\|_{L^\infty}.
 \label{hoho}
 \end{equation}
 
The estimates in the following proposition correspond to the estimates which one can get by formally interpolating the estimates \eqref{ha2} and \eqref{hoho}. 
\begin{prop}
\label{decay} 
Let  $d/2\le p\le\infty$, $\kappa=1-\frac d{2p}$, $\tau\gg 1$, and let $f\in L^p_c(B_d(0,C))$. Suppose
that  $m^\kappa(\xi)=(1+|\xi|^2)^\frac\kappa 2$ or $m^\kappa(\xi)=|\xi|^\kappa$. Then, 
\begin{equation}\label{interpol}
	\|\mathcal M_{m^\kappa (D) f}\|_{X_{\zeta(\tau,U)}^{1/2}\to X_{\zeta(\tau,U)}^{-1/2}} \lesssim  \| f\|_{L^p(\Rd)}.
\end{equation}
\end{prop}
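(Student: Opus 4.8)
The plan is to deduce Proposition \ref{decay} from the two endpoint estimates \eqref{ha2} and \eqref{hoho} by complex interpolation of the bilinear form. Fix $u,v$ with $\supp\wh u,\supp\wh v\subset B_d(0,4)$ (the general case follows by the $X^b_\zeta$ localization lemmas, exactly as in Section \ref{sec_tech}), and consider the analytic family of operators
\[
z\mapsto T_z\colon f\mapsto \mathcal M_{(1+|D|^2)^{z/2}f},\qquad 0\le \Re z\le 1,
\]
acting on the bilinear pairing $\langle (T_z f)u,v\rangle$. The endpoint $\Re z=0$ is controlled by \eqref{ha2}, which gives $|\langle (T_{iy}f)u,v\rangle|\lesssim \|f\|_{L^{d/2}}\|u\|_{X_{\zeta(\tau,U)}^{1/2}}\|v\|_{X_{\zeta(\tau,U)}^{1/2}}$ uniformly in $y\in\mathbb R$, after noting that $(1+|D|^2)^{iy/2}$ is an $L^{d/2}$-bounded Fourier multiplier with operator norm growing at most polynomially in $y$ by Mikhlin's theorem. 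The endpoint $\Re z=1$ is controlled by \eqref{hoho}: since $(1+|D|^2)^{(1+iy)/2}$ differs from $|D|(1+|D|^2)^{iy/2}$, hence from a sum of Riesz-transform composites with $\nabla$, by an $L^\infty$-bounded multiplier, one gets $|\langle (T_{1+iy}f)u,v\rangle|\lesssim \|f\|_{L^\infty}\|u\|_{X_{\zeta(\tau,U)}^{1/2}}\|v\|_{X_{\zeta(\tau,U)}^{1/2}}$; here one should be slightly careful because \eqref{hoho} is stated for $\mathcal M_{\nabla q}$ with $q\in L^\infty$, but componentwise $\nabla q = \nabla (1+|D|^2)^{-1/2}\,(1+|D|^2)^{1/2}q$, and $\nabla(1+|D|^2)^{-1/2}$ is a nice multiplier, so this is harmless.

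With these two endpoint bounds in hand, Stein's interpolation theorem for analytic families applied to the bilinear form $\langle (T_zf)u,v\rangle$ yields, at $z=\kappa=1-\tfrac d{2p}$, the bound
\[
|\langle (m^\kappa(D)f)u,v\rangle|\lesssim \|f\|_{L^p}\,\|u\|_{X_{\zeta(\tau,U)}^{1/2}}\,\|v\|_{X_{\zeta(\tau,U)}^{1/2}},
\]
where $\tfrac1p = \tfrac{1-\kappa}{d/2}+\tfrac{\kappa}{\infty}$, i.e. $p=\tfrac{d}{2(1-\kappa)}$, which is exactly $\kappa=1-\tfrac d{2p}$. This is \eqref{interpol} for $m^\kappa(\xi)=(1+|\xi|^2)^{\kappa/2}$. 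For the homogeneous symbol $m^\kappa(\xi)=|\xi|^\kappa$ one repeats the argument with the family $f\mapsto \mathcal M_{|D|^zf}$; the only new point is that $|D|^{iy}$ is still an $L^r$-bounded multiplier for $1<r<\infty$ with polynomial growth in $y$, and at $\Re z=1$ one uses $|D|^{1+iy}=\sum_j R_j\partial_j |D|^{iy}$ with the Riesz transforms $R_j$ bounded on $L^\infty$... which they are not — so here I would instead observe that for $f$ supported in $B_d(0,C)$ one has $|D|^\kappa f = (1+|D|^2)^{\kappa/2}f + E f$ where $E=|D|^\kappa-(1+|D|^2)^{\kappa/2}$ has a symbol that is smooth, decays like $|\xi|^{\kappa-2}$ at infinity and is $O(1)$ near the origin, hence $E$ maps $L^p_c$ boundedly into $L^\infty$ (indeed into any $L^q$) by the Hausdorff–Young/convolution estimate, and then $\|\mathcal M_{Ef}\|_{X^{1/2}\to X^{-1/2}}\lesssim\|Ef\|_{L^{d/2}}\lesssim\|f\|_{L^p}$ via \eqref{ha2}. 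Thus the homogeneous case reduces to the inhomogeneous one.

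Finally I would remove the assumption $\supp\wh u,\supp\wh v\subset B_d(0,4)$: writing $u = \phi u + (1-\phi)u$ with $\phi$ a bump adapted to $B_d(0,4)$ (and similarly for $v$) — or rather splitting in frequency so that the high-frequency parts are handled trivially because on $\{|\xi|\gtrsim 1\}$ one has $|p_{\zeta(\tau)}(\xi)|\simeq \tau^2+|\xi|^2$ and the spaces $X^{1/2}_{\zeta(\tau,U)}$ reduce to weighted Sobolev spaces on which \eqref{x-1/2} and Sobolev embedding apply directly — one reduces to the localized setting already treated. The main obstacle I anticipate is not the interpolation itself but the bookkeeping at the $\Re z=1$ endpoint: making precise that $(1+|D|^2)^{(1+iy)/2}$ (resp. $|D|^{1+iy}$) applied to a compactly supported $L^\infty$ (resp. $L^p$) function can be written as $\nabla$ of something in $L^\infty$ plus an error controlled by \eqref{ha2}, with all constants growing at most polynomially in $y$ so that Stein interpolation applies. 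Everything else is routine.
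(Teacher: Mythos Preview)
Your overall strategy --- Stein interpolation of the analytic family $z\mapsto\mathcal M_{(1+|D|^2)^{z/2}f}$ between the endpoints $\Re z=0$ (handled by \eqref{ha2} and Mikhlin on $L^{d/2}$) and $\Re z=1$ --- is exactly the paper's approach, and your reduction of the homogeneous symbol $|\xi|^\kappa$ to the inhomogeneous one via the error operator $E=|D|^\kappa-(1+|D|^2)^{\kappa/2}$ is fine (the kernel of $E$ is in $L^1$, so $\|Ef\|_{L^{d/2}}\lesssim\|f\|_{L^{d/2}}\lesssim\|f\|_{L^p}$ for compactly supported $f$).

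The gap is at $\Re z=1$. You try to reduce $\mathcal M_{(1+|D|^2)^{(1+iy)/2}f}$ to \eqref{hoho} by writing $(1+|D|^2)^{(1+iy)/2}f$ as $\nabla q$ with $\|q\|_{L^\infty}\lesssim\|f\|_{L^\infty}$; but any such $q$ must involve an operator with symbol behaving like $\xi/|\xi|$ at infinity (e.g.\ $\nabla(1+|D|^2)^{-1/2}$ or a Riesz transform) applied to $f$, and these are \emph{not} bounded on $L^\infty$. You notice this yourself for the homogeneous family $|D|^{1+iy}$, but the same obstruction is already present in the inhomogeneous case --- your remark that ``$\nabla(1+|D|^2)^{-1/2}$ is a nice multiplier'' is true on $L^r$ for $1<r<\infty$, but not on $L^\infty$, which is what you need.

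The paper avoids $L^\infty$ altogether at this endpoint. It passes to the rescaled space $X^{1/2}_{\zeta(1),1/\tau}$ (so the compactly supported $f$ becomes supported in $B_d(0,C\tau)$), splits $u=P_{\le 8}u+P_{>8}u$ and $v$ likewise, and bounds each of the four pieces of $\langle m_\tau^{1+it}(D)f\cdot u,v\rangle$ by $C(1+|t|)^{d+1}\|f\|_{L^d}\|u\|_{X^{1/2}_{\zeta(1),1/\tau}}\|v\|_{X^{1/2}_{\zeta(1),1/\tau}}$ using the fractional Leibniz rule on $L^{d/(d-1)}$ (where Mikhlin does apply to $m_\tau^{1-it}(D)|D|^{-1}$) together with \eqref{A}--\eqref{C}. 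Since in the rescaled picture $\|f\|_{L^d}\lesssim\tau\|f\|_{L^\infty}$, this gives the needed bound $\lesssim\tau\|f\|_{L^\infty}$, and rescaling back yields \eqref{interpol}. That frequency decomposition of $u,v$ and the detour through $L^d$ are the missing ingredients in your argument.
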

  
\begin{proof}
When $\kappa=0$ the estimate \eqref{interpol} is identical with \eqref{ha2}, so it is enough to prove \eqref{interpol} with $\kappa=1$. For the purpose, it is more convenient to work with the rescaled space $X_{\zeta(1),1/\tau}^{1/2}$.  We note that $m^\kappa_\tau(\xi)$  takes the particular forms $(\tau^{-2}+|\xi|^2)^\frac \kappa2$,  $|\xi|^\kappa$. We regard $m^\kappa_\tau(D)$ as operators embedded in  an analytic  family  of operators $m^z_\tau(D)$  with complex parameter $z$. We claim that 
\Be\label{goal}
	|\inp{m^{\kappa+it}_\tau  (D) fu,v}|  \lesssim  (1+|t|)^{d+1} \tau^{\kappa} \|f\|_{L^p}   \|   u\|_{X_{\zeta(1),1/\tau}^{1/2}}  \|  v\|_{X_{\zeta(1),1/\tau}^{1/2}} 
\Ee 
whenever $f$ is supported in $B_d(0, C\tau)$.  This gives \eqref{interpol} by Lemma \ref{equiv}. By Stein's interpolation along analytic family  we only need to  show  \eqref{goal} for the cases $p=\infty$ ($\kappa=1$) and $p=d/2$ ($\kappa=0$).  H\"older's inequality, \eqref{x-1/2-0}, and  \eqref{scaledX}  yield  $|\inp{m^{\kappa+it}_\tau  (D) fu,v}|\le  \|m^{\kappa+it}_\tau  (D) f \|_{L^{d/2}}  \|u\|_{X_{\zeta(1),1/\tau}^{1/2}} \|  v\|_{X_{\zeta(1),1/\tau}^{1/2}} $. Thus,  \eqref{goal} with $\kappa=0$ follows from Mikhlin's multiplier theorem. It remains to show \eqref{goal} with $\kappa=1$. 
   
For the purpose we decompose 
\[ u= u_{0}  +  u_{1}  := P_{\le 8 } u+ P_{>  8} u,  \quad  v= v_{0}  +  v_{1}  :=  P_{\le 8} v+ P_{>  8} v.\] 
Here $P_{> r}$ and $P_{\le r}$ are given by \eqref{proj}. Since $\inp{ m^{\kappa+it}_\tau(D) fu_0,v_0}= \inp{ f, m^{\kappa-it}_\tau(D)(\overline{u_0}v_0)}$ and $\widehat {u_0}$ and $\widehat {v_0}$ are compactly supported (so, we may disregard the multiplier operator $m^{\kappa-it}_\tau  (D)$ because $\|m^{\kappa-it}_\tau  (D) g\|_{L^1}\lesssim(1+|t|)^{d+1} \|g\|_{L^1}$ whenever $\widehat g$ is supported in $B_d(0,C)$ and  $\kappa>0$\footnote{It is not difficult to show that $\| m^{\kappa-it}_\tau  (D) \psi\|_{L^1} \lesssim (1+|t|)^{d+1}$ if $ \psi\in \mathcal S(\mathbb R^d)$ provided that $\kappa>0$. }), we have the estimate 
\begin{align*}
|\inp{m^{1+it}_\tau  (D) fu_0,v_0}|  
	&\lesssim  (1+|t|)^{d+1}  \|f\|_{L^\infty} \|\overline{u_0}v_0\|_{L^1} \\
	&\lesssim  \tau  (1+|t|)^{d+1} \|f\|_{L^\infty} \| u_0\|_{X_{\zeta(1),1/\tau}^{1/2}}  \|  v_0\|_{X_{\zeta(1),1/\tau}^{1/2}} .  
\end{align*}
For the second inequality we used  \eqref{x2scale}.  

On the other hand, we have 
\[	|\inp{m^{1+it}_\tau  (D) fu_1,v_1}|  \le \|f\|_{L^d}  \big( \|m^{1-it}_\tau(D)P_{\le 8}(\overline{u_1}v_1)\|_{L^\frac{d}{d-1}} + \|m^{1-it}_\tau(D)P_{> 8}(\overline{u_1}v_1)\|_{L^\frac{d}{d-1}} \big). \]
For the low frequency part $P_{\le 8} (\overline{u_{1}}v_{1})$, we take $\psi \in \mathcal S(\mathbb R^d)$ such that $\widehat{\psi} =1$ on the Fourier support of $P_{\le 8} (\overline{u_{1}}v_{1})$.  By Young's and H\"older's inequalities and the estimate \eqref{B},
\begin{align*}
\| m_{\tau}^{1- i t}(D) P_{\le 8} (\overline{u_{1}}v_{1}) \|_{L^{ \frac{d}{d-1} }} 
& \le \| m_{\tau}^{1- i t}(D) \psi\|_{L^1} \| P_{\le 8} (\overline{u_{1}}v_{1}) \|_{L^{ \frac{d}{d-1} }} 
		\lesssim (1+|t|)^{d+1} \| \overline{u_1}v_1 \|_{L^{\frac{d}{d-1}}} \\
& \le (1+|t|)^{d+1} \| u_1\|_{L^2} \|v_1 \|_{L^{\frac{2d}{d-2}}} 
		\lesssim (1+|t|)^{d+1} \| u_1\|_{X^{1/2}_{\zeta(1),1/\tau}}  \|v_1 \|_{X^{1/2}_{\zeta(1),1/\tau}}.
\end{align*}
For the high frequency part $P_{>8}(\overline{u_1}v_1)$, we see that $m_{\tau}^{1- i t}(D) |D|^{-1}$ satisfies the assumption in Mikhlin's multiplier theorem. Hence, it follows from the fractional Leibniz rule and the estimate \eqref{x-1/2-0} (with \eqref{scaledX}) that
\begin{align*}
\| m_{\tau}^{1- i t}(D) P_{>8}(\overline{u_1}v_{1}) \|_{L^{ \frac{d}{d-1} }} & \le (1+|t|)^{d+1}  \| |D| ( \overline{u_1} v_1)  \|_{L^{\frac{d}{d-1}} }\\
& \lesssim  (1+|t|)^{d+1} \big(\| |D| \overline{u_1}\|_{L^2} \| v_1 \|_{L^{\frac{2d}{d-2}}} + \| \overline{u_1} \|_{L^{ \frac{2d}{d-2} }} \||D| v_1\|_{L^2} \big) \\
& \lesssim (1+|t|)^{d+1} \| u_1\|_{X^{1/2}_{\zeta(1),1/\tau}}  \|v_1 \|_{X^{1/2}_{\zeta(1),1/\tau}}.
\end{align*}
Since $f$ is supported in $B_d(0,C\tau)$, we obtain  
\begin{align*}
|\inp{m^{1+it}_\tau  (D) fu_1  ,v_1}| 
	&\lesssim    (1+|t|)^{d+1} \|f\|_{L^d}  \|   u_1\|_{X_{\zeta(1),1/\tau}^{1/2}}  \|  v_1\|_{X_{\zeta(1),1/\tau}^{1/2}} \\
	&\lesssim  \tau (1+|t|)^{d+1} \|f\|_{L^\infty}  \|   u_1\|_{X_{\zeta(1),1/\tau}^{1/2}}  \|  v_1\|_{X_{\zeta(1),1/\tau}^{1/2}}.
\end{align*} 

Finally, it is enough to consider $\inp{m^{1+it}_\tau  (D) fu_0,v_1}$ since the remaining $\inp{m^{1+it}_\tau  (D) fu_1,v_0}$ can be handled similarly. Since  $\| |D|\overline{u_0} \|_{L^\frac{2d}{d-2}} \lesssim   \|\overline{u_0}\|_{L^\frac{2d}{d-2}} $, repeating the above argument,   we have 
\begin{align*}
|\inp{m^{1+it}_\tau  (D) fu_0,v_1}|
	&\lesssim  (1+|t|)^{d+1} \|f\|_{L^d}  \| u_0\|_{X_{\zeta(1),1/\tau}^{1/2}}  \|  v_1\|_{X_{\zeta(1),1/\tau}^{1/2}} \\
	&\lesssim  \tau  (1+|t|)^{d+1}  \|f\|_{L^\infty}  \|   u_0\|_{X_{\zeta(1),1/\tau}^{1/2}}  \|  v_1\|_{X_{\zeta(1),1/\tau}^{1/2}}.
\end{align*} 
Thus, combining all the estimates together,  we see that \eqref{goal} holds with $\kappa=1$. 
\end{proof}

\begin{cor}
Let $p = \frac{d}{2(1-\kappa)}$ for $0\le \kappa <1$. If $f$ is supported in  a bounded set,  for any $\epsilon>0$ and $\tau \gg 1$
\begin{equation}\label{interpol2}
	\|\mathcal M_{ f}\|_{X_{\zeta(\tau,U)}^{1/2}\to X_{\zeta(\tau,U)}^{-1/2}} \lesssim  \| f\|_{H^{-\kappa+\epsilon, p}(\Rd)}.
\end{equation}
\end{cor}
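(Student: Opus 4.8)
The plan is to reduce to Proposition \ref{decay} by factoring $f$ through the Bessel potential operator. First I would record the bookkeeping: since $0\le\kappa<1$, the exponent $p=\frac{d}{2(1-\kappa)}$ lies in $[\frac d2,\infty)$ and satisfies $1-\frac d{2p}=\kappa$, which is precisely the relation under which Proposition \ref{decay} applies with $m^\kappa(D)=(1+|D|^2)^{\kappa/2}$. I would then set $g:=(1+|D|^2)^{-\kappa/2}f$, so that $f=(1+|D|^2)^{\kappa/2}g$ and, by the definition of the Bessel norm, $\|g\|_{L^p}=\|f\|_{H^{-\kappa,p}}\le\|f\|_{H^{-\kappa+\epsilon,p}}$. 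The difficulty is that $g$ is not compactly supported, so \eqref{interpol} cannot be applied to it directly; however $g$ is the Bessel potential of a compactly supported distribution, hence smooth and rapidly decaying away from $\supp f$, and this is what makes the correction term below manageable.

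Next I would localize: choose $\chi\in C_c^\infty(\mathbb R^d)$ with $\chi\equiv1$ on a neighborhood of $\supp f$, and split
\[	f=\chi f=(1+|D|^2)^{\kappa/2}(\chi g)+h,\qquad h:=[\chi,(1+|D|^2)^{\kappa/2}]g=f-(1+|D|^2)^{\kappa/2}(\chi g).	\]
Since $\chi g\in L^p$ is supported in a fixed ball $B_d(0,C)$ with $\|\chi g\|_{L^p}\le\|g\|_{L^p}$, Proposition \ref{decay} gives $\|\mathcal M_{(1+|D|^2)^{\kappa/2}(\chi g)}\|_{X_{\zeta(\tau,U)}^{1/2}\to X_{\zeta(\tau,U)}^{-1/2}}\lesssim\|\chi g\|_{L^p}\lesssim\|f\|_{H^{-\kappa+\epsilon,p}}$ uniformly in $\tau\gg1$ and $U\in O_d$, which handles the main piece.

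It then remains to estimate $\mathcal M_h$, and for this I would use the crude bound \eqref{ha2}, which only requires $h\in L^{d/2}(\mathbb R^d)$. This is the main technical point. I would argue that $[\chi,(1+|D|^2)^{\kappa/2}]$ is a pseudodifferential operator of order $\kappa-1<0$, hence maps $L^p$ into $H^{1-\kappa,p}$, which embeds into $L^{q}$ with $\frac1q=\frac1p-\frac{1-\kappa}d=\frac{1-\kappa}d$ (the embedding being valid since $1-\kappa<\frac dp$), so $h\in L^q$ with $q=\frac d{1-\kappa}\ge d$ and $\|h\|_{L^q}\lesssim\|g\|_{L^p}$; and separately, from $h=f-(1+|D|^2)^{\kappa/2}(\chi g)$ together with the pointwise decay of the Schwartz kernel of $(1+|D|^2)^{\kappa/2}$ away from the origin, $h$ vanishes outside $B_d(0,C)$ up to a tail of size $O((1+|x|)^{-d-\kappa})\|\chi g\|_{L^1}$, which lies in $L^{d/2}$ because $(d+\kappa)\frac d2>d$ for $d\ge3$ and $\|\chi g\|_{L^1}\lesssim\|g\|_{L^p}$. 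Combining these gives $\|h\|_{L^{d/2}}\lesssim\|g\|_{L^p}\lesssim\|f\|_{H^{-\kappa+\epsilon,p}}$, whence \eqref{ha2} yields $\|\mathcal M_h\|_{X_{\zeta(\tau,U)}^{1/2}\to X_{\zeta(\tau,U)}^{-1/2}}\lesssim\|f\|_{H^{-\kappa+\epsilon,p}}$, and adding the two bounds proves \eqref{interpol2}. The only real obstacle is thus the careful handling of the non-compact support of $g=(1+|D|^2)^{-\kappa/2}f$: making the kernel-decay and Sobolev-embedding statements for $(1+|D|^2)^{\pm\kappa/2}$ and for the commutator precise, none of which is deep but all of which needs to be stated with some care.
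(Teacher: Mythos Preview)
Your argument is correct and takes a genuinely different route from the paper's. The paper does \emph{not} apply Proposition~\ref{decay} as a black box; instead it re-enters the proof of that proposition at $\kappa=1$ to extract the refined two-term bound
\[
|\langle m^1(D)gu,v\rangle|\lesssim(\|g\|_{L^\infty}+\|g\|_{L^d})\|u\|_{X_{\zeta(\tau,U)}^{1/2}}\|v\|_{X_{\zeta(\tau,U)}^{1/2}},
\]
substitutes $g=m^{-1}(D)f$, controls $\|m^{-1}(D)f\|_{L^\infty}$ via the Sobolev embedding $H^{d/p+\epsilon,p}\hookrightarrow L^\infty$ and $\|m^{-1}(D)f\|_{L^d}$ via Lemma~\ref{elementary} (this is where the $\epsilon$-loss enters), and finally interpolates the resulting near-$p=\infty$ estimate with \eqref{ha2}. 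Your approach bypasses all of this: you keep Proposition~\ref{decay} intact at the given $\kappa$, absorb the lack of compact support of $g=(1+|D|^2)^{-\kappa/2}f$ into the commutator $h=[\chi,(1+|D|^2)^{\kappa/2}]g$, and then use standard $\Psi$DO order-counting ($[\chi,m^\kappa(D)]\in\mathrm{Op}\,S^{\kappa-1}$, hence $L^p\to H^{1-\kappa,p}\hookrightarrow L^{d/(1-\kappa)}$) together with the rapid off-diagonal decay of the Bessel kernel to place $h$ in $L^{d/2}$, after which \eqref{ha2} finishes. This is more elementary in that it uses only black-box consequences of Proposition~\ref{decay} and \eqref{ha2} plus routine $\Psi$DO/kernel facts, and it avoids the interpolation step entirely. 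It also yields the slightly stronger conclusion $\|\mathcal M_f\|\lesssim\|f\|_{H^{-\kappa,p}}$ with no $\epsilon$-loss, since every bound in your chain is controlled by $\|g\|_{L^p}=\|f\|_{H^{-\kappa,p}}$; the $\epsilon$ in the paper's statement is an artifact of its reliance on Lemma~\ref{elementary}. One cosmetic remark: your tail bound $O((1+|x|)^{-d-\kappa})$ for the kernel of $(1+|D|^2)^{\kappa/2}$ understates the truth (the decay is in fact rapid, indeed exponential for this specific multiplier), but the weaker polynomial decay you claim is already enough for the $L^{d/2}$ estimate you need.
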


\begin{proof}
Let $m^\kappa (D)= (1+|D|^2)^\frac \kappa 2$. Using rescaling and following the argument in the proof of Proposition \ref{decay}, 
we have
$
  |\inp{m^{1}  (D) gu,v}|  \lesssim   ( \| g\|_{L^\infty} +  \|g\|_{L^d}) \|   u\|_{X_{\zeta(\tau,U)}^{1/2}}  \|  v\|_{X_{\zeta(\tau,U)}^{1/2}} 
$
for any $g$ in the Schwartz class. 
 Obviously this gives 
\[
  |\langle gu,v\rangle|  \lesssim   \big( \| m^{-1} (D) g \|_{L^\infty} +  \|m^{-1} (D) g \|_{L^d} \big)   \|   u\|_{X_{\zeta(\tau,U)}^{1/2}}  \|  v\|_{X_{\zeta(\tau,U)}^{1/2}} .
\]
Since the Schwartz class is dense  in $H^{s,p}$, using Lemma \ref{elementary} and the  embedding $ H^{\frac dp+\epsilon,\, p}  \hookrightarrow L^\infty$ , we get
\[
  |\langle fu,v\rangle|  \lesssim  \big( \| m^{-1+\delta} (D) f \|_{L^p} +  \| f\|_{H^{-1+\epsilon, p}} \big)   \|   u\|_{X_{\zeta(\tau,U)}^{1/2}}  \|  v\|_{X_{\zeta(\tau,U)}^{1/2}}
\]
provided that $d\le p<\infty$, $\epsilon>0$, and $\delta>\frac dp$. 
Now, taking $p$ arbitrarily close to $\infty$ in the above and interpolating the estimate with   
\[ |\inp{ fu,v}|  \lesssim   \|f\|_{L^\frac d2} \|   u\|_{X_{\zeta(\tau,U)}^{1/2}}  \|  v\|_{X_{\zeta(\tau,U)}^{1/2}},\]
 which is equivalent to \eqref{ha2},  we get the bound \eqref{interpol2} on the desired range.
\end{proof}

\subsection{Convergence of the  averages to zero}  We now show that  averages of $\| q\| _{\bdot{X}_{\zeta(\tau,U)}^{-1/2}}$, $\|\mathcal M_q\|_{X_{\zeta(\tau,U)}^{1/2}\to X_{\zeta(\tau,U)}^{-1/2}}$ over $U$ and $\tau$ asymptotically vanish as $\tau\to\infty$.   Compared with the non-averaged counterpart, averaged estimates allow a considerable amount of regularity gain. 

\begin{prop} 
Let $d\ge 3$, $0\le \kappa \le 1$, and $\tau\gtrsim 1$.  Then,  we have
\begin{align} \label{g}
 \int_{O_d} \| m^\kappa (D)  g \|^2 _{X_{\zeta(\tau,U)}^{-1/2}} dm(U) \lesssim \| g\|_{L^\frac{2d}{d+2-2\kappa}(\Rd)}^2.
\end{align}
\end{prop}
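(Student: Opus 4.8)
The plan is to move to the Fourier side, carry out the average over $O_d$ explicitly, and reduce everything to a single pointwise estimate for a radial weight followed by a Sobolev embedding. By Plancherel's theorem and Fubini's theorem,
\[
\int_{O_d}\|m^\kappa(D)g\|_{X^{-1/2}_{\zeta(\tau,U)}}^2\,dm(U)=\int_{\R^d}|m^\kappa(\xi)|^2\,|\widehat g(\xi)|^2\,\Phi_\tau(\xi)\,d\xi,\qquad \Phi_\tau(\xi):=\int_{O_d}\frac{dm(U)}{|p_{\zeta(\tau,U)}(\xi)|+\tau}.
\]
A direct computation gives $p_{\zeta(\tau,U)}(\xi)=p_{\zeta(\tau)}(U^t\xi)$, and since the Haar measure is invariant under $U\mapsto U^t$, formula \eqref{haar_msr} shows that $\Phi_\tau$ is radial: $\Phi_\tau(\xi)=c_d\int_{\mathbb S^{d-1}}(|p_{\zeta(\tau)}(|\xi|\omega)|+\tau)^{-1}\,d\sigma(\omega)$. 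The key claim is then the pointwise bound $\Phi_\tau(\rho)\lesssim\langle\rho\rangle^{-2}$, uniformly in $\tau\gtrsim1$ and $\rho>0$.

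To prove the claim, observe that for $\rho\le1$ it is immediate from $|p_{\zeta(\tau)}|+\tau\ge\tau\gtrsim1$, while for $\rho\ge4\tau$ it follows from \eqref{geom_symb}, since then $\dist(\rho\omega,\Sigma^\tau)\ge\rho/2>2^{-7}\tau$ and hence $|p_{\zeta(\tau)}(\rho\omega)|\simeq\rho^2$ for all $\omega$. For $1\le\rho<4\tau$ I would use the identity $|p_{\zeta(\tau)}(\rho\omega)|^2=(\rho^2-2\tau\rho\omega_2)^2+4\tau^2\rho^2\omega_1^2$ to split $\mathbb S^{d-1}$ into the dyadic pieces $A_j=\{\omega:|p_{\zeta(\tau)}(\rho\omega)|+\tau\simeq2^j\tau\}$, which are nonempty only for $2^j\lesssim\rho$ and satisfy $A_j\subset\{|\omega_1|\lesssim2^j/\rho\}\cap\{|\omega_2-\rho/(2\tau)|\lesssim2^j/\rho\}$. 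The geometric input needed is the spherical measure estimate
\[
\sigma\bigl(\{\omega\in\mathbb S^{d-1}:|\omega_1|\le h,\ |\omega_2-a|\le h\}\bigr)\lesssim h^{3/2},\qquad a\in\R,\ h>0,
\]
which I would prove by slicing in the $\omega_2$-variable and estimating the resulting band on $\mathbb S^{d-2}$ (away from the poles of the $\omega_2$-coordinate one in fact gains $h^2$, and the exponent only improves when $d\ge4$). Feeding this in, $\Phi_\tau(\rho)\lesssim\sum_{2^j\lesssim\rho}(2^j\tau)^{-1}(2^j/\rho)^{3/2}\simeq(\tau\rho)^{-1}\lesssim\rho^{-2}$, proving the claim.

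With the pointwise bound in hand the conclusion is routine. By the size condition \eqref{mkmk} we have $|m^\kappa(\xi)|^2\Phi_\tau(\xi)\lesssim\langle\xi\rangle^{2\kappa}\langle\xi\rangle^{-2}=\langle\xi\rangle^{2(\kappa-1)}$, so the left-hand side of \eqref{g} is $\lesssim\int_{\R^d}\langle\xi\rangle^{2(\kappa-1)}|\widehat g(\xi)|^2\,d\xi$. I would split this at $|\xi|=1$: on $\{|\xi|\ge1\}$ bound $\langle\xi\rangle^{2(\kappa-1)}\le|\xi|^{2(\kappa-1)}$ and apply the Hardy--Littlewood--Sobolev inequality $\bigl\||D|^{\kappa-1}g\bigr\|_{L^2(\R^d)}\lesssim\|g\|_{L^{2d/(d+2-2\kappa)}(\R^d)}$ (valid for $0\le\kappa<1$; trivial for $\kappa=1$); on $\{|\xi|<1\}$ use Hausdorff--Young together with H\"older on the unit ball to get $\int_{|\xi|<1}|\widehat g(\xi)|^2\,d\xi\lesssim\|g\|_{L^{2d/(d+2-2\kappa)}(\R^d)}^2$. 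Summing the two estimates gives \eqref{g}.

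The principal difficulty is the pointwise weight bound, and inside it the transitional range $\rho\approx2\tau$, where the sphere $\rho\,\mathbb S^{d-1}$ is nearly tangent to $\Sigma^\tau$ (equivalently $a=\rho/(2\tau)$ is within $O(h)$ of $1$): there the band-intersection measure only decays like $h^{3/2}$ rather than $h^2$, and one has to check that this is still enough after summing the (geometrically decaying) dyadic contributions---it is, precisely because $\rho\simeq\tau$ in that range. Alternatively one could interpolate, in the manner of Stein, along the analytic family $z\mapsto(1+|\xi|^2)^{z/2}$ between $\kappa=0$ (which is immediate from \eqref{x-1/2}) and $\kappa=1$; but the $\kappa=1$ endpoint again requires exactly this pointwise bound, so nothing is really gained.
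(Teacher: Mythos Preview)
Your proof is correct and follows essentially the same route as the paper: both pass to the Fourier side, exploit \eqref{haar_msr} to reduce to the pointwise weight bound $(1+|\xi|^2)\Phi_\tau(\xi)\lesssim 1$, and finish with Hardy--Littlewood--Sobolev. The only cosmetic differences are that the paper first reduces to $\kappa=1$ and then bounds the spherical integral $\int_{\mathbb S^{d-1}}(|e_1\cdot\omega|+|e_2\cdot\omega-r|)^{-1}\,d\sigma$ directly (via symmetry and a change of variables) rather than through your dyadic level-set decomposition and $h^{3/2}$ cap estimate.
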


\begin{proof} In order to show \eqref{g} it is enough to consider the case $\kappa=1$.  If $0\le \kappa<1$, from \eqref{g} with $\kappa=1$ and the Plancherel theorem, we have 
\[  \int_{O_d} \|  m^\kappa (D)  g \|^2 _{X_{\zeta(\tau,U)}^{-1/2}} dm(U) \lesssim \|(1+|D|^2)^{-\frac12}m^\kappa(D)g\|_{L^2}^2 \lesssim \|  |D|^{\kappa-1}   g\|_{L^2}^2. \] 
Thus, the desired estimate  follows by the Hardy-Littlewood-Sobolev inequality. 

To show \eqref{g} with $\kappa=1$, we break $g$ into $g=P_{\le 8\tau}\, g+P_{> 8\tau}\, g$ where  $P_{\le 8\tau}$ and $P_{> 8\tau}$ are given by \eqref{proj}.  Since  $ | p_{\zeta(\tau,U)}(\xi)| \gtrsim   |\xi|^2$   and  $|m^1(\xi)|\lesssim |\xi|$ for $ |\xi|\ge 4\tau $, 
\begin{equation}\label{g1} 
\|  m^1 (D)  P_{> 8\tau} \, g \| _{X_{\zeta(\tau,U)}^{-1/2}} \lesssim \|g\|_{L^2}.
\end{equation}
Noting that  $|p_{\zeta(\tau,U)}(\xi)| \simeq  |\xi| \big( |-|\xi|  + 2 \tau Ue_2 \cdot \frac{\xi }{|\xi|} | + |2\tau U e_1\cdot \frac{\xi} { |\xi|}| \big)$  and using \eqref{haar_msr},  we see 
\begin{align*}
\int_{O_d} \| m^1 (D)  P_{\le 8\tau} g \|^2 _{X_{\zeta(\tau,U)}^{-1/2}} dm(U) 
	&\lesssim \int_{O_d} \int_{|\xi|\le 8\tau} \frac{1+|\xi|^2}{\tau+ |p_{\zeta(\tau,U)}(\xi)|}\,   \, | \widehat{g}(\xi)|^2 d\xi \, dm(U) \\
	&\lesssim \bigg(1+\sup_{|\xi|\le 8\tau} \int_{\mathbb S^{d-1}} F(|\xi|/2\tau, \omega) d\sigma(\omega)\bigg) \| g\|_{L^2}^2 ,
\end{align*}
where $  F(r,  \omega ) = \big( |  e_2 \cdot \omega  -r  | + |  e_1\cdot \omega | \big)^{-1}.$  Taking into account symmetry of the sphere, it is clear that $\sup_{|\xi|\le 16\tau} \int_{\mathbb S^{d-1}} F(|\xi|/2\tau, \omega) d\sigma(\omega)\lesssim  \int_{\mathbb S^{d-1}} F(1, \omega) d\sigma(\omega).$  By change of variables $ \tau  \omega = \eta \in \mathbb R^{d}$, we see that  $\int_{\mathbb S^{d-1}} F(1, \omega) d\sigma(\omega)\le C_d$ for $d\ge 3$.  Thus, we get 
\[ \int_{O_d} \| m^1  (D)   P_{\le 8\tau} g \|^2 _{X_{\zeta(\tau,U)}^{-1/2}} dm(U) 
	\lesssim \| g\|_{L^2}^2 .\] 
This and \eqref{g1} yield \eqref{g} with $\kappa=1$. 
\end{proof}

\begin{cor}\label{with-d-cor}  Let $d\ge 3$, $\frac {2d}{d+2} \le p < \infty $, and $s\ge  \max\{-1, - \frac{d+2}{2}+\frac d{p} \}$.  
If  $ f\in H_c^{s,p}(\mathbb R^{d})$, then
\begin{align}
\label{q}
\lim_{\tau \to \infty} \int_{O_d} \| f\|^2 _{\bdot{X}_{\zeta(\tau,U)}^{-1/2}} dm(U) =0.
\end{align}
\end{cor}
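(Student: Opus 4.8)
The plan is to prove Corollary~\ref{with-d-cor} by combining the averaged estimate \eqref{g} with a density argument, exploiting the gap in the hypothesis $s \ge \max\{-1, -\tfrac{d+2}{2} + \tfrac dp\}$ versus the target exponent $\tfrac{2d}{d+2-2\kappa}$ for a suitable $\kappa \in [0,1]$. First I would reduce the claim to an \emph{a priori} bound with a positive power of $\tau^{-1}$: the goal is to show that for $f \in H^{s,p}_c$ there is $\varepsilon > 0$ (depending on the gap between $s$ and the critical exponent) such that
\[
\int_{O_d} \| f\|^2_{\bdot X^{-1/2}_{\zeta(\tau,U)}}\, dm(U) \lesssim \tau^{-\varepsilon}\, \|f\|_{H^{s,p}}^2 ,
\]
which immediately gives \eqref{q} by letting $\tau \to \infty$. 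The passage from $\bdot X$ to $X$ costs nothing here since by \eqref{lemma2.1} (with a cutoff $\phi$ equal to $1$ on the support of $f$) we have $\|f\|_{\bdot X^{-1/2}_{\zeta(\tau,U)}} = \|\phi f\|_{\bdot X^{-1/2}_{\zeta(\tau,U)}} \lesssim \|f\|_{X^{-1/2}_{\zeta(\tau,U)}}$, and $X^{-1/2}$ is defined with the $\sigma = \tau$ regularization built in.

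Next I would set up the interpolation. Choose $\kappa \in [0,1]$ so that $\tfrac{2d}{d+2-2\kappa}$ is as large as allowed; concretely, the condition $s \ge -\tfrac{d+2}{2} + \tfrac dp$ together with Sobolev embedding should let us write $f = m^\kappa(D) g$ with $g = (1+|D|^2)^{-\kappa/2} f$, where $g \in H^{0,r}$ for some $r$ strictly larger than $\tfrac{2d}{d+2-2\kappa}$ (this strict inequality is exactly what the $\epsilon$ of regularity in $s$ buys us). The difficulty that $g$ need not be compactly supported is handled as in the proof of Corollary~\ref{scaled-n}: decompose the bilinear pairing $\langle f u, v\rangle = \langle (m^\kappa(D)g)u,v\rangle$ into frequency-localized pieces, or alternatively use Lemma~\ref{elementary} to move from $H^{s,p}_c$ into a slightly worse $H^{s',p'}_c$ with $p' < p$ before applying the multiplier. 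Then I would apply \eqref{g} with this $\kappa$ to get
\[
\int_{O_d} \| m^\kappa(D) g\|^2_{X^{-1/2}_{\zeta(\tau,U)}}\, dm(U) \lesssim \| g\|^2_{L^{2d/(d+2-2\kappa)}} ;
\]
but crucially I want a gain in $\tau$, so instead I would run this with the $\sigma = \tau$ regularization exploited: on the portion of frequency space where $\dist(\xi, \Sigma^\tau) \gtrsim \tau$ the symbol $|p_{\zeta(\tau,U)}|$ is $\simeq \tau^2 + |\xi|^2$ and contributes an extra decay, while on the portion within $O(\tau)$ of $\Sigma^\tau$ one uses Hölder to trade the slack between $r$ and $\tfrac{2d}{d+2-2\kappa}$ for a power of the measure of that strip, which is $O(\tau)$, hence a net $\tau^{-\varepsilon}$.

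The cleanest route, and the one I would actually carry out, is a soft density argument that avoids quantifying $\varepsilon$: the Schwartz functions supported in a fixed ball $B$ are dense in $H^{s,p}_c(B)$; for $\phi \in \mathcal S$ with compact support one shows $\int_{O_d}\|\phi\|^2_{\bdot X^{-1/2}_{\zeta(\tau,U)}}\,dm(U) \to 0$ directly (using \eqref{x-1/2} or Plancherel plus the explicit geometry \eqref{geom_symb}, since $\widehat\phi$ decays rapidly and $|p_{\zeta(\tau,U)}(\xi)|^{-1} \lesssim \tau^{-1}$ away from $\Sigma^\tau$ while the measure of $\{|p_{\zeta(\tau,U)}| \le \lambda\}$ is controlled); and then one uses \eqref{g} with $\kappa$ chosen from the hypothesis on $s$ as a \emph{uniform-in-$\tau$} bound to upgrade pointwise convergence on the dense class to convergence on all of $H^{s,p}_c$. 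The main obstacle I anticipate is exactly this last step: showing that $\int_{O_d}\|\cdot\|^2_{\bdot X^{-1/2}_{\zeta(\tau,U)}}\,dm(U)$ is bounded on $H^{s,p}_c$ \emph{uniformly in} $\tau \gtrsim 1$ — this requires \eqref{g} together with the embedding $H^{s,p}_c \hookrightarrow H^{0,\,2d/(d+2-2\kappa)}$, which holds precisely when $s - \tfrac dp \ge -\tfrac{d}{2} + \tfrac{d+2-2\kappa}{2} = 1 - \kappa$, i.e. $s \ge 1 - \kappa + \tfrac dp - \tfrac d2$; optimizing over $\kappa \in [0,1]$ recovers exactly the stated range $s \ge \max\{-1, -\tfrac{d+2}{2}+\tfrac dp\}$, and the $\bdot X$-to-$X$ reduction plus \eqref{lemma2.1} absorbs the compact-support issue. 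Putting the uniform bound together with the vanishing on Schwartz functions and an $\varepsilon/3$ argument yields \eqref{q}.
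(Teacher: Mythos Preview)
Your final strategy---a $\tau$-uniform bound derived from \eqref{g} together with approximation by smooth functions and an $\varepsilon/3$ argument---is the paper's approach; the paper implements the approximation concretely via mollification, writing $f=(f-f\ast\phi_\epsilon)+f\ast\phi_\epsilon$, bounding the smooth piece by $\tau^{-1/2}\|f\ast\phi_\epsilon\|_{L^2}$ and the remainder by the uniform estimate \eqref{sobolev}, then sending $\epsilon=\epsilon(\tau)\to 0$ slowly.

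Your setup of the uniform bound is miscalculated, though, and as written would not close. Applying \eqref{g} with $m^\kappa(D)=(1+|D|^2)^{\kappa/2}$ and $g=(1+|D|^2)^{-\kappa/2}f$ yields $\int_{O_d}\|f\|^2_{X^{-1/2}_{\zeta(\tau,U)}}\,dm(U)\lesssim\|f\|^2_{H^{-\kappa,q}}$ with $q=\frac{2d}{d+2-2\kappa}$ (this is the paper's \eqref{sobolev}); the required embedding is therefore $H^{s,p}_c\hookrightarrow H^{-\kappa,q}$, not $H^{0,q}$. The correct Sobolev condition (for $q\ge p$) reads $s-\tfrac dp\ge -\kappa-\tfrac{d+2-2\kappa}{2}=-\tfrac{d+2}{2}$, which is \emph{independent} of $\kappa$; your formula $s-\tfrac dp\ge 1-\kappa$ is wrong and does not in fact recover the stated range. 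The constraint $q\ge p$ forces $\kappa\ge\tfrac{d+2}{2}-\tfrac dp$, which admits a solution $\kappa\in[0,1]$ only when $p\le 2$. For $p>2$ the paper first reduces to $p=2$ via the compact-support inclusion $\|h\|_{H^{-1,2}}\lesssim\|h\|_{H^{-1,r}}$ for $r\ge 2$ (valid at integer order by duality), and then takes $\kappa=1$; this is precisely where the $s\ge -1$ branch of the maximum enters, and you do not address it.
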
  
\begin{proof}
We may assume that $s = \max\{-1, - \frac{d+2}{2}+\frac d{p} \}$ since  $H^{t, p}\hookrightarrow H^{s,p}$ for $t \ge s$ and $1<p<\infty$. 
By \eqref{lemma2.1}, it is enough to show that \eqref{q} holds with  $\bdot{X}^{-1/2}_{\zeta(\tau,U)}$ replaced by ${X}^{-1/2}_{\zeta(\tau,U)}$.
Note that  $\|h\|_{H^{-1,2}} \lesssim \|h\|_{H^{-1, r}} $  for $2 \le r <\infty $ whenever $h$ is supported in a bounded set.\footnote{This is possible because the order is an integer. In fact, 
it follows from the embedding $H_c^{1, 2}(\Omega) \hookrightarrow H_c^{1,r'}(\Omega)$ for any bounded set $\Omega$ and duality.} Hence, it suffices to consider $s = -  \frac{d+2}{2} + \frac d p$ and $\frac{2d}{d+2} \le p  \le 2$, i.e.,
$f \in H^{-\kappa, \frac{2d}{d+2-2\kappa}}$ for $0\le \kappa \le 1$. 
From  \eqref{g} with $m^\kappa(D) = (1+ |D|^2)^{\frac\kappa2}$, we  have
\begin{equation}
\label{sobolev}\int_{O_d} \|  f \|^2 _{X_{\zeta(\tau,U)}^{-1/2}} dm(U) \lesssim \|f\|_{H^{-\kappa , p }}^2,   \quad  p= \frac{2d}{d+2-2\kappa}. 
\end{equation}
Let $\phi\in C_c^\infty(B_d(0,1))$ such that $\int \phi\, dx=1$. We  write  $f=(f-f\ast \phi_\epsilon)  +f\ast \phi_\epsilon$.  
By Young's convolution inequality and the embedding $H^{1,q} \hookrightarrow H^{\kappa,q} $ for $1<q<\infty$, 
we have 
\begin{align*}
\| f\ast \phi_\epsilon \|_ {X_{\zeta(\tau,U)}^{-1/2}} \! \lesssim \tau^{-\frac12} \| f\ast \phi_\epsilon \|_ {L^2}
	\lesssim   \tau^{-\frac12} \| f\|_{H^{-\kappa,p}}  \|  \phi_\epsilon  \|_{H^{1,\frac{2p }{3p -2}}}  
	\lesssim \tau^{-\frac12}   \epsilon^{-1  -\frac{d(2-p )}{2p }}  \| f\|_{H^{-\kappa,p}}. 
\end{align*}
Combining this with \eqref{sobolev}, we obtain
\[ \bigg( \int_{O_d} \| f  \|^2 _{X_{\zeta(\tau,U)}^{-1/2}} dm(U)\bigg)^{1/2}
\lesssim   \|f-f\ast \phi_\epsilon\|_{H^{-\kappa,p }} + \tau^{-\frac12} \epsilon^{-1  -\frac{d(2-p )}{2p }}  \| f\|_{H^{-\kappa,p }} .
\]
Since $\lim_{\epsilon \rightarrow 0} \| f - f\ast\phi_\epsilon\|_{H^{-\kappa,p}} = 0$,  \eqref{q} follows if we take  $\epsilon=\epsilon(\tau)>0$ such that   $\epsilon^{-1  -\frac{d(2-p )}{2p }} =\epsilon^{\kappa-2}< \tau^{\frac14 }$. 
\end{proof}

\subsection{Average over $\tau$ and $U$\!}  As we have seen in the proof of Proposition \ref{low-high}, to get the desired estimate we do not have to use the averaged estimate for the  high-high, low-high, high-low frequency interactions.  However, in the case of  low-low frequency interaction  we get  significantly improved bounds by means  of  average over  $\tau$ and $U$. 

For simplicity  we define 
\[\avint_{\!\!\!\! M}  f (\tau) d\tau:=  \frac 1M \int_M^{2M}  f (\tau) d\tau.\]
For $M \ge 2$, we set 
 \[   \mathfrak A_{M}^{p,\kappa} (f)= \bigg( \avm \int_{O_d} \|  \mathcal M_{m^\kappa  (D) f}  \|_{X_{\zeta(\tau,U)}^{1/2}\to  X_{\zeta(\tau,U)}^{-1/2}}^p dm(U) d\tau\bigg)^\frac1p.  \] 

\begin{lem} \label{with-d-av}  
Let  $2\le p<\infty$. Suppose we have \eqref{result2}.  If $\nu>s$, then for any  $f\in  W^{s,p}_c(B_d(0, 1))$ the estimate
\begin{equation}\label{a-sob}
\mathfrak A_{M}^{p,\kappa} (f)
\lesssim  
 \|f\|_{W^{s ,p}}
 \end{equation}
holds with
the implicit constant independent of $M$. The same remains valid with $W^{s,p}$  replaced by $ H^{s,p}$.
 \end{lem}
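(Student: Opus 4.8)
The plan is to feed the dilation--rotation average directly into the pointwise bound \eqref{result2} and then invoke the averaged estimate \eqref{key-avr} after rescaling by $M$. Fix $f$ supported in $B_d(0,1)$ and apply \eqref{result2} with $g=f$: for every $\tau\in[M,2M]$ and $U\in O_d$,
\[
\|\mathcal M_{m^\kappa(D)f}\|_{X_{\zeta(\tau,U)}^{1/2}\to X_{\zeta(\tau,U)}^{-1/2}}
\lesssim \|f\|_{L^p}+\sum_{\frac1{2M}\lesssim\lambda\lesssim1:\,\mathrm{dyadic}}\lambda^\nu\,\tau^{s}\,\big[\tau^{d-\frac dp}\mathcal A_p(f_{\tau U},\lambda,e_1)\big],
\]
where I have enlarged the $\lambda$-range so that it no longer depends on $\tau$ (this only adds nonnegative terms). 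On the averaging interval $\tau\simeq M$, so $\tau^{s}\simeq M^{s}$ and $\tau^{d-\frac dp}\simeq M^{d-\frac dp}$. Taking the $L^p$ norm in $(\tau,U)$ against the normalized measure $\avm\int_{O_d}dm(U)\,d\tau$ and using Minkowski's inequality to pass this norm inside the sum over $\lambda$, I am reduced to bounding, for each dyadic $\frac1{2M}\lesssim\lambda\lesssim1$, the quantity $M^{s}\big(\avm\int_{O_d}[\tau^{d-\frac dp}\mathcal A_p(f_{\tau U},\lambda,e_1)]^p\,dm(U)\,d\tau\big)^{1/p}$.

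The key step is a rescaled form of \eqref{key-avr}. Substituting $\tau=M\sigma$, $\sigma\in[1,2]$, and using $f_{M\sigma U}=(f_M)_{\sigma U}$ (since $M,\sigma$ are scalars), \eqref{key-avr} applied to $f_M$ gives
\[
\avm\int_{O_d}\big[\mathcal A_p(f_{\tau U},\lambda,e_1)\big]^p\,dm(U)\,d\tau=\int_1^2\int_{O_d}\big[\mathcal A_p((f_M)_{\sigma U},\lambda,e_1)\big]^p\,dm(U)\,d\sigma\lesssim\|\mathcal P_\lambda(f_M)\|_{L^p}^p.
\]
Since $\mathcal P_\lambda(f_M)=(\mathcal P_{M\lambda}f)_M$ and $\|h_M\|_{L^p}=M^{\frac dp-d}\|h\|_{L^p}$, the right side equals $M^{d-dp}\|\mathcal P_{M\lambda}f\|_{L^p}^p$; multiplying by $(\tau^{d-\frac dp})^p\simeq M^{dp-d}$ the powers of $M$ cancel exactly, so
\[
\Big(\avm\int_{O_d}\big[\tau^{d-\frac dp}\mathcal A_p(f_{\tau U},\lambda,e_1)\big]^p\,dm(U)\,d\tau\Big)^{1/p}\lesssim\|\mathcal P_{M\lambda}f\|_{L^p}.
\]
Combining with the previous paragraph, $\mathfrak A_M^{p,\kappa}(f)\lesssim\|f\|_{L^p}+M^{s}\sum_{\frac1{2M}\lesssim\lambda\lesssim1}\lambda^\nu\|\mathcal P_{M\lambda}f\|_{L^p}$.

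To close, for $\mu\gtrsim1$ one has $\|\mathcal P_\mu h\|_{L^p}\lesssim\mu^{-s}\|h\|_{H^{s,p}}$ by Mikhlin's theorem, and, for $p\ge2$, $\mu^{s}\|\mathcal P_\mu h\|_{L^p}\lesssim\|h\|_{W^{s,p}}$ by the Littlewood--Paley/Besov characterization; writing $\mu=M\lambda\gtrsim1$, each summand above is therefore $\lesssim\lambda^{\nu-s}\|f\|_{W^{s,p}}$ (resp. $\lesssim\lambda^{\nu-s}\|f\|_{H^{s,p}}$). Since $\nu>s$, the dyadic sum $\sum_{\frac1{2M}\lesssim\lambda\lesssim1}\lambda^{\nu-s}$ is dominated by its $\lambda\simeq1$ term, hence $O(1)$ uniformly in $M$; and $\|f\|_{L^p}\lesssim\|f\|_{W^{s,p}}$ because $f$ is supported in a bounded ball (and $s\ge0$ in the relevant range of indices). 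This yields \eqref{a-sob}, and the $H^{s,p}$ version follows identically. The delicate point — and the \emph{only} place where the hypothesis $\nu>s$ is used — is the bookkeeping of the powers of $M$: the growth $\tau^{s}\simeq M^{s}$ coming out of \eqref{result2} is exactly absorbed, after the dilation $\tau\mapsto M\sigma$, by the decay $(M\lambda)^{-s}$ of the Littlewood--Paley pieces, leaving the geometric factor $\lambda^{\nu-s}$ whose sum converges precisely because $\nu>s$; one must also take care to apply Minkowski's inequality with respect to the \emph{normalized} average, so that the implied constant remains independent of $M$.
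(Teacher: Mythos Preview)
Your proof is correct and follows the same core argument as the paper: rescale $\tau\to M\sigma$, apply \eqref{key-avr} to $f_M$, and use the Besov embedding $W^{s,p}\hookrightarrow B^s_{p,p}$ (for $p\ge2$) to control the Littlewood--Paley pieces. The only minor difference is that the paper bounds the stronger quantity $\bigl(\sum_{M\ge2}(\mathbf A_M(f))^p\bigr)^{1/p}$ via Schur's test on the kernel $(\rho/M)^{\nu-s}$, whereas you bound each $\mathbf A_M(f)$ directly using the single-piece estimate $\mu^{s}\|\mathcal P_\mu f\|_{L^p}\lesssim\|f\|_{W^{s,p}}$; your route is slightly more direct for the stated lemma but yields only the supremum bound.
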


\begin{proof}  
It is well-known that if $1<p<\infty$, then $W^{k,p}=H^{k,p}$ for any $k=0,1,2, \ldots$, and $H^{s,p} \hookrightarrow W^{s-\epsilon,p}$ for any $\epsilon>0$, $s\in\mathbb R$ (see \cite[pp. 168--180]{Triebel}). Hence, it suffices to show that \eqref{a-sob} holds with $f \in W^{s,p}$.  Taking $p$-th power and  integrating  over $U$ and $\tau$ on both side of \eqref{result2}, by Minkowski's inequality we get
\begin{align*}
\mathfrak A_{M}^{p,\kappa} (f)
\lesssim
  \|f\|_{L^p} +  \mathbf A_M(f),
\end{align*}
where
\[
{\bA}_M(f)= 
   \sum_{\frac1{2M} <  \lambda \lesssim 1: dyadic} \lambda^{\nu} M^{s}
\bigg( \avm \int_{O_d} \big[\tau^{d-\frac d p}   \mathcal A_{p}  (f_{\tau U}, \lambda,  e_1)  \big]^p  dm(U) d\tau \bigg)^{\frac1p}.\\
\]
Thus, for \eqref{a-sob} it suffices to show that 
\Be  \label{sum} 
\bigg(\sum_{M\ge 2: dyadic} (\bA_M (f))^p \bigg)^{\frac1p} \le C  \|f\|_{W^{s,p}}.
\Ee
By scaling $\tau \to M \tau$ and applying \eqref{key-avr}, it follows that
\begin{align*}
\avm\! \int_{O_d}  \!\!\! \big[\tau^{d-\frac d p}   \mathcal A_{p}  (f_{\tau U}, \lambda, e_1)   \big]^p dm(U)  d\tau  
	&\lesssim M^{(d-\frac d p)p}   \!\! \int_1^{2}\! \!  \int_{O_d}    \!\!\! \big[ \mathcal A_{p}  (f_{ M\tau U}, \lambda,  e_1)  \big]^p  dm(U) d\tau  \\
	&\lesssim M^{(d- \frac d p)p}  \| \mathcal P_\lambda f_{M}\|_{L^p}^p = \| \mathcal P_{M\lambda}  f\|_{L^p}^p \,.
\end{align*} 
This yields 
$  \bA_M(f)\lesssim \sum_{(2M)^{-1} <  \lambda \lesssim 1: dyadic}   \lambda^{\nu} M^{s}  \| \mathcal P_{M\lambda} f\|_{L^p}.$
Reindexing  $\rho=\lambda M$, we see that 
$ \bA_M  (f) \lesssim \sum_{\frac12<  \rho \lesssim M: dyadic }   (\rho/M)^{\nu-s } \rho^{s}    \| \mathcal P_{\rho} f\|_{L^p}.$ 
Since $ \nu>s$, 
$\sup_M \sum_{ \frac12< \rho\lesssim M} (\rho/M)^{\nu-s}   \lesssim 1$ and $\sup_\rho \sum_{M \gtrsim \rho} (\rho/M)^{\nu-s} \lesssim 1$. 
So, by Schur's test,  
\begin{align*} 
\bigg(\sum_{M >2}   (\bA_M (f))^p\bigg)^\frac1p 
\lesssim \bigg( \sum_{\rho>1/2} \big( \rho^{s }  
\|\mathcal P_{\rho} f\|_{L^p} \big)^p \bigg)^{\frac 1p} \lesssim \|f\|_{W^{s, p}}.
\end{align*} 
Here the last inequality follows from the embedding of $W^{s,p}$ into the Besov space $B_{p,p}^s$ for $2\le p<\infty$ and $s\in\R$ (see \cite[pp. 179--180]{Triebel}).  Hence we get \eqref{sum}.
\end{proof}

In a similar way, using Corollary \ref{scaled-n} we obtain the following. 

\begin{lem} \label{huhu}
Let $2\le p<\infty$. Suppose \eqref{result3} holds with $\nu=-r-\frac 2p$.  If $\mu<2-\frac{d+2}p$ and $\mu+\frac dp-2\le r$, then, for every $\epsilon>0$ and any $f\in H_c^{r+\epsilon, p}(B_d(0,1))$, we have
\begin{equation}\label{a-sob-2}
\mathfrak A_M^p(f) := \bigg( \avm \int_{O_d} \|  \mathcal M_{ f}  \|_{X_{\zeta(\tau,U)}^{1/2}\to  X_{\zeta(\tau,U)}^{-1/2}}^p dm(U) d\tau\bigg)^\frac1p 
\lesssim \|f\|_{H^{r+\epsilon,p}}
\end{equation}
uniformly in $M$.
\end{lem}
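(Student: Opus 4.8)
The plan is to mimic the proof of Lemma~\ref{with-d-av}, replacing the use of Corollary~\ref{scaled} by Corollary~\ref{scaled-n}. First I would take the $p$-th power of \eqref{result3} and integrate over $U\in O_d$ and $\tau\in[M,2M]$. By Minkowski's inequality this yields
\[
\mathfrak A_M^p(f)\lesssim \|f\|_{H^{r+\epsilon,p}}+\mathbf B_M(f),
\]
where
\[
\mathbf B_M(f)=\sum_{\frac1{2M}<\lambda\lesssim1:\,{\rm dyadic}}\lambda^\nu M^{\mu+\frac dp-2}\bigg(\avint_{\!\!\!\! M}\int_{O_d}\big[\tau^{d-\frac dp}\,\mathcal A_p\big(m^r_\tau(D)f_{\tau U},\lambda,e_1\big)\big]^p\,dm(U)\,d\tau\bigg)^{1/p},
\]
and $\nu=-r-\frac2p$. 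So it suffices to establish
\[
\bigg(\sum_{M\ge2:\,{\rm dyadic}}(\mathbf B_M(f))^p\bigg)^{1/p}\lesssim\|f\|_{H^{r+\epsilon,p}}.
\]

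The main new point, compared with Lemma~\ref{with-d-av}, is that the function appearing inside $\mathcal A_p$ is now $m^r_\tau(D)f_{\tau U}$ rather than $f_{\tau U}$, and the multiplier $m^r_\tau$ depends on the dilation parameter $\tau$. So after the scaling $\tau\to M\tau$ (which turns $f_{\tau U}$ into $f_{M\tau U}=((f_M)_{\tau U}$ up to the action of $U$) I would apply the averaging estimate \eqref{key-avr}; but first I need to absorb the operator $m^r_\tau(D)$. Since $\mathcal A_p$ involves only the projections $P_{\le\delta}^{e_1}P_\lambda$ localizing frequencies to $|\xi|\simeq\lambda$ (and $|\xi_1|\lesssim\delta$), and on that region $m^r_{M\tau}(\xi)=(M^{-2}\tau^{-2}+|\xi|^2)^{r/2}$ behaves like $\lambda^r$ with symbol bounds of the right type (this is exactly the content of Definition~\ref{mk} and \eqref{prop_symb}, combined with Lemma~\ref{Lit_Pal}), I can pull $m^r_{M\tau}(D)$ out of $\mathcal A_p$ at the cost of a factor $\simeq\lambda^r$ (and, when $\lambda\simeq1/(M\tau)\simeq1/\tau$, a factor $\simeq\tau^{-r}\simeq M^{-r}\lambda^{-0}$, which is harmless since $r\le0$). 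More precisely $\mathcal A_p(m^r_\tau(D)g,\lambda,e_1)\lesssim\lambda^r\mathcal A_p(g,\lambda,e_1)$ uniformly in the relevant range, because each operator $P^{j,\pm}_{k,k'}P^{e_1}_{\le\delta}P_\lambda\,m^r_\tau(D)$ has an $L^p$-bounded kernel with norm $\lesssim\lambda^r$ by the same Fourier-integral-by-parts argument as Lemma~\ref{Lit_Pal}.

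With that reduction in hand, the estimate \eqref{key-avr} gives
\[
\avint_{\!\!\!\! M}\int_{O_d}\big[\tau^{d-\frac dp}\mathcal A_p\big(m^r_\tau(D)f_{\tau U},\lambda,e_1\big)\big]^p\,dm(U)\,d\tau\lesssim\lambda^{rp}\,M^{(d-\frac dp)p}\,\|\mathcal P_\lambda f_M\|_{L^p}^p=\lambda^{rp}\,\|\mathcal P_{M\lambda}f\|_{L^p}^p,
\]
so that $\mathbf B_M(f)\lesssim\sum_{(2M)^{-1}<\lambda\lesssim1}\lambda^{\nu+r}M^{\mu+\frac dp-2}\|\mathcal P_{M\lambda}f\|_{L^p}$. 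Reindexing $\rho=M\lambda$ and writing $\lambda^{\nu+r}=\lambda^{-2/p}=(\rho/M)^{-2/p}$ and $M^{\mu+\frac dp-2}$, I get
\[
\mathbf B_M(f)\lesssim\sum_{\frac12<\rho\lesssim M}\Big(\frac\rho M\Big)^{-\frac2p}M^{\mu+\frac dp-2}\,\|\mathcal P_\rho f\|_{L^p}=\sum_{\frac12<\rho\lesssim M}\Big(\frac\rho M\Big)^{\mu+\frac dp-2}\rho^{\,2-\mu-\frac dp-\frac2p}\,\|\mathcal P_\rho f\|_{L^p}.
\]
Now the hypotheses are used: $\mu<2-\frac{d+2}p$ means the exponent $\mu+\frac dp-2<-\frac2p<0$, so $\sum_{\rho\lesssim M}(\rho/M)^{\mu+\frac dp-2}\lesssim1$ and $\sum_{M\gtrsim\rho}(\rho/M)^{\mu+\frac dp-2}\lesssim1$, and Schur's test (exactly as in Lemma~\ref{with-d-av}) yields
\[
\bigg(\sum_{M>2}(\mathbf B_M(f))^p\bigg)^{1/p}\lesssim\bigg(\sum_{\rho>1/2}\big(\rho^{\,2-\mu-\frac{d+2}p}\|\mathcal P_\rho f\|_{L^p}\big)^p\bigg)^{1/p}\lesssim\|f\|_{B^{\,2-\mu-\frac{d+2}p}_{p,p}}\lesssim\|f\|_{H^{\,2-\mu-\frac{d+2}p+\epsilon',p}},
\]
and since $\mu+\frac dp-2\le r$ forces $2-\mu-\frac{d+2}p\le r-\frac2p<r$ wait—more simply $2-\mu-\frac{d+2}p\le r-\frac2p$, and one checks $r-\frac2p\le r$, hence $\|f\|_{H^{\,2-\mu-\frac{d+2}p+\epsilon',p}}\lesssim\|f\|_{H^{r+\epsilon,p}}$ for $\epsilon'$ small. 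Here the Besov embedding $W^{s,p}=B^s_{p,p}$-type inclusion for $2\le p<\infty$ (as cited in Lemma~\ref{with-d-av}) is used. The main obstacle is the first bullet above: justifying that the $\tau$-dependent multiplier $m^r_\tau(D)$ can be extracted from $\mathcal A_p$ with a clean factor $\lambda^r$, uniformly over all the Whitney projections $P^{j,\pm}_{k,k'}$ and over $\tau\simeq1$; everything else is a bookkeeping variant of the proof of Lemma~\ref{with-d-av}.
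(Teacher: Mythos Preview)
Your overall strategy is correct and is essentially the paper's approach: integrate \eqref{result3}, apply \eqref{key-avr}, reindex $\rho=M\lambda$, and finish with Schur's test and a Besov embedding. The paper avoids your multiplier-extraction step by the clean scaling identity $m^r_{M\tau}(D)f_{M\tau U}=\tau^{-r}(m^r_M(D)f_M)_{\tau U}$, which feeds directly into \eqref{key-avr} and produces $\|\mathcal P_{M\lambda}\,m^r(D)f\|_{L^p}$; your route (pulling $m^r_\tau$ through $P_\lambda$ as a factor $\lambda^r$) also works, since on $\{|\xi|\simeq\lambda\ge\tau^{-1}\}$ the rescaled symbol $\lambda^{-r}m^r_\tau(\lambda\,\cdot\,)$ is uniformly Mikhlin and commutes with all the projections.

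However, there is a genuine algebraic slip in your Schur step that breaks the argument as written. After reindexing you have
\[
(\rho/M)^{-2/p}M^{\mu+\frac dp-2}=\rho^{-2/p}M^{\mu+\frac{d+2}{p}-2},
\]
which in the form $(\rho/M)^a\rho^{\,b}$ gives $a=2-\mu-\tfrac{d+2}{p}$ and $b=\mu+\tfrac dp-2$. You instead wrote $a=\mu+\tfrac dp-2$ and $b=2-\mu-\tfrac{d+2}{p}$, i.e.\ you swapped the two exponents. With your (negative) $a$, the claim $\sum_{\rho\lesssim M}(\rho/M)^a\lesssim1$ is false; the sum diverges. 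With the correct $a=2-\mu-\tfrac{d+2}{p}$, the hypothesis $\mu<2-\tfrac{d+2}{p}$ gives $a>0$ and Schur applies, yielding the Besov exponent $b=\mu+\tfrac dp-2\le r$, hence $\|f\|_{B^{b}_{p,p}}\lesssim\|f\|_{H^{b,p}}\lesssim\|f\|_{H^{r+\epsilon,p}}$ as desired. Your final line, where you try to check $2-\mu-\tfrac{d+2}{p}\le r-\tfrac2p$, is a symptom of the same swap and does not follow from the hypotheses. Fixing the algebra, your proof goes through.
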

 
\begin{proof}
By rescaling and using the identity $m^r_{M\tau}(D)f_{M\tau U} = \tau^{-r} (m^r_{M}(D)f_M)_{\tau U}$, it is not difficult to see that the estimates \eqref{result3} and \eqref{key-avr} imply
\[	\mathfrak A_M^p(f) \lesssim \|f\|_{H^{r+\epsilon, p}} +\sum_{\frac1{2M}<\lambda \lesssim 1} \lambda^\nu M^s  \| \mathcal P_{M\lambda} m^r(D) f\|_{L^p},	\]
where we set  $s = \mu +\frac d p -2-r$. By the assumption on $\mu$ and $\nu$ it is easy to check that $\nu>s$. Repeating the argument in the proof of Lemma \ref{with-d-av} immediately yields 
\[	\mathfrak A_M^p(f) \lesssim \|f\|_{H^{r+\epsilon,p}} + \|m^r(D)f\|_{B^s_{p,p}} \lesssim \|f\|_{H^{r+\epsilon, p}} + \|f\|_{H^{r+s, p}},   \]
where the last inequality follows from the embedding of $H^{s,p}$ into the Besov space $B_{p,p}^s$ for any $-\infty<s<\infty$ and $2\le p<\infty$ (\cite[p. 179]{Triebel}). Thus we get \eqref{a-sob-2}  since $s\le0$. 
\end{proof}

Now, we combine Proposition \ref{bilinear-sum}, Proposition \ref{low-high}, Corollary \ref{scaled}, and Lemma \ref{with-d-av} altogether to conclude the following: \emph{If $d\ge 3$, $0\le\kappa\le 1$, $\max\{\frac{d+1}2, \frac d{2-\kappa}\}\le p <\infty$,  and  $\kappa -\frac2p >s>\beta_d(p)+\frac dp -2+\kappa$,   then we have \eqref{a-sob} for any $f\in W_c^{s,p}(B_d(0, C))$.} Here, let us specify the range of $p$ for the estimate \eqref{a-sob} in Lemma \ref{with-d-av}.

\subsection*{When $3\le d \le 6$} Such $s$ exists only if $\kappa -\frac2p >\beta_d(p)+\frac dp -2+\kappa$, which is equivalent to $p>\frac{d+2}2$.  Hence we need to consider the cases $\frac{d}{2-\kappa}\le \frac{d+2}2$ and $\frac{d}{2-\kappa}> \frac{d+2}2$ separately. 
\vspace{-10pt}
\begin{itemize} [leftmargin=0.15in]
\item If  $0\le\kappa \le \frac4{d+2}$, then \eqref{a-sob} holds whenever $\frac{d+2}{2}< p <\infty$ and $s>\beta_d(p)+\frac dp-2+\kappa$.
\item If $\frac4{d+2}<\kappa\le 1$, then \eqref{a-sob} holds whenever $\frac{d}{2-\kappa}\le p<\infty$ and $s>\beta_d(p)+\frac dp-2+\kappa$.
\end{itemize}

\subsection*{When $d\ge 7$} In this case, such $s$ exists only if $-\frac2p >\beta_d(p)+\frac dp -2$, i.e., $p>\frac{3d+7}{6}$.
\vspace{-10pt}
\begin{itemize} [leftmargin=0.15in]
\item If  $0\le\kappa \le  \frac{14}{3d+7}$, then \eqref{a-sob} holds whenever $\frac{3d+7}{6}< p <\infty$ and $s>\beta_d(p)+\frac dp-2+\kappa$.
\item If $\frac{14}{3d+7}<\kappa\le 1$, then \eqref{a-sob} holds whenever $\frac{d}{2-\kappa}\le p<\infty$ and $s>\beta_d(p)+\frac dp-2+\kappa$.
\end{itemize}

On the other hand, \eqref{ha2} gives 
\begin{equation}\label{fff}
	\mathfrak A_{M}^{\frac d2,\kappa} (f) \lesssim \| m^{\kappa}(D)f\|_{L^\frac d2} \lesssim   \|f\|_{H^{\kappa,\frac d2}}
\end{equation}
for any $\kappa\ge 0$. Interpolating this bound and the estimate \eqref{a-sob} with endpoints $p$ in the above, we can extend the aforementioned ranges of $p$ to the range $p\ge\frac d2$ as the following.

\subsection*{When $3\le d \le 6$} By the definition of $\beta_d$ we need to consider the cases $\frac{d}{2-\kappa}\le 4$ and $\frac{d}{2-\kappa}> 4$ separately.
\vspace{-10pt}
\begin{itemize} [leftmargin=0.15in]
\item If $0\le\kappa \le \frac4{d+2}$, \eqref{a-sob} holds for $\frac d2 \le p \le \frac{d+2}2$ and $s> \frac dp-2+\kappa$.
\item If $\frac4{d+2}<\kappa\le \frac{8-d}{4}$,\footnote{This interval is empty when $d=6$.} \eqref{a-sob} holds for $\frac{d}2\le p\le\frac{d}{2-\kappa}$ and $s>\frac{d}{p}-2+\kappa$.
\item If $\frac{8-d}{4}<\kappa\le 1$,\footnote{This is void when $d=3$ or $4$.} \eqref{a-sob} holds for $\frac{d}2\le p\le\frac{d}{2-\kappa}$ and $s>\kappa+\frac1\kappa(2-\frac dp)(\frac12-\frac{2(2-\kappa)}d-\kappa)$.
\end{itemize}

\subsection*{When $d\ge 7$}  We consider the cases $\frac{d}{2-\kappa} \le \frac{d+9}{2}$ and $\frac{d}{2-\kappa}>\frac{d+9}{2}$ separately.
\vspace{-10pt}
\begin{itemize} [leftmargin=0.15in]
\item If  $0\le\kappa \le \frac{14}{3d+7}$, \eqref{a-sob} holds for $\frac d2\le p\le \frac{3d+7}{6}$ and $s>\kappa-\frac67(2-\frac dp)$.
\item If  $\frac{14}{3d+7}<\kappa \le \frac{18}{d+9}$, \eqref{a-sob} holds for $\frac d2\le p\le \frac d{2-\kappa}$ and $s>\kappa-(\frac{3d-1}{4}+\frac1{2\kappa})(\frac2d-\frac 1{p})$.
\item If $\frac{18}{d+9}<\kappa\le 1$,\footnote{This is void if $d\le 9$.} then \eqref{a-sob} holds for $\frac d2\le \kappa\le \frac{d}{2-\kappa}$ and $s>\kappa-(\frac{d-5}2+\frac5\kappa)(\frac2d-\frac1p)$.
\end{itemize}

Now, for simplicity, let us denote by $s>s_\circ(d,p,\kappa)$ the conditions on $s$ arranged above.
\begin{prop}\label{hohoho} 
Let $d\ge3$, $0\le \kappa \le 1$, and let $\frac d2\le p  <\infty$. Suppose that $s> s_\circ(d,p,\kappa)$ and $s\ge 0$. Then \eqref{a-sob} holds for all $f\in W_c^{s,p}(B_d(0,R))$ for any fixed $R>0$. Moreover, if $f\in W_c^{s,p}(\R^d)$ 
\Be\label{ave_decay_w} 
	\lim_{M\to \infty}  \mathfrak A_{M}^{p,\kappa} (f)=0.
\Ee
When $p=\frac d2$ and $\kappa=0$, \eqref{ave_decay_w} holds for any $f\in L^\frac{d}{2}(\R^d)$.
\end{prop}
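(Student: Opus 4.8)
The first assertion — the estimate \eqref{a-sob} — is essentially a bookkeeping exercise that collects the bounds already assembled in the discussion preceding the statement. For $R=1$ one feeds Proposition \ref{bilinear-sum} (respectively Proposition \ref{bilinear-sum-2} when $3\le d\le 8$ and $p\ge d$) into Proposition \ref{low-high}, rescales via Corollary \ref{scaled}, and invokes Lemma \ref{with-d-av}; this gives \eqref{a-sob} on the range $\max\{\tfrac{d+1}2,\tfrac d{2-\kappa}\}\le p<\infty$ (or $p\ge d$ in low dimensions). One then interpolates the resulting bounds against the trivial bound \eqref{fff} coming from \eqref{ha2} in order to push $p$ down to $\tfrac d2$; this is precisely the case-by-case computation that produces the threshold $s_\circ(d,p,\kappa)$ catalogued just above the statement, and yields \eqref{a-sob} for every $s>s_\circ(d,p,\kappa)$ with $s\ge0$. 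To go from $B_d(0,1)$ to $B_d(0,R)$ I would apply the $R=1$ case to $\tilde f=f(R\,\cdot\,)$: under this isotropic dilation the spaces $X^b_{\zeta(\tau,U)}$ rescale to dilated copies $X^b_{\zeta(c_R\tau,U)}$ (with $c_R>0$ a constant), so the average over $\tau\in[M,2M]$ turns into an average over a dilated window, and since the $R=1$ bound is uniform in the window one recovers \eqref{a-sob} for $f$ supported in $B_d(0,R)$, now with an $R$-dependent but $M$-independent constant.

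For the convergence \eqref{ave_decay_w} the plan is a density argument anchored on the uniform boundedness just obtained. First observe that $f\mapsto\mathfrak A_M^{p,\kappa}(f)$ is subadditive (Minkowski, as $p\ge\tfrac d2>1$), and that, by the first part, $\mathfrak A_M^{p,\kappa}(h)\lesssim_R\|h\|_{W^{s,p}}$ holds for every $h\in W^{s,p}_c(B_d(0,R))$, uniformly in $M$. Given $f\in W^{s,p}_c(\R^d)$ and $\epsilon>0$, use density of $C_c^\infty$ in $W^{s,p}$ ($s\ge0$, $1\le p<\infty$) to pick $f_1\in C_c^\infty$, supported in a fixed ball containing $\supp f$, with $\|f-f_1\|_{W^{s,p}}<\epsilon$. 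Then $\mathfrak A_M^{p,\kappa}(f-f_1)\lesssim\epsilon$ uniformly in $M$. For the smooth piece, $m^\kappa(D)f_1$ is bounded, so the elementary estimate \eqref{ha1} gives $\|\mathcal M_{m^\kappa(D)f_1}\|_{X^{1/2}_{\zeta(\tau,U)}\to X^{-1/2}_{\zeta(\tau,U)}}\lesssim\tau^{-1}\|m^\kappa(D)f_1\|_{L^\infty}$, uniformly in $U$, hence $\mathfrak A_M^{p,\kappa}(f_1)\lesssim M^{-1}\|m^\kappa(D)f_1\|_{L^\infty}$. Subadditivity then yields $\limsup_{M\to\infty}\mathfrak A_M^{p,\kappa}(f)\lesssim\epsilon$, and letting $\epsilon\to0$ proves \eqref{ave_decay_w}. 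The endpoint case $p=\tfrac d2$, $\kappa=0$ is handled the same way: now $m^0(D)=\mathrm{Id}$ and \eqref{ha2} provides the uniform bound $\mathfrak A_M^{p,\kappa}(f)\lesssim\|f\|_{L^{d/2}}$ for \emph{all} $f\in L^{d/2}(\R^d)$, and $C_c^\infty$ is dense in $L^{d/2}$, so the same split (with \eqref{ha1} again controlling the smooth piece) gives \eqref{ave_decay_w} for every $f\in L^{d/2}(\R^d)$.

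I do not anticipate a genuine obstacle in this proposition: the analytic substance — the bilinear restriction estimates, the Whitney decomposition, and the rotation–dilation averaging responsible for the gain over the fixed-frame bound — lives entirely in the earlier sections and enters only through \eqref{a-sob}. The one place needing care is the interpolation bookkeeping that fixes $s_\circ(d,p,\kappa)$ in each case (together with checking that the interpolated exponents match at the breakpoints $p=d$, $p=d+1$, $p=\tfrac{d+9}2$, etc.), plus the routine verification that the isotropic rescaling truly reduces the $B_d(0,R)$ statement to the $B_d(0,1)$ one. For the limit, the only subtlety is that the $C_c^\infty$ approximant must be chosen supported in one fixed ball so that the uniform form of \eqref{a-sob} applies to $f-f_1$ — harmless since $f$ is already compactly supported.
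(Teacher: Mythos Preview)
Your proposal is correct and follows essentially the same plan as the paper. For \eqref{a-sob} both simply point back to the discussion preceding the statement; for \eqref{ave_decay_w} the paper splits $f=f\ast\phi_\epsilon+(f-f\ast\phi_\epsilon)$ via a mollifier (with $\epsilon=M^{-1/\beta}$ tied to $M$), whereas you use a generic $C_c^\infty$ approximant $f_1$, but in both cases the smooth piece is handled by \eqref{ha1} and the remainder by the uniform bound \eqref{a-sob}. One small caveat: your isotropic rescaling to pass from $B_d(0,1)$ to $B_d(0,R)$ is not quite as clean as stated, since $m^\kappa$ need not be homogeneous and $X^b_{\zeta(\tau,U)}$ does not transform exactly into $X^b_{\zeta(c_R\tau,U)}$; the honest route is to note that every input (Propositions \ref{bilinear-sum}, \ref{low-high}, Corollary \ref{scaled}, Lemma \ref{with-d-av}) goes through verbatim with $B_d(0,R)$ and an $R$-dependent constant, which the paper tacitly does.
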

For $4\le d\le 8$,  we can make the exponent  $s_\circ(d,p,\kappa)$ slightly smaller for $p\ge d$ if we use Proposition \ref{bilinear-sum-2} instead of Proposition \ref{bilinear-sum}. For the case $\kappa=1$, this will be done in Proposition \ref{hohoho2} below.
\begin{proof}[Proof of Proposition \ref{hohoho}]
We have already shown all the statement for the estimate \eqref{a-sob}. So, it remains to prove \eqref{ave_decay_w} and the proof is similar to that of  Corollary \ref{with-d-cor}. Writing  $f=f\ast \phi_\epsilon+(f- f\ast \phi_\epsilon)$ and using \eqref{a-sob}, we get 
\[ {\mathfrak A_M^{p,\kappa} } (f- f\ast \phi_\epsilon)\lesssim 
\| f- f\ast \phi_\epsilon \|_{W^{s, p}}. \] 
On the other hand, the estimate \eqref{ha1}  and the H\"older inequality give 
\[ \| \mathcal M_{m^\kappa (D) ( f\ast \phi_\epsilon) }  \|_{X_{\zeta(\tau,U)}^{1/2}\to X_{\zeta(\tau,U)}^{-1/2}} 
\lesssim \tau^{-1}  \| f\ast  (m^{\kappa} (D)\phi_\epsilon)\|_{L^\infty}  
\lesssim   \tau^{-1}  \epsilon^{-\kappa-\frac dp}  \|  f \|_{L^p}.\]
Here we used the Mikhlin multiplier theorem to see that $\|m^{\kappa} (D)\phi_\epsilon\|_{L^r}  \lesssim \|\phi_\epsilon\|_{H^{1,r}} \lesssim \epsilon^{-\kappa-d+\frac dr}$ for $1< r < \infty$. This immediately  yields  $\mathfrak A_M^{p, \kappa}( f\ast \phi_\epsilon)\lesssim M^{-1}  \epsilon^{-\kappa-\frac dp}  \|  f \|_{L^p}$.  
Thus,  
\begin{align*} 
\mathfrak A_M^{p, \kappa}(f) \le \mathfrak A_M^{p, \kappa} ( f\ast \phi_\epsilon)+   \mathfrak A_M^{p, \kappa}(f- f\ast \phi_\epsilon)
	\lesssim  M^{-1}\epsilon^{-\kappa-\frac dp} \|  f \|_{L^p}+  \| f- f\ast \phi_\epsilon \|_{W^{s, p}}.
\end{align*} 
Since $s\ge 0$, $\|f\|_{L^p}\lesssim \|f\|_{W^{s,p}}$. Taking  $  \epsilon= M^{-1/\beta}$ with $\beta > \kappa+\frac dp$,  we get \eqref{ave_decay_w}. The last statement follows  from \eqref{fff} in a similar manner.
\end{proof}

We now turn to \eqref{a-sob-2}. By Proposition \ref{bilinear-sum}, Corollary \ref{scaled-n}, and Lemma \ref{huhu}, we have the following: \emph{If $d\ge 3$, $\epsilon>0$, $\frac{d+1}2\le p <\infty$, $\beta_d(p)<\mu<2-\frac{d+2}p$, and $r\ge \max\{-1, \mu+\frac dp-2\}$, then \eqref{a-sob-2} holds for any $f\in H_c^{r+\epsilon,p}(B_d(0,C))$.}

Such $\mu$ exists only if $\beta_d(p)<2-\frac{d+2}p$, which is equivalent to $p>\frac{d+2}{2}$ if $3\le d \le 6$, and $p>\frac{3d+7}6$ if $d\ge 7$. As before, we interpolate \eqref{a-sob-2} and \eqref{fff} (with $\kappa =0$) to extend the range of $p$ and obtain the following proposition. The proof is very similar to and even simpler than that of Proposition \ref{hohoho}, so we omit it.

\begin{prop} \label{hohoho3} Let $\epsilon>0$.

\vspace{-8pt}  
$(\mathrm I)$ If $3\le d\le 6$, then \eqref{a-sob-2} holds for $r\ge \max\{-1, \beta_d(p)+\frac dp-2\}$ when $\frac{d+2}2<p<\infty$, and for $r\ge \frac dp -2$ when $\frac d2 \le p \le \frac{d+2}2$. 

\vspace{-8pt}  
$(\mathrm I\!\mathrm  I)$ If $d\ge 7$, then \eqref{a-sob-2} holds for $r\ge \max\{-1, \beta_d(p)+\frac dp-2\}$ when $\frac{3d+7}6<p<\infty$, and for $r\ge -\frac 67(2-\frac dp)$ when $\frac d2 \le p \le \frac{3d+7}6$. 

\vspace{-8pt}
$(\mathrm I\!\mathrm  I \!\mathrm I)$ If $d$, $p$, and $r$ are given as in $(\mathrm I)$, $(\mathrm I\!\mathrm  I)$,\footnote{Note that the conditions on $r$ in $(\mathrm I)$, $(\mathrm I\!\mathrm  I)$ can be written equivalently $r\ge \max\{-1, r_d(p)\}$ (see Section \ref{intro}).} then for any $f\in H_c^{r+\epsilon, p}(\R^d)$,
\Be\label{ave_decay_h} 
 \lim_{M\to \infty}  \mathfrak A_{M}^{p} (f)=0.
\Ee
\end{prop}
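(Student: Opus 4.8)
The plan is to establish first the estimate \eqref{a-sob-2} asserted in (I) and (II), and then to deduce the limit \eqref{ave_decay_h} in (III) by a mollification argument running parallel to the proof of Proposition \ref{hohoho}. For the estimate, I would apply Proposition \ref{bilinear-sum} with $\kappa=-r\in[0,1]$: it gives \eqref{Large} with $\nu=-r-\frac2p$ and any $\mu>\beta_d(p)$, valid for $p\ge\frac{d+1}2$. Feeding \eqref{Large} into Corollary \ref{scaled-n} produces \eqref{result3}, and Lemma \ref{huhu} then turns \eqref{result3} into \eqref{a-sob-2} as soon as one can choose $\mu$ with $\beta_d(p)<\mu<2-\frac{d+2}p$ and $\mu+\frac dp-2\le r$. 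Such a $\mu$ exists precisely when $\beta_d(p)<2-\frac{d+2}p$ and $r\ge\max\{-1,\beta_d(p)+\frac dp-2\}$ (the lower bound $-1$ coming from the hypothesis of Corollary \ref{scaled-n}); unwinding the piecewise definition of $\beta_d$, the inequality $\beta_d(p)<2-\frac{d+2}p$ is equivalent to $p>\frac{d+2}2$ when $3\le d\le6$ and to $p>\frac{3d+7}6$ when $d\ge7$. This gives \eqref{a-sob-2} on the upper $p$-ranges appearing in (I) and (II). To cover $\frac d2\le p\le\frac{d+2}2$ (respectively $\frac d2\le p\le\frac{3d+7}6$), I would interpolate this endpoint bound against the trivial estimate \eqref{ha2} at $p=\frac d2$, that is the $\kappa=0$, $r=0$ instance of \eqref{fff}, exactly as in the passage preceding Proposition \ref{hohoho} and using Stein's analytic interpolation along the family $f\mapsto\mathcal M_{m^z(D)f}$ as in the proof of Proposition \ref{decay}; a direct computation shows that the interpolated Sobolev exponent is the piecewise-linear function $r_d(p)$ of Section \ref{intro}, which yields (I) and (II).

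For (III), fix $f\in H_c^{r+\epsilon,p}(\R^d)$; it is supported in some ball, and \eqref{a-sob-2} applies to functions supported in any fixed ball (by rescaling from the $B_d(0,1)$ version), so I may use it directly. Choose $\phi\in C_c^\infty(B_d(0,1))$ with $\int\phi\,dx=1$, set $\phi_\delta=\delta^{-d}\phi(\cdot/\delta)$, and split $f=f\ast\phi_\delta+(f-f\ast\phi_\delta)$; both pieces are supported in one fixed bounded set, uniformly in $\delta\in(0,1]$. For the smooth piece, write $f=(1+|D|^2)^{-\frac{r+\epsilon}2}g$ with $g\in L^p$ and $\|g\|_{L^p}=\|f\|_{H^{r+\epsilon,p}}$, so that $f\ast\phi_\delta=g\ast\big((1+|D|^2)^{-\frac{r+\epsilon}2}\phi_\delta\big)$; then Young's inequality together with the scaling bound $\big\|(1+|D|^2)^{-\frac{r+\epsilon}2}\phi_\delta\big\|_{L^{p'}}\lesssim\delta^{-c}$ for a constant $c=c(d,p,r,\epsilon)$ (Mikhlin's multiplier theorem, exactly as in the proof of Proposition \ref{hohoho}) gives $\|f\ast\phi_\delta\|_{L^\infty}\lesssim\delta^{-c}\|f\|_{H^{r+\epsilon,p}}$, whence the crude estimate \eqref{ha1} yields $\mathfrak A_M^p(f\ast\phi_\delta)\lesssim M^{-1}\delta^{-c}\|f\|_{H^{r+\epsilon,p}}$. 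For the remainder, \eqref{a-sob-2} applied with $\epsilon/2$ in place of $\epsilon$ gives $\mathfrak A_M^p(f-f\ast\phi_\delta)\lesssim\|f-f\ast\phi_\delta\|_{H^{r+\epsilon/2,p}}$, and this tends to $0$ as $\delta\to0$ since mollification converges strongly in $H^{r+\epsilon/2,p}$ for $1<p<\infty$. Taking $\delta=\delta(M)=M^{-\beta}$ with $\beta>0$ small enough that $\beta c<1$ then sends the first term to $0$, while $\delta(M)\to0$ sends the second term to $0$; this proves \eqref{ave_decay_h}.

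The routine ingredients are the mollification limit and the Young--Mikhlin scaling bound for $\|f\ast\phi_\delta\|_{L^\infty}$. The step that requires real care is the interpolation that pushes $p$ down to $\frac d2$: one must arrange the analytic family so that both the $L^p$-averaging exponent entering $\mathfrak A_M$ and the Sobolev smoothing exponent move together along the interpolation segment, and then check by an elementary but somewhat tedious computation that the resulting exponent is exactly $r_d(p)$. This is, however, entirely parallel to the corresponding steps in the proofs of Proposition \ref{decay} and Proposition \ref{hohoho}.
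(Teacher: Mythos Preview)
Your proposal is correct and follows essentially the same route as the paper: the paper derives \eqref{a-sob-2} on the upper $p$-range from Proposition \ref{bilinear-sum}, Corollary \ref{scaled-n}, and Lemma \ref{huhu} exactly as you describe, then interpolates with \eqref{fff} at $\kappa=0$ (i.e.\ \eqref{ha2}) to reach $p\ge\frac d2$, and states that the proof of (III) ``is very similar to and even simpler than that of Proposition \ref{hohoho}.'' Your mollification argument for (III) is precisely that adaptation, and your identification of the interpolation step as the one point requiring care is apt---the paper is equally terse there.
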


We close this section weakening the condition on $s$ in Proposition \ref{hohoho} in the case of $p\ge d$ and $\kappa=1$. For the definition of $\gamma_d(p)$, see Section \ref{sec_str}.
\begin{prop}\label{hohoho2} Let $3\le d\le 8$, $d\le p<\infty$, and $\kappa=1$. Suppose $s> \max\{0,  \gamma_d(p)+ \frac d p   -1 \}$. Then, for any $f\in  W^{s,p}_c(B_d(0, R))$ (for any constant $R>0$ fixed), \eqref{a-sob} and \eqref{ave_decay_w} hold.
\end{prop}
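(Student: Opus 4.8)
\textbf{Proof proposal for Proposition \ref{hohoho2}.}

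The plan is to mirror the proof of Proposition \ref{hohoho}, but to feed in the sharpened bilinear bounds from Section \ref{sec_str} in place of those from Proposition \ref{bilinear-sum}. Concretely, I would first invoke Proposition \ref{bilinear-sum-2}: since $3\le d\le 8$, $\frac1\tau\le\lambda\lesssim1$, $p\ge d$, and $u,v$ satisfy \eqref{near}, the estimate \eqref{Large} holds with $\nu=\kappa-\frac2p=1-\frac2p$ and any $\mu>\gamma_d(p)$. With \eqref{Large} available, Proposition \ref{low-high} applies (note $\frac d{2-\kappa}=d\le p$, so the hypothesis $\frac d{2-\kappa}\le p<\infty$ is met, and $\max\{\frac{d+1}2,d\}=d$ since $d\ge3$), giving \eqref{result1}; then Corollary \ref{scaled} upgrades this to the scaled estimate \eqref{result2} with $s=\mu+\frac dp-2+\kappa=\mu+\frac dp-1$ for $f$ supported in $B_d(0,C)$.

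Next I would apply Lemma \ref{with-d-av} with $\kappa=1$. Its hypothesis is $\nu>s$, i.e. $1-\frac2p>\mu+\frac dp-1$, equivalently $\mu<2-\frac{d+2}p$. Since $\gamma_d(p)<2-\frac{d+2}p$ exactly for $p$ in the stated range (one checks this from the explicit formula for $\gamma_d$: for $3\le d\le6$ the inequality $\gamma_d(p)<2-\frac{d+2}p$ reduces, on each piece, to $p>\tfrac{d+2}2$, which is implied by $p\ge d$ when $d\ge3$; for $d=7,8$ it reduces to $p>\tfrac{3d+7}6$, again implied by $p\ge d$), we may choose $\mu$ with $\gamma_d(p)<\mu<2-\frac{d+2}p$, so that \eqref{result2} holds and $\nu>s$. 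Lemma \ref{with-d-av} then yields
\[
\mathfrak A_M^{p,1}(f)\lesssim \|f\|_{W^{s,p}}
\]
uniformly in $M$, for all $f\in W^{s,p}_c(B_d(0,1))$, whenever $s>\gamma_d(p)+\frac dp-1$; since $s$ was allowed to be taken arbitrarily close to $\gamma_d(p)+\frac dp-1$ from above and the inclusion $W^{s_1,p}\hookrightarrow W^{s_2,p}$ for $s_1>s_2$ lets us absorb the $\epsilon$-loss, this gives \eqref{a-sob} for $s>\max\{0,\gamma_d(p)+\frac dp-1\}$ (the extra $\max\{0,\cdot\}$ coming from the requirement $s\ge0$ in Lemma \ref{with-d-av}, respectively Proposition \ref{low-high}). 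The passage from $B_d(0,1)$ to $B_d(0,R)$ for arbitrary fixed $R$ is routine rescaling, exactly as in the surrounding propositions.

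Finally, to obtain the vanishing statement \eqref{ave_decay_w} for $f\in W^{s,p}_c(\R^d)$, I would repeat verbatim the mollification argument from the proof of Proposition \ref{hohoho}: write $f=f*\phi_\epsilon+(f-f*\phi_\epsilon)$, bound $\mathfrak A_M^{p,1}(f-f*\phi_\epsilon)\lesssim\|f-f*\phi_\epsilon\|_{W^{s,p}}$ via \eqref{a-sob}, bound $\mathfrak A_M^{p,1}(f*\phi_\epsilon)\lesssim M^{-1}\epsilon^{-1-\frac dp}\|f\|_{L^p}$ via \eqref{ha1}, the H\"older inequality, and Mikhlin's multiplier theorem ($\|m^1(D)\phi_\epsilon\|_{L^\infty}\lesssim\epsilon^{-1-d}$, with the support localization absorbed since $f$ has compact support), and then take $\epsilon=M^{-1/\beta}$ with $\beta>1+\frac dp$; using $s\ge0$ so that $\|f\|_{L^p}\lesssim\|f\|_{W^{s,p}}$ and $\|f-f*\phi_\epsilon\|_{W^{s,p}}\to0$, both terms tend to $0$. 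The only substantive point beyond bookkeeping is the verification that $\gamma_d(p)<2-\frac{d+2}p$ throughout $p\ge d$ (so that an admissible $\mu$ exists and $\nu>s$ can be arranged simultaneously); this is the step I would check most carefully, case by case on the defining intervals of $\gamma_d$, but it presents no real difficulty.
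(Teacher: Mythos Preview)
Your proposal is correct and follows essentially the same route as the paper's own proof: invoke Proposition \ref{bilinear-sum-2} to get \eqref{Large} with $\nu=1-\frac2p$ and $\mu>\gamma_d(p)$, pass through Proposition \ref{low-high} and Corollary \ref{scaled} to \eqref{result2}, apply Lemma \ref{with-d-av} using the key inequality $\gamma_d(p)<2-\frac{d+2}{p}$ for $p\ge d$, and then rerun the mollification argument from Proposition \ref{hohoho} for \eqref{ave_decay_w}. One small slip: in the mollification step you quote $\|m^1(D)\phi_\epsilon\|_{L^\infty}\lesssim\epsilon^{-1-d}$, but to reach the bound $M^{-1}\epsilon^{-1-d/p}\|f\|_{L^p}$ you actually want $\|m^1(D)\phi_\epsilon\|_{L^{p'}}\lesssim\epsilon^{-1-d/p}$ (exactly as in the paper's proof of Proposition \ref{hohoho}); this is immediate from Mikhlin and does not affect the argument.
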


\begin{proof}
As before, from Proposition \ref{bilinear-sum-2}, Proposition \ref{low-high}, and Corollary \ref{scaled} we have 
\eqref{result2} with $\nu=1-\frac 2p$ and $s>\gamma_d(p)+\frac dp-1$ for $p\ge  d$. 
Thus,   by Lemma \ref{with-d-av}  we get \eqref{a-sob} since 
$2-\frac {d+2}p>  \gamma_d(p)$ whenever $p\ge d$.  Since we have \eqref{a-sob}, the same argument as in the proof of Proposition \ref{hohoho} gives 
\eqref{ave_decay_w}. 
\end{proof}

\section{Proof of Theorem \ref{main} and Theorem \ref{Schrodinger}  }\label{sec_pf_thm}

Once we have the key estimates in the previous sections, we can prove Theorem \ref{main} and Theorem \ref{Schrodinger} following   
the argument in \cite{Haberman15},  which   also relies on  the basic strategy due to Sylvester-Uhlmann \cite{SU}, and subsequent modifications due to 
Haberman-Tataru \cite{HT} and  Nguyen-Spirn \cite{NS}. We begin with recalling several basic theorems  which we need in what follows.

\subsection{Proof of Theorem \ref{main}}
Let $\Omega$, $s$, $p$, and $d$ be as in Theorem \ref{main}. We may assume that $s\le 1+\frac1p$ by the inclusion $W^{s_1,p}\subset W^{s_2,p}$ for $s_1\ge s_2$ and $1<p<\infty$. For $k=1,2$, assume that   $\gamma_k \in W^{s,p}(\Omega)\cap A(\Om)$ satisfy $\Lambda_{\gamma_1} = \Lambda_{\gamma_2}$. 
It is clear  that $W^{s,p}(\Omega) \subset W^{1,d}(\Omega) \subset W^{1,1}(\Omega)$ since $s\ge 1$ and $p \ge d$. 
The following is due to 
Brown \cite{Brown01}.
\begin{lem}\label{boundary}
If $\gamma_1$, $\gamma_2 \in W^{1,1}(\Omega)\cap A(\Omega)$ and $\Lambda_{\gamma_1} = \Lambda_{\gamma_2}$, then 
$\gamma_1 = \gamma_2$ on $\partial \Omega$.
\end{lem}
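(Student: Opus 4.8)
The plan is to recover the boundary values of $\gamma_1$ and $\gamma_2$ from the Dirichlet-to-Neumann maps by testing against highly oscillatory boundary data concentrated near a point, a technique going back to Kohn--Vogelius and refined by Brown for conductivities of minimal regularity. First I would note that since $s\ge 1$ and $p\ge d$, Sobolev embedding gives $W^{s,p}(\Omega)\subset W^{1,d}(\Omega)\subset W^{1,1}(\Omega)$, so $\gamma_1,\gamma_2$ indeed have traces on $\partial\Omega$ and the hypotheses of Lemma~\ref{boundary} are met; the statement is then exactly \cite[Theorem]{Brown01} applied to this pair, so strictly speaking the content is to invoke that reference. But to describe the underlying argument: for a boundary point $x_0\in\partial\Omega$, one constructs a family of functions $f_N$ supported in a shrinking neighborhood of $x_0$ (in boundary coordinates, roughly $e^{iN\,\eta\cdot x}$ cut off and with an amplitude that localizes at scale $N^{-1/2}$ in the normal direction after harmonic extension), and examines the quadratic form $(\Lambda_{\gamma_k}f_N, \overline{f_N})$.

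The key steps, in order, are: (i) flatten the boundary near $x_0$ and write the harmonic-type extension $u_N$ of $f_N$ into $\Omega$ with explicit leading-order behavior, controlling the error terms in $H^1$; (ii) expand $(\Lambda_{\gamma_k}f_N,\overline{f_N})=\int_\Omega \gamma_k\nabla u_N\cdot\nabla\overline{u_N}\,dx$ and show that, after appropriate normalization, $N^{-1}(\Lambda_{\gamma_k}f_N,\overline{f_N})\to c_d\,\gamma_k(x_0)$ as $N\to\infty$, where $\gamma_k(x_0)$ is the boundary trace and the crucial point is that the $W^{1,1}$ (indeed the $C^0$-trace) regularity suffices to pass to the limit via a Lebesgue-differentiation/density argument rather than needing $\gamma_k$ continuous up to the boundary; (iii) since $\Lambda_{\gamma_1}=\Lambda_{\gamma_2}$ forces the two limits to coincide for every $x_0\in\partial\Omega$, conclude $\gamma_1=\gamma_2$ on $\partial\Omega$.

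The main obstacle is step~(ii): making the limiting argument work when $\nabla\gamma_k$ is merely in $L^1$ (or $L^d$) rather than bounded. One must be careful that the localized test function concentrates fast enough that the average of $\gamma_k$ over the shrinking support converges to the trace value, while simultaneously the error in the approximate extension $u_N$ — which involves $\nabla\gamma_k$ — does not blow up. This is precisely the technical heart of Brown's work \cite{Brown01}, and since that theorem is already in the literature and applies verbatim here, I would simply cite it; no new estimate is needed, and in particular none of the $X^b_\zeta$ or bilinear-restriction machinery developed earlier in the paper enters this boundary-determination step. With Lemma~\ref{boundary} in hand, one then subtracts: $\gamma_1-\gamma_2\in W^{s,p}_0(\Omega)$ (up to the trace condition $s-\tfrac1p\le 1$ exploited elsewhere), which is what lets the subsequent interior argument via complex geometric optics proceed.
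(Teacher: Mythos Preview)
Your proposal is correct and matches the paper's treatment exactly: the paper does not give its own proof of Lemma~\ref{boundary} but simply attributes it to Brown \cite{Brown01}, which you have correctly identified. Your additional sketch of the oscillatory-boundary-data argument is accurate and goes beyond what the paper itself provides.
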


Hence $(\gamma_1-\gamma_2)\vert_{\partial \Omega}=0$ by Lemma \ref{boundary}. Since $s- \frac1p \le 1$, using \cite[Theorem 1]{Marschall},  we have   $\gamma_1, \gamma_2 \in W^{s,p}(\Omega)$ extended to the whole space $\mathbb R^d$ such that $\gamma_1 = \gamma_2$ outside of $\Omega$ and $\gamma_k$ is supported in a large ball $B_d(0,R)$ containing $\Omega$.  Thus,  from now on  we  assume that $\gamma_1$ and $\gamma_2$ are  in $W_c^{s,p}(\R^d)$.

Recall that the conductivity equation $\divergence(\gamma \nabla u)=0$ is equivalent to the equation $(\Delta - q)v = 0$, where $v= \gamma^{1/2}u$ and $q = \gamma^{-1/2}\Delta \gamma^{1/2}$.   

In what follows we also make use of the next two lemmas:

\begin{lem}{\cite[Proposition 2]{BT}}\label{identity}
Suppose that $\gamma_k \in W^{1,d}(\mathbb R^d)\cap A(\R^d)$, $\nabla \gamma_k^{1/2}$ is supported in a bounded set, and $\gamma_1 =\gamma_2$ outside of $\Omega$. 
If $v_k$ are solutions in $H^{1}_{loc}(\R^d)$ to $(\Delta - q_k) v_k=0$ with $q_k = \gamma_k^{-1/2}\Delta \gamma_k^{1/2}$, 
then $  ( q_1 , v_1 v_2)= ( q_2, v_1 v_2)$.
\end{lem}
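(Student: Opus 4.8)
The plan is to convert the stated Fourier-side identity into Alessandrini's bilinear identity for the conductivity Dirichlet-to-Neumann map; this is available because, throughout the proof of Theorem \ref{main}, we are assuming $\Lambda_{\gamma_1}=\Lambda_{\gamma_2}$. Write $a_k=\gamma_k^{1/2}$ and $u_k:=\gamma_k^{-1/2}v_k$. A direct computation with the Liouville--Sylvester--Uhlmann substitution gives $(\Delta-q_k)v_k=\gamma_k^{-1/2}\divergence(\gamma_k\nabla u_k)$, so that $u_k\in H^1_{loc}(\mathbb R^d)$ (a bounded $W^{1,d}$ function times an $H^1_{loc}$ function) and $\divergence(\gamma_k\nabla u_k)=0$ in $\mathbb R^d$; in particular $u_k|_\Omega\in H^1(\Omega)$ is the unique solution of the conductivity equation in $\Omega$ with boundary value $u_k|_{\partial\Omega}$.

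First I would record an algebraic identity. Multiplying $\Delta v_k=q_kv_k$ by $a_\ell$ ($\ell\neq k$) and using $v_k=a_ku_k$ shows, as distributions, that $(q_1-q_2)\,a_1a_2=\divergence F$, where $F:=a_2\nabla a_1-a_1\nabla a_2$. The crucial structural point is that $\gamma_1=\gamma_2$ off $\Omega$: writing $b:=a_1-a_2$ one has $F=a_k\nabla b-b\nabla a_k=\gamma_k\nabla(b/a_k)$ for $k=1,2$, so $F$ is supported in $\overline\Omega$ (it need not vanish on $\partial\Omega$), and $F\in L^d$ since $\gamma_k\in W^{1,d}\cap L^\infty$ with $\gamma_k\ge c>0$. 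Since $v_1v_2=a_1a_2u_1u_2$ and $a_1a_2\in W^{1,d}\cap L^\infty$ is a multiplier, $(q_1-q_2,v_1v_2)=(\divergence F,\,u_1u_2)$; here the hypothesis that $\nabla\gamma_k^{1/2}$ has bounded support guarantees $q_k$ is a compactly supported distribution, so the pairing against the non-decaying $v_1v_2$ makes sense. Integrating by parts over $\mathbb R^d$—legitimate because $\divergence F$ is compactly supported, so one inserts a cutoff $\chi\equiv1$ near $\operatorname{supp}F$—gives $(q_1-q_2,v_1v_2)=-\int_{\mathbb R^d}F\cdot\nabla(u_1u_2)\,dx=-\int_\Omega F\cdot\nabla(u_1u_2)\,dx$, with no boundary term ever appearing.

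Next I would split $\nabla(u_1u_2)=u_1\nabla u_2+u_2\nabla u_1$, use $F=\gamma_2\nabla(b/a_2)$ against the first summand and $F=\gamma_1\nabla(b/a_1)$ against the second, and test the weak equations $\divergence(\gamma_k\nabla u_k)=0$ against the functions $(b/a_k)\,u_\ell\in H^1_0(\Omega)$—these vanish on $\partial\Omega$ because $b$ does, by the boundary determination $\gamma_1=\gamma_2$ on $\partial\Omega$ of Lemma \ref{boundary}. One summand of each pairing is thereby killed, and since $\gamma_2(b/a_2)+\gamma_1(b/a_1)=b(a_1+a_2)=\gamma_1-\gamma_2$ the identity collapses to
\[
(q_1-q_2,\,v_1v_2)=\int_\Omega(\gamma_1-\gamma_2)\,\nabla u_1\cdot\nabla u_2\,dx .
\]
Finally, the weak definition $(\Lambda_\gamma f,g)=\int_\Omega\gamma\nabla u_f\cdot\nabla v$ (for any $H^1$ extension $v$ of $g$), together with the symmetry $(\Lambda_\gamma f,g)=(\Lambda_\gamma g,f)$, applied with $f=u_1|_{\partial\Omega}$, $g=u_2|_{\partial\Omega}$, $v=u_2$, yields $\int_\Omega(\gamma_1-\gamma_2)\nabla u_1\cdot\nabla u_2\,dx=\big((\Lambda_{\gamma_1}-\Lambda_{\gamma_2})(u_1|_{\partial\Omega}),\,u_2|_{\partial\Omega}\big)=0$, which closes the argument.

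I expect the real work to be functional-analytic bookkeeping rather than any new idea: at the $W^{1,d}$ level $q_k$ is only a distribution of order $-1$, $v_k$ is only $H^1_{loc}$, and $F\in L^d$ (not $L^\infty$), so every product and integration by parts above must be justified. The needed embeddings are routine—$H^1\hookrightarrow L^{2d/(d-2)}$ and $W^{1,d}\hookrightarrow L^q$ for all finite $q$ on bounded sets give $u_1u_2\in W^{1,d/(d-1)}_{loc}$, which pairs with $q_1-q_2\in W^{-1,d}$ supported in $\overline\Omega$, and give $(b/a_k)u_\ell\in H^1_0(\Omega)$—but one must check that each such pairing is a continuous bilinear form and then pass from the smooth identities by mollifying $\gamma_k$. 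Two points deserve care: that $b/a_k\in W^{1,d}_0(\Omega)$ (in the range of Theorem \ref{main} this is essentially the statement $\gamma_1-\gamma_2\in W^{s,p}_0(\Omega)$ already invoked for the whole-space extension), and that one never splits an integral into $\Omega$ and $\Omega^c$ and integrates by parts separately—precisely because $F$ need not vanish on $\partial\Omega$ and $\partial_\nu\gamma_k$ has no trace in this regularity class.
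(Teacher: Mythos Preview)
The paper does not prove this lemma at all: it is quoted verbatim as \cite[Proposition 2]{BT} and used as a black box, so there is no ``paper's own proof'' to compare against. Your argument is essentially the standard Brown--Torres derivation of Alessandrini's identity in this low-regularity setting, and it is correct; in particular your observation that the hypothesis $\Lambda_{\gamma_1}=\Lambda_{\gamma_2}$ is tacitly assumed is accurate---the lemma as printed in the paper omits it, but the paper's own invocation (``by the assumption $\Lambda_{\gamma_1}=\Lambda_{\gamma_2}$ and Lemma \ref{boundary} we can apply Lemma \ref{identity}'') confirms it is required, and without it the identity $(q_1-q_2,v_1v_2)=\int_\Omega(\gamma_1-\gamma_2)\nabla u_1\cdot\nabla u_2$ you derive has no reason to vanish.
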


\begin{lem}{\cite[Lemma 7.2]{Haberman15}}\label{equality}
Let $\gamma_k$ and $q_k$ be given as in Lemma \ref{identity}. 
If $q_1 =q_2$ in the sense of distributions, then $\gamma_1 = \gamma_2$. 
\end{lem}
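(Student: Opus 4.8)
\emph{Plan for the proof of Lemma~\ref{equality}.} The plan is to use the special structure $q_k=\gamma_k^{-1/2}\Delta\gamma_k^{1/2}$ to reduce the claim to the uniqueness of the $H^1$ solution of the conductivity Dirichlet problem \eqref{eq} for $\gamma_2$. Write $\psi_k=\gamma_k^{1/2}$. Since $\gamma_k\in W^{1,d}(\mathbb R^d)\cap A(\mathbb R^d)$, the $\psi_k$ are bounded and bounded below by a positive constant, lie in $W^{1,d}_{loc}(\mathbb R^d)$ with $\nabla\psi_k=\tfrac12\gamma_k^{-1/2}\nabla\gamma_k$ supported in a bounded set, and satisfy $\psi_1=\psi_2$ outside $\Omega$. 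By definition $\Delta\psi_k=q_k\psi_k$ in the sense of distributions, so the hypothesis $q_1=q_2=:q$ says that $\psi_1$ and $\psi_2$ both solve $(\Delta-q)\psi=0$.

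First I would set $v=\psi_1-\psi_2$. Then $v$ vanishes outside $\Omega$, hence $v\in W^{1,d}(\mathbb R^d)$ has compact support and, since $d\ge 3$ gives $W^{1,d}\hookrightarrow L^2$ on bounded sets, $v\in H^1_0(\Omega')$ for some bounded Lipschitz domain $\Omega'\supset\overline\Omega$; moreover $(\Delta-q)v=0$ by linearity. Next, writing $u=v/\psi_2=\psi_1/\psi_2-1$ and using the Leibniz rule,
\begin{equation*}
\divergence(\gamma_2\nabla u)=\divergence(\psi_2^2\nabla u)=\divergence\!\big(\psi_2\nabla v-v\nabla\psi_2\big)=\psi_2\Delta v-v\Delta\psi_2=\psi_2(qv)-v(q\psi_2)=0.
\end{equation*}
Since $\psi_2$ is bounded and bounded below, $u=v/\psi_2$ still belongs to $H^1_0(\Omega')$ (from $\nabla u=\psi_2^{-1}\nabla v-\psi_2^{-2}v\nabla\psi_2$, with $\nabla v\in L^2$, $v\in L^{2d/(d-2)}$ and $\nabla\psi_2\in L^d$). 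By uniqueness for the Dirichlet problem of $\divergence(\gamma_2\nabla\,\cdot\,)=0$ — equivalently, pairing the equation with $u$ gives $\int\gamma_2|\nabla u|^2\,dx=0$, so $\nabla u=0$ and $u=0$ — we conclude $\psi_1=\psi_2$, i.e.\ $\gamma_1=\gamma_2$. This is the classical Sylvester--Uhlmann reduction (\cite{SU}).

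I expect the only genuine obstacle to be the low-regularity bookkeeping: because $\gamma_k\in W^{1,d}$ only, $q_k$ is a genuine distribution (in $W^{-1,d}$) and the identities $\Delta\psi_k=q_k\psi_k$ and $\divergence(\psi_2\nabla v-v\nabla\psi_2)=\psi_2\Delta v-v\Delta\psi_2$ must be interpreted and manipulated as distributional identities for $W^{1,d}_{loc}$ data rather than by pointwise differentiation. This is exactly the type of computation carried out in the proof of Lemma~\ref{identity} (Brown--Torres, \cite{BT}); it can be justified either by approximating $\gamma_k$ by smooth conductivities that stay uniformly bounded below and passing to the limit, or by testing against functions of the form $\varphi/\psi_k$ with $\varphi\in C^\infty_c$ and using that $W^{1,d}$ functions act as multipliers on compactly supported $W^{1,d'}$ functions, $d'=d/(d-1)$. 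Everything else is routine, and one then feeds this lemma, together with Lemma~\ref{identity} and the vanishing-average estimates, into the Sylvester--Uhlmann/Haberman scheme to conclude Theorem~\ref{main}.
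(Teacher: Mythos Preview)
Your argument is correct and is exactly the classical Sylvester--Uhlmann reduction. The paper does not supply its own proof of this lemma; it merely quotes \cite[Lemma~7.2]{Haberman15}, whose proof follows the same route you outline (reduce $q_1=q_2$ to $\divergence(\gamma_2\nabla u)=0$ with $u=\gamma_1^{1/2}\gamma_2^{-1/2}-1\in H^1_0$, then use energy uniqueness), with the same low-regularity bookkeeping you flag.
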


 In order to prove Theorem \ref{main},  by Lemma  \ref{equality}  it suffices to show  that $\widehat{ q_1} =  \widehat{q_2}$.    
This will be done by  constructing the complex geometrical optics solutions $v_k$ with a parameter $\tau$ to the equations $(\Delta - q_k) v_k=0$ such that 
$v_1(x)v_2(x)$ converges to $e^{i\xi  \cdot  x}$ for a fixed $\xi \in \mathbb R^d$ as $\tau\to \infty$. 
In fact, the solutions $v_k$  take the form of $e^{x \cdot{\zeta_k'}(\tau,U)} (1 + \psi_k)$, where $\psi_k$'s are solutions to
\Be
\label{tildeeq}
(\Delta_{\zeta_k'(\tau,U)} - {q_k} ) \psi_k  = q_k,
\Ee
where $\Delta_\zeta=\Delta+2\zeta\cdot \nabla$. 

Fix orthonormal vectors $e_1,$ $e_2,$ $e_3$ in $\R^d$ and $r>0$. For $U\in O_d$ and $\tau\ge\max\{1,r\}$, we set, as in \cite{Haberman15},
\begin{equation} 
\label{tau-u}
\begin{aligned}
\zeta_1(\tau, U) &= \tau U ( e_1 - i e_2), & \zeta_1'(\tau, U) &= \tau U e_1 - i\sqrt{\tau^2 - r^2} U e_2  + i r Ue_3 , \\
\zeta_2(\tau, U) &= -\tau U ( e_1 - i e_2), & \zeta_2'(\tau, U) &= -\tau U e_1 + i\sqrt{\tau^2 - r^2} U e_2  + i r Ue_3  .
\end{aligned}
\end{equation}

We may write 
\[ q_k =\gamma_k^{-1/2}\Delta \gamma_k^{1/2} = \sum_j \partial_j f_{k,j} +h_k, \]
where   $f_{k,j} = \frac12\partial_j \log \gamma_k$ and $h_k= \frac14 |\nabla \log \gamma_k|^2$.  It is clear that  $f_{k,j} \in W_c^{s-1,p}(\R^d)$ and $h_k \in L_c^{ p /2}(\R^d)$ since $s\ge 1$. 

By Proposition \ref{hohoho2} (applied to $f_{k,j} \in W_c^{s-1,p}$ with $\kappa=1$), Proposition \ref{hohoho} (applied to $h_k\in L^\frac d2$), and Corollary \ref{with-d-cor} (applied to $\partial f_{k,j}\in H_c^{s-2-\epsilon,p}$ and  $h_k \in L^{\frac d 2}$),  we see that  
\Be
\label{exist-seq}
  \lim_{M\to \infty}\avm  \int_{O_d} \sum_{k, \ell=1,2}\bigg( \| \mathcal M_{q_k} \|_{X_{\zeta_\ell(\tau,U)}^{1/2}\to X_{\zeta_\ell(\tau,U)}^{-1/2}}^p 
  + \|q_k\|_{X_{\zeta_\ell(\tau,U)}^{-1/2}}^2 \bigg) dm(U) d\tau  = 0,
 \Ee
if $s-1 > \max\{  \gamma_d(p) +\frac d p -1,0\}$, i.e., $s > \max\{ \gamma_d(p) +\frac d p,1\} = s_d(p)$ when $d=5,6$. Hence, there exist sequences $\tau:=\tau_j>0$, $U:=U_j \in O_d$, and $\delta:=\delta_j>0$ (in what follows  we occasionally omit the subscript $j$ for simplicity of notation)
such that
\Be
\label{tau-delta}
\lim_{j\to\infty} \tau_j=\infty, \quad  \lim_{j\to \infty} \delta_j=0,
\Ee
and,   for $k,\ell=1,2$,
\begin{align} 
\label{inhomo}
\| \mathcal M_{q_k}\|_{ X_{ \zeta_\ell(\tau_j,U_j)}^{1/2}\to  X_{  \zeta_\ell( \tau_j,U_j)}^{-1/2}} <\delta_j , \quad
\|q_k\|_{ X_{ \zeta_\ell(\tau_j,U_j)}^{-1/2}} <\delta_j.
\end{align}
Since $|\zeta_\ell(\tau,U) - \zeta_\ell'(\tau,U)| \simeq r$, 
we have
\begin{equation}\label{norm_equiv}
\| u\|_{X_{\zeta_\ell(\tau,U)}^{b}} 
\simeq \| u\|_{X_{\zeta_\ell'(\tau,U)}^{b}}	
\end{equation}
for any $b \in \mathbb R$ with the implicit constant depending on $r$ (see \cite[Lemma 6.3]{Haberman15}).  
It follows from \eqref{inhomo} and Lemma \ref{properties}  that for $k,\ell =1,2$, 
\begin{equation}
\label{smallness} 
	\| \mathcal M_{q_k}\|_{ \bdot   X_{\zeta_\ell' (\tau,U)}^{1/2}\to \bdot X_{\zeta_\ell' ( \tau,U)}^{-1/2}}  \lesssim \delta, \quad 	\|q_k\|_{\bdot X_{\zeta_\ell' (\tau,U)}^{-1/2}} \lesssim \delta.
\end{equation}
Here and later on,  the implicit constants depend on $r$ but are independent of $\tau$. The precise dependence is not important since $\delta\to 0$ while $\tau\to \infty$.

With sufficiently large $j$,  by the contraction mapping principle  (or the operator $\Delta_{\zeta_k'(\tau,U)} - {q_k }$ is invertible  since $\|\Delta_{\zeta_k'(\tau,U)}^{-1} \mathcal M_{q_k }\|_{ \bdot X_{ \zeta_k'(\tau,U)}^{1/2}\to \bdot  X_{  \zeta_k'( \tau,U)}^{1/2}} $ is small)
we have solutions $\psi_k\in \bdot X_{\zeta_k'(\tau,U)}^{1/2}$ to the equations $(\Delta_{\zeta_k'(\tau,U)} - {q_k }) \psi_k = q_k$, $k=1,2$, such that 
\begin{equation}\label{psi}
\|\psi_k \|_{\bdot X_{\zeta_k'(\tau,U)}^{1/2}} 
\lesssim
\| q_k \|_{\bdot X_{\zeta_k'(\tau,U)}^{-1/2}}.
\end{equation}
Indeed, since $\psi_k=\Delta^{-1}_{\zeta_k' (\tau,U)} ({q_k} \psi_k + q_k)$ and $\| \Delta^{-1}_\zeta \|_{\bdot X_\zeta^{-1/2}  \to \bdot X_\zeta^{1/2}}=1$, we have that
\[
\|\psi_k \|_{\bdot X_{\zeta_k' (\tau,U)}^{1/2}} = \| \Delta^{-1}_{\zeta_k'(\tau,U)} ({q_k} \psi_k + q_k) \|_{\bdot X_{\zeta_k'(\tau,U)}^{1/2}} 
\le \| \mathcal M_{q_k} \psi_k + q_k  \|_{\bdot X_{\zeta_k'(\tau,U)}^{-1/2}} \lesssim \delta \| \psi_k\|_{\bdot X_{\zeta_k'(\tau,U)}^{1/2}} + \| q_k \|_{\bdot X_{\zeta_k'(\tau,U)}^{-1/2}}.
\]
If $j$ is large enough, \eqref{psi} follows. Furthermore, since $\| u \|_{H^{1,2}(\R^d)} \lesssim \tau^{1/2} \|u \|_{X_{\zeta(\tau)}^{1/2}}$, we have $\psi_k \in H^{1}(\R^d)$,  hence $v_k=e^{x \cdot{\zeta_k'}(\tau,U)} (1 + \psi_k) \in H_{loc}^{1}(\R^d)$. 

Therefore,  by the assumption  $\Lambda_{\gamma_1} = \Lambda_{\gamma_2}$ and Lemma \ref{boundary} we can apply Lemma \ref{identity} to get $(q_1 -q_2 ,v_1v_2) = 0$. 
Since $v_1v_2 = e^{ x\cdot(\zeta_1' + \zeta_2' )}(1+\psi_1)(1+\psi_2) = e^{i x \cdot 2rUe_3} (1+\psi_1)(1+\psi_2)$,  we now obtain  that
 \[
 ( q_1 -q_2 , e^{i x \cdot 2rUe_3}) = \sum_{k,n=1,2} (-1)^k ( q_k , e^{i x \cdot 2rUe_3} \psi_{n} )  
 + ( q_2-q_1 , e^{ i x \cdot 2rUe_3} \psi_1\psi_2).
\]
By \eqref{psi}, we have 
\begin{align*}
 | ( q_k ,\, e^{i x \cdot 2rUe_3} \psi_{n}) | 
\lesssim  
 \| q_k\|_{\bdot X_{\zeta_{n}'(\tau,U)}^{-1/2}} \| \psi_{n}\|_{\bdot X_{\zeta_{n}'(\tau,U)}^{1/2}}  
\lesssim 
 \| q_k\|_{\bdot X_{\zeta_{n}'(\tau,U)}^{-1/2}} \| q_{n}\|_{\bdot X_{\zeta_{n}' (\tau,U)}^{-1/2}}.
\end{align*}
Here we also use 
that $\|e^{\pm i x \cdot 2rUe_3} \phi \|_{ \bdot X_{ \zeta'(\tau,U)}^{1/2}}\simeq\| \phi \|_{ \bdot X_{ \zeta'(\tau,U)}^{1/2}}$ with the implicit constant which may  depend on $r$. This is easy to see  since the modulation $e^{i x \cdot 2rUe_3} $  is acting only on $\bar \xi$. 
Applying \eqref{smallness}, \eqref{norm_equiv}, and \eqref{psi}, successively, we get 
 \begin{align*}
|( q_1 , e^{i x \cdot 2rUe_3} \psi_1\psi_2) |
& \le   
 \| \mathcal M_{q_1}  ( e^{i x \cdot 2rUe_3}    \psi_1 ) \|_{ \bdot X_{ \zeta_2(\tau,U)}^{-1/2}}   
                   \| \psi_2 \|_{ \bdot X_{ \zeta_2(\tau,U)}^{1/2}}
                   \\
& \lesssim
\delta \|  e^{ i x \cdot 2rUe_3}  \psi_1   \|_{\bdot X_{\zeta_2(\tau, U)}^{ 1/2}}  \| \psi_2 \|_{\bdot X_{\zeta_2(\tau, U)}^{1/2}}  
 \simeq \,\,\delta \|  e^{- i x \cdot 2rUe_3} \overline{ \psi_1} \|_{\bdot X_{\zeta_1(\tau,U)}^{ 1/2}}   \| \psi_2 \|_{\bdot X_{\zeta_2'(\tau,U)}^{ 1/2}}  \\
& \lesssim  \delta \| \overline{\psi_1} \|_{\bdot X_{\zeta_1'(\tau,U)}^{1/2}}  \| q_2 \|_{\bdot X_{\zeta_2'(\tau,U)}^{-1/2}}  
\lesssim \delta  \| \overline{q_1} \|_{\bdot X_{\zeta_1'(\tau,U)}^{-1/2}}  \| q_2 \|_{\bdot X_{\zeta_2'(\tau,U)}^{-1/2}} 
\simeq \delta  \|  q_1  \|_{\bdot X_{\zeta_2'(\tau,U)}^{-1/2}}  \| q_2 \|_{\bdot X_{\zeta_2'(\tau,U)}^{-1/2}} ,
\end{align*} 
where we use the fact that $\zeta_1(\tau,U) = - \zeta_2(\tau,U)$.  
We also obtain similar estimate for $q_2$ replacing $q_1$. 
Hence, combining all the above estimates 
we get 
\[
|(\widehat{q_1} - \widehat{q_2}) (-2 r U e_3) | 
\lesssim \delta
\sum_{k,\ell, m, n} \|q_k\|_{\bdot X_{\zeta_\ell'(\tau,U)}^{-1/2}} \|q_m\|_{\bdot X_{\zeta_{n}'(\tau,U)}^{-1/2}} 
\lesssim 
 \delta^3.
\]
This shows that  $\lim_{j\to\infty} (\widehat{q_1} - \widehat{q_2})(-2 r U_j e_3)=0$ for the sequence $\{U_j \}$. Meanwhile, since $O_d$ is compact, we can pass to  a subsequence 
so that $U_{j}$ converges to a unitary matrix $U_\ast$. Thus we have  
$(\widehat{q_1} - \widehat{q_2})(-2 r U_\ast e_3)=0$. Therefore, we conclude that  $\widehat{q_1} - \widehat{q_2} =0$ as $e_3$ and $r$ are arbitrary.
This completes the proof of Theorem \ref{main}.

\begin{rmk}\label{sd} 
From Proposition \ref{hohoho2} (for $d=7,8$) and Proposition \ref{hohoho} (for $d\ge 9$), if we apply \eqref{ave_decay_w} (with $\kappa=1$) to the potentials $q_k=\gamma_k^{-1/2}\Delta \gamma_k^{1/2}$ where $\gamma_k\in W_c^{s,p}(\R^d)$, we see that \eqref{exist-seq} holds whenever $d\le p<\infty$ and  $s > s_d(p)$ that is given by
\begin{align*} 
s_d(p) 
   &=
\begin{cases}
\, 	1 +  \frac{d - 5}{2p}	&\text{if }\ \   \frac{d+9}2 \le  p <\infty ,  \\[5pt]
\, 	1 +  \frac{d^2 +d -16-2p}{2p(d+5)}	&\text{if }\ \ d\le  p < \frac{d+9}2, 
\end{cases}
\  \quad \text{ for }  d=7,8,
\\[4pt] 
s_d(p) 
    &=
\begin{cases}
\, 	1 +  \frac{d - 5}{2p}	&\text{if }\ \   \frac{d+9}2 \le  p <\infty ,  \\[5pt]
\, 	\frac12 +  \frac{3d -1}{4p}	&\text{if }\ \  d\le  p < \frac{d+9}2\,,
\end{cases}  
\qquad  \quad \quad  
\text{ for } d\ge9.
\end{align*}
As is mentioned in the introduction, if we have the additional condition ${\partial   \gamma_1}/{\partial \nu}={\partial   \gamma_2}/{\partial \nu}$ on the boundary, the zero-extension of $\gamma_1 - \gamma_2$ is valid if $s-\frac1p\le 2$.   Since 
$s_d(p)-\frac1p \le 2$ for $d\le p<\infty$ and  \eqref{exist-seq} is valid whenever $s > s_d(p)$, by the above argument, the injectivity of the mapping $W^{s, p}\ni \gamma \mapsto \Lambda_\gamma$ follows whenever  $d\le p<\infty$ and  $s > s_d(p)$. 
 \end{rmk}

\subsection{Proof of Theorem \ref{Schrodinger}}
Before we prove Theorem \ref{Schrodinger} we justify $\mathcal L_q$ is well defined with  $q\in H^{s,p}_c(\Omega)$  while $s,p$ satisfy  \eqref{concon}.

\begin{lem}\label{DtoN}  Let $p\ge \frac d2$ and  $\mathcal L_q$  be given  by \eqref{DtN}. 
Suppose  $q\in H^{s,p}_c(\Omega)$  and $s,p$ satisfy  \eqref{concon}.  
Then,  $\mathcal L_q$ is well defined and  continuous from $H^\frac12(\partial \Omega)$ to $H^{-\frac12}(\partial \Omega)$.
\end{lem}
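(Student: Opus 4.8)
The plan is to show that the bilinear form in \eqref{DtN} makes sense for $u,v\in H^1(\Omega)$ when $q\in H^{s,p}_c(\Omega)$ with $(s,p)$ as in \eqref{concon}, and that it depends continuously on the boundary data. First I would reduce everything to a single quantitative estimate: it suffices to prove that there is a constant $C$ such that
\begin{equation}\label{quest}
\Big| \int_\Omega q\, uv \, dx\Big| \le C\|q\|_{H^{s,p}} \|u\|_{H^1(\Omega)} \|v\|_{H^1(\Omega)}, \qquad u,v\in H^1(\Omega).
\end{equation}
Indeed, once \eqref{quest} holds, the right-hand side of \eqref{DtN} is a bounded bilinear form on $H^1(\Omega)\times H^1(\Omega)$ (the Dirichlet part $\int \nabla u\cdot\nabla v$ being trivially bounded); combined with the standard existence and uniqueness of $u=u_f\in H^1(\Omega)$ solving \eqref{seq} (guaranteed since $0$ is not a Dirichlet eigenvalue, via the modification of \cite[Appendix A]{KU} mentioned in the text), and the bound $\|u_f\|_{H^1(\Omega)}\lesssim \|f\|_{H^{1/2}(\partial\Omega)}$, one gets that $g\mapsto (\mathcal L_q f, g)$ is well defined independently of the extension $v$ of $g$ (two such extensions differ by an $H^1_0(\Omega)$ function, which is killed after integrating by parts using the equation, exactly as in \cite{Mc, BT}) and that $\|\mathcal L_q f\|_{H^{-1/2}(\partial\Omega)}\lesssim \|f\|_{H^{1/2}(\partial\Omega)}$ by the trace theorem. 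So the whole lemma rests on \eqref{quest}.

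To prove \eqref{quest} I would distinguish the two regimes in \eqref{concon}. When $s\ge 0$: extend $u,v$ to $\widetilde u,\widetilde v\in H^1(\mathbb R^d)$ with $\|\widetilde u\|_{H^1(\mathbb R^d)}\lesssim\|u\|_{H^1(\Omega)}$ etc.; since $\supp q\subset\Omega$ we may replace $u,v$ by $\widetilde u,\widetilde v$. By Sobolev embedding $H^1(\mathbb R^d)\hookrightarrow L^{r}(\mathbb R^d)$ with $r=\frac{2d}{d-2}$ (for $d\ge3$), so $uv\in L^{d/(d-2)}$, and then $\int q\, uv\,dx$ is controlled by $\|q\|_{L^{d/2}}\|uv\|_{L^{d/(d-2)}}\lesssim \|q\|_{L^{d/2}}\|u\|_{H^1}\|v\|_{H^1}$; since $p\ge d/2$ and $q$ has compact support, $\|q\|_{L^{d/2}}\lesssim\|q\|_{L^p}\lesssim\|q\|_{H^{s,p}}$, giving \eqref{quest}. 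When $-1\le s<0$ and $s\ge -2+\frac dp$: here $q$ is a genuine distribution, so pair it against $uv$ via the duality $H^{s,p}\times H^{-s,p'}$. Thus it suffices to show $uv\in H^{-s,p'}(\mathbb R^d)$ with $\|uv\|_{H^{-s,p'}}\lesssim\|u\|_{H^1}\|v\|_{H^1}$ (after extending $u,v$ to $\mathbb R^d$ as before, using $\supp q$ compact to localize). Since $-s\le 1$, I would use the fractional-Leibniz / product estimate: $\|uv\|_{H^{-s,p'}}\lesssim \||D|^{-s}(uv)\|_{L^{p'}}+\|uv\|_{L^{p'}}$, and bound $\||D|^{-s}(uv)\|_{L^{p'}}\lesssim \||D|^{-s}u\|_{L^{q_1}}\|v\|_{L^{q_2}}+\|u\|_{L^{q_2}}\||D|^{-s}v\|_{L^{q_1}}$ for suitable exponents; choosing $q_1,q_2$ so that $|D|^{-s}: H^1\to L^{q_1}$ and $H^1\hookrightarrow L^{q_2}$ by Sobolev embedding (which is where the constraint $-s\ge 2-\frac dp$, equivalently $\frac1{p'}\le \frac12-\frac{1-(-s)}{d}+\frac12-\frac1d = 1-\frac{2-(-s)}{d}$... — the precise arithmetic to be checked) makes the exponents add up correctly.

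The key step, and the one I expect to require the most care, is the borderline exponent bookkeeping in the negative-$s$ case: one must verify that the Sobolev exponents coming from $H^1(\mathbb R^d)$ together with the fractional integration $|D|^{-s}$ exactly match $p'$ under the stated hypothesis \eqref{concon}, including the two endpoint cases $s=-1$ and $s=-2+\frac dp$, and the low-dimensional cases $d=3,4$ where $H^1$ barely embeds into the needed $L^q$. A clean way to organize this is to interpolate between the two extreme situations $s=0$ (handled above) and $s=-1$ (where $uv\in H^{1,1}\subset H^{1,p'}$ for $p'\le \frac{d}{d-1}$, i.e. $p\ge d$, using $\nabla(uv)=u\nabla v+v\nabla u\in L^1$ by Cauchy--Schwarz and $u,v\in L^2$, plus $uv\in H^{1, d/(d-1)}$ by the Sobolev product rule when $d/2\le p<d$), so that \eqref{concon} is precisely the region swept out. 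Once \eqref{quest} is in hand the remaining assertions (well-definedness of $\mathcal L_q f$, independence of the chosen extension $v$ of $g$, and continuity $H^{1/2}(\partial\Omega)\to H^{-1/2}(\partial\Omega)$) follow from the standard weak formulation argument as in \cite{Mc, BT, KU}, which I would spell out only briefly.
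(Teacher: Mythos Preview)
Your proposal is correct and follows essentially the same route as the paper: both reduce to the key bound \eqref{quest} and obtain it via the $H^{s,p}$--$H^{-s,p'}$ duality together with the fractional Leibniz (Kato--Ponce) inequality and Sobolev embedding, and both deduce well-definedness and continuity of $\mathcal L_q$ from this bound exactly as you outline. The only difference is that the paper carries out the exponent bookkeeping in one line by fixing the pair $(t,\frac{2d}{d-2})$ with $\frac1t=\frac{d+2}{2d}-\frac1p$ (so that $\frac1{p'}=\frac1t+\frac{d-2}{2d}$ and $H^{1+s}\hookrightarrow L^t$ under \eqref{concon}), rather than your endpoint/interpolation organization.
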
  

\begin{proof} We may  assume $s=\max\{-2+\frac dp,-1\}$ since $H^{s_2,p}\hookrightarrow H^{s_1,p}$ if $s_1\le s_2$. 
Let $u\in H^1(\Omega)$ be a solution to \eqref{seq} and $v\in  H^1(\Omega)$ with $v|_{\partial \Omega}=g$.  Then the quantity 
\[ S_u(v)=\int_\Omega \nabla u\cdot \nabla v+ q uv\, dx \]  
 is well defined.  In fact, we note that  $|(q,uv)|=|((1+|D|^2)^{\frac s2} q, (1+|D|^2)^{-\frac s2}(uv))|\le \|(1+|D|^2)^{\frac s2} q\|_{L^p} \| (1+|D|^2)^{-\frac s2}(uv)\|_{L^{p'}} $. By the Kato-Ponce inequality (\cite{KaPo, GuKo}) and the Hardy-Littlewood-Sobolev inequality we get  
\begin{align*}
 |(q,uv)|&
\lesssim    \|q\|_{H^{s, p}} \big(  \| (1+|D|^2)^{-\frac s2}u\|_{L^t} \|v\|_{L^\frac{2d}{d-2}}+  \| u \|_{L^\frac{2d}{d-2}}\|(1+|D|^2)^{-\frac s2}v\|_{L^t}\big)
 \\
 & \lesssim  \|q\|_{H^{s, p}} \|u\|_{H^{1}(\Omega)}\|v\|_{H^{1}(\Omega)} ,
 \end{align*}
where $s=\max\{-2+\frac dp,-1\}$ and $\frac 1t=\frac{d+2}{2d}-\frac 1p$.  Thus we have 
\Be   |S_u(v)|\lesssim  \|u\|_{H^{1}(\Omega)}\|v\|_{H^{1}(\Omega)} \label{prep}. \Ee
Since $u\in H^1(\Omega)$ is a solution to \eqref{seq}, $S_u(v_\circ -v)=0$ for all  $v_\circ \in  H^1(\Omega)$ with $v_\circ|_{\partial \Omega}=g$ because 
$v_\circ -v\in H_0^1(\Omega)$. This shows $S_u(v)$ does not depend on particular choices of $v$, that is to say, $\mathcal L_q$ is well defined. 

To show  $\mathcal L_q:H^\frac12(\partial \Omega)\to H^{-\frac12}(\partial \Omega)$ is continuous, by duality it is sufficient to show that 
\[  |(\mathcal L_q f, g)|\lesssim \|f\|_{H^\frac12(\partial \Omega)} \|g\|_{H^\frac12(\partial \Omega)}.  \] 
 From \eqref{DtN}  and \eqref{prep}  we have $ |(\mathcal L_q f, g)|\lesssim \|u\|_{H^{1}(\Omega)}\|v\|_{H^{1}(\Omega)}.$
Using the right inverse of the  trace operator we have $\|u\|_{H^{1}(\Omega)}\lesssim  \|f\|_{H^\frac12(\partial \Omega)} $ and $\|v\|_{H^{1}(\Omega)}\lesssim  \|g\|_{H^\frac12(\partial \Omega)} $. This gives the desired estimate.
\end{proof}

From the standard argument, similarly handling $(q,uv)$ as in the above, 
it is also easy to see the following.  (See Section 2.7 in \cite{FSU}, for example.) 
\begin{prop}\label{zero1}  Let $p\ge \frac d2$. Suppose $q_1, q_2\in H^{s,p}_c(\Omega)$ and $s,p$ satisfy  \eqref{concon} and  suppose  $\mathcal L_ {q_1}=\mathcal L_{q_2}$. Then, 
$(q_1-q_2, u_1 u_2)=0$ whenever  $u_i\in H^1(\Omega)$  is a solution to $\Delta u-q_i u= 0$  for each $i=1,2$. 
\end{prop}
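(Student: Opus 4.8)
\textbf{Proof proposal for Proposition \ref{zero1}.}

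The plan is to follow the classical Alessandrini identity argument, with attention paid to the fact that $q_1, q_2$ are only distributions of negative order. First I would fix solutions $u_i \in H^1(\Omega)$ to $\Delta u - q_i u = 0$ for $i = 1, 2$; the existence and uniqueness of such solutions (given that zero is not a Dirichlet eigenvalue) has already been recorded, and by the computation in the proof of Lemma \ref{DtoN} the pairing $(q_i, u_1 u_2)$ is well defined since $u_1 u_2$ lies in a space against which $H^{s,p}_c(\Omega)$ pairs (the Kato--Ponce and Hardy--Littlewood--Sobolev estimates give $|(q_i, u_1 u_2)| \lesssim \|q_i\|_{H^{s,p}} \|u_1\|_{H^1(\Omega)} \|u_2\|_{H^1(\Omega)}$). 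Denote by $f_i = u_i|_{\partial\Omega}$. The key step is to expand $(\mathcal L_{q_1} f_1, f_2)$ using the weak formulation \eqref{DtN}: choosing $v = u_2$ as the competitor (which is legitimate since $u_2 \in H^1(\Omega)$ and $u_2|_{\partial\Omega} = f_2$, and since $S_{u_1}(v)$ was shown in Lemma \ref{DtoN} to be independent of the choice of representative), we get
\[
(\mathcal L_{q_1} f_1, f_2) = \int_\Omega \nabla u_1 \cdot \nabla u_2 + q_1 u_1 u_2 \, dx.
\]
Symmetrically, interchanging the roles and using that $u_1 \in H^1(\Omega)$ with $u_1|_{\partial\Omega} = f_1$,
\[
(\mathcal L_{q_2} f_2, f_1) = \int_\Omega \nabla u_2 \cdot \nabla u_1 + q_2 u_1 u_2 \, dx.
\]
Now I would subtract these two identities. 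The gradient terms $\int_\Omega \nabla u_1 \cdot \nabla u_2\, dx$ cancel (the bilinear pairing on the boundary is symmetric, $(\mathcal L_{q_1} f_1, f_2) = (\mathcal L_{q_2} f_2, f_1)$ follows from $\mathcal L_{q_1} = \mathcal L_{q_2}$ together with symmetry of the Dirichlet-to-Neumann map), leaving $(q_1 - q_2, u_1 u_2) = 0$, which is exactly the claim.

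The one genuinely delicate point is justifying the substitution $v = u_i$ into \eqref{DtN} and the symmetry $(\mathcal L_q f, g) = (\mathcal L_q g, f)$ at this low regularity: one must check that all the bilinear forms involved are continuous on $H^1(\Omega) \times H^1(\Omega)$ (equivalently on $H^{1/2}(\partial\Omega) \times H^{1/2}(\partial\Omega)$ after composing with a right inverse of the trace) so that the formal manipulations are valid. This is precisely where the estimate $|S_u(v)| \lesssim \|u\|_{H^1(\Omega)} \|v\|_{H^1(\Omega)}$ from Lemma \ref{DtoN}, itself resting on the Kato--Ponce inequality applied to $(1+|D|^2)^{-s/2}(uv)$ and the condition \eqref{concon}, does the work. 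Given that lemma, the symmetry of $\mathcal L_q$ and the independence of $S_u(v)$ from the extension $v$ are immediate, so the obstacle is really bookkeeping rather than analysis. I would therefore organize the proof as: (i) recall well-definedness of $(q_i, u_1 u_2)$ and continuity of $S_{u_i}(\cdot)$ from Lemma \ref{DtoN}; (ii) write the two Alessandrini-type identities; (iii) use $\mathcal L_{q_1} = \mathcal L_{q_2}$ and symmetry to cancel, concluding $(q_1 - q_2, u_1 u_2) = 0$. Since the paper explicitly says this is proved "from the standard argument, similarly handling $(q,uv)$ as in the above," I expect the write-up to be short, essentially a pointer to Section 2.7 of \cite{FSU} combined with the estimates already established in Lemma \ref{DtoN}.
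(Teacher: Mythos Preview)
Your proposal is correct and is precisely the standard Alessandrini identity argument the paper invokes: the authors do not write out a proof at all, merely noting that ``from the standard argument, similarly handling $(q,uv)$ as in the above'' the result follows, with a reference to Section~2.7 of \cite{FSU}. Your identification of the only nontrivial point---that the pairings $(q_i, u_1 u_2)$ and the bilinear form $S_u(v)$ make sense and are continuous on $H^1(\Omega)\times H^1(\Omega)$ under \eqref{concon}, which is exactly what Lemma~\ref{DtoN} supplies---matches the paper's intent.
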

 
 Now we show Theorem \ref{Schrodinger} by  constructing the complex geometrical  optics solutions. We follow the lines of arguments in the proof of Theorem \ref{main}.  Let $s$, $p$ be given as in Theorem \ref{Schrodinger} and   $q_1,$ $q_2\in H_c^{s,p}(\Omega)$.  
By Lemma \ref{elementary}, it is enough to consider the case $s \le 0$.
Then, by Proposition \ref{hohoho3} we have, for $k=1,2$, 
\begin{equation}\label{mqk} \lim_{M\to \infty} \bigg( \avm \int_{O_d} \| \mathcal M_{q_k} \|_{X_{\zeta(\tau,U)}^{1/2}\to  X_{\zeta(\tau,U)}^{-1/2}}^p dm(U) d\tau\bigg)^\frac1p =0   \end{equation} 
for $0 \ge s > \max\{-1, r_d(p)\}$. From Corollary \ref{with-d-cor} we see that  \eqref{q} holds for $s\ge\max\{-1,\frac d p -2\}$ and $ \frac d 2 \le p<\infty$.  
We also have,  for $k=1,2,$ 
 \begin{equation}\label{qk} 
  \lim_{\tau \to \infty} \int_{O_d} \| q_k\|^2 _{\bdot{X}_{\zeta(\tau,U)}^{-1/2}} dm(U) =0 . 
 \end{equation} 
 Let $\zeta_1(\tau, U)$, $\zeta_1'(\tau, U)$, $\zeta_2(\tau, U)$, and $\zeta_2'(\tau, U)$ be given by   \eqref{tau-u}.
   Combining these two,  we have 
 $\tau = \tau_j>0$, $U = U_j \in O_d$, and $\delta_j>0$  such that 
\eqref{tau-delta} and \eqref{inhomo} hold.   Once we have  $\tau = \tau_j>0$, $U = U_j \in O_d$, then the rest of argument works without modification. So we omit the details.

\begin{rmk}\label{nachman}
When $d\ge3$ the above argument provide a different proof of  the uniqueness result for  $q\in L^{d/2}$ (\cite{N}) by using \eqref{ha2} instead of \eqref{a-sob-2}.
As observed above, it is sufficient to show \eqref{mqk} and \eqref{qk} for $q\in L^{d/2}$.
Following the proof of Proposition \ref{hohoho} (\eqref{ave_decay_w}), we write $q = q\ast \phi_\epsilon + (q-q\ast\phi_\epsilon)$. 
By \eqref{ha1} and \eqref{ha2}, we obtain 
\[
 \mathfrak A_M^{p,0} (q) \lesssim \tau^{-1} \| q \ast \phi_\epsilon\|_{L^\infty} + \| q- (q\ast\phi_\epsilon)\|_{L^{\frac d 2}}  \lesssim \tau^{-1} \epsilon^{-2} \| q \|_{L^{\frac d2}} + \| q- (q\ast\phi_\epsilon)\|_{L^{\frac d 2}}. 
\]
Taking $\epsilon =\tau^{-\frac 14}$, we see that \eqref{mqk} holds for $q_k \in L^{\frac d 2}$. 
Meanwhile \eqref{qk} is immediate by \eqref{q}  with $s =0$  and $p=\frac d 2$. The remaining is identical with the previous argument. 
\end{rmk}
 
\subsection*{Acknowledgement} 
This work was supported  by NRF-2017R1C1B2002959  (S. Ham), a KIAS Individual Grant (MG073701) at Korea Institute for Advanced Study (Y. Kwon), and  NRF-2018R1A2B2006298 (S. Lee).

\bibliographystyle{plain}

\end{document}